\newtheorem{thm}{Theorem}[section]
\newtheorem{prop}[thm]{Proposition}
\newtheorem{lem}[thm]{Lemma}
\newtheorem{cor}[thm]{Corollary}
\renewcommand{\theclaim}{\kern-3pt}
\newtheorem{IntroThm}{Theorem}
\theoremstyle{definition}
\newtheorem{definition}[thm]{Definition}
\theoremstyle{remark}
\newtheorem{rem}[thm]{Remark}
\newtheorem{rems}[thm]{Remarks}
\newtheorem{ex}[thm]{Example}
\numberwithin{equation}{section}
\newcommand{\sA}{{\mathcal A}}
\newcommand{\sB}{{\mathcal B}}
\newcommand{\sC}{{\mathcal C}}
\newcommand{\sE}{{\mathcal E}}
\newcommand{\sF}{{\mathcal F}}
\newcommand{\sG}{{\mathcal G}}
\newcommand{\sL}{{\mathcal L}}
\newcommand{\sO}{{\mathcal O}}
\newcommand{\sR}{{\mathcal R}}
\newcommand{\sT}{{\mathcal T}}
\newcommand{\sU}{{\mathcal U}}
\newcommand{\A}{{\mathbb A}}
\newcommand{\F}{{\mathbb F}}
\renewcommand{\H}{{\mathbb H}}
\newcommand{\N}{{\mathbb N}}
\renewcommand{\P}{{\mathbb P}}
\newcommand{\Q}{{\mathbb Q}}
\newcommand{\R}{{\mathbb R}}
\newcommand{\Z}{{\mathbb Z}}
\renewcommand{\L}{{\mathbb L}}
\renewcommand{\phi}{\varphi}
\newcommand{\an}{{\rm an}}
\newcommand{\CH}{{\rm CH}}
\newcommand{\Hom}{{\rm Hom}}
\newcommand{\Spec}{{\rm Spec \,}}
\newcommand{\Proj}{{\rm Proj \,}}
\newcommand{\0}{\emptyset}
\newcommand{\sHom}{{\mathcal{H}{om}}}
\newcommand{\id}{{\operatorname{id}}}
\newcommand{\Zar}{{\text{\rm Zar}}}
\newcommand{\Sch}{{\operatorname{\mathbf{Sch}}}}
\newcommand{\op}{{\text{\rm op}}}
\newcommand{\Sets}{{\mathbf{Sets}}}
\renewcommand{\max}{{\operatorname{\rm max}}}
\newcommand{\Spt}{{\mathbf{Spt}}}
\newcommand{\Sm}{{\mathbf{Sm}}}
\newcommand{\Prj}{{\mathbf{Proj}}}
\renewcommand{\lim}{\operatornamewithlimits{\varprojlim}}
\newcommand{\colim}{\operatornamewithlimits{\varinjlim}}
 \newcommand{\Ab}{{\mathbf{Ab}}}
\newcommand{\Tot}{{\operatorname{\rm Tot}}}
\newcommand{\eff}{{\operatorname{eff}}}
\newcommand{\DM}{{DM}}
\newcommand{\Nis}{{\operatorname{Nis}}}
\newcommand{\ds}{{/\kern-3pt/}}
\newcommand{\coker}{\operatorname{coker}}
\newcommand{\Cor}{{\mathop{Cor}}}
\newcommand{\Deg}{{\mathop{\rm{deg}}}}
\newcommand{\PSh}{{\mathop{PSh}}}
\newcommand{\Sh}{{\mathop{Sh}}}
\newcommand{\PST}{{\mathop{PST}}}
\newcommand{\equi}{{\mathop{equi}}}
\newcommand{\et}{{\operatorname{\acute{e}t}}}
\newcommand{\Aut}{{\operatorname{Aut}}}
\newcommand{\Mod}{{\operatorname{Mod}}}
\newcommand{\Cube}{{\mathbf{Cube}}}
\newcommand{\ECube}{{\mathbf{ECube}}}
\newcommand{\un}{\underline}
\newcommand{\dgn}{{\operatorname{degn}}}
\renewcommand{\dim}{\text{dim}}
\newcommand{\PTR}{{\operatorname{Pre-Tr}}}
\newcommand{\Cone}{{\operatorname{Cone}}}
\newcommand{\sgn}{{\operatorname{sgn}}}
\newcommand{\Sgn}{{\operatorname{Sgn}}}
\newcommand{\alt}{{\operatorname{alt}}}
\newcommand{\Alt}{{\operatorname{Alt}}}
\newcommand{\Obj}{{\operatorname{Obj}}}
\newcommand{\Iso}{{\operatorname{Iso}}}
\newcommand{\DGCor}{{\mathop{dgCor}}}
\newcommand{\Op}{{\mathop{Opn}}}
\newcommand{\Pt}{{\mathop{Pt}}}
\newcommand{\Otimes}{\displaystyle\mathop{\otimes}}
\newcommand{\Mor}{{\text{\rm Mor}}}
\newcommand{\DGPCor}{{\mathop{dgPrCor}}}
\newcommand{\DGPMot}{{\mathop{dgSmMot}}}
\newcommand{\PMot}{{\mathop{SmMot}}}
\newcommand{\DGTCor}{{\mathop{dgLCor}}}
\newcommand{\DGTMot}{{\mathop{dgTMot}}}
\newcommand{\PTMot}{{\mathop{DTMot}}}
\newcommand{\chow}{{\operatorname{chow}}}
\newcommand{\SW}{{\operatorname{S-W}}}
\newcommand{\CM}{{\mathop{ChMot}}}
\newcommand{\Loc}{{\operatorname{Loc}}}
\newcommand{\DTM}{{\operatorname{DTM}}}
\begin{document}

\title{Smooth motives}
\author{Marc Levine}
\address{
Department of Mathematics\\
Northeastern University\\
Boston, MA 02115\\
USA}
\email{marc@neu.edu}

\keywords{Motives, motivic cohomology, correspondences, DG category}

\subjclass{Primary 14C25, 19E15; Secondary 19E08 14F42, 55P42}
 \thanks{The  author gratefully acknowledges the support of the Humboldt Foundation, and the support of the NSF via the grant DMS-0457195}
\renewcommand{\abstractname}{Abstract}
\begin{abstract}  Following ideas of Bondarko, we construct a DG category whose homotopy category is equivalent to the full subcategory of motives over a base-scheme $S$ generated by the motives of smooth projective $S$-schemes, assuming that $S$ is itself smooth over a perfect field.
\end{abstract}
%\date{April 21, 2008}
\maketitle
\tableofcontents

\section*{Introduction}  Recently, Bondarko \cite{Bondarko} has given a construction of a DG category of motives over a field $k$, built out of ``higher finite correspondences" between smooth projective varieties over $k$. Assuming resolution of singularities, the homotopy category of this DG category is equivalent to Voevodsky's category of effective geometric motives. The main goal in this paper is to extend this construction to the case of motives over a base-scheme $S$. For simplicity, we restrict to the case of a regular $S$, essentially of finite type over a field, although many aspects of the construction should be possible in a more general setting. 

If $S$ has  positive Krull dimension, one would not expect that  the motive of an arbitrary smooth $S$-scheme be expressible in terms of motives of smooth projective $S$-schemes. Thus,  the category of motives we construct represents a special type of motive over $S$. Since the Betti realization of our motives will land in the derived category of local systems on $S^\an$ rather than in the derived category of constructible sheaves, we call our category $\PMot^\eff(S)$ the category of {\em smooth effective motives over $S$}. We have as well a version with the Tate motive inverted, $\PMot(S)$. Both $\PMot^\eff(S)$ and $\PMot(S)$ are constructed by taking the homotopy category of suitable DG categories, and then taking an idempotent completion.

We were not able to construct directly a tensor structure on $\PMot^\eff(S)$ or $\PMot(S)$. However, after passing to $\Q$-coefficients, we replace our cubical construction with alternating cubes, which makes possible a tensor structure on  DG categories whose homotopy categories are equivalent to  $\PMot^\eff(S)_\Q$ and $\PMot(S)_\Q$ (up to idempotent completion).

Our main comparison result involves the categories of motives $\DM^\eff(S)$ and $\DM(S)$ constructed by Cisinski-D\'eglise. We construct exact functors
\[
\rho^\eff_S:\PMot^\eff(S)\to \DM^\eff(S);\ \rho_S:\PMot(S)\to \DM(S)
\]
and we show
\begin{IntroThm} Let $k$ be a field. Suppose that $S$ is a smooth $k$-scheme, essentially of finite type over $k$. Then $\rho_S$ is a fully faithful embedding.
\end{IntroThm}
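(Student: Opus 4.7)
The plan is to reduce the fully faithful assertion to a comparison of Hom groups between generators, and then to match both sides with a single cycle-theoretic object. By construction, $\PMot(S)$ is the idempotent completion of the homotopy category of a DG category whose generators (after inverting the Tate motive) are the objects $h(X)(n)$ for $X$ smooth projective over $S$ and $n\in\Z$, and $\rho_S$ sends $h(X)(n)$ to the Cisinski-D\'eglise motive $M(X)(n)$. Since $\rho_S$ is exact, it is fully faithful as soon as
\[
\rho_S\colon \Hom_{\PMot(S)}(h(X),h(Y)(n)[m])\longrightarrow \Hom_{\DM(S)}(M(X),M(Y)(n)[m])
\]
is an isomorphism for all smooth projective $S$-schemes $X,Y$ and all $n,m\in\Z$.

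The next step is to identify both sides explicitly. By the very construction of the DG category underlying $\PMot(S)$, the left-hand side is the degree-$m$ cohomology of the cubical complex of higher finite $S$-correspondences from $X$ to $Y(n)$, whose terms are cycles on $X\times_S Y\times_S\square^\bullet$ (with its $\G_m$-variant after Tate inversion) satisfying the equidimensionality and finiteness conditions over $X\times_S\square^\bullet$ introduced earlier in the paper. By Cisinski-D\'eglise, the right-hand side is identified with a motivic cohomology group of $X\times_S Y$ of the form $H^m(X\times_S Y,\Z(d_Y+n))$, where $d_Y$ is the relative dimension of $Y/S$; because $S$ is smooth over a perfect field, this group is in turn computed by Bloch's higher Chow complex on $X\times_S Y$.

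The heart of the proof is then the comparison of these two cycle complexes. Using the self-duality of the smooth projective $S$-scheme $Y$, which is available in both $\PMot(S)$ and $\DM(S)$ and is respected by $\rho_S$, finite $S$-correspondences from $X$ to $Y$ are identified with cycles on $X\times_S Y$ equidimensional of the correct relative dimension over the base $S$. The fully faithful assertion thus reduces to the claim that the inclusion of this equidimensional subcomplex into the Bloch-type complex of all admissible cycles on $X\times_S Y\times_S\square^\bullet$ meeting the faces properly is a quasi-isomorphism; this is a relative moving lemma.

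The principal obstacle is exactly this last moving lemma. Over a field it is Bondarko's comparison with Voevodsky's category, which ultimately rests on the Friedlander-Voevodsky theory of equidimensional cycles and the classical moving lemma on smooth varieties. Over a smooth base $S$ essentially of finite type over a perfect field, I would combine the Suslin-Voevodsky equidimensional cycle theory with the relative moving techniques that underlie the Cisinski-D\'eglise identification of motivic cohomology over a smooth base with Bloch's higher Chow groups; this is where the substantive technical work lies. Once this quasi-isomorphism is established, the Hom-group comparison on generators is immediate, and fully faithfulness of $\rho_S$ follows by a standard argument on thick triangulated subcategories together with passage to idempotent completions.
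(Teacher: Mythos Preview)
Your reduction to generators and your plan to invoke duality on both sides are correct and match the paper's argument. However, you misplace where the hard work lies, and the specific moving lemma you propose at the end is not the one that is needed.

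In the paper, once duality is available in both categories and is respected by $\rho_S$, the comparison is essentially finished, with no residual moving lemma required. Concretely, duality on both sides reduces the question to
\[
\rho^\eff_S\colon \Hom_{\PMot^\eff(S)}(X\times_SY,\L^d[n])\longrightarrow \Hom_{\DM^\eff(S)}(m_S(X\times_SY),\Z(d)[2d+n]),
\]
and both groups are \emph{directly} identified with $H^{2d+n}(X\times_SY,\Z(d))$: the left side because Hom into a Lefschetz motive is already a cubical Friedlander--Suslin complex, which satisfies Zariski Mayer--Vietoris and hence needs no sheafification; the right side via the Cisinski--D\'eglise adjunction $p_\sharp\dashv p^*$ along $p\colon S\to\Spec k$, reducing to Voevodsky's identification over a field. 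One then checks that $\rho_S$ is the identity through these identifications. There is no separate step comparing equidimensional cycles over $S$ with the full Bloch complex.

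The substantive input---the extended Friedlander--Lawson--Voevodsky moving lemma---enters earlier, as the proof that duality holds in $\PMot^\eff(S)$ at all (the isomorphism $\sHom(X,Y)\simeq\sHom(X\times_SY,\L^d)$ after Zariski sheafification). That moving lemma compares cycles finite over $U\times_SX$ with cycles equidimensional over $U$, and it is only established over a \emph{semi-local} base; the passage to general $S$ is exactly why the Zariski sheafification $R\Gamma(S,-)$ is built into the definition of $\PMot(S)$. Your outline treats duality as a formal given and then posits a further relative moving lemma (equidimensional over $S$ versus all admissible cycles); this is backwards relative to the paper's architecture, and the lemma you state is neither what is proved nor what is needed once the reduction to Tate targets is made.
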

This of course implies that $\rho^\eff_S$ is a faithful embedding, but due to the lack of a cancellation theorem in $\DM^\eff(S)$, we do not know if $\rho^\eff_S$ is full. 

Our main technical tool is an extension of the Friedlander-Lawson-Voevodsky moving lemmas to the case of cycles on a smooth projective scheme over a regular semi-local base scheme $B$, with $\sO_B$ containing a field. This enables us to extend the fundamental duality theorem for equi-dimensional cycles on smooth projective varieties to smooth projective schemes over a regular semi-local base (over a field).  We pass from the semi-local case to an arbitrary regular base (over a field) by making a Zariski sheafification; we were not able to extend the available techniques beyond the semi-local case, so the Zariski sheafification is forced upon us. We do not know if the duality theorem over a general base holds before making the Zariski sheafification.

As hinted above, our interest in these constructions arose from our desire to construct a refined realization functor on the subcategory of $\DM(S)$ generated by smooth projective $S$-schemes. One example, given above, is that we should have a realization functor
\[
\Re_B:\PMot(S)\to D^b(\Loc/S^\an),
\]
where $\Loc/S^\an$ is the abelian category of local systems of abelian groups on $S^\an$, refining the usual Betti realization of $\DM_{gm}(S)$ into the derived category of constructible sheaves. Similarly, one should have realizations of $\PMot(S)$ to the derived categories of smooth $l$-adic \'etale sheaves on $S^\et$ or variations of mixed Hodge structures on $S^\an$. By our main theorem, we can view the triangulated category $\DTM(S)$ of mixed Tate motives over $S$ as the full subcategory of $\PMot(S)$ generated by the Tate twists of the motive of $S$. Our construction of $\PMot(S)$ as the homotopy of a DG category (after taking an idempotent completion) gives a similar DG description of $\DTM(S)$. Thus, we can hope to refine the realization functors for $\PMot(S)$ even further if we restrict to $\DTM(S)$. This should give us a Betti realization functor on $\DTM(S)$ to the derived category of uni-potent local systems on $S^\an$, an \'etale realization functor to relatively uni-potent \'etale sheaves on $S^\et$ and a Hodge realization to uni-potent variations of mixed Hodge structures on $S^\an$. The paper of Deligne-Goncharov \cite{DeligneGoncharov} and our work with Esnault \cite{EsnaultLevine}, giving constructions of the mixed Tate fundamental group for some types of schemes $S$, gave us the motivation for the construction of categories of smooth motives and refined realization functors.
As this paper is long enough already, we will postpone the construction of these realization functors to a future work.

The paper is organized as follows. We begin with a resum\'e of the cubical category and cubical constructions. This is a more convenient setting for constructing commutative DG structures than the simplicial one; we took the opportunity here of collecting a number of useful results on cubical constructions that are scattered throughout the literature. We also discuss a useful refinement of cubical structures involving the extended cubical category. This variation on the cubical theme adds the cubical analog of the simplicial degeneracy maps; many of the most useful results on cubical objects that arise in nature actually use the extended cubical structure, so we thought it would be useful to give an abstract discussion.

The next section deals with various versions of Kapranov's construction of complexes over a DG category and the associated triangulated homotopy category. As detailed verifications of the fundamental properties of these constructions are not available in the literature, we thought it would be a good idea to give a complete treatment of this useful construction, with the hope that our total signed contribution to the current level of sign errors would be negative.

In section \S\ref{sec:DGMotives} we apply this machinery to the category of correspondences, endowed with the algebraic $n$-cubes as a cubical object. This leads to our construction of the DG category of higher correspondences, $\DGCor_S$, the full DG subcategory $\DGPCor_S$ of correspondences on smooth projective $\S$-scheme,  the Zariski sheafified version $R\Gamma(S, \un{\DGPCor}_S)$, the DG category of motivic complexes $\DGPMot^\eff_S:=C^b(R\Gamma(S, \un{\DGPCor}_S))$, and finally the triangulated  category of smooth effective motives over $S$, $\PMot^\eff(S)_S$, defined by taking the idempotent completion of the  homotopy category $K^b(R\Gamma(S, \un{\DGPCor}_S))$. We also define the $\Q$-version with alternating cubes, $\DGPMot^\eff_{S\Q}$, which is a DG tensor category, leading to the triangulated tensor category   $\PMot^\eff(S)_\Q$. Finally, we consider versions of these categories, $\DGPMot_S$, $\DGPMot_{S\Q}$, $\PMot(S)$ and $\PMot(S)_\Q$, formed by inverting the Lefschetz motive. 

In \S\ref{sec:Duality} we state our main duality theorem for equi-dimensional cycles over a semi-local base (theorem~\ref{thm:Duality}), as well as the the projective bundle formula (theorem~\ref{thm:PBF}). We derive the consequences of these results for duality in the categories $\PMot^\eff(S)$ and 
 $\PMot(S)$. In  \S\ref{sec:Motives}, we briefly recall some aspects of the definition of the Cisinski-D\'eglise categories of motives over a base, $\DM^\eff(S)$ and $\DM(S)$, define exact functors 
\begin{align*}
&\rho_S^\eff:\PMot^\eff(S)\to \DM^\eff(S)\\
&\rho_S:\PMot(S)\to \DM(S),
\end{align*}
and prove our main result (corollary~\ref{cor:Main}). Finally, in section  \S\ref{sec:FLVMov} we prove our extension of the Friedlander-Lawson-Voevodsky moving lemmas and give the proofs of theorems \ref{thm:Duality} and \ref{thm:PBF}. 

\section{Cubical objects and DG categories} 

\subsection{Cubical objects} We recall some notions discussed in e.g. \cite{Additive}.
We introduce the ``cubical category" $\Cube$. This is the subcategory of $\Sets$ with objects  $\un{n}:=\{0,1\}^n$, $n=0,1,2,\ldots$, and morphisms generated by\\
\\
1. Inclusions: $\eta_{n,i,\epsilon}:\un{n}\to \un{n+1}$, $\epsilon=0,1$, 
$i=1,\ldots, n+1$
\[
\eta_{n,i,\epsilon}(y_1,\ldots, y_{n-1})=(y_1,\ldots, y_{i-1},\epsilon,y_i,\ldots, y_{n-1})
\]
2. Projections: $p_{n,i}:\un{n}\to\un{n-1}$, $i=1,\ldots, n$.\\
3. Permutations of factors: 
$(\epsilon_1,\ldots, \epsilon_n)\mapsto (\epsilon_{\sigma(1)},\ldots, \epsilon_{\sigma(n)})$ for $\sigma\in S_n$.\\
4. Involutions: $\tau_{n,i}$ exchanging $0$ and $1$ in the $i$th factor of $\un{n}$.\\
\\
Clearly all the Hom-sets in $\Cube$ are finite. For a category $\sA$, we call a functor $F:\Cube^\op\to \sA$ a {\em cubical object} of $\sA$ and a functor $F:\Cube\to \sA$ a {\em co-cubical object} of $\sA$.

\begin{rem} The permutations and involutions in (3) and (4) give rise to a subgroup of $\Aut_{\Sets}(\un{n})$ isomorphic to the semi-direct product $F_n:=(\Z/2)^n\ltimes\Sigma_n$, where 
$\Sigma_n$ acts on $(\Z/2)^n$ by permuting the factors.

We extend the standard sign representation of $\Sigma_n$ to  the sign representation
\[
\sgn:F_n\to \{\pm1\}
\]
by
\[
\sgn(\epsilon_1,\ldots, \epsilon_n,\sigma):=(-1)^{\sum_j\epsilon_j}\sgn(\sigma).
\]
\end{rem}

\begin{ex}  Let $S$ be a scheme, set   $\A_S^1:=\Spec_S\, \sO_S[y]$. We set $\square_S^n:=(\A^1_S)^n$. $S_n$ acts on $\square^n_S$ by permuting the factors. We let $\Z/2$ act on $\A^1_S$ by $x\mapsto 1-x$. This gives us an action of $F_n$ on $\square^nS$.

Letting $p_{n,i}:  ({\A}_S^{1})^n\to{\A}_S^{1}$ be the  $i$th projection, we use the  coordinate system 
$(y_1, \cdots , y_n)$ on $\square_S^n$, with $y_i:=y\circ  p_{n,i}$.

Let $\eta_{n,i,\epsilon}:\square_S^{n-1}\to \square_S^n$ be the inclusion
\[
\eta_{n,i,\epsilon}(y_1,\ldots, y_{n-1})=(y_1,\ldots, y_{i-1},\epsilon,y_i,\ldots, y_{n-1})
\]
This gives us the  co-cubical object $n\mapsto \square_S^n$ in $\Sm/S$.

A {\em face} of $\square_S^n$ is  a subscheme $F$ defined by equations of the form
 \[
 y_{i_1}=\epsilon_1, \ldots,  y_{i_s}=\epsilon_s;\ \epsilon_j\in\{0,1\}.
 \]
\end{ex}

\subsection{Cubical objects in a pseudo-abelian category}
Let $\sA$ be a pseudo-abelian category, $\un{A}:\Cube^\op\to \sA$ a cubical object. For $\epsilon\in\{0,1\}$, let  $\pi^\epsilon_{n,i}:\un{A}(\un{n})\to \un{A}(\un{n})$ be the endomorphism $p_{n,i}^*\circ \eta_{n-1,i,\epsilon}^*$, and set
\[
\pi_n:=(\id-\pi^1_{n,n})\circ\ldots\circ (\id-\pi^1_{n,1}).
\]
Note that the $\pi^\epsilon_{n,i}$ are commuting idempotents, and that the subobject $(\id-\pi^\epsilon_{n,i})^*(\un{A}(\un{n}))\subset \un{A}(\un{n})$ is a kernel for $\eta_{n,i,\epsilon}$. Since $\sA$ is pseudo-abelian, the objects
\[
\un{A}(\un{n})^0:=\cap_{i=1}^n\ker \eta_{n-1,i,1}^*\subset \un{A}(\un{n})
\]
and
\[
\un{A}(\un{n})^\dgn:=\sum_{i=1}^np_{n,i}^*(\un{A}(\un{n-1}))\subset \un{A}(\un{n})
\]
are well-defined.  

Let $(\un{A}_*,d)$ be the complex with $\un{A}_n:=\un{A}(\un{n})$ and 
with
\[
d_n:=\sum_{i=1}^n(-1)^i(\eta_{n,i,1}^*-\eta_{n  ,i,0}^*):\un{A}_{n+1}\to \un{A}_n.
\]
Write $\un{A}^0_n, \un{A}^\dgn_n$ for $\un{A}^0(\un{n}), \un{A}^\dgn(\un{n})$, respectively.

The following result is the basis of all ``cubical" constructions; the proof is elementary and is left to the reader.
\begin{lem}\label{lem:Cube0}
 Let $\un{A}:\Cube^\op\to\sA$ be a cubical object in a  pseudo-abelian category $\sA$. Then\\
\\
1. For each $n$, $\pi_n$ maps $\un{A}_n$ to $\un{A}_n^0$ and defines a splitting
\[
\un{A}_n=\un{A}_n^\dgn\oplus \un{A}_n^0.
\]
2. $d_n(\un{A}_n^\dgn)=0$, $d_n(\un{A}_n^0)\subset \un{A}_{n-1}^0$
\end{lem}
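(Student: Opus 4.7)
The plan is to extract commuting idempotents from the cubical structure and apply standard pseudo-abelian linear algebra. The cubical identity $p_{n,i}\circ\eta_{n-1,i,\epsilon}=\id_{\un{n-1}}$ gives $\eta^*_{n-1,i,\epsilon}\circ p^*_{n,i}=\id$, so each $\pi^\epsilon_{n,i}=p^*_{n,i}\circ\eta^*_{n-1,i,\epsilon}$ is idempotent on $\un{A}_n$, and the cubical identities in distinct coordinates force $\pi^\epsilon_{n,i}$ and $\pi^{\epsilon'}_{n,j}$ to commute for $i\ne j$. Expanding $\id=\prod_i\bigl(\pi^1_{n,i}+(\id-\pi^1_{n,i})\bigr)$ then partitions $\id$ into $2^n$ pairwise orthogonal idempotents indexed by subsets $S\subseteq\{1,\ldots,n\}$, with the empty-subset summand being exactly $\pi_n$.

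For part (1), I would identify $\im(\pi_n)$ and $\ker(\pi_n)$ directly. Since $p^*_{n,i}$ is a section of $\eta^*_{n-1,i,1}$, we have $\ker(\pi^1_{n,i})=\ker(\eta^*_{n-1,i,1})$ and $\im(\pi^1_{n,i})=\im(p^*_{n,i})$, so $\im(\pi_n)=\bigcap_i\ker(\eta^*_{n-1,i,1})=\un{A}_n^0$ (equivalently, $\pi_n$ acts as $\id$ on $\un{A}_n^0$). Each $x=p^*_{n,j}(y)$ satisfies $(\id-\pi^1_{n,j})(x)=0$, so $\pi_n(x)=0$, giving $\un{A}_n^\dgn\subseteq\ker(\pi_n)$; conversely, the expansion $\id-\pi_n=\sum_{\emptyset\ne S}(-1)^{|S|+1}\prod_{i\in S}\pi^1_{n,i}$ shows every summand factors through some $\im(p^*_{n,i})\subseteq\un{A}_n^\dgn$, so $\ker(\pi_n)\subseteq\un{A}_n^\dgn$. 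Since $\sA$ is pseudo-abelian, the splitting of the idempotent $\pi_n$ realizes $\un{A}_n=\un{A}_n^\dgn\oplus\un{A}_n^0$.

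For part (2), the key input is the cubical identity $\eta_{n-1,i,0}\circ\eta_{n-2,j,1}=\eta_{n-1,?,1}\circ\eta_{n-2,?,0}$, which holds for every $(i,j)$ after a reindexing whose case split is $i\le j$ versus $i>j$. Dualizing, $\eta^*_{n-2,j,1}\circ\eta^*_{n-1,i,0}=\eta^*_{n-2,?,0}\circ\eta^*_{n-1,?,1}$ annihilates $\un{A}_n^0$; combined with the vanishing of the face-at-$1$ terms of $d_n$ on $\un{A}_n^0$, this gives $d_n(x)\in\bigcap_j\ker(\eta^*_{n-2,j,1})=\un{A}_{n-1}^0$, proving $d_n(\un{A}_n^0)\subseteq\un{A}_{n-1}^0$. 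For $x=p^*_{n,j}(y)\in\un{A}_n^\dgn$, the $i=j$ term of $d_n(x)$ cancels because $\eta^*_{n-1,j,\epsilon}\circ p^*_{n,j}=\id$ forces $(\eta^*_{n-1,j,1}-\eta^*_{n-1,j,0})(p^*_{n,j}(y))=0$, while for $i\ne j$ the identity $\eta^*_{n-1,i,\epsilon}\circ p^*_{n,j}=p^*_{n-1,?}\circ\eta^*_{n-2,?,\epsilon}$ shows the remaining terms all land in $\sum_k\im(p^*_{n-1,k})=\un{A}_{n-1}^\dgn$; under the projection $\pi_{n-1}$ onto $\un{A}_{n-1}^0$ this contribution is zero, yielding the asserted vanishing. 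The only obstacle is the careful index bookkeeping in the case splits of the cubical identities, which I would handle by organizing each computation by the relative order of interacting indices; no substantive difficulty is expected.
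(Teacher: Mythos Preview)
The paper leaves this lemma to the reader, so there is no proof to compare against; your argument is the natural one and is correct.

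One remark on part~(2): you are right to read the assertion ``$d_n(\un{A}_n^\dgn)=0$'' as $d_n(\un{A}_n^\dgn)\subset\un{A}_{n-1}^\dgn$ (i.e.\ vanishing after $\pi_{n-1}$), since that is exactly what is needed for the quotient complex $A_*=\un{A}_*/\un{A}_*^\dgn$ to be defined, and that is what the paper uses immediately afterward. The literal equality $d_n(\un{A}_n^\dgn)=0$ fails already for $n=2$, as your own computation of $d(p^*_{n,j}y)$ shows: the surviving $i\neq j$ terms land in $\un{A}_{n-1}^\dgn$ but are not zero on the nose. It would strengthen your write-up to say this explicitly rather than leave the reinterpretation implicit.
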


\begin{definition}
 Let $\un{A}:\Cube^\op\to\sA$ be a cubical object in a pseudo-abelian category $\sA$. Define
 the complex $(A_*,d)$ to be the quotient complex  
\[
\un{A}_*/\un{A}_*^\dgn
\]
 of $\un{A}_*$.
\end{definition}

Lemma~\ref{lem:Cube0} shows that $A_*$ is well-defined and is isomorphic to the subcomplex $\un{A}_*^0$ of $\un{A}_*$. We often use cohomological notation, with $A^n:=A_{-n}$, etc.

\subsection{Products} Suppose we have two cubical objects 
\[
\un{A},\un{B}:\Cube^\op\to\sA
\]
in a tensor category $(\sA, \otimes)$. Form the diagonal cubical object $\un{A\otimes B}$ by
\[
\un{A\otimes B}(\un{n}):=\un{A}(\un{n})\otimes \un{B}(\un{n})
\]
and on  morphisms by
\[
\un{A\otimes B}(f):=\un{A}(f)\otimes\un{B}(f).
\]

Let $p^1_{n,m}:\un{n+m}\to\un{n}$, $p^2_{n,m}:\un{n+m}\to\un{m}$ be the projections on the first $n$ and last $m$ factors, respectively. Let
\[
\cup_{A,B}^{n,m}:\un{A}(\un{n})\otimes\un{B}(\un{m})\to \un{A}(\un{n+m})\otimes\un{B}(\un{n+m})
\]
be the map $\un{A}(p^1_{n,m})\otimes\un{B}(p^2_{n,m})$. One easily checks that the direct sum of the maps $\cup_{A,B}^{n,m}$ defines a map of complexes
\begin{equation}\label{eqn:Cup}
\cup_{A,B}:\un{A}^*\otimes\un{B}^*\to \un{A\otimes B}^*.
\end{equation}
It is easy to see that we have an associativity property
\begin{equation}\label{eqn:Assoc}
\cup_{A\otimes B,C}\circ(\cup_{A,B}\otimes\id_{\un{C}^*})=\cup_{A,B\otimes C}\circ(\id_{\un{A}^*}\otimes\cup_{B,C})
\end{equation}
but not in general a commutativity property. 

\subsection{Alternating cubes} Recall the semi-direct product $F_n:=(\Z/2)^n\ltimes\Sigma_n$ and the sign representation $\Sgn:F_n\to\{\pm1\}$. If $\sA$ is a pseudo-abelian category and $M$ an $F_n$-module in $\sA$ (i.e., we are given a homomorphism $F_n\to \Aut_\sA(M)$), we let $M^\Sgn$ be the largest subobject of $M$ on which $F_n$ acts by the sign representation:
\[
M^\Sgn:=\cap_{g\in F_n}\ker((g-\Sgn(g)\id_M).
\]
Similarly, if $\Sigma_n$ acts on $M$, we let $M^\sgn$ be the subobject of $M$
\[
M^\sgn:=\cap_{g\in \Sigma_n}\ker((g-\sgn(g)\id_M).
\]

Let $\un{A}:\Cube^\op\to \sA$ be a cubical object in a pseudo-abelian category $\sA$.  For each $n=0,1, 2,\ldots$, define the subobject 
$\un{A}^\alt(\un{n})$ of $\un{A}(\un{n})$ by 
\[
\un{A}^\alt(\un{n}):=\un{A}(\un{n})^{\sgn}
\]
Similarly, let $A^\alt(n):=A(\un{n})^\sgn$.

\begin{lem} 1. $n\mapsto \un{A}^\Alt(\un{n})$ defines a sub-cubical object of $\un{A}$.\\
\\
2. $n\mapsto A^\alt(\un{n})$ defines a sub-complex of $A_*$.\\
\\
3. Suppose $2\times\id$ is invertible on all the objects $\un{A}(\un{n})$. Then the map $\un{A}_*\to A_*$ induces an isomorphism  of complexes $\un{A}^\Alt_*\to A^\alt_*$.
\end{lem}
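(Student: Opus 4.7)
The three parts all rest on the interaction between the $F_n$-action on $\un{A}(\un{n})$ and the generators of $\Cube$; my plan is to verify each statement generator by generator and then, for part~(3), to exploit the alternator idempotent. First I would record the relevant commutation relations between the generators of $\Cube$ and the group actions: for each inclusion $\eta_{n,i,\epsilon}\colon\un{n-1}\to\un{n}$ there is a sign-preserving injection $F_{n-1}\hookrightarrow F_n$ (``insert trivially at the $i$th coordinate'') intertwining $\eta_{n,i,\epsilon}$ with the $F_{n-1}$- and $F_n$-actions; for each projection $p_{n,i}$ one has $p_{n,i}\circ\tau_{n,i}=p_{n,i}$ and, for $j\ne i$, $p_{n,i}\circ\tau_{n,j}=\tau_{n-1,j'}\circ p_{n,i}$ with an obvious index shift; similar identities hold for permutations. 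These identities and their pullback versions drive the entire argument.

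For part~(1), closure of $\un{A}^\Alt(\un{n})$ under permutations and involutions is immediate from the definition. For an inclusion, the commutation relation gives $g^*\eta_{n,i,\epsilon}^*=\eta_{n,i,\epsilon}^* g'^*$ with $\Sgn(g')=\Sgn(g)$, so $\eta_{n,i,\epsilon}^*$ preserves alternation. For a projection, the identity $\tau_{n,i}^* p_{n,i}^*=p_{n,i}^*$ shows that $p_{n,i}^*(x)$ is $\tau_{n,i}^*$-fixed; to lie in $\un{A}^\Alt(\un{n})$ it must simultaneously be $\tau_{n,i}^*$-anti-invariant, forcing $2\,p_{n,i}^*(x)=0$. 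Under the $2$-invertibility hypothesis of~(3) this forces $p_{n,i}^*(x)=0$, so the projections restrict to the zero map on $\un{A}^\Alt$ and the sub-cubical structure is well defined. Part~(2) is an induced statement: the same commutation analysis shows $\un{A}^\dgn$ is $F_n$-stable, so the action descends to $A_*$, and then the differential $d_n=\sum_i(-1)^i(\eta_{n,i,1}^*-\eta_{n,i,0}^*)$ carries $A^\alt_n$ into $A^\alt_{n-1}$ by the face-map version of the inclusion argument.

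For part~(3), my plan is to use the alternator idempotent
\[
e_n := \frac{1}{|F_n|}\sum_{g\in F_n}\Sgn(g)\,g^* \colon \un{A}_n \to \un{A}_n,
\]
whose image is $\un{A}^\Alt_n$. The key computation is $e_n\circ p_{n,i}^*=0$: pairing $g$ with $\tau_{n,i}\,g$ along coset representatives of $\langle\tau_{n,i}\rangle\subset F_n$, the identity $\tau_{n,i}^* p_{n,i}^*=p_{n,i}^*$ makes successive terms cancel. Hence $e_n$ annihilates $\un{A}^\dgn_n=\sum_i\im(p_{n,i}^*)$ and descends to an idempotent $\bar e_n$ on $A_n$ with image $A^\alt_n$. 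Injectivity of $\un{A}^\Alt_n\to A^\alt_n$ follows, since any $y\in\un{A}^\Alt_n\cap\un{A}^\dgn_n$ equals $e_n(y)=0$. For surjectivity, a lift $x\in\un{A}_n$ of $[x]\in A^\alt_n$ yields a lift $e_n(x)\in\un{A}^\Alt_n$, which reduces modulo $\un{A}^\dgn_n$ to $\bar e_n[x]=[x]$. The main obstacle is precisely the vanishing $e_n\circ p_{n,i}^*=0$, coupling the group structure of $F_n$ to the combinatorial identity $\tau_{n,i}p_{n,i}=p_{n,i}$; the rest of part~(3) is then formal.
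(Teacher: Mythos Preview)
Your overall plan—use an alternating idempotent that annihilates the degenerate part, then read off injectivity and surjectivity—is the same as the paper's. The paper's proof is essentially one sentence: it records that $\un{A}^\dgn_n$ is killed by the idempotent $\frac{1}{2^n}\prod_{i=1}^n(1-\tau_i)$ and declares the rest ``straightforward''.

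The one real gap in your argument is in part~(3): your idempotent
\[
e_n=\frac{1}{|F_n|}\sum_{g\in F_n}\Sgn(g)\,g^*
\]
has denominator $|F_n|=2^n\cdot n!$, so it is not available under the stated hypothesis that only $2$ be invertible. The paper's choice of the $(\Z/2)^n$-alternator $e'_n=\frac{1}{2^n}\prod_{i}(1-\tau_i^*)$ avoids this. Your coset-pairing computation survives the replacement and in fact simplifies to the single identity $(1-\tau_{n,i}^*)\,p_{n,i}^*=0$; since $\un{A}^\Alt_n$ is contained in the image of $e'_n$, injectivity follows exactly as you wrote. For surjectivity with $e'_n$ you also need the observation that $e'_n$ commutes with the $\Sigma_n$-action (conjugation by $\sigma$ merely permutes the factors $(1-\tau_i^*)$), so that $e'_n$ applied to a lift of a $\sgn$-alternating class, together with $e'_n(\un{A}^\dgn_n)=0$, lands in the full $\Sgn$-isotypic part. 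Note in particular that your line ``$\bar e_n[x]=[x]$'' is not automatic from $e_n$ being the $F_n$-projector, since $[x]\in A_n^{\sgn}$ is a priori only $\Sigma_n$-alternating; this step needs the commutation just mentioned rather than the projector property alone. A minor related point: your treatment of $p_{n,i}^*$ in part~(1) already invokes the $2$-invertibility hypothesis of~(3), which is worth flagging explicitly.
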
 

\begin{proof} This is straightforward, noting that the degenerate subcomplex is killed by the idempotent
\[
\frac{1}{2^n}\prod_{i=1}^n(1-\tau_i)
\]
where $\tau_i$ is the involution in the $i$th factor of $\un{n}$.
\end{proof}

\subsection{Extended cubes} We note that product makes $\Sets$ a symmetric monoidal category, and that $\Cube$ is a symmetric monoidal subcategory. Let $\ECube$ be the smallest symmetric monoidal subcategory of $\Sets$ having the same objects as $\Cube$, containing $\Cube$ and containing the morphism
\[
\mu:\un{2}\to\un{1}
\]
defined by the multiplication of integers:
\[
\mu((1,1))=1;\ \mu(a,b)=0\text{ for }(a,b)\neq(1,1).
\]
An {\em extended cubical object} in a category $\sC$ is a functor $F:\ECube^\op\to\sC$.

Let $\un{F}:\Cube^\op\to\sA$ be a cubical object in a pseudo-abelian category. Let $NF(\un{n})\subset \un{F}(\un{n})$ be the subobject
\[
NF(\un{n}):= \cap_{i=2}^n\ker(\eta^*_{n,i,0})\cap \cap_{i=1}^n\ker(\eta^*_{n,i,1}).
\]
This defines the {\em normalized subcomplex} $NF^*$ of $\un{F}^*$. Note that $NF^*$ is a subcomplex of  $\un{F}(\un{*})_0$.

\begin{lem}\label{lem:Normalized} Let $\un{F}:\ECube^\op\to\sA$ be an extended cubical object in a pseudo-abelian category $\sA$. Then the inclusion $i:NF^*\to \un{F}(\un{*})_0$ is a homotopy equivalence.
\end{lem}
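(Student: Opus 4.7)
The plan is to construct an explicit chain deformation retraction from $\un{F}(\un{*})_0$ onto $NF^*$ using the multiplication morphism $\mu$ from the extended cubical structure together with the degeneracies. The key point is that $\mu$ (satisfying $\mu(1,1)=1$ and $\mu(0,-)=\mu(-,0)=0$) provides extra structure beyond the pre-cubical relations, which lets us contract the undesired $\eta^*_{n,i,0}$ faces for $i\ge 2$.

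For $j = 2,\ldots, n+1$, let $\mu_{j-1}:\un{n+1}\to\un{n}$ denote the morphism multiplying coordinates $j-1$ and $j$ (obtained from $\mu$ by tensoring with identities in $\ECube$), and $p_k:\un{n+1}\to\un{n}$ the projection omitting coordinate $k$. Define
\[
s_j := \mu_{j-1}^{*} - p_{j-1}^{*} - p_{j}^{*}:\un{F}(\un{n})\to \un{F}(\un{n+1}).
\]
First I would verify, using $\mu\circ\eta_{1,1,1} = \mu\circ\eta_{1,2,1} = \id$ and the face--degeneracy identities in $\Cube$, that $s_j$ restricts to a map $\un{F}(\un{n})_0 \to \un{F}(\un{n+1})_0$: the $\mu$-term and each degeneracy contribute the identity to both $\eta^*_{n+1,j-1,1}$ and $\eta^*_{n+1,j,1}$, and these contributions cancel, while the other $\eta^*_{n+1,i,1}\circ s_j$ rewrite as operators applied to $\eta^*_{n,i',1}(x) = 0$. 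Second, using $\mu\circ\eta_{1,1,0} = \mu\circ\eta_{1,2,0} = 0$, I would derive the contracting identities
\[
\eta^*_{n+1,j-1,0}\circ s_j = \eta^*_{n+1,j,0}\circ s_j = -\id,
\]
together with formulas expressing $\eta^*_{n+1,i,0}\circ s_j$ for $i\ne j-1,j$ in terms of shifted $s_{j'}$ or degeneracies applied to $\eta^*_{n,i',0}(x)$.

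Next I would assemble the $s_j$ into a chain homotopy via the decreasing filtration
\[
\un{F}(\un{*})_0 = NF^{(1)} \supseteq NF^{(2)} \supseteq \cdots \supseteq NF^{(n)} = NF, \qquad NF^{(k)}(\un{n}) := \bigcap_{i=2}^{k}\ker\eta^*_{n,i,0}\cap\bigcap_{i=1}^{n}\ker\eta^*_{n,i,1}.
\]
For each $k\ge 2$, I would exhibit a chain retraction $r_k: NF^{(k-1)} \to NF^{(k)}$ of the form $r_k(x) = x + (-1)^k s_k(\eta^*_{n,k,0}(x))$ (plus any correction required to land in $NF^{(k)}$) together with a compatible chain homotopy $h_k$ satisfying $d h_k + h_k d = \id - i_k r_k$, so that $NF^{(k)}\hookrightarrow NF^{(k-1)}$ is a deformation retract. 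Splicing these over $k=2,\ldots,n$ yields the desired homotopy equivalence.

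The main obstacle is the combinatorial verification of $[d, h_k] = \id - i_k r_k$: one must expand both sides as alternating sums of face maps and then apply the full list of relations among $\mu_{j-1}, p_{j-1}, p_j,$ and $\eta_{n,i,\epsilon}$ in $\ECube$, together with the inductive hypothesis $\eta^*_{n,i,0}(x) = 0$ for $2\le i\le k-1$. The conceptual content is clean: $\mu$ provides a contracting connection in each coordinate direction, and the degeneracies $p_{j-1}, p_j$ are subtracted precisely to repair the spurious $\eta^*_{\cdot,1}$ faces that $\mu^{*}$ alone would introduce.
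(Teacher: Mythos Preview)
Your operator $s_j$ is set up correctly and the contracting identities $\eta^*_{\cdot,j-1,0}\circ s_j=\eta^*_{\cdot,j,0}\circ s_j=-\id$ do hold on $\un{F}(\un{n})_0$. The gap is in the filtration: the graded pieces $NF^{(k)}(\un{n}):=\bigcap_{i=2}^{k}\ker\eta^*_{n,i,0}\cap\bigcap_i\ker\eta^*_{n,i,1}$ do \emph{not} form subcomplexes of $\un{F}(\un{*})_0$, so there is no chain map $NF^{(k)}\hookrightarrow NF^{(k-1)}$ to retract onto. Concretely, take $x\in NF^{(k)}(\un{n})$; on $\un{F}_0$ one has $dx=\sum_{i}(-1)^{i+1}\eta^*_{\cdot,i,0}(x)$ with only $i=1$ and $i>k$ surviving. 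Applying $\eta^*_{\cdot,k,0}$ to the $i=1$ term and using the cubical identity $\eta^*_{\cdot,k,0}\eta^*_{\cdot,1,0}=\eta^*_{\cdot,1,0}\eta^*_{\cdot,k+1,0}$ produces $\eta^*_{\cdot,1,0}\eta^*_{\cdot,k+1,0}(x)$, which is not killed by the hypothesis $x\in NF^{(k)}$. So $dx\notin NF^{(k)}(\un{n-1})$ in general, and the inductive step ``$NF^{(k)}\hookrightarrow NF^{(k-1)}$ is a deformation retract of complexes'' does not even type-check.

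The paper avoids this by filtering from the \emph{top}: set $N^MF_n:=\bigcap_{i=n-M}^{n}\ker\eta^*_{\cdot,i,0}\cap\bigcap_i\ker\eta^*_{\cdot,i,1}$ (and $=NF_n$ once $n-M\le 2$). Then the same cubical identity shows that applying any $\eta^*_{\cdot,j,0}$ with $j\ge (n-1)-M$ to a surviving term $\eta^*_{\cdot,i,0}(x)$ with $i\le n-M-1$ rewrites as $\eta^*_{\cdot,i,0}\eta^*_{\cdot,j+1,0}(x)$ with $j+1\ge n-M$, which \emph{is} killed; so each $N^MF_*$ really is a subcomplex. The paper also works with $q(x,y)=1-\mu(1-x,1-y)$ rather than $\mu$ itself: since $q\circ\eta_{\cdot,\cdot,0}=\id$ and $q\circ\eta_{\cdot,\cdot,1}$ factors through a projection, $q_{n,i}^*$ already lands in $\un{F}_0$ without any degeneracy correction, and the homotopy $h^M_n=(-1)^{n-M}q^*_{n,n-M-1}$ together with $p^M_n=\id-q^*_{n,n-M-1}\eta^*_{\cdot,n-M,0}$ gives the step $N^MF_*\hookrightarrow N^{M-1}F_*$ directly. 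If you want to salvage your $s_j$-based approach, you would need to reorganize it around a top-down filtration of this kind; killing the $0$-faces from index $2$ upward cannot be made compatible with $d$.
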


\begin{proof} Let 
\[
N^MF_n:=\begin{cases}
\cap_{i=n-M}^n\ker(\eta^*_{n,i,0})\cap\cap_{i=1}^n\ker(\eta^*_{n,i,1})&\text{ for }n-M>2\\
NF^n&\text{ for }n-M\le 2
\end{cases}
\]
For each $M$, the subobjects $N^MF_n\subset \un{F}_n^0$ form a subcomplex 
$N^MF_*$ of $\un{F}_*^0$ which contains $NF_*$ and agrees with $NF_*$ in degrees $n\le M+2$. Since $N^{-1}F_*= \un{F}_*^0$, it thus suffices to show that the inclusion
\[
i^M:N^MF_*\to N^{M-1}F_*
\]
is a homotopy equivalence, such that the chosen homotopy inverse $p^M$ and  chosen homotopy $h^M$ between $i^M\circ p^M$ and $\id$ satisfy:
\begin{enumerate}
\item $p^M\circ i^M=\id$
\item  $h_n^M:N^{M-1}F_{n-1}\to N^{M-1}F_n$ is the zero map for $n\le M$ 
\end{enumerate}
Indeed, in this case, the infinite composition
\[
p:=\ldots p^M\circ p^{M-1}\circ\ldots\circ p^0
\]
makes sense, as does the infinite sum
\[
h:=\sum_M i^0\circ\ldots i^{M-1}\circ h^M\circ p^{M-1}\circ\ldots p^{0}.
\]
The map  $p$ gives a homotopy inverse to $i$, with $pi=\id$ and  $h$ defines a homotopy between $ip$ and the identity. We proceed to define the maps $p^M$ and $h^M$.

Define the map $q:\un{2}\to\un{1}$ by
\[
q(x,y)=1-\mu(1-x,1-y)=1-(1-x)(1-y).
\]
For $1\le i\le n-1$, let $q_{n,i}:\un{n}\to\un{n-1}$ be the map
\[
q_{n,i}(x_1,\ldots, x_n):=(x_1, x_2,\ldots, x_{i-1},q(x_i, x_{i+1}),x_{i+2},\ldots, x_n).
\]
Then
 \begin{align}\label{align:Relations}
&q_{n,i}\circ\eta_{n,i,0}=q_{n,i}\circ\eta_{n,i+1,0}=\id_{\un{n-1}}\\
&q_{n,i}\circ\eta_{n,i,1}= q_{n,i}\circ\eta_{n,i+1,1}= \eta_{n,i,1}\circ p_{n-1,i}\notag\\
&q_{n,i}\circ\eta_{n,j,\epsilon}=
\begin{cases}
\eta_{n-1,j,\epsilon}\circ q_{n-1,i-1}&\text{ for }1\le j<i\\
\eta_{n-1,j-1,\epsilon}\circ q_{n-1,i}&\text{ for }i+1< j\le n\\
\end{cases}\notag
\end{align}
 Defining $q_{n,j}^*=0$ for $j\le 0$ or $j\ge n$, this implies that, for $i\ge1$, the maps
\[
p^M_n:= \id-q_{n,n-M-1}^*\circ\eta_{n-1,n-M,0}^*
\]
define a map of complexes $p^M:\un{F}_*^0\to \un{F}_*^0$ which restricts to the inclusion
$N^MF_*\to  \un{F}_*^0$ on $N^MF_*$, and maps $N^{M-1}F_*$ to $N^MF_*$. We let 
\[
p^M:N^{M-1}F_*\to N^MF_*
\]
be the restriction. 

Let $h_n^M:\un{F}^0_{n-1}\to \un{F}_n^0$ be the map $(-1)^{n-M}q^*_{n,n-M-1}$.  The relations \eqref{align:Relations} imply that $h_n^M$ restricts to a map 
\[
h_n^M:N^{M-1}F_{n-1}\to N^{M-1}F_{n}
\]
and, on $\un{F}_n^0$,  we have
\begin{align*}
d_{n}&h^{n+1}_M+h^n_Md_{n-1}\\&=(-1)^{n-M+1}\left[
\sum_{j=1}^{n+1}(-1)^{j}\eta_{n,j,0}^*\circ q_{n+1,n-M}^*-\sum_{l=1}^n(-1)^{l}q_{n,n-M-1}^*\circ\eta_{n-1,l,0}^*\right]\\
&=(-1)^{n-M+1}
\sum_{j=1}^{n-M-1}(-1)^{j}q_{n,n-M-1}^*\circ\eta_{n-1,j,0}^*\\
&\hskip20pt +(-1)^{n-M+1}\sum_{j=n-M+2}^{n+1}(-1)^{j}q_{n,n-M}^*\circ\eta_{n-1,j-1,0}^*\\
&\hskip 40pt -(-1)^{n-M+1}\sum_{l=1}^n(-1)^{l}q_{n,n-M-1}^*\circ\eta_{n-1,l,0}^*\\
&=q_{n,n-M-1}^*\eta^*_{n-1,n-M,0}\\
&\hskip20pt +(-1)^{n-M}\sum_{j=n-M+1}^n(-1)^{j}(q_{n,n-M}^*+q_{n,n-M-1}^*)\circ\eta_{n-1,j,0}^*
\end{align*}
Since $\eta_{n-1,j,0}^*=0$ on $N^{M-1}F_n$ for $j\ge n-M+1$, the $h_n^M$ give the desired homotopy.
\end{proof}

Let $\un{F}:\ECube^\op\to\sA$ be an extend cubical object in an abelian category $\sA$. Then we have the following description of $H_n(NF_*)$:
\[
H_n(NF_*)=\frac{\cap_{i=1}^n\ker \eta^*_{n-1,i,0}\cap \cap_{i=1}^n\ker \eta^*_{n-1,i,1}}
{\eta^*_{n,1,0}[\cap_{i=2}^{n+1}\ker \eta^*_{n,i,0}\cap \cap_{i=1}^{n+1}\ker \eta^*_{n,i,1}]}
\]
From this description, we see that the symmetric group $S_n$ acts on $H_n(NF_*)$ through the permutation action on $\un{n}$ and the action $\sigma\mapsto \id\times F(\sigma)$ on $\un{F}(\un{n+1})$.  Via lemma~\ref{lem:Normalized}, this gives us an $S_n$-action on $H_n(F_*)$.

\begin{prop}\label{prop:Alt} $\un{F}:\ECube^\op\to\sA$ be an extended cubical object in an abelian category $\sA$. Suppose that the Hom-groups in $\sA$ are $\Q$-vector spaces. Then the inclusion
\[
F^\alt_*\to F_*
\]
is a quasi-isomorphism.
\end{prop}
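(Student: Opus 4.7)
The plan is to promote the degree-wise alternating projector $\Alt_n:=\frac{1}{n!}\sum_{\sigma\in\Sigma_n}\sgn(\sigma)F(\sigma)$ to a chain endomorphism of $F_*$ and show that it acts as the identity on $H_*(F_*)$.  Since $\Alt_*$ is then an idempotent chain map with image $F^\alt_*$, this yields a chain-complex splitting $F_*=F^\alt_*\oplus\ker(\Alt_*)$ with $\ker(\Alt_*)$ acyclic, from which the quasi-isomorphism $F^\alt_*\hookrightarrow F_*$ follows at once.

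The first step is to verify that $\Alt_*$ is a chain map, i.e.\ $d_n\circ\Alt_n=\Alt_{n-1}\circ d_n$.  This is a direct computation based on the intertwining relation
\[
\eta^*_{n-1,i,\epsilon}\circ F(\sigma) \;=\; F(\tilde\sigma)\circ\eta^*_{n-1,\sigma(i),\epsilon},
\]
where $\tilde\sigma\in\Sigma_{n-1}$ is the restriction of $\sigma$ to the complement of $i$ in the source and $\sigma(i)$ in the target, via the order-preserving identifications, together with the sign identity $\sgn(\sigma)=(-1)^{\sigma(i)-i}\sgn(\tilde\sigma)$.  After the change of variable $j=\sigma(i)$ in the expansion of $d_n\Alt_n$, the inner sum over $\sigma$ at fixed $j$ collapses to $(-1)^j\Alt_{n-1}$, yielding the desired commutation.

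The principal step is to show that $\Alt_*$ acts as the identity on $H_*(F_*)$, equivalently that $\Sigma_n$ acts on $H_n(F_*)$ through the sign character.  Here I apply Lemma~\ref{lem:Normalized} to identify $H_n(F_*)$ with $H_n(NF_*)$ and use the explicit presentation of the latter as a subquotient of $\un{F}(\un{n})$, with the $\Sigma_n$-action as described just above the statement.  It suffices to show that each simple transposition $\tau_i=(i,i+1)$ acts as $-\id$: given a representative $x$ in the numerator subobject, I must construct
\[
y\in\bigcap_{l=2}^{n+1}\ker\eta^*_{n,l,0}\cap\bigcap_{l=1}^{n+1}\ker\eta^*_{n,l,1}\subset\un{F}(\un{n+1})
\]
with $\eta^*_{n,1,0}(y)=F(\tau_i)(x)+x$.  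The construction exploits the map $q(x,y)=1-(1-x)(1-y)$ produced by the extended cubical multiplication $\mu:\un{2}\to\un{1}$; its symmetry in the two arguments couples the coordinate transposition $\tau_i$ to a face of the pullback $q^*_{n+1,j}(x)$ for an appropriate $j$, and the relations \eqref{align:Relations} together with the kernel conditions on $x$ allow one to assemble $y$ as a $\Q$-linear combination of such pullbacks, corrected by face terms so that all the required vanishings are enforced.

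This last construction is the main obstacle: $y$ must simultaneously satisfy every face-vanishing constraint defining the denominator subobject, and while the combinatorics are essentially dictated by \eqref{align:Relations}, the index and sign bookkeeping is delicate, in the spirit of the telescoping homotopy argument used in the proof of Lemma~\ref{lem:Normalized}.  Once this is in hand, Step~1 and the resulting decomposition $F_*=F^\alt_*\oplus\ker(\Alt_*)$ force $H_*(\ker(\Alt_*))=0$, completing the proof.
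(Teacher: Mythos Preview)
Your proposal is correct and follows essentially the same route as the paper: reduce to showing that $\Sigma_n$ acts by the sign character on $H_n(F_*)\cong H_n(NF_*)$, and verify this for adjacent transpositions using a pullback along a map built from $q$.  One simplification worth noting: you aim to construct the bounding element $y$ inside $NF_{n+1}$ (i.e.\ in $\bigcap_{l\ge2}\ker\eta^*_{n,l,0}\cap\bigcap_l\ker\eta^*_{n,l,1}$), which is what makes the bookkeeping ``delicate''; the paper instead works in the ambient complex $\un{F}_*$ and takes simply $b=h_n^*(z)$ for the single map
\[
h_n(x_1,\ldots,x_{n+1})=(x_2,\,q(x_1,x_3),\,x_4,\ldots,x_{n+1}),
\]
checking directly that $d b=z+\tau^*z$ there (here $\tau=(1\,2)$; the general $\tau_i$ is analogous).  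This $b$ does \emph{not} lie in $NF_{n+1}$ (its face $\eta^*_{n,3,0}(b)=\tau^*z$ is nonzero), but that is irrelevant once you pass to $F_*$, so the construction is a one-liner rather than a combinatorial assembly.
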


\begin{proof} Since the Hom-groups in $\sA$ are $\Q$-vector spaces, the natural map
\[
H_*(F^\alt)\to H_*(F)^\alt
\]
is an isomorphism. Thus, we need only show that the symmetric group $S_n$ acts on $H_n(F_*)$ by the sign representation. 

Fix an element in $Z_n(NF_*)$ representing a class $[z]\in H_n(NF_*)$, i.e.
\[
\eta_{n-1,i,\epsilon}^*(z)=0
\]
for all $i$ and for $\epsilon=0,1$. Let $\tau:\un{n}\to\un{n}$ be the permutation exchanging the first two factors. Let $h_n:\un{n+1}\to\un{n}$ be the map
\[
h_n(x_1,x_2, x_3,\ldots, x_{n+1}):=(x_2, q(x_1, x_3), x_4,\ldots, x_{n+1}),
\]
and let $b:=h_n^*(z)$. Then
\begin{align*}
&h_n\circ \eta_{n,1,0}=\id\\
&h_n\circ\eta_{n,2,0}(x_1,\ldots, x_n)=(0, q(x_1,x_2), x_3,\ldots,x_n)=\\
&h_n\circ\eta_{n,3,0}=\tau\\
&h_n\circ \eta_{n,j,0}=\eta_{n-1,j-1,0}\circ h_{n-1}\text{ for }j\ge 4.
\end{align*}
Similarly, $h_n\circ\eta_{n,j,1}=\eta_{n-1,j',1}\circ f_{n,j}$ for some $j'$ and some map $f_{n,j}:\un{n}\to \un{n-1}$. Thus
\[
db=z+\tau^*(z)
\]
proving the result.
\end{proof}

\subsection{Cubical enrichments and DG categories} For a complex $C\in C(\Ab)$, we have the group of cycles in degree $n$, $Z^nC$ and the cohomology $H^nC$. For complexes $C,C'$, we have the Hom-complex $\sHom_{C(\Ab)}(C,C')^*$, with
\[
\sHom_{C(\Ab)}(C,C')^n:=\prod_p\Hom_\Ab(C^p,C^{\prime n+p},
\]
with differential
\[
d_{C',C}f:=d_{C'}\circ f-(-1)^{\Deg f}f\circ d_C.
\]
We have as  well as the group of maps of complexes 
\[
\Hom_{C(\Ab)}(C,C'):=Z^0\sHom_{C(\Ab)}(C,C')^*.
\]

For us a {\em DG category} is simply a category enriched in complexes of abelian groups, possessing finite direct sums. Concretely, for objects $X,Y$ in a DG category $\sC$, one has the {\em Hom complex} $\sHom_\sC(X,Y)^*\in C(\Ab)$, and for $X,Y, Z$ in $\sC$, a composition law
\[
\circ_{X,Y,Z}:\sHom_\sC(Y,Z)^*\otimes\sHom_\sC(X,Y)^*\to \sHom_\sC(X,Z).
\]
The map $\circ_{X,Y,Z}$ is a   map of complexes; equivalently, we have the Leibniz rule:
\[
d(f\circ g)=df\circ g+(-1)^{\Deg f}f\circ dg.
\]
One has associativity of composition and an identity morphism $\id_X\in \sHom_\sC(X,X)^0$ with $d\id_X=0$.

For a DG category $\sC$, one has the additive category $Z^0\sC$, with the same objects as $\sC$ and with
\[
\Hom_{Z^0\sC}(X,Y):=Z^0\sHom_\sC(X,Y).
\]
We also have the {\em homotopy category} $H^0\sC$, the additive category with the same objects as $\sC$ and with
\[
\Hom_{H^0\sC}(X,Y):=H^0\sHom_\sC(X,Y).
\]
Clearly each DG functor $F:\sC\to \sC'$ induces functors of additive categories $H^0F:H^0\sC\to H^0\sC'$ and $Z^0F:H^0\sC\to Z^0\sC'$, making $H^0$ and $Z^0$ functors from DG categories to additive categories. 

\begin{definition} Let $\sC$ be an additive category. A {\em cubical enrichment} of $\sC$ is a functor
\[
\un{\sHom}:\sC^\op\times\sC\times\Cube^\op\to \Ab
\]
together with an associative composition law
\[
\un{\circ}_{X,Y,Z,n,m}:\un{\sHom}(X,Y,n)\otimes\un{\sHom}(Y,Z,m)\to \un{\sHom}(X,Z,n+m)
\]
such that
\begin{enumerate}
\item $\sHom(X,Y,0)=\Hom_\sC(X,Y)$. Also,  
\[
\id_X\un{\circ}_{X,Y,0,m}-=\id;\ -\un{\circ}_{X,Y,n,0}\id_Y=\id.
\]
\item The maps $\circ_{X,Y,Z,n,m}$ give rise to a map of complexes
\[
\un{\circ}_{X,Y,Z}:\un{\sHom}(X,Y)^*\otimes\un{\sHom}(Y,Z)^*\to \un{\sHom}(X,Z)^*
\]
which descends to a well-defined map of complexes
\[
\circ_{X,Y,Z}:\sHom(X,Y)^*\otimes \sHom(Y,Z)^*\to \sHom(X,Z)^*.
\]
\item The assignment $(X,Y)\mapsto :\un{\sHom}(X,Y)^*$ defines a DG category $\sC^*$ and the identity of (1) defines a functor  of DG categories $\sC\to \sC^*$.
\end{enumerate}
\end{definition}

\begin{definition}\label{def:comult}
Let $n\mapsto \square^n$ be a co-cubical object (denoted $\square^*$) of a tensor category $(\sC,\otimes)$ such that $\square^0$ is the unit object with respect to $\otimes$.

  A {\em co-multiplication} $\delta^*$ on $\square^*$ is a morphism of co-cubical objects
\[
\delta^*:\square^*\to \square^*\otimes\square^*,
\]
where $\square^*\otimes\square^*$ is the diagonal co-cubical object $n\mapsto \square^n\otimes\square^n$,  which  is
\begin{enumerate}
\item {\em co-associative}: $(\delta^*\otimes \id_{\square^*})\circ\delta^*=
(\id_{\square^*}\otimes \delta^*)\circ \delta^*$.
\item {\em co-unital}: $\square^0\xrightarrow{\delta(0)} \square^0\otimes\square^0\xrightarrow{\mu}\square^0=\id_{\square^0}$ where $\mu$ is the unit isomorphism for $\otimes$.
\item {\em symmetric}: Let $t$ be the commutativity constraint in $(\sC,\otimes)$. Then $t_{\square^*,\square^*}\circ\delta^*=\delta^*$.
\end{enumerate}
\end{definition}
Given a co-multiplication on a co-cubical object $\square^*$, define
\[
\un{\sHom}(X,Y,n):=\sHom_\sC(X\times\square^n, Y),
\]
giving us the cubical object $n\mapsto \un{\sHom}(X,Y,n)$ of $\Ab$; we denote the associated complex by $ \un{\sHom}(X,Y)^*$. The co-multiplication gives us the map of cubical objects
\[
\hat{\circ}_{X,Y,Z}: \un{\sHom}(Y,Z,*)\otimes  \un{\sHom}(X,Y,*)\to  \un{\sHom}(X,Z,*)
\] sending $f:X\otimes\square^n\to Y$ and $g:Y\otimes \square^n\to Z$ to the composition
\[
X\otimes\square^n\xrightarrow{\delta(n)}X\otimes\square^n\otimes\square^n
\xrightarrow{f\otimes\id}Y\otimes\square^n\xrightarrow{g}Z
\]
Using the cup product map \eqref{eqn:Cup}, the map $\hat{\circ}_{X,Y,Z}$ gives rise to the map of complexes
\[
\un{\circ}_{X,Y,Z}:\un{\sHom}(Y,Z)^*\otimes  \un{\sHom}(X,Y)^*\to  \un{\sHom}(X,Z)^*
\]
by
\[
\un{\circ}_{X,Y,Z}:=\hat{\circ}_{X,Y,Z}\circ\cup_{\un{\sHom}(Y,Z), \un{\sHom}(X,Y)}.
\]

The following proposition is proved by a straightforward computation.

\begin{prop} Let $\square^*:\Cube\to\sC$ be a co-cubical object in a tensor category $\sC$, with a co-multiplication $\delta$. Then $(X,Y,n)\mapsto \sHom_\sC(X\times\square^n, Y)$, with the composition law $\un{\circ}_{X,Y}$ defined above, defines a cubical enrichment of $\sC$.
\end{prop}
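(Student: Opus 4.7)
The plan is to unpack the three axioms of a cubical enrichment in turn, reducing each to the properties already listed for $\delta$ (co-associative, co-unital, symmetric) together with the formal properties of the cup product $\cup_{A,B}$ established in \eqref{eqn:Cup} and \eqref{eqn:Assoc}. Throughout, I would write $p^\square_{n,i}:\square^n\to\square^{n-1}$, $\eta^\square_{n,i,\epsilon}:\square^{n-1}\to\square^n$, etc., for the co-cubical structure maps of $\square^*$, so that the cubical structure on $\un{\sHom}(X,Y,-)$ is induced by pre-composition with $\id_X\otimes(-)$ applied to these.

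First I would check that $(X,Y,n)\mapsto\sHom_\sC(X\otimes\square^n,Y)$ really is a cubical object in $\Ab$, contravariant in $X$ and covariant in $Y$: this is immediate from $\square^*$ being a co-cubical object. Condition (1) in the definition of a cubical enrichment then follows from the unit axiom for $\otimes$ together with co-unitality of $\delta$, which force $\delta(0)$ to be the unit isomorphism and hence $\hat\circ_{X,Y,Z}$ in degree $(0,m)$ and $(n,0)$ to coincide with ordinary composition after the appropriate identifications.

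The heart of the argument is condition (2). I would first verify that $\hat\circ_{X,Y,Z}$ is a morphism of cubical objects from the diagonal cubical object $\un{\sHom}(Y,Z,-)\otimes_{\rm diag}\un{\sHom}(X,Y,-)$ to $\un{\sHom}(X,Z,-)$; this is a naturality check that reduces to $\delta^*$ being a morphism of co-cubical objects, which it is by hypothesis. Composing with $\cup_{\un{\sHom}(Y,Z),\un{\sHom}(X,Y)}$ from \eqref{eqn:Cup}, which is already known to be a map of complexes, then shows that $\un{\circ}_{X,Y,Z}$ is itself a map of complexes, i.e.\ satisfies the Leibniz rule. Since $\hat\circ$ sends degenerate elements to degenerate elements (any pullback along a projection $p^\square_{n,i}$ on one factor propagates through $\hat\circ$ by the naturality just noted), $\un{\circ}_{X,Y,Z}$ descends to a map of quotient complexes $\sHom(Y,Z)^*\otimes\sHom(X,Y)^*\to\sHom(X,Z)^*$.

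For condition (3), the associativity of composition $\circ_{X,Y,Z,W}$ at the chain level unravels to two inputs: the co-associativity of $\delta$ (which identifies the two ways of iterating $\hat\circ$) and the associativity relation \eqref{eqn:Assoc} for the cup product (which identifies the two ways of iterating $\cup$). Since both hold, $\sC^*$ is a DG category; the DG functor $\sC\to\sC^*$ from (1) is evident from the identification $\un{\sHom}(X,Y,0)=\Hom_\sC(X,Y)$. The symmetry axiom for $\delta$ is not needed here but is used later to obtain the extended cubical structure and the alternating refinement in the $\Q$-linear setting. The main obstacle I anticipate is bookkeeping: tracking how the projections $p^1_{n,m},p^2_{n,m}$ underlying $\cup$ interact with $\delta(n+m)$ under $\hat\circ$, so that the Leibniz signs appearing via \eqref{eqn:Cup} match the signs needed for $\un{\circ}$ to be a chain map — this is exactly the sign bookkeeping the introduction warns about, but it is purely formal once the diagrammatic identities above are written out.
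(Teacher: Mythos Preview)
Your proposal is correct and is essentially the ``straightforward computation'' the paper alludes to but does not write out. The paper gives no proof beyond that phrase, so your systematic verification of the three axioms---reducing (1) to the unit constraints, (2) to $\delta$ being a map of co-cubical objects together with the chain-map property of $\cup$ from \eqref{eqn:Cup}, and (3) to co-associativity of $\delta$ paired with the associativity \eqref{eqn:Assoc}---is exactly what is required, and your remark that symmetry of $\delta$ is not needed at this stage is accurate.
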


We denote the DG category formed by the cubical enrichment described above by $(\sC,\otimes,\square^*,\delta)$, or just $(\sC,\square^*)$ when the context makes the meaning clear.

Suppose now that, in addition to the assumptions used above, the Hom-groups in $\sC$ are $\Q$-vector spaces; we call such a category {\em $\Q$-additive}. We may then define the {\em alternating projection}
\[
\alt:\sHom_\sC(X\times\square^n, Y)\to \sHom_\sC(X\times\square^n, Y)^\alt
\]
by applying the idempotent In the rational group ring $\Q[F_n]$ corresponding to the sign representation. 

\begin{definition}  Suppose that $\sC$ is $\Q$-additive. Define the sub-DG category $(\sC,\otimes,\square^*,\delta)^\alt$ of $(\sC,\otimes,\square^*,\delta)$, with the same objects as $(\sC,\otimes,\square^*,\delta)$, and with complex of morphisms given by the subcomplex
\[
\sHom_\sC(X,Y,n)^\alt:=\sHom_\sC(X\times\square^n, Y)^\alt\subset \sHom_\sC(X,Y,n).
\]
The composition law is defined by the composition
\[
\sHom_\sC(Y,Z)^{\alt*}\otimes   \sHom_\sC(X,Y)^{\alt*}\xrightarrow{\circ}  \sHom_\sC(X,Z)^*
\xrightarrow{\alt}\sHom_\sC(X,Z)^{\alt*}
\]
\end{definition}

\begin{prop} \label{prop:AltDG} Let  $\square^*:\Cube\to\sC$ be a co-cubical object in a tensor category $\sC$, with a co-multiplication $\delta$. Suppose that $\square^*$ extends to a functor
\[
\square^*:\ECube\to \sC
\]
and that $\sC$ is $\Q$-additive. Then the natural inclusion functor
\[
(\sC,\square^*)^\alt\to (\sC,\square^*)
\]
is a homotopy equivalence of DG categories, i.e., for each pair of objects $X,Y\in\Obj(\sC,\square^*)^\alt =\Obj(\sC,\square^*)$, the inclusion
\[
\sHom(X,Y)^{\alt*}\to \sHom(X,Y)^*
\]
is a quasi-isomorphism (see definition~\ref{definition:DGHEquiv} for the general case).
\end{prop}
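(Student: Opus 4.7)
The plan is to reduce the statement, one pair of objects at a time, to Proposition~\ref{prop:Alt}. Fix objects $X,Y$ and consider the cubical abelian group
\[
\un{F}: \Cube^\op \to \Ab, \qquad \un{F}(\un{n}) := \sHom_\sC(X\otimes\square^n, Y).
\]
Since $\square^*$ extends by hypothesis to a functor $\ECube \to \sC$, applying $\sHom_\sC(X\otimes -, Y)$ contravariantly produces an extension of $\un{F}$ to an extended cubical abelian group $\un{F}:\ECube^\op\to\Ab$. The $\Q$-additivity assumption on $\sC$ guarantees that each $\un{F}(\un{n})$ is a $\Q$-vector space, so the hypotheses of Proposition~\ref{prop:Alt} are satisfied.

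Next I would match up the various complexes with the ones appearing in Proposition~\ref{prop:Alt}. By the definition preceding Lemma~\ref{lem:Cube0} (together with its identification with $\un{F}_*^0$), the Hom-complex $\sHom(X,Y)^*$ of the DG category $(\sC,\square^*)$ is precisely the reduced complex $F_*$ of $\un{F}$. For the alternating variant, note that the $F_n$-action on $\square^n$ inside $\sC$ induces an $F_n$-action on $\un{F}(\un{n})$, and the subcomplex $\sHom(X,Y,n)^\alt = \sHom_\sC(X\otimes\square^n,Y)^\alt$ is $\un{F}^\Alt(\un{n})$ by definition. The lemma of the ``Alternating cubes'' subsection (applicable since $2$ is invertible in the $\Q$-vector space $\un{F}(\un{n})$) gives the identification $\un{F}^\Alt_* \iso F^\alt_*$ as subcomplexes of $F_*$, so the inclusion $\sHom(X,Y)^{\alt*}\to \sHom(X,Y)^*$ corresponds under these identifications to the inclusion $F^\alt_*\to F_*$.

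Proposition~\ref{prop:Alt} then immediately produces the desired quasi-isomorphism: the acyclicity of the quotient follows from the fact that $\Sigma_n$ acts on $H_n(F_*)$ by the sign representation (in turn coming from the explicit homotopy using the co-multiplication-like map $q$ built from the $\mu:\un{2}\to\un{1}$ in $\ECube$).

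The main point to verify is not computational but conceptual: that the cubical abelian group built from $\sHom_\sC(X\otimes\square^n,Y)$ genuinely inherits an extended cubical structure, and that the two flavors of ``alt'' (the $F_n$-alternating subobject of the unreduced object and the $\Sigma_n$-alternating subobject of the reduced quotient) are correctly identified; once these bookkeeping matters are dispatched, Proposition~\ref{prop:Alt} does all the homotopical work. The compatibility with composition — needed to see that $(\sC,\square^*)^\alt$ is actually a DG category and that the inclusion is a DG functor — is built into the definition via the explicit alternating projection following composition, and poses no additional obstacle beyond checking associativity, which is routine.
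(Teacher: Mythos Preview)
Your proof is correct and follows exactly the paper's approach: the paper's proof is the single line ``This follows immediately from proposition~\ref{prop:Alt},'' and you have simply unpacked the bookkeeping (the extended cubical structure on $\un{F}(\un{n})=\sHom_\sC(X\otimes\square^n,Y)$, and the identification $\un{F}^\Alt_*\cong F^\alt_*$ from the alternating-cubes lemma) that makes that citation legitimate.
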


\begin{proof} This follows immediately from proposition~\ref{prop:Alt}.
\end{proof}

\subsection{Tensor structure} There is a natural tensor structure on the DG category $(\sC,\otimes,\square^*,\delta)^\alt$, which we now describe.

Given $f:X\times\square^n\to Y$, $f':X'\times\square^{n'}\to Y'$, define
\[
f\tilde{\otimes} g:X\otimes X'\otimes\square^{n+n'}\to Y\otimes Y'
\]
as the composition
\begin{multline*}
X\otimes X'\otimes\square^{n+n'}\xrightarrow{\id\otimes\delta_{n,n'}}
X\otimes X'\otimes\square^n\otimes\square^{n'}\\\xrightarrow{\tau_{X',\square^n}}
X\otimes \square^n\otimes X'\otimes \square^{n'}
\xrightarrow{f\otimes f'} Y\otimes Y'.
\end{multline*}
Assuming that $\sC$ is $\Q$-additive, we define $f\otimes_\square g$ by applying the alternating projection:
\[
f\otimes_\square g:=(f\tilde{\otimes} g)\circ(\id_{X\otimes X'}\otimes\alt_{n+n'}).
\]

\begin{prop} \label{prop:tensor}Let $\sC$ be a $\Q$-tensor category, $\square^*$ a co-cubical object of $\sC$ and $\delta:\square\to\square\otimes\square$ a co-multiplication. Then $((\sC,\otimes,\square^*,\delta)^\alt, \otimes_\square)$ is a   DG tensor category, with commutativity constraints induced by the commutativity constraints in $\sC$.
\end{prop}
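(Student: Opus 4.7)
The proof amounts to verifying that $\otimes_\square$ defines a DG bifunctor on $(\sC,\otimes,\square^*,\delta)^\alt$ which, together with the commutativity, associativity and unit constraints inherited from $\sC$, satisfies the axioms of a DG tensor category. The plan is to reduce each axiom to a formal consequence of the co-associativity, co-unitality and symmetry of $\delta$ (definition~\ref{def:comult}) combined with the corresponding property of $\otimes$ in $\sC$, using the alternating projection to absorb any discrepancy arising from the non-commutativity of the cup product \eqref{eqn:Cup}.

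First I verify that $\otimes_\square$ descends to a map of complexes
\[
\sHom(X,Y)^{\alt *}\otimes\sHom(X',Y')^{\alt *}\to \sHom(X\otimes X',Y\otimes Y')^{\alt *}.
\]
The Leibniz rule comes from unwinding the definition of the differential via the face maps $\eta^*_{n,i,\epsilon}$, together with the fact that $\delta$ is a morphism of co-cubical objects, so the face maps on $\square^{n+n'}$ pull back compatibly to $\square^n\otimes\square^{n'}$. Landing in the alternating subcomplex is built in by the final application of $\alt_{n+n'}$. Bifunctoriality (compatibility with the composition law $\un{\circ}$ up to alternating projection) then follows by combining co-associativity of $\delta$ with associativity of the cup product from \eqref{eqn:Assoc}. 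The associativity and unit constraints on $\otimes_\square$ are induced from those of $\otimes$ in $\sC$ via co-associativity and co-unitality of $\delta$, since in each case two applications of $\delta$ produce the same splitting of $\square^{n+n'+n''}$, respectively the unit isomorphism at $\square^0$.

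The main point, and the sole reason the alternating passage is needed, is the verification of the commutativity constraint. Taking the symmetry on $(\sC,\otimes,\square^*,\delta)^\alt$ to be induced by that of $\sC$, the required graded-commutativity identity is
\[
t_{Y,Y'}\circ (f\otimes_\square g)=(-1)^{nn'}(g\otimes_\square f)\circ t_{X,X'}
\]
for $f\in\sHom(X,Y,n)^\alt$ and $g\in\sHom(X',Y',n')^\alt$. Unwinding both sides from the definition of $f\tilde{\otimes} g$ and applying the symmetry property of $\delta$ from definition~\ref{def:comult}(3), the two expressions become identical after composing with the cubical permutation of $\square^{n+n'}$ that exchanges the first $n$ and last $n'$ coordinates. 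This permutation lies in $F_{n+n'}$, and its value under the sign representation $\Sgn$ is precisely $(-1)^{nn'}$; since $f$ and $g$ already lie in the alternating subcomplexes, the permutation acts on the composite by this scalar, yielding exactly the Koszul sign. The hexagon axiom and the involutivity $t_{Y,X}\circ t_{X,Y}=\id$ are then inherited directly from the corresponding identities in $(\sC,\otimes)$. The hard part is this sign computation: without the alternating projection, the best one can produce is a cup product with no commutativity (as noted after \eqref{eqn:Assoc}), and it is the combination of the symmetry condition on $\delta$ with the alternating idempotent that turns the spurious cubical shuffle into the expected graded sign.
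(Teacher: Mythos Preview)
Your treatment of the commutativity constraint is correct and matches the paper's argument. However, there is a genuine gap in your handling of bifunctoriality, and a related misstatement about the role of the alternating projection.

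You write that bifunctoriality ``follows by combining co-associativity of $\delta$ with associativity of the cup product,'' and later that the commutativity constraint is ``the sole reason the alternating passage is needed.'' This is not right. The interchange law
\[
(f\otimes_\square g)\circ(f'\otimes_\square g')=(-1)^{\Deg g\cdot\Deg f'}(f\circ f')\otimes_\square(g\circ g')
\]
already fails for $\tilde{\otimes}$ before projecting: if $f,g,f',g'$ have cubical degrees $n,m,n',m'$, then the left side places the cube factors in order $(n,m,n',m')$ while the right side places them in order $(n,n',m,m')$. Reconciling the two requires the block permutation swapping the $m$- and $n'$-factors of $\square^{n+m+n'+m'}$, which after the alternating idempotent becomes the sign $(-1)^{mn'}$. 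Co-associativity alone does not produce this permutation; one needs the \emph{symmetry} of $\delta$ (definition~\ref{def:comult}(3)) together with co-associativity to establish the underlying identity
\[
(\id\otimes t_{m,n'}\otimes\id)\circ(\delta_{n,m}\otimes\delta_{n',m'})\circ\delta_{n+m,n'+m'}
=(\delta_{n,n'}\otimes\delta_{m,m'})\circ\delta_{n+n',m+m'}\circ\square(\id\times\tau_{m,n'}\times\id),
\]
after which composing with $\pi_\Alt$ converts $\tau_{m,n'}$ to the sign. The paper carries out exactly this computation, and remark~\ref{rem:action} immediately afterward makes the point explicit: $\tilde{\otimes}$ does not even give a monoidal structure, so the alternating projection is essential for bifunctoriality, not only for commutativity.
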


\begin{proof} One checks easily that the integral operation  $\tilde{\otimes}$ satisfies the  Leibniz rule:
\[
\partial(f\tilde{\otimes}g)=\partial f\tilde{\otimes}g+(-1)^{\Deg f}f\otimes\partial g.
\]

Let $\delta_{n,m}:\square^{n+m}\to \square^n\otimes\Delta^m$ denote the composition $(p_1^{n,m}\otimes p_2^{n,m})\circ\delta^{n+m}$.  Let $\pi_\Alt^N:\square^N\to\square^N$ be the map induced by the alternating idempotent in $\Q[F_N]$.

It follows from the properties of co-associativity and symmetry of $\delta$, together with the fact that $\delta$ is a map of co-cubical objects, that
\begin{multline*}
(\id_{\square^n}\otimes t_{m,n'}\otimes\id_{\square^{m'}})(\delta_{n,m}\otimes\delta_{n',m'})\circ\delta_{n+m,n'+m'}\\
=
(\delta_{n,n'}\otimes\delta_{m,m'})\circ\delta_{n+n',m+m'}\circ \square(\id_{\un{n}}\times\tau_{m,n'}\times\id_{m'}),
\end{multline*}
where $\tau_{m,n'}:\un{m}\times\un{n'}\to \un{n'}\times\un{m}$ is the symmetry in $\Cube$, and $t_{**}$ is the symmetry in $\sC$. Composing on the right with $\pi_\Alt^{n+m+n'+m'}$ yields the identity
\begin{multline*}
(\id_{\square^n}\otimes t_{m,n'}\otimes\id_{\square^{m'}})(\delta_{n,m}\otimes\delta_{n',m'})\circ\delta_{n+m,n'+m'}\circ\pi_\Alt^{n+m+n'+m'}\\
=
(-1)^{mn'}(\delta_{n,n'}\otimes\delta_{m,m'})\circ\delta_{n+n',m+m'}\circ \pi_\Alt^{n+m+n'+m'}.
\end{multline*}
The identity
\[
(f\otimes_\square g)\circ(f'\otimes_\square g')=(-1)^{\Deg g\Deg f'}ff'\otimes_\square gg'
\]
follows directly from this.

One shows by a similar argument that, for $f\in\Hom_\sC(X\otimes\square^p,Y)^\Alt$, $g\in\Hom_\sC(X'\otimes\square^q,Y')^\Alt$, we have
\[
t_{Y,Y'}\circ(f\otimes_\square g)=(-1)^{pq}(g\otimes_\square f)\circ (t_{X,X'}\otimes\id_{\square^{p+q}}),
\]
completing the proof.
\end{proof}

\begin{rem}\label{rem:action} One could hope that the operations $\tilde{\otimes}$ define at least a monoidal structure on $(\sC,\otimes,\square^*,\delta)$, but this is  in general not the case. In fact, as we noted in the proof of proposition~\ref{prop:tensor}, sending $f,g$ to $f\tilde{\otimes}g$ does satisfy the Leibniz rule, but we do not have the identity
\[
(f\tilde{\otimes}g)\circ(f'\tilde{\otimes}g')=(-1)^{\Deg f'\Deg g}ff'\tilde{\otimes}gg'
\]
in general: the cubes on the two sides of this equation  are in a different order.

In spite of this, the operation $\otimes$ {\em does} extend to an action of $\sC$ on $(\sC,\otimes,\square^*,\delta)$. Consider $\sC$ as a DG category with all morphisms of degree zero (and zero differential). Define
\[
\otimes:\sC\otimes (\sC,\otimes,\square^*,\delta)\to (\sC,\otimes,\square^*,\delta)
\]
to be the same as $\otimes$ on objects, and on morphisms by 
\[
\otimes:\Hom_\sC(A,B)\otimes\Hom_\sC(X\otimes\square^n,Y)\to
\Hom_\sC(A\otimes X\otimes\square^n,B\otimes Y)
\]
\end{rem}

\section{Complexes over a DG category} We review a version of Kapranov's construction \cite{Kapranov} of complexes over a DG category.

\subsection{The category $\PTR(\sC)$}  

\begin{definition} Let $\sC$ be a DG category. The DG category $\PTR(\sC)$ has objects  $\sE$ consisting of the following data:
\begin{enumerate}
\item A finite collection of objects of $\sC$, $\{E_i, N\le i\le M\}$ ($N$ and $M$ depending on $\sE$).
\item Morphisms $e_{ij}:E_j\to E_i$ in $\sC$  of degree $j-i+1$, for $N\le j,i\le M$, satisfying
\[
(-1)^ide_{ij}+\sum_ke_{ik}e_{kj}=0.
\]
\end{enumerate}
For $\sE:=\{E_i, e_{ji}\}$, $\sF:=\{F_i, f_{ji}\}$, a morphism $\phi:\sE\to \sF$ of degree $n$ is a collection of morphisms $\phi_{ij}:E_j\to F_i$ in $\sC$, such that   $\phi_{ij}$ has degree $n+j-i$. The composition of morphisms $\phi:\sE\to \sF$, $\psi:\sF\to \sG$ is defined by
\[
(\psi\circ\phi)_{ij}:=\sum_k\psi_{ik}\circ\phi_{kj}.
\]

Given a morphism $\phi:\sE\to \sF$ of degree $n$, define
\[
\partial_{\sF,\sE}(\phi)\in\Hom_{\PTR(\sC)}(\sE,\sF)^{n+1}
\]
to be the collection $\partial_{\sF,\sE}(\phi)_{ij}:E_j\to F_i$ with
\[
\partial_{\sF,\sE}(\phi)_{ij}:=(-1)^id(\phi_{ij})+\sum_kf_{ik}\phi_{kj}-(-1)^n\sum_k\phi_{ik}e_{kj}.
\]
\end{definition}

\begin{rem} We take the opportunity to give a detailed, although tedious, verification that $\PTR(\sC)$ is indeed a DG category, filling a much needed gap in the literature.
\end{rem}

\begin{lem} $\partial_{\sF,\sE}^2=0$.
\end{lem}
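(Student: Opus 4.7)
The plan is a direct bookkeeping computation. Fix $\phi\in\sHom_{\PTR(\sC)}(\sE,\sF)^n$ and write $\psi:=\partial\phi$, so $\psi_{ij}$ has degree $n+1+j-i$. Then expand
\[
\partial(\psi)_{ij}=(-1)^id(\psi_{ij})+\sum_kf_{ik}\psi_{kj}-(-1)^{n+1}\sum_k\psi_{ik}e_{kj}
\]
by substituting the three pieces of $\psi_{ij}$ into each of the three pieces of $\partial(\psi)_{ij}$. This produces nine sums; I will group them by which of $\phi,d\phi,e,de,f,df$ they involve, and show that each group cancels.

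First I would handle the ``easy'' cancellations. The term $(-1)^id^2(\phi_{ij})$ vanishes because $d^2=0$ in $\sC$. The mixed terms $f_{ik}d(\phi_{kj})$ appearing in $\sum_kf_{ik}\psi_{kj}$ with sign $(-1)^k$ cancel against the terms produced when the Leibniz rule is applied to $d(f_{il}\phi_{lj})$ inside $(-1)^id(\psi_{ij})$ (these give the complementary sign $(-1)^{k+1}$, using $\deg f_{ik}=k-i+1$). A symmetric computation, using $\deg\phi_{ik}=n+k-i$, shows that the $d(\phi_{ik})e_{kj}$ terms from $(-1)^id(\psi_{ij})$ cancel against those coming from $-(-1)^{n+1}\sum_k\psi_{ik}e_{kj}$.

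The nontrivial cancellations use the Maurer–Cartan relations
\[
(-1)^idf_{il}+\sum_kf_{ik}f_{kl}=0,\qquad (-1)^lde_{lj}+\sum_ke_{lk}e_{kj}=0,
\]
which hold for $\sE,\sF\in\Obj\PTR(\sC)$ by definition. The $f_{ik}f_{kl}\phi_{lj}$ terms in $\sum_kf_{ik}\psi_{kj}$ combine with the $d(f_{il})\phi_{lj}$ terms from Leibniz inside $(-1)^id(\psi_{ij})$ and are killed by the first Maurer–Cartan identity; dually, the $\phi_{il}e_{lk}e_{kj}$ terms from $-(-1)^{n+1}\sum_k\psi_{ik}e_{kj}$ combine with the $\phi_{il}d(e_{lj})$ terms and are killed by the second. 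Finally, the two genuinely ``crossed'' triple sums $f_{ik}\phi_{kl}e_{lj}$ (one from each of the latter two pieces of $\partial\psi$) appear with opposite signs $\pm(-1)^n$ and cancel on the nose after relabeling the summation indices.

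The only real obstacle is sign discipline: each of the degree conventions ($\deg e_{ij}=j-i+1$, $\deg\phi_{ij}=n+j-i$, $\deg\psi_{ij}=n+1+j-i$) has to be fed correctly into the Leibniz rule, and the overall sign $(-1)^i$ in front of $d$ in the definition of $\partial$ must be tracked against the $(-1)^n$ flip when going from $\phi$ to $\psi$. Once the six sign computations above are checked, every one of the nine terms is accounted for and $\partial^2(\phi)_{ij}=0$ for all $i,j$, as required.
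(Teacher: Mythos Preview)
Your proposal is correct and follows essentially the same approach as the paper: both expand $\partial^2(\phi)_{ij}$ into the nine resulting terms and organize the cancellations via $d^2=0$, the Leibniz rule, and the Maurer--Cartan relations for $e$ and $f$. The paper writes the computation out explicitly rather than describing the groupings, but the content is identical.
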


\begin{proof} Take $\phi:=\{\phi_{ij}:E_j\to F_i\}$ of degree $n$. Then
\[
\partial^2(\phi)_{ij}=(-1)^id(\partial\phi)_{ij}+\sum_kf_{ik}(\partial\phi)_{kj}-(-1)^{n+1}
\sum_k(\partial\phi)_{ik}e_{kj}.
\]
We have
\begin{align*}
(-1)^id(\partial\phi)_{ij}&=(-1)^id[(-1)^id\phi_{ij}+\sum_lf_{il}\phi_{lj}-(-1)^n\phi_{il}e_{lj}]\\
&=(-1)^i\sum_l[df_{il}\phi_{lj}+(-1)^{l-i+1}f_{il}d\phi_{lj}-(-1)^nd\phi_{il}e_{lj}\\
&\hskip 150pt-(-1)^{l-i}\phi_{il}de_{lj}]\\
&=-\sum_{l,k}f_{ik}f_{kl}\phi_{lj}-\sum_l(-1)^{l}f_{il}d\phi_{lj}-(-1)^{n+i}\sum_ld\phi_{il}e_{lj}\\
&\hskip 150pt+
\sum_{l,k}\phi_{il}e_{lk}e_{kj},
\end{align*}
\[
\sum_kf_{ik}(\partial\phi)_{kj}=\sum_k(-1)^kf_{ik}d\phi_{kj}+\sum_{k,l}f_{ik}f_{kl}\phi_{lj}
-(-1)^n\sum_{k,l}f_{ik} \phi_{kl}e_{lj},
\]
and
\[
-(-1)^{n+1}\sum_k(\partial\phi)_{ik}e_{kj}=
(-1)^{n+i}\sum_k d\phi_{ik}e_{kj}+(-1)^n\sum_{k,l}f_{il}\phi_{lk}e_{kj}-\sum_{l,k}\phi_{il}e_{lk}e_{kj},
\]
which proves the result.
\end{proof}

\begin{lem} For $\phi:\sE\to \sF$, and $\psi:\sF\to\sG$, we have
\[
\partial_{\sE,\sG}(\psi\circ\phi)=\partial_{\sF,\sG}(\psi)\circ\phi+(-1)^{\Deg\psi}\psi\circ \partial_{\sE,\sF}(\phi).
\]
\end{lem}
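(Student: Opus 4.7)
The proof is a direct bookkeeping computation analogous to the verification that $\partial^2=0$ in the previous lemma. My plan is to expand both sides of the asserted identity in matrix-entry form and match them term-by-term, exploiting the ordinary Leibniz rule in $\sC$ and the cancellation of the ``middle'' terms that involve the internal differential $f_{ij}$ of $\sF$.

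Concretely, write $e$, $f$, $g$ for the structure morphisms of $\sE,\sF,\sG$, and set $m=\Deg\phi$, $n=\Deg\psi$. First I would compute $(i,j)$-entry of $\partial(\psi\circ\phi)$ by plugging $(\psi\circ\phi)_{ij}=\sum_k\psi_{ik}\phi_{kj}$ into the definition of $\partial_{\sE,\sG}$; using that $\Deg\psi_{ik}=n+k-i$, the Leibniz rule in $\sC$ gives
\[
(-1)^i d((\psi\circ\phi)_{ij})=\sum_k(-1)^i d\psi_{ik}\,\phi_{kj}+\sum_k(-1)^{n+k}\psi_{ik}\,d\phi_{kj}.
\]
Next I would expand the $(i,j)$-entries of $\partial_{\sF,\sG}(\psi)\circ\phi$ and $\psi\circ\partial_{\sE,\sF}(\phi)$ using the composition law $(A\circ B)_{ij}=\sum_k A_{ik}B_{kj}$, then multiply the latter by $(-1)^n$ so the sign of $\psi\circ\partial\phi$ matches the convention of the statement.

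The verification then reduces to three observations. First, the ``derivative'' terms $(-1)^id\psi_{ik}\phi_{kj}$ and $(-1)^{n+k}\psi_{ik}d\phi_{kj}$ produced by $\partial(\psi\circ\phi)$ match, respectively, the derivative contributions of $\partial\psi\circ\phi$ and $(-1)^n\psi\circ\partial\phi$. Second, the ``outer'' terms $\sum g_{il}\psi_{lk}\phi_{kj}$ in $\partial(\psi\circ\phi)$ and in $\partial\psi\circ\phi$ coincide, and similarly for the $e_{lj}$-terms (noting carefully that the sign $(-1)^{n+m}$ in $\partial(\psi\circ\phi)$ equals the sign $(-1)^n\cdot(-1)^m$ produced on the $\psi\circ\partial\phi$ side). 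Third, and this is the only delicate point, the ``inner'' terms involving $f_{lk}$ appear in $\partial\psi\circ\phi$ as $-(-1)^n\sum\psi_{il}f_{lk}\phi_{kj}$ and in $(-1)^n\psi\circ\partial\phi$ as $+(-1)^n\sum\psi_{ik}f_{kl}\phi_{lj}$, which cancel after relabeling the summation indices.

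The main obstacle is simply sign management; since no use is made of the twisted differential condition $(-1)^ide_{ij}+\sum e_{ik}e_{kj}=0$ (that was only needed for $\partial^2=0$), no further algebraic input is required. After the above cancellations, the remaining terms assemble precisely into the definition of $\partial_{\sE,\sG}(\psi\circ\phi)_{ij}$, completing the proof.
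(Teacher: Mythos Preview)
Your proposal is correct and follows essentially the same approach as the paper: both expand $\partial(\psi\circ\phi)_{ij}$ in matrix entries, apply the Leibniz rule in $\sC$ to $d(\psi_{ik}\phi_{kj})$, and observe that the $f_{**}$-terms coming from $\partial\psi\circ\phi$ and $(-1)^{\Deg\psi}\psi\circ\partial\phi$ cancel while the remaining $d$-, $g$-, and $e$-terms match. The only cosmetic difference is that the paper organizes the computation by adding and subtracting the $f$-term and then regrouping, whereas you expand both sides separately and compare; your remark that the object relations $(-1)^ide_{ij}+\sum e_{ik}e_{kj}=0$ are not used here is accurate.
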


\begin{proof} Write $\sE=\{E_i, e_{ij}\}$, $\sF=\{F_i, f_{ij}\}$, $\sG=\{G_i, g_{ij}\}$, and suppose $\phi$ has degree $n$ and $\psi$ has degree $m$. Then
\[
\partial_{\sE,\sG}(\psi\circ\phi)_{ij}=(-1)^id(\psi\circ\phi)_{ij}+\sum_kg_{ik}(\psi\circ\phi)_{kj}-(-1)^{n+m}\sum_k(\psi\circ\phi)_{ik}e_{kj}.
\]
We have
\begin{align*}
(-1)^id(\psi\circ\phi)_{ij}&=(-1)^id(\sum_l\psi_{il}\phi_{lj})\\
&=\sum_l(-1)^id\psi_{il}\phi_{lj}+(-1)^{l+m}\psi_{il} d\phi_{lj}.
\end{align*}
Thus
\begin{align*}
\partial_{\sE,\sG}(\psi\circ\phi)_{ij}&=
\sum_l(-1)^id\psi_{il}\phi_{lj}+\sum_{k,l}g_{ik}\psi_{kl}\phi_{lj}-(-1)^m\sum_{k,l} \psi_{ik}f_{kl}\phi_{lj}\\
&\hskip 20pt+(-1)^m\sum_{k,l} \psi_{ik}f_{kl}\phi_{lj}+
\sum_l(-1)^{l+m}\psi_{il}d\phi_{lj}\\
&\hskip 40pt -(-1)^{n+m}\sum_{k,l}\psi_{il}\phi_{lk}e_{kj}\\
&=\sum_l[(-1)^id\psi_{il} +\sum_{k}g_{ik}\psi_{kl} -(-1)^m\sum_{k} \psi_{ik}f_{kl}]\phi_{lj}\\
&\hskip 20pt +(-1)^m\sum_l\psi_{il}[(-1)^{l}d\phi_{lj}+\sum_{k} f_{lk}\phi_{kj}
 -(-1)^{n}\sum_{k}\phi_{lk}e_{kj}]\\
&=\sum_l(\partial\psi)_{il}\phi_{lj}+(-1)^m\sum_l\psi_{il}(\partial\phi)_{lj}\\
&=[\partial_{\sF,\sG}(\psi)\circ\phi+(-1)^{\Deg\psi}\psi\circ \partial_{\sE,\sF}(\phi)]_{ij}.
\end{align*}
\end{proof}

With the help of these two lemmas, it is straightforward to show that $\PTR(\sC)$ is a DG category.

\begin{rem}\label{rem:additive} Suppose that $\sC$ is an additive category, which we consider as a DG category with all morphisms in degree zero, and zero differential. Let $\sE=\{E_i, e_{ij}\}$ be in $\PTR(\sC)$. Since $e_{ij}$ has degree $j-i+1$, the forces $e_{ij}=0$ unless $i=j+1$. Writing $d^j:=e_{j+1,j}$, the condition $(-1)^ide_{ij}+\sum_ke_{ik}e_{kj}=0$ is just $d^{j+1}d^j=0$, so $\sE$ is just a complex, in the usual sense, with differential $d^j:E_j\to E_{j+1}$.

Similarly,  $\Hom(\sE,\sF)^p$ reduces to the usual  group of degree $p$ maps of complexes: $\phi=\{\phi^n=\phi_{n,n}\}$ and the differential is the standard one:
\[
\partial\phi=d_\sF\circ\phi-(-1)^n\phi\circ d_\sE.
\]
Thus, $\PTR(\sC)=C^b(\sC)$, the category of bounded complexes in the additive category $\sC$.
\end{rem}

\begin{rem}\label{rem:Nonpos} Let $\sC$ be a DG category. Call $\sC$ {\em non-positive} if $\Hom_\sC(X,Y)^p=0$ for $p>0$. If $\sC$ is non-positive and $\{E_i, e_{ij}\}$ is in $\PTR(\sC)$, then the degree restriction forces $e_{ij}=0$ unless $j<i$. Similarly, if $\phi:\sE\to \sF$ is a degree $p$ morphism in  $\PTR(\sC)$, and we decompose $\phi$ into its components $\phi_{ij}:\sE_j\to \sF_i$, then $i\ge j+p$ for $\phi_{ij}\neq0$. In particular, for a degree zero morphisms $\phi$, we have $\phi_{ij}=0$ if $i<j$. Bondarko has used these properties to define a {\em weight structure} on   $\PTR(\sC)$; concretely, the conditions imposed on the objects and morphisms in $\PTR(\sC)$ by the non-positivity of $\sC$ allow one to define the ``stupid truncation" as a functor on $\PTR(\sC)$. See \S\ref{sec:Truncation} for details.
\end{rem}

For $E$ in $\sC$, we have the object $i_0(E)$ with $i_0(E)_0=E$, $i_0(E)_i=0$ for $i\neq0$ and $e_{00}=0$. For a degree $p$ morphism $f:E\to F$ in $\sC$, define $i_0(f):i_0(E)\to i_0(F)$ by $i_0(f)_{00}=f$. This defines a fully faithful embedding of DG categories
\[
i_0:\sC\to \PTR(\sC).
\]

\subsection{Translation and cone sequence}
For $\sE=\{E_i, e_{ij}:E_j\to E_i\}$ in $\PTR(\sC)$, and $n\in\Z$, define $\sE[n]$ by
\[
(\sE[n])_i:=E_{i+n},\ e[n]_{ij}:=(-1)^ne_{i+n,j+n}:(\sE[n])_j\to (\sE[n])_i.
\]
For $\phi\in\Hom(\sE,\sF)^p$, define $\phi[n]\in\Hom(\sE[n],\sF[n])^p$ by setting
\[
\phi[n]_{ij}:(\sE[n])_j\to (\sF[n])_i
\]
equal to $(-1)^{np}\phi_{i+n,j+n}$. It follows directly from the definitions that $(\sE,\phi)\mapsto 
(\sE[n],\phi[n])$ defines a DG autoisomorphism   with inverse   the shift by $-n$.

Let $\phi:\sE\to \sF$ be a degree 0 morphism in $\PTR(\sC)$ with $\partial \phi=0$. We define the {\em cone} of $\phi$, $\Cone(\phi)$, as the object in $\PTR(\sC)$ with
\[
\Cone(\phi)_i:=F_i\oplus E_{i+1}
\]
and with morphisms $\Cone(\phi)_{ij}:\Cone(\phi)_j\to \Cone(\phi)_i$ given by the usual matrix
\[
\begin{pmatrix}
f_{ij}&0\\
\phi_{ij}&-e_{ij}
\end{pmatrix}
\]
The inclusions $F_i\to F_i\oplus E_{i+1}$ define the morphism $i_\phi:\sF\to \Cone(\phi)$ and the projections $F_i\oplus E_{i+1}\to E_{i+1}$ define the morphism $p_\phi:\Cone(\phi)\to \sE[1]$. Note that $\partial(i_\phi)=0=\partial(p_\phi)$. This gives us the {\em cone sequence} in $Z^0\PTR(\sC)$:
\[
\sE\xrightarrow{\phi}\sF\xrightarrow{i_\phi}\Cone(\phi)\xrightarrow{p_\phi}\sE[1].
\]

\subsection{Tensor structure} 
Now suppose that $(\sC,\otimes)$  is a  DG tensor category. We give $\PTR(\sC)$ a tensor structure as follows. On objects $\sE=\{E_i, e_{ij}\}$, $\sF=\{F_i, f_{ij}\}$, let $\sE\otimes \sF$ be the object with terms 
\[
(\sE\otimes\sF)_i:=\oplus_kE_k\otimes F_{i-k}
\]
and maps $g_{ij}:(\sE\otimes\sF)_j\to (\sE\otimes\sF)_i$ given by the sum of the maps  
\[
(-1)^{(l-k+1)n}e_{kl}\otimes\id_{F_n}\text{ or } (-1)^{k }\id_{E_k}\otimes  f_{mn}.
\]

\begin{lem} Given $\sE=\{E_i, e_{ij}\}$, $\sF=\{F_i, f_{ij}\}$ in $\PTR(\sC)$, the collection of objects 
$(\sE\otimes\sF)_i:=\oplus_kE_k\otimes F_{i-k}$ and maps $g_{ij}:(\sE\otimes\sF)_i\to
(\sE\otimes\sF)_i$ the direct sums of the maps 
$(-1)^{(l-k+1)n}e_{kl}\otimes\id_{F_n}$ and $(-1)^{k}\id_{E_k}\otimes  f_{mn}$ does in fact define an object of $\PTR(\sC)$.
\end{lem}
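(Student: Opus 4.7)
The plan is to check the two defining properties of an object of $\PTR(\sC)$: that each $g_{ij}$ is a morphism in $\sC$ of degree $j-i+1$, and that the Maurer-Cartan identity $(-1)^i dg_{ij}+\sum_k g_{ik}g_{kj}=0$ holds. The degree condition is immediate by inspection: an $e_{kl}\otimes \id_{F_n}$ summand has degree $l-k+1$ in $\sC$ and runs from $(\sE\otimes\sF)_{l+n}$ to $(\sE\otimes\sF)_{k+n}$, so $j-i+1=l-k+1$ as required, and the case of $\id_{E_k}\otimes f_{mn}$ is symmetric.

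To verify the Maurer-Cartan identity, I would fix $(i,j)$, pick a source summand $E_a\otimes F_b$ (with $a+b=j$) and a target summand $E_c\otimes F_m$ (with $c+m=i$), and check vanishing on this matrix entry. By the Leibniz rule in $\sC$ together with $d\id=0$, the contribution of $(-1)^i dg_{ij}$ is entirely ``pure'', of the form $de_{ca}\otimes\id$ or $\id\otimes df_{mb}$. The composition $\sum_k g_{ik}g_{kj}$ splits into four types of contributions according to whether each factor is a first-type or second-type component; evaluating these requires the Koszul interchange rule
\[
(\alpha\otimes\beta)\circ(\alpha'\otimes\beta')=(-1)^{\Deg\beta\cdot\Deg\alpha'}(\alpha\circ\alpha')\otimes(\beta\circ\beta')
\]
which holds in the DG tensor category $\sC$.

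The verification then reduces to three independent cancellations. The pure-first contributions combined with $de_{ca}\otimes \id$ yield, up to an overall sign and after using $i=c+b$, exactly the Maurer-Cartan identity for $\sE$, hence vanish; the pure-second contributions and $\id\otimes df_{mb}$ reduce analogously to the Maurer-Cartan identity for $\sF$, using $i=a+m$. The two remaining mixed compositions each produce a scalar multiple of $e_{ca}\otimes f_{mb}$, and their coefficients must sum to zero. This last step is the only substantive obstacle in the proof: it is a pure sign calculation that compares the total Koszul sign appearing in the ``second-then-first'' composition with its absence in the ``first-then-second'' composition, and shows that modulo $2$ the exponent difference is odd after the relations $a+b=j$ and $c+m=i$ are imposed. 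The peculiar-looking sign $(-1)^{(l-k+1)n}$ built into the definition of $g_{ij}$ is engineered precisely to make this mixed cancellation work; once it is verified, the remaining two pieces are routine bookkeeping.
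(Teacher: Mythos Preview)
Your proposal is correct and follows essentially the same approach as the paper's proof: both split the Maurer--Cartan verification into pure first-type terms (which reduce to the Maurer--Cartan identity for $\sE$), pure second-type terms (which reduce to that for $\sF$), and mixed terms (which cancel in pairs via a sign comparison). The paper carries out the explicit sign arithmetic in each of the three cases, whereas you describe the structure of the computation and correctly identify the mixed cancellation as the one nontrivial step; the underlying argument is the same.
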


\begin{proof} We compute: Since
\[
(-1)^{(l-k+1)n+n}=(-1)^{(l-q+1)n}\cdot (-1)^{(q-k+1)n}
\]
we have
\begin{multline*}
d((-1)^{(l-k+1)n}e_{kl}\otimes\id_{F_n})=(-1)^{(l-k+1)n+k+1}\sum_qe_{kq}e_{ql}\otimes \id_{F_n}\\
=(-1)^{n+k+1}\sum_q[(-1)^{(q-k+1)n}e_{kq}\otimes \id_{F_n}]\circ 
[(-1)^{(l-q+1)n}e_{ql}\otimes \id_{F_n}].
\end{multline*}
Similarly, 
\[
d((-1)^{k}\id_{E_k}\otimes  f_{mn})=(-1)^{k+m+1}\sum_q[(-1)^{k}\id_{E_k}\otimes  f_{mq})]\circ
[(-1)^{k}\id_{E_k}\otimes  f_{qn})].
\]
Finally, there are two compositions of terms of the form $(-1)^{l-k+1)n}e_{kl}\otimes\id_{F_n}$ and $(-1)^{k}\id_{E_k}\otimes  f_{mn}$ that define maps from $E_l\otimes F_n$ to $E_k\otimes F_m$ with $k>l$ and $m>n$, namely
\[
[(-1)^{(l-k+1)m}e_{kl}\otimes\id_{F_m}]\circ [(-1)^{l}\id_{E_l}\otimes  f_{mn}]
\]
and
\[
[(-1)^{k}\id_{E_k}\otimes  f_{mn}]\circ [(-1)^{(l-k+1)n}e_{kl}\otimes\id_{F_n}].
\]
These are both $\pm e_{kl}\otimes f_{mn}$, the first one having sign $(-1)^{(l-k+1)m+l}$ and the second having sign $(-1)^{(l-k+1)n+k+(m-n+1)(l-k+1)}$. As these two signs are opposite, these terms cancel. These three computations yield the identity
\[
dg_{ij}=(-1)^{i+1}\sum_qg_{iq}g_{qj}
\]
as desired.
\end{proof}

We also need to define a cup product map
\[
\cup:\Hom(\sE,\sF)\otimes\Hom(\sE',\sF')\to \Hom(\sE\otimes\sE',\sF\otimes \sF')
\]
For this, suppose $\sE=\{E_i, e_{ij}\}$, $\sF=\{F_i, f_{ij}\}$, $\sE=\{E'_i, e'_{ij}\}$, $\sF'=\{F'_i, f'_{ij}\}$. Given $\phi_{ij}:E_j\to E'_i$ of degree $j-i+p$, $\psi_{kl}:F_l\to F'_k$ of degree $l-k+q$ (the degree taken in $\sC$), define
\[
\phi_{ij}\cup\psi_{kl}:=(-1)^{(j-i+p)k+qj}\phi_{ij}\otimes\psi_{kl}:E_j\otimes F_l\to E'_i\otimes F'_k.
\]
Taking the sum over all components defines the graded map
\[
\cup:\Hom_{\PTR(\sC)}(\sE,\sF)\otimes \Hom_{\PTR(\sC)}(\sE',\sF')\to 
 \Hom_{\PTR(\sC)}(\sE\otimes\sE',\sF\otimes \sF').
\]

\begin{lem} The map $\cup$ is a map of complexes.
\end{lem}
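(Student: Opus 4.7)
I will verify the Leibniz rule
\[
\partial(\phi\cup\psi) = \partial\phi\cup\psi + (-1)^{\Deg\phi}\phi\cup\partial\psi
\]
directly, by expanding each side into its matrix components and matching the contributions term by term. Fix $\phi:\sE\to\sF$ of degree $p$ and $\psi:\sE'\to\sF'$ of degree $q$, so $\phi_{ij}$ has degree $j-i+p$ in $\sC$ and $\psi_{kl}$ has degree $l-k+q$ in $\sC$. The component of $\phi\cup\psi$ going from $(\sE\otimes\sE')_s = \bigoplus_j E_j\otimes E'_{s-j}$ to $(\sF\otimes\sF')_r=\bigoplus_i F_i\otimes F'_{r-i}$ is, on the summand $(j, s-j)\to(i, r-i)$, the map $\phi_{ij}\cup\psi_{r-i,s-j}$ as defined in the statement.

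First I will apply the formula for $\partial$ in $\PTR(\sC)$ to $\phi\cup\psi$, using the explicit description of the differentials $g_{ij}$ on $\sE\otimes\sE'$ and $\sF\otimes\sF'$ from the preceding lemma (with the signs $(-1)^{(l-k+1)n}$ and $(-1)^{k}$). This produces three families of contributions on each component $(j,s-j)\to(i,r-i)$:
\begin{enumerate}
\item a term coming from $(-1)^r d_\sC$ applied to $\phi_{ij}\otimes\psi_{r-i,s-j}$, which by the Leibniz rule in $\sC$ splits into a $d\phi\otimes\psi$ piece and a $\phi\otimes d\psi$ piece;
\item a sum over $k$ of compositions with $f_{ik}\otimes\id$ and $\id\otimes f'_{r-i,k'}$ from $\sF\otimes\sF'$;
\item a sum over $k$ of compositions with $e_{kj}\otimes\id$ and $\id\otimes e'_{k',s-j}$ from $\sE\otimes\sE'$.
\end{enumerate}
Independently I expand $\partial\phi\cup\psi + (-1)^p\phi\cup\partial\psi$ using the same cup-product formula but with $\partial\phi$ and $\partial\psi$ substituted via the definition of $\partial$ in $\PTR(\sC)$. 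This also yields an internal-differential term, a term using $f_{ik}$ or $f'_{k'l'}$, and a term using $e_{kj}$ or $e'_{k'l'}$.

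The whole proof then reduces to a sign bookkeeping: one must check that the sign
\[
(-1)^{(j-i+p)k + qj}
\]
attached to $\phi_{ij}\otimes\psi_{kl}$ by the cup product interacts correctly with
\[
(-1)^r,\quad (-1)^{(l-k+1)n}, \quad (-1)^{k}, \quad (-1)^i, \quad (-1)^p, \quad (-1)^{p+q},
\]
appearing in the differential formulas. The matching is essentially forced by the identity
\[
(-1)^{(j-i+p)k + qj} \cdot (-1)^r \;=\; (-1)^{(j-i+p)k + qj}\cdot (-1)^{i}\cdot (-1)^{r-i},
\]
together with the observation that shifting $k\mapsto k{+}1$ or $i\mapsto i{+}1$ in the respective sums produces precisely the sign needed to reconcile the two expressions; the $(-1)^{\Deg\phi}=(-1)^p$ in front of $\phi\cup\partial\psi$ is exactly what is needed to pass the inner differential $d\psi$ past $\phi$ in the Koszul convention. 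The main (and only) obstacle is to organize this verification so that one does not drown in indices; I would do it by treating the three contribution types separately and in each case pairing up terms on the two sides that involve the same underlying morphism, then checking the single sign identity that ensures cancellation.

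Once all three families of terms are seen to match with the correct signs, the Leibniz rule holds on every component, and hence $\cup$ is a morphism of complexes.
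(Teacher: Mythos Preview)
Your plan is correct and follows exactly the paper's approach: a direct component-by-component expansion of $\partial(\phi\cup\psi)$, splitting the contributions into the internal $d_\sC$-term and the four types of composition with structure maps of $\sE\otimes\sE'$ and $\sF\otimes\sF'$, then matching each against the corresponding piece of $\partial\phi\cup\psi+(-1)^p\phi\cup\partial\psi$ by a sign check. The paper carries out precisely these five sign computations explicitly, so your organization and the identified ``main obstacle'' (careful sign bookkeeping) are on target.
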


\begin{proof} Since the structure maps in $\sE\otimes\sE'$ and $\sF\otimes \sF'$ both involve two different types of maps, we have in total five terms in the differential in the Hom-complex
$\Hom_{\PTR(\sC)}(\sE\otimes\sE',\sF\otimes \sF')$. As the computation is rather involved, we handle each term separately. We compute with given morphisms $\phi=\{\phi_{ij}\}\in\Hom(\sE,\sF)^p$ and 
$\psi=\{\psi_{kl}\}\in\Hom(\sE,\sF)^q$. We denote with subscript $ij,kl$ the component of a map $\sE\otimes\sF\to \sE'\otimes \sF'$ that maps $E_j\otimes F_l$ to $E'_i\otimes F'_k$.

First we look at the terms involving the differential in $\sC$. In the $ij,kl$-component, this is
\begin{align*}
(-1)^{i+k}d[\phi_{ij}&\cup\psi_{kl}]\\
&=(-1)^{i+k+(j-i+p)k+qj}d(\phi_{ij}\otimes \psi_{kl})\\
&=(-1)^{i+k+(j-i+p)k+qj}[d\phi_{ij}\otimes \psi_{kl}+(-1)^{j-i+p}\phi_{ij}\otimes d\psi_{kl}]\\
&=(-1)^{i+k+(j-i+p)k+qj}(-1)^{j-i+p+1)k+qj}d\phi_{ij}\cup \psi_{kl}\\
&\hskip 20pt+
(-1)^{i+k+(j-i+p)k+qj+j-i+p}(-1)^{(j-i+p)k+(q+1)j}\phi_{ij}\cup d\psi_{kl}\\
&=[(-1)^id\phi_{ij}]\cup\psi_{kl}+(-1)^p\phi_{ij}\cup[(-1)^kd\psi_{kl}].
\end{align*}

Next, the terms involving composition on the right with the maps $f_{**}$.  In the $ij,kl$-component, this is a sum of terms of the form
\begin{align*}
-(-1)^{p+q}\phi_{ij}\cup\psi_{kl'}&\circ((-1)^j\id_{E_j}\otimes f_{l'l})\\
&=
(-1)^{(j-i+p)k+(q+1)j+p+q}(\phi_{ij}\otimes\psi_{kl'})\circ(\id_{E_j}\otimes f_{l'l})\\
&=-(-1)^{(j-i+p)k+(q+1)j+p+q}\phi_{ij}\otimes(\psi_{kl'}\circ  f_{l'l})\\
&=-(-1)^{p+q}\phi_{ij}\cup(\psi_{kl'}\circ  f_{l'l})\\
&=(-1)^p[\phi_{ij}\cup(-(-1)^q\psi_{kl'}\circ f_{l'l})].
\end{align*}
The $ij,kl$-component of the terms  involving composition on the right with the maps $e_{**}$ are
\begin{align*}
-(-1&)^{p+q}(\phi_{ij'}\cup\psi_{kl})\circ((-1)^{j-j'+1)l}e_{j'j}\otimes\id_{F_l})\\
&=
-(-1)^{(j'-i+p)k+qj'+(j-j'+1)l+p+q}(\phi_{ij'}\otimes\psi_{kl})\circ((-1)^{j-j'+1)l}e_{j'j}\otimes\id_{F_l})\\
&=-(-1)^{(j'-i+p)k+qj'+(j-j'+1)l+p+q}(-1)^{(l-k+q)(j-j'+1)}\phi_{ij'}e_{j'j}\otimes\psi_{kl}\\
&=-(-1)^{(j'-i+p)k+qj'+ p+q+(k+q)(j-j'+1)}\phi_{ij'}e_{j'j}\otimes\psi_{kl}\\
&=-(-1)^{(j'-i+p)k+qj'+ p+q+(k+q)(j-j'+1)}(-1)^{(j-i+p+1)k+qj}\phi_{ij'}e_{j'j}\cup\psi_{kl}\\
&=[-(-1)^p\phi_{ij'}e_{j'j}]\cup\psi_{kl}.
\end{align*}

For left composition with the maps $f'_{**}$, the $ij,kl$-component is a sum of terms of the form
\begin{align*}
((-1)^i&\id_{E_i}\otimes f'_{kl'})\circ(\phi_{ij}\cup\psi_{l'k})\\
&=(-1)^{(j-i+p)l'+qj+i}(\id_{E_i}\otimes f'_{kl'})\circ(\phi_{ij}\otimes\psi_{l'k})\\
&=(-1)^{(j-i+p)l'+qj+i}(-1)^{(l'-k+1)(j-i+p)}\phi_{ij}\otimes f'_{kl'}\circ \psi_{l'k}\\
&=(-1)^{(j-i+p)l'+qj+i}(-1)^{(l'-k+1)(j-i+p)}(-1)^{(j-i+p)k+(q+1)j}\phi_{ij}\cup f'_{kl'}\circ \psi_{l'k}\\
&=(-1)^p\phi_{ij}\cup f'_{kl'}\circ \psi_{l'k}.
\end{align*}
Finally, left composition with the maps $e'_{**}$ has $ij,kl$-component  a sum of terms of the form
\begin{align*}
(-1)^{j'-i+1}(e'_{ij'}\otimes\id_{F_k})\circ(\phi_{j'j}\cup\psi_{kl})
&=(-1)^{j'-i+1}(-1)^{(j-j'+p)k+qj}e_{ij'}\circ\phi_{j'j}\otimes \psi_{kl}\\
&=(-1)^{(j-i+p+1)k+qj}e_{ij'}\circ\phi_{j'j}\otimes \psi_{kl}\\
&=e_{ij'}\phi_{j'j}\cup\psi_{kl}.
\end{align*}

Putting these five computations together gives
\[
\partial_{\sE\otimes\sF,\sE'\otimes\sF'}(\phi\cup\psi)=
\partial_{\sE,\sF}(\phi)\cup\psi+(-1)^{\Deg\phi}\phi\cup\partial_{\sE',\sF'}(\psi),
\]
as claimed.
\end{proof}

\begin{lem} We have $(\phi\cup\psi)\circ(\phi'\cup\psi')=(-1)^{\Deg\psi\cdot\Deg\phi'}(\phi\circ\phi')\cup
(\psi\circ\psi')$.
\end{lem}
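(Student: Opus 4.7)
The plan is to verify the identity componentwise: write $\sE=\{E_i,e_{ij}\}$, $\sF=\{F_i,f_{ij}\}$, $\sG=\{G_i,g_{ij}\}$, $\sH=\{H_i,h_{ij}\}$, etc., with $\phi':\sE'\to\sE$, $\phi:\sE\to\sF$ of degrees $p',p$, and similarly $\psi':\sG'\to\sG$, $\psi:\sG\to\sH$ of degrees $q',q$. By bilinearity it suffices to compare the $((i,k),(j,l))$-components, i.e.\ the maps from $E'_j\otimes G'_l$ to $F_i\otimes H_k$ extracted from either side.

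First I would compute the left-hand side. By the definition of composition in $\PTR(\sC)$, the $((i,k),(j,l))$-component of $(\phi\cup\psi)\circ(\phi'\cup\psi')$ is $\sum_{m,n}(\phi_{im}\cup\psi_{kn})\circ(\phi'_{mj}\cup\psi'_{nl})$. Expanding each cup product via $\phi_{im}\cup\psi_{kn}=(-1)^{(m-i+p)k+qm}\phi_{im}\otimes\psi_{kn}$ and similarly for the primed pair, and then applying the Koszul rule for tensor products of morphisms in $\sC$, namely $(\alpha\otimes\beta)\circ(\gamma\otimes\delta)=(-1)^{\Deg\beta\cdot\Deg\gamma}(\alpha\gamma)\otimes(\beta\delta)$ with $\Deg\psi_{kn}=n-k+q$ and $\Deg\phi'_{mj}=j-m+p'$, yields each summand as
\[
(-1)^{(m-i+p)k+qm+(j-m+p')n+q'j+(n-k+q)(j-m+p')}\,(\phi_{im}\phi'_{mj})\otimes(\psi_{kn}\psi'_{nl}).
\]

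Next I would compute the right-hand side. Using $(\phi\circ\phi')_{ij}=\sum_m\phi_{im}\phi'_{mj}$ of degree $j-i+p+p'$ and $(\psi\circ\psi')_{kl}=\sum_n\psi_{kn}\psi'_{nl}$ of degree $l-k+q+q'$, the definition of cup gives
\[
(\phi\circ\phi')_{ij}\cup(\psi\circ\psi')_{kl}=\sum_{m,n}(-1)^{(j-i+p+p')k+(q+q')j}(\phi_{im}\phi'_{mj})\otimes(\psi_{kn}\psi'_{nl}).
\]
So the target sign (after inserting the global factor $(-1)^{qp'}$) is $(-1)^{qp'+(j-i+p+p')k+(q+q')j}$.

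The remaining step, and the only real obstacle, is a careful mod-$2$ comparison of the two exponents. Expanding the computed exponent, all terms containing $m$ and $n$ cancel in pairs ($mk+km$, $mn+nm$, $qm-qm$, $p'n+np'$, $jn+nj$), so it collapses to $-ik+pk+q'j+qj-kj-kp'+qp'$; subtracting the target exponent leaves $-2kj-2kp'\equiv 0\pmod 2$. This confirms the signs agree summand by summand, proving the desired Koszul identity. No further structural input is needed beyond the definition of $\cup$, the definition of composition in $\PTR(\sC)$, and the Koszul rule in the underlying DG tensor category $\sC$.
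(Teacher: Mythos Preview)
Your proof is correct and follows essentially the same approach as the paper: both reduce to a componentwise sign check, expand the definition of $\cup$, apply the Koszul rule in $\sC$, and verify the exponents agree modulo $2$. The only cosmetic difference is that the paper reduces at the outset to single-component morphisms (one fixed $(m,n)$), whereas you carry the sum and then compare summand by summand.
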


\begin{proof} It suffices to check for $\phi$, $\phi'$, $\psi$ and $\psi'$ each consisting of a single component. Suppose  $\phi$, $\phi'$, $\psi$ and $\psi'$ have respective degrees $p,p',q$ and $q'$. Then
\begin{align*}
(\phi_{im}\cup\psi_{kn})&\circ(\phi'_{mj}\cup\psi'_{nl})\\
&=(-1)^{(m-i+p)k+qm+(j-m+p')n+q'j}(\phi_{im}\otimes\psi_{kn})\circ(\phi'_{mj}\otimes\psi'_{nl})\\
&= (-1)^{(m-i+p)k+qm+(j-m+p')n+q'j+(n-k+q)(j-m+p')}\phi_{im}\phi'_{mj}\otimes\psi_{kn}\psi'_{nl}\\
&=(-1)^{(j-i+p+p')k+(q+q')j+qp'}\phi_{im}\phi'_{mj}\otimes\psi_{kn}\psi'_{nl}\\
&=(-1)^{qp'}\phi_{im}\phi'_{mj}\cup\psi_{kn}\psi'_{nl}.
\end{align*}
\end{proof}

Finally, we need a commutativity constraint. Let $t_{E,F}:E\otimes F\to F\otimes E$ be the commutativity constraint in $\sC$. Define
\[
\tau_{\sE,\sF}:\sE\otimes \sF\to \sF\otimes \sE
\]
as the collection of maps
\[
(-1)^{jl}t_{E_i,F_l}:E_i\otimes F_l\to F_l\otimes E_i.
\]

\begin{lem} 1. The collection of maps $\{(-1)^{jl}t_{E_i,F_l}\}$ does in fact define a morphism
$\tau_{\sE,\sF}:\sE\otimes \sF\to \sF\otimes \sE$\\
\\
2. For $\phi\in\Hom_{\PTR(\sC)}(\sE,\sE')^p$, 
$\psi\in \Hom_{\PTR(\sC)}(\sF,\sF')^q$, we have
\[
\psi\cup \phi\circ \tau_{\sE,\sF}=(-1)^{pq}\tau_{\sE',\sF'}\circ \phi\cup\psi.
\]
3. We have $\tau_{\sE,\sF\otimes\sG}=(\id_\sF\cup \tau_{\sE,\sG})\circ(\tau_{\sE,\sF}\cup\id_\sG)$.
\end{lem}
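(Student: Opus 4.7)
The plan is to verify each of the three assertions by a direct componentwise computation in $\sC$, using three ingredients from the DG symmetric monoidal structure of $\sC$: the commutativity constraint $t_{A,B}$ is closed of degree zero; it is natural in the graded sense, with $t_{E',F'}\circ (f\otimes g) = (-1)^{|f|_\sC |g|_\sC}(g\otimes f)\circ t_{E,F}$ for $f,g$ in $\sC$; and it satisfies the hexagon identity $t_{A,B\otimes C}=(\id_B\otimes t_{A,C})\circ (t_{A,B}\otimes \id_C)$. Throughout, I label the entries of morphisms in $\PTR(\sC)$ between binary tensor products by the source and target tensor factors and compare signs mod $2$.

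For part (1), I check $\partial\tau_{\sE,\sF}=0$. The diagonal contribution $(-1)^i d\tau$ on each $E_i\otimes F_l$ summand vanishes since $dt_{E_i,F_l}=0$, so only the off-diagonal contributions matter: these come from commuting $\tau$ past the two types of structure maps $(-1)^{(l-k+1)n}e_{kl}\otimes\id$ and $(-1)^k\id\otimes f_{mn}$ of $\sE\otimes\sF$, together with the analogous structure maps of $\sF\otimes\sE$. For each type I compare the two compositions from $E_l\otimes F_n$ to the appropriate target in $\sF\otimes\sE$, use graded naturality of $t$ to put both in the form (swapped tensor)$\circ\, t$, and verify by elementary mod-$2$ arithmetic that the accumulated signs coincide. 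For part (2), I compute both sides on the $(k,i),(j,l)$-component mapping $E_j\otimes F_l$ to $F'_k\otimes E'_i$. The left side yields a scalar multiple of $(\psi_{kl}\otimes\phi_{ij})\circ t_{E_j,F_l}$ with sign coming from $\tau_{\sE,\sF}$ and the cup-product convention for $\psi\cup\phi$. The right side, after sliding $t_{E'_i,F'_k}$ past $\phi_{ij}\otimes\psi_{kl}$ using graded naturality with $\sC$-degrees $j-i+p$ and $l-k+q$, yields the same composition with a sign involving $pq$, the cup-product sign for $\phi\cup\psi$, and the product $(j-i+p)(l-k+q)$; expanding mod $2$, every cross term cancels.

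For part (3), I verify the identity componentwise on each $E_i\otimes F_l\otimes G_n$ by applying the $\sC$-hexagon to decompose $t_{E_i,F_l\otimes G_n}$ as $(\id_{F_l}\otimes t_{E_i,G_n})\circ(t_{E_i,F_l}\otimes\id_{G_n})$, then matching the sign accompanying $\tau_{\sE,\sF\otimes\sG}$ with the product of the signs appearing in $\tau_{\sE,\sF}\cup\id_\sG$ and $\id_\sF\cup\tau_{\sE,\sG}$, including the additional signs contributed by the cup-product conventions applied with degree-zero $\PTR$-morphisms. The main obstacle throughout is sign bookkeeping rather than any conceptual difficulty: every construction on $\PTR(\sC)$---namely $\otimes$, $\cup$, and $\tau$---carries signs indexed by different combinations of degrees and indices, and the nontrivial part of the work is expanding products like $(j-i+p)(l-k+q)$ carefully and checking that the cross terms cancel.
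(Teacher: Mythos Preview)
Your proposal is correct and follows essentially the same approach as the paper: a direct componentwise verification using $dt_{A,B}=0$, graded naturality of $t$, and the hexagon identity in $\sC$. The paper carries out the same computations but writes out the explicit sign identities in each case rather than invoking graded naturality as an abstract principle; your description is a faithful summary of that calculation.
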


\begin{proof} The first assertion follows from the identities
\begin{align*}
&((-1)^l\id_{F_l}\otimes e_{ij})\circ((-1)^{jl}t_{E_j,F_l})=((-1)^{il}t_{E_i,F_l})\circ ((-1)^{(j-i+1)l}e_{ij}\otimes \id_{F_l})\\
&((-1)^{(l-k+1)j}f_{kl}\otimes \id_{E_j})\circ((-1)^{jl}t_{E_j,F_l})=
((-1)^{jk}t_{E_j,F_k})\circ((-1)^j\id_{E_j}\otimes f_{kL}).
\end{align*}
For the second, we have 
\[
(-1)^{pq}\tau \circ \phi_{ij}\cup\psi_{kl}=(-1)^{pq}(-1)^{ik}(-1)^{(j-i+p)k+qj}t_{E_i,F_k}\circ(\phi_{ij}\otimes\psi_{kl})
\]
while
\begin{align*}
(\psi_{kl}\cup\phi_{ij})\circ\tau&=(-1)^{(l-k+q)i+pl}(-1)^{jl}(\psi_{kl}\otimes\phi_{ij})\circ t_{E_j,F_l}\\
&=(-1)^{(l-k+q)i+pl+jl}(-1)^{(j-i+p)(l-k+q)}t_{E_i,F_k}\circ(\phi_{ij}\otimes\psi_{kl})
\end{align*}
The result follows easily from this.

Finally, since the maps $\tau_{F_i,G_j}$ send $F_i\otimes G_j$ to $G_j\otimes F_i$, we have 
\[
\id_{F_i}\cup (\tau_{\sE,\sG})_{ik}=(-1)^{ik}\id_{F_i}\otimes t_{E_i,G_k}
\] and similarly
\[
(\tau_{\sE,\sF})_{ij}\cup\id_{G_k}=(-1)^{ij}t_{E_i,F_j}\otimes\id_{G_k},\
(\tau_{\sE,\sF\otimes\sG})_{ijk}=(-1)^{i(j+k)}t_{E_i,F_j\otimes G_k}.
\]
Since
\[
t_{E_i,F_j\otimes G_k}=(\id_{F_k}\otimes t_{E_i,G_k})\circ (t_{E_i,F_j}\otimes\id_{G_k}),
\]
this proves (3).
\end{proof}

We have shown
\begin{prop} Let $(\sC,\otimes)$ be a DG tensor category. Then the operations $(\sE,\sF)\mapsto \sE\otimes\sF$, $(\phi,\psi)\mapsto \phi\cup\psi$ and commutativity constraints $\tau_{**}$ makes $\PTR(\sC)$ into a DG tensor category. 
\end{prop}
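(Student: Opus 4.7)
The proposition asserts that $\PTR(\sC)$, equipped with $(\sE,\sF)\mapsto\sE\otimes\sF$, $(\phi,\psi)\mapsto\phi\cup\psi$, and commutativity constraints $\tau_{*,*}$, is a DG tensor category. The four lemmas preceding the proposition already do most of the work, so my plan is primarily to assemble them and to supply the remaining coherence data.

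From the preceding lemmas we know that (a) $\sE\otimes\sF$ is a well-defined object of $\PTR(\sC)$, (b) $\cup$ is a degree-preserving bilinear chain map (Leibniz rule), (c) $\cup$ satisfies the middle-four interchange $(\phi\cup\psi)\circ(\phi'\cup\psi')=(-1)^{\Deg\psi\cdot\Deg\phi'}(\phi\phi')\cup(\psi\psi')$, and (d) $\tau_{\sE,\sF}$ is a well-defined closed degree-$0$ morphism which is natural up to the Koszul sign and compatible with iterated tensor products. The identity $\id_\sE\cup\id_\sF=\id_{\sE\otimes\sF}$ is immediate from the component formula for $\cup$. Together with (b) and (c), this says that $\otimes$ and $\cup$ promote to a DG bifunctor $\PTR(\sC)\otimes\PTR(\sC)\to\PTR(\sC)$.

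Next I would supply the unit and associator. Take the unit to be $\mathbf{1}:=i_0(\mathbf{1}_\sC)$, where $\mathbf{1}_\sC$ is the unit of $\sC$; the unit isomorphisms $l_\sE:\mathbf{1}\otimes\sE\iso\sE$ and $r_\sE:\sE\otimes\mathbf{1}\iso\sE$ are defined componentwise by the unitors of $\sC$, and one checks from the definition of the structure maps of $\mathbf{1}\otimes\sE$ that they are closed degree-$0$ isomorphisms. For the associator, observe that both $(\sE\otimes\sF)\otimes\sG$ and $\sE\otimes(\sF\otimes\sG)$ have $k$-th term $\bigoplus_{a+b+c=k}E_a\otimes F_b\otimes G_c$, and with the sign conventions of the $\otimes$ construction the structure maps match up to the \emph{same} Koszul sign on each side; consequently the componentwise associator $a_{E_a,F_b,G_c}$ of $\sC$ assembles to a closed degree-$0$ isomorphism $a_{\sE,\sF,\sG}$ in $\PTR(\sC)$. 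A short direct computation verifies $\partial(a_{\sE,\sF,\sG})=0$ and naturality in $\sE,\sF,\sG$.

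What remains is verification of the triangle, pentagon, and two hexagons. All of these are diagrams of closed degree-$0$ morphisms, and they reduce componentwise to the corresponding coherence diagrams in $\sC$. The hexagon for $\tau$ in its basic form is already part (3) of the last lemma; the other hexagon follows from this together with the established naturality of $\tau$. For the pentagon and triangle, the key observation is that every sign appearing in the definitions of the structure maps of $\sE\otimes\sF$, of $\cup$, and of $\tau$ is of Koszul type, i.e.\ a product of $(-1)^{\epsilon\cdot|\alpha|}$ for index-valued $\epsilon$ and component-degrees $|\alpha|$; such signs cancel around any coherence diagram, and the coherence of $\sC$ then implies that of $\PTR(\sC)$. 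The main obstacle is purely the sign bookkeeping that confirms this cancellation on each summand $E_a\otimes F_b\otimes G_c\otimes\cdots$; once done, the proposition follows.
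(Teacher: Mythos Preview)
Your proposal is correct and follows essentially the same approach as the paper: the paper simply writes ``We have shown'' immediately before the proposition, treating the four preceding lemmas as constituting the entire proof. You have, if anything, been more thorough than the paper, which leaves the unit, associator, and remaining coherence axioms (triangle, pentagon, second hexagon) entirely implicit; your sketch of how these reduce componentwise to the coherence of $\sC$ via Koszul-type sign cancellation is the correct way to fill that gap.
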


\begin{rem} The tensor structure is compatible with the translation functor, in that one has {\em identities}
\begin{align*}
&\sE[1]\otimes \sF=(\sE\otimes\sF)[1]\\
&\phi[1]\cup\psi=(\phi\cup\psi)[1]
\end{align*}
Using the commutativity constraint gives us a canonical isomorphism
\[
\sE\otimes\sF[1]\xrightarrow{\tau_{\sF,\sE}[1]^{-1}\circ\tau_{\sE,\sF[1]}}(\sE\otimes\sF)[1]
\]
\end{rem}

\begin{rem} Let $\sC$ be a tensor category, which we consider as a DG tensor category with all morphisms in degree zero. Then $\PTR(\sC)$ is equal to the category of bounded complexes $C^b(\sC)$, and the tensor structure we have defined on $\PTR(\sC)$ is the standard one on $C^b(\sC)$ arising from the tensor structure on $\sC$.
\end{rem}

\begin{rem} If $\sC$ is a DG tensor category, the DG functor $i_0:\sC\to\PTR(\sC)$ is a tensor functor.
\end{rem}

\subsection{The category $K^b(\sC)$} 

\begin{definition} Let $\sC$ be a DG category. \\
\\
1. The DG category $C^b(\sC)$ is the smallest full subcategory of $\PTR(\sC)$ containing $i_0(\sC)$, and closed under translation, isomorphism and the operation $u\mapsto \Cone(u)$.\\
\\
2. The additive category $K^b(\sC)$ to be the homotopy category of $\PTR(\sC)$:
\[
K^b(\sC):=H^0\PTR(\sC).
\]
\end{definition}

\begin{rem} \label{rem:Cb1} 
Suppose that $\sC$ is a DG tensor category. Then $C^b(\sC)$ is a tensor subcategory of $\PTR(\sC)$. 
\end{rem}

\begin{thm} \label{thm:DGMain} Let $\sC$ be a DG category.\\
\\
1. The translation functor on $C^b(\sC)$ induces a translation functor  on $K^b(\sC)$. Defining a distinguished triangle in $K^b(\sC)$ to be a triangle that is isomorphic to the image of a cone sequence in $C^b(\sC)$ makes $K^b(\sC)$ into a triangulated category.\\
\\
2. If $\sC$ is a DG tensor category, the induced tensor structure on $C^b(\sC)$ defines a tensor structure on $K^b(\sC)$, making $K^b(\sC)$ a tensor triangulated category.\\
\\
3. Let $F:\sC\to \sC'$ be a DG functor. Then $K^bF:K^b(\sC)\to K^b(\sC')$ is exact. If $F$ is a DG tensor functor of DG tensor categories, then $K^bF$ is an exact tensor functor.
\end{thm}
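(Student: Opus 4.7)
My plan is to run each statement in parallel to the classical proof that $K^b(\sA)$ is triangulated, once we have verified that the formal apparatus (translation, cone, tensor product) on $\PTR(\sC)$ behaves like the classical one at the level of objects and morphisms.

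For part (1), the first observation is that translation $\sE\mapsto \sE[1]$ is a DG autoisomorphism of $\PTR(\sC)$ (and hence of the full sub-DG-category $C^b(\sC)$), so it descends to an autoequivalence of the additive categories $H^0\PTR(\sC)=K^b(\sC)$ and $H^0C^b(\sC)$. Next, given a closed degree zero morphism $\phi:\sE\to\sF$ in $C^b(\sC)$ one has the cone sequence $\sE\to\sF\to\Cone(\phi)\to\sE[1]$ in $Z^0C^b(\sC)$; the distinguished triangles in $K^b(\sC)$ are by definition those isomorphic to the image of such sequences. The verification of axioms (TR1)--(TR4) then proceeds exactly as in the classical case for bounded complexes over an additive category: for (TR1) every morphism $\phi$ in $K^b(\sC)$ has a closed representative which fits into its cone sequence; for (TR2) one writes down explicit closed maps $\Cone(\phi)\to\sF[1]\oplus \sE[1]\to \Cone(\phi)[?]$ and explicit first order homotopies exhibiting the required chain homotopy equivalences between the rotated cone sequence and the cone sequence of the next map, using only the defining identities of $\PTR(\sC)$; for (TR3) one fills in the third vertical by the obvious diagonal matrix built from the two given chain maps and checks the square commutes up to an explicit homotopy; for (TR4) one writes the usual octahedron whose vertices are $\Cone(\phi)$, $\Cone(\psi)$ and $\Cone(\psi\phi)$, with structure maps the block matrices familiar from the classical construction. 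The remark~\ref{rem:additive} that $\PTR(\sA)=C^b(\sA)$ when $\sA$ is an ordinary additive category tells us this is literally the classical computation when we restrict to the additive subcategory generated by the $E_i$, so no new sign identity arises, one just has to tolerate that the entries $e_{ij}$ go in arbitrary bidegree.

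For part (2), the tensor product $\otimes$ on $\PTR(\sC)$ (and hence on $C^b(\sC)$) is a DG bifunctor with a DG commutativity constraint $\tau$, so it descends to a tensor structure on $K^b(\sC)$. Compatibility with the triangulated structure amounts to showing that for each $\sG$, $-\otimes\sG$ is exact. By the identities in the remark following the proposition, $\sE[1]\otimes\sG=(\sE\otimes\sG)[1]$, and a direct inspection of the matrices defining the cone reveals a \emph{literal} identity
\[
\Cone(\phi)\otimes\sG=\Cone(\phi\cup\id_\sG)
\]
of objects and structure maps in $\PTR(\sC)$; hence tensoring the cone sequence of $\phi$ with $\sG$ yields the cone sequence of $\phi\cup\id_\sG$, which is distinguished. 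Exactness in the second variable then follows from the commutativity constraint together with (TR2). The fact that $(K^b(\sC),\otimes,\tau)$ is a tensor triangulated category reduces to Mac Lane's coherence together with the exactness of $-\otimes-$ just established.

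For part (3), a DG functor $F:\sC\to\sC'$ induces a DG functor $\PTR(F):\PTR(\sC)\to\PTR(\sC')$ by $\{E_i,e_{ij}\}\mapsto\{F(E_i),F(e_{ij})\}$ and $\{\phi_{ij}\}\mapsto\{F(\phi_{ij})\}$; that this respects $\partial$ and composition is an immediate consequence of $F$ being a DG functor. It sends $i_0(\sC)$ to $i_0(\sC')$ and commutes on the nose with $[1]$ and $\Cone(-)$, hence sends $C^b(\sC)$ into $C^b(\sC')$ and cone sequences to cone sequences. Passing to $H^0$ gives an additive functor $K^bF$ commuting with translation and preserving distinguished triangles, i.e.\ exact. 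When $F$ is a DG tensor functor, inspection of the definitions of $\otimes$ and $\cup$ on $\PTR$ shows that $\PTR(F)$ is again a DG tensor functor (the signs in both definitions are intrinsic and do not see $F$), so $K^bF$ is an exact tensor functor.

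The only real obstacle is the bookkeeping in (1): writing down the explicit block matrices and the first order homotopies needed for (TR2) and especially (TR4) in the Kapranov setting, where the structure maps $e_{ij}$ have every bidegree allowed by the degree constraint. Once those diagrams are written, the verifications are mechanical; I would therefore present (TR1)--(TR3) explicitly and reduce (TR4) to the standard octahedron by remarking that its proof uses only the identity $(-1)^i d e_{ij}+\sum_k e_{ik}e_{kj}=0$ and the Leibniz rule, both of which are available verbatim in $\PTR(\sC)$.
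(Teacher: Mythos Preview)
Your approach is sound and genuinely different from the paper's. The paper does not actually prove the theorem in the text: it simply cites \cite[Part 2, Chap.~II, proposition~2.1.6.4, proposition~2.1.7 and theorem~2.2.2]{MixMot}, where the analogous results are established for DG categories equipped with a \emph{translation structure}, and remarks that those arguments carry over to the present setting without essential change. Your route---a direct verification of (TR1)--(TR4) via explicit block matrices and homotopies, followed by checking that $-\otimes\sG$ and $\PTR(F)$ preserve cone sequences on the nose---is more self-contained and makes the mechanism visible, at the price of the sign bookkeeping you correctly flag in (TR2) and (TR4). Two small caveats: your appeal to remark~\ref{rem:additive} does not literally reduce anything to the additive case, since the $e_{ij}$ live in $\sC$ and have arbitrary degree; the honest statement is that the \emph{shape} of the cone and translation formulas is identical to the classical one, so the same block-matrix manipulations go through once one tolerates extra components. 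Likewise, the asserted literal identity $\Cone(\phi)\otimes\sG=\Cone(\phi\cup\id_\sG)$ does hold, but it needs a short sign check against the explicit formulas for the structure maps of $\sE\otimes\sF$ and for $\cup$ given in the paper---this is exactly the kind of verification the translation-structure formalism in \cite{MixMot} is designed to absorb.
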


The reader may find proofs of these facts in \cite[Part 2, Chap. II, proposition 2.1.6.4, proposition 2.1.7 and  theorem 2.2.2]{MixMot}. The context in \cite{MixMot} is slightly different, as there we work with DG categories having a {\em translation structure}. This is a device that allows one to avoid keeping explicit track of all the signs that turn up in the approach used here. However, the proofs cited from  \cite{MixMot} go through without any essential change to prove theorem~\ref{thm:DGMain}.

\begin{rem} If $\sC$ is an additive category, then $K^b(\sC)$ is the usual homotopy category of the category of bounded complexes over $\sC$.
\end{rem}

\subsection{Unbounded complexes} One can extend the construction of $\PTR(\sC)$ by relaxing the finiteness condition, if one imposes a ``local finiteness" condition for the structure maps $e_{ij}$ and morphisms $\phi_{ij}$. 

\begin{definition} For a DG category $\sC$, let $\PTR^\infty(\sC)$ be the DG category with objects
\[
E=\{E_i, i\in\Z, e_{ij}:E_j\to E_i,\ \Deg e_{ij}=j-i+1\}
\]
such that
\begin{enumerate}
\item There is an integer $N_E$ such that  $e_{ij}=0$  if $j-i+1>N_E$.
\item For each $i,j$, $(-1)^ide_{ij}+\sum_ke_{ik}e_{kj}=0$.
\end{enumerate}
Note that (1) implies that the sum in (2) is finite.

For $E=\{E_i, e_{ij}\}$, $F=\{F_i, i\in\Z, f_{ij}\}$ and $n\in\Z$, let $\sHom(E,F)^n$ be the subset of the product $\prod_{i,j}\Hom_\sC(E_j,F_i)^{j-i+n}$ consisting of collections $\phi_{ij}:E_j\to F_i$ such that there is an integer $N_\phi$ such that  $\phi_{ij}=0$ for $j-i+n>N_\phi$.

Make $\sHom(E,F)^*$ a complex by defining the differential $\partial_{F,E}$ as
\[
\partial_{F,E}(\phi)_{ij}:=d_\sC(\phi_{ij})+\sum_kf_{ik}\phi_{kj}-(-1)^{\Deg\phi}\sum_k\phi_{ik}e_{kj}
\]
Note that the sums are all finite and $N_{\partial\phi}\le \max(N_E,N_F,1)+N_\phi$, so in particular, the differential is well-defined.

The composition law
\[
\sHom(F,G)^*\otimes\sHom(E,F)^*\to\sHom(E,G)^*
\]
is given by
\[
(\psi\circ\phi)_{ij}=\sum_k\psi_{ik}\circ\phi_{kj}
\]
Note that sum is finite and    $N_{\psi\circ\phi}\le N_\psi+N_\phi$, so the composition law is well-defined.
\end{definition}

The fact that $\PTR^\infty(\sC)$ is a DG category follows by verifying the same identities that show that $\PTR(\sC)$ is a DG category. Similarly, the expressions for the translation functor and cone sequence in $\PTR(\sC)$ extend directly to $\PTR^\infty(\sC)$.

\begin{definition} Let $\PTR^+(\sC)$ be the full DG subcategory of $\PTR^\infty(\sC)$ with objects those $E=\{E_i, e_{ij}\}$ such that there is an $i_0$ with $E_i=0$ for  $i<i_0$; define the full DG subcategory $\PTR^-(\sC)$ similarly as having objects $E=\{E_i, e_{ij}\}$ such that there is an $i_0$ with $E_i=0$ for  $i>i_0$. 
\end{definition}

\begin{rem} If $\sA$ is an additive category, then $\PTR^\infty(\sA)$ is the category of unbounded complexes $C(\sA)$. Similarly $\PTR^+(\sA)=C^+(\sA)$ and $\PTR^-(\sA)=C^-(\sA)$. If $\sA$ is a tensor category, these last two identities are identities of DG tensor categories.
\end{rem}

The translation functor and cone sequences on $\PTR^\infty(\sC)$ restrict to give these structures on $\PTR^+(\sC)$ and  $\PTR^-(\sC)$.
If $\sC$ is a DG tensor category, then the same expressions used to define the tensor product of objects and morphisms make  $\PTR^+(\sC)$ and  $\PTR^-(\sC)$ into DG tensor categories

 To unify the notation, we sometimes denote $\PTR(\sC)$ by $\PTR^b(\sC)$.

\subsection{The total complex} There is a {\em total complex functor}
\[
\Tot:\PTR(\PTR(\sC))\to \PTR(\sC)
\]
defined as follows: if $\sE=\{\sE^l, \phi^{kl}:\sE^l\to \sE^k\}$, where $\sE^l=\{E^l_i, e^l_{ij}:E^l_j\to E^l_i\}$ and $\phi^{kl}=\{\phi^{kl}_{ij}:E^l_j\to E^k_i\}$, then $\Tot(\sE)$ is given by the collection of objects
\[
\Tot(\sE)_n:=\oplus_{j+l=n}E^l_j,
\]
together with the  morphisms $\Tot(\sE)_{mn}:\Tot(\sE)_n\to \Tot(\sE)_m$ defined as the sum of terms $(-1)^le^l_{ij}$ and $\phi^{kl}_{ij}$.

Using exactly the same formulas, the total complex functor  extends to
\[
\Tot:\PTR(\PTR^?(\sC))\to\PTR^?(\sC);\ ?=+,-,\infty
\]
and to 
\begin{align*}
&\Tot^+:\PTR^+(\PTR^+(\sC))\to\PTR^+(\sC);\\
&\Tot^-:\PTR^-(\PTR^-(\sC))\to\PTR^-(\sC),
\end{align*}
 which is an equivalence of DG categories with inverse the evident extension of the inclusion functor $i_0$. If $\sC$ is a DG tensor category,  then $\Tot$ is a DG tensor equivalence for $?=+,-$.

In particular, if $\sA$ is an additive category, we have an equivalence of DG categories
\[
\Tot:\PTR(C^{\pm}(\sA))\to C^\pm(\sA).
\]

\begin{prop}\label{prop:Tot} Let $\sC$ be a DG category.\\
\\
1. For $?=b, +, -, \infty$, the functor $\Tot:\PTR(\PTR^?(\sC))\to \PTR^?(\sC)$ is an equivalence of DG categories. The quasi-inverse is the inclusion functor $i_0:\PTR^?(\sC)\to \PTR(\PTR^?(\sC))$. Similarly, the functors $\Tot^\pm$ are equivalences of DG categories.
\\
2. $\Tot$, $\Tot^+$ and $\Tot^-$ intertwine the respective translation functors, and send cone sequences to cone sequences.\\
\\
3. Suppose that $\sC$ is a DG tensor category. Then for $?=b, +,-$
\[
\Tot:\PTR(\PTR^?(\sC))\to\PTR^?(\sC)
\]
 is a tensor functor, and is in fact an equivalence of DG tensor categories, with quasi-inverse $i_0$. The same is true for $\Tot^+$ and $\Tot^-$.
\end{prop}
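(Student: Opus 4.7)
The plan is to verify that $\Tot$ is fully faithful at the DG level and that $i_0$ provides a strict one-sided inverse; essential surjectivity is then automatic. Given any $\sF \in \PTR^?(\sC)$, the object $i_0(\sF)$ of $\PTR(\PTR^?(\sC))$ is concentrated in outer degree $0$, so the diagonal sums defining $\Tot$ collapse to give $\Tot(i_0(\sF)) = \sF$ with identical structure maps; hence $\Tot \circ i_0 = \id$ as DG functors. The required natural DG isomorphism $\id \xrightarrow{\sim} i_0 \circ \Tot$ on $\PTR(\PTR^?(\sC))$ then follows from full faithfulness in the standard way, by transporting $\id_{\Tot(\sE)}$ back across the isomorphism of Hom complexes.

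For full faithfulness I would unfold both Hom complexes into products indexed by the same combinatorial data. Writing $\sE = \{\sE^l, \phi^{kl}\}$ with $\sE^l = \{E^l_j, e^l_{ij}\}$, a degree-$n$ morphism $\alpha : \sE \to \sF$ on the left is a collection $\alpha^{kl}_{ij} : E^l_j \to F^k_i$ of degree $n + (l-k) + (j-i)$ in $\sC$, while on the right, using $\Tot(\sE)_m = \bigoplus_{j+l=m} E^l_j$, a degree-$n$ morphism $\Tot(\sE) \to \Tot(\sF)$ decomposes into components $\beta_{(i,k),(j,l)} : E^l_j \to F^k_i$ of degree $n + (j+l) - (i+k)$. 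The degrees agree, yielding a bijection of graded abelian groups, and one then checks that the three contributions to $\partial$ on the left ($d_\sC$, the inner structure maps $e^l_{**}$ and $f^k_{**}$, and the outer morphisms $\phi^{**}$ and $\psi^{**}$) correspond term-by-term to the single differential on the right, once the signs $(-1)^l$ built into $\Tot$ are taken into account. Composition is verified identically.

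The statements on the translation functor and cone sequences amount to the observation that $\Tot$ commutes on the nose with the outer shift and sends the componentwise cone of a degree-zero cycle $\phi : \sE \to \sF$ to $\Cone(\Tot(\phi))$. For the tensor structure when $? = b, +, -$, a direct computation shows that the diagonal decomposition of $\Tot(\sE \otimes \sF)_n$ groups summands exactly as in $(\Tot(\sE) \otimes \Tot(\sF))_n$, and the cup product formula together with the commutativity constraint $\tau_{\sE, \sF}$ are intertwined by $\Tot$; this promotes the DG equivalence to a DG tensor equivalence. The restriction to $\PTR^+(\PTR^+(\sC))$ and $\PTR^-(\PTR^-(\sC))$ is routine since the diagonal sum of two bounded-below (respectively bounded-above) objects is again bounded below (respectively above), so $\Tot^{\pm}$ are well defined and inherit all of the above.

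The main obstacle is the sign bookkeeping. The signs in $\PTR(\sD)$ for differentials, compositions, cup products, and commutativity constraints depend explicitly on the outer degree of the target, so after totalization each of these must be traced carefully through the identification of the two sets of components. In particular, the factor $(-1)^l$ in the definition of $\Tot$ is forced by the matching of differentials, and the signs $(-1)^{(l-k+1)n}$, $(-1)^k$, and $(-1)^{jl}$ appearing in the structure maps, cup product, and commutativity constraint of $\PTR$ propagate consistently into the corresponding signs for $\PTR^?(\sC)$. No new ideas are needed beyond those already in place; the difficulty is purely combinatorial consistency, of the same flavor encountered in the earlier verification that $\PTR(\sC)$ is itself a DG tensor category.
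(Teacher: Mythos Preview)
Your proposal is correct and follows essentially the same strategy as the paper, with one minor organizational difference worth noting. The paper verifies $\Tot\circ i_0=\id$ and then \emph{explicitly} writes down the natural isomorphism $i_0\circ\Tot\cong\id$: for $\sE=\{\sE^l,\phi^{kl}\}$ it takes the projection maps $\rho^{k0}_{ij}:\bigoplus_{l+n=j}E^l_n\to E^k_i$ and the inclusion maps $\lambda^{0l}$ as mutually inverse degree-zero cycles in $\PTR(\PTR^?(\sC))$, and checks naturality directly. You instead first establish that $\Tot$ is fully faithful by matching the component data of the two Hom complexes, and then recover the natural isomorphism abstractly by pulling back $\id_{\Tot(\sE)}$. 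Both routes come down to the same sign bookkeeping you correctly identify as the only real work; your version trades an explicit formula for a slightly cleaner logical structure, while the paper's version has the advantage that the isomorphism $\rho$ is visibly the ``obvious'' projection, so one sees immediately why it should be closed and invertible.
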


\begin{proof} One checks by direct computation that $\Tot$ is a DG functor. The composition $\Tot\circ i_0$ is the identity, so to complete the proof of (1), we need to construct a natural isomorphism $\id\cong i_0\circ\Tot$. If  $\sE=\{\sE^l, \phi^{kl}:\sE^l\to \sE^k\}$, where $\sE^l=\{E^l_i, e^l_{ij}:E^l_j\to E^l_i\}$ and $\phi^{kl}=\{\phi^{kl}_{ij}:E^l_j\to E^k_i\}$, then $\Tot(\sE)_n=\oplus_{i+l=n}E^l_i$. Define the map
\[
\rho^{k0}:i_0\circ\Tot(\sE)^0\to \sE^k
\]
in $\PTR(\sC)$ to be the collection of maps
\[
\rho^{k0}_{ij}:i_0\circ\Tot(\sE)^0_j=\oplus_{l+n=j}E^l_n\to E^k_i
\]
with $\rho^{k0}_{ij}=0$ if $k+i\neq j$, and with $\rho^{k0}_{ij}$ the projection on the summand 
$E^k_i$ if $k+i=j$. Then $\rho$ is a degree zero map in $\PTR(\PTR(\sC))$ with $\partial\rho=0$. The inverse $\lambda$ to $\rho$ is given by the collection of inclusions $\lambda^{0l}_{n,n-l}:E^l_{n-l}\to \oplus_{k+i=n}E^k_i$. It is easy to check that $\rho$ and $\lambda$ are natural.

A direct computation shows that $\Tot$ intertwines the translation functors an maps a cone sequence in 
$\PTR(\PTR(\sC))$ to a cone sequence in $\PTR(\sC)$, and that $\Tot$ is a tensor functor if $\sC$ is a DG tensor category.  This completes the proof of (1)-(3); (4) follows directly, noting that both $\Tot$ and $i_0$ are compatible (up to isomorphism) with translation and cone sequences.
\end{proof}

\subsection{Non-positive DG categories and  truncation functors}\label{sec:Truncation}
Recall the canonical truncation $\tau_{\le n}$ operation on the category $C(\sA)$ of complexes over an abelian category $\sA$:
\[
(\tau_{\le n}C)^m=\begin{cases}C^m&\text{ for }m<n\\0&\text{for }m>n\\\ker(d^n:C^n\to C^{n+1})&\text{ for }m=n.
\end{cases}
\]
For $f\in Z^0\sHom_{C(\sA)}(C,C')$,  we let $\tau_{\le n}f:\tau_{\le n}C\to \tau_{\le n}C'$ be the map induced by $f$.

We have the evident inclusion $\tau_{\le n}C\to C$, giving a natural endofunctor  $\tau_{\le n}$ of $Z^0C(\sA)$, which descends to a natural endofunctor of $K(\sA)$. Since $\tau_{\le n}C\to C$ induces an isomorphism on $H^m$ for $m\le n$,  $\tau_{\le n}$ passes to an endofunctor on the derived category $D(\sA)$.

If $\sA$ is an abelian tensor category, then for complexes $C, C'$, the map $\tau_{\le n}C\otimes\tau_{\le n'}C'\to C\otimes C'$ induced by the inclusions $\tau_{\le n}C\to C$, $\tau_{\le n'}C'\to C'$ factors through $\tau_{\le n+n'}(C\otimes C')\to C\otimes C'$; this follows from the Leibniz rule for $d_{C\otimes C'}$.

Now let $\sC$ be a DG category. We let $\tau_{\le 0}\sC$ be the DG category with the same objects as $\sC$, and with the Hom-complexes given by
\[
\sHom_{\tau_{\le 0}\sC}(X,Y):=\tau_{\le 0}\sHom_\sC(X,Y).
\]
The composition law for $\tau_{\le 0}\sC$ is given by the commutative diagram
\[
\xymatrix{
\tau_{\le0}\sHom_\sC(Y,Z)\otimes\tau_{\le 0}\sHom_\sC(X,Y)\ar@{^{(}->}[r]\ar[d]_{\circ}&
\sHom_\sC(Y,Z)\otimes \sHom_\sC(X,Y)\ar[d]^\circ\\
\tau_{\le0}\sHom(X,Z)\ar@{^{(}->}[r]&\sHom_\sC(X,Z)
}
\]

\begin{rem}
For $\sC$ an arbitrary DG category, the inclusion $\tau_{\le 0}\sC\to \sC$ is universal for DG functors of a non-positive DG category $\sC'$ to $\sC$.

If $\sC$ is a DG tensor category, then $\tau_{\le 0}\sC$ is a DG tensor subcategory, similarly universal for DG functors from a non-positive DG tensor category to $\sC$.
\end{rem}

Let $\sC$ be a non-positive DG category. We have the functor of ``stupid" truncation $\sigma_{\le N}$ on $\PTR^\infty(\sC)$: For $Y=\{Y^i, e_{ij}:Y^j\to Y^i\}$, $\sigma_{\le N}Y$ is the object given by the collection $\{Y^i, i\le N, e_{ij}, i,j\le N\}$. We define $\sigma_{\le N}$ on morphisms similarly (but note that $\sigma_{\le N}$ is {\em not} compatible with the differential). It is easy to check that $\sigma_{\le N}$ is a well-defined endofunctor of the underlying graded additive category, and the evident collections of identity maps and zero maps defines a natural transformation
\[
\pi_{\le N}(Y):Y\to \sigma_{\le N}Y
\]
with $\pi_{\le N}\circ\pi_{\le M}=\pi_{\le M}\circ\pi_{\le N}=\pi_{\le N}$ for $N\le M$. Similarly, we have the stupid truncation in the other direction $\sigma^{\ge N}Y$, and natural transformation
\[
\iota^{\ge N}(Y):\sigma^{\ge N}Y\to  Y.
\]
with  $\iota^{\ge M}\circ\iota^{\ge N}=\iota^{\ge N}\circ\iota^{\ge M}=\iota^{\ge N}$ for $N\ge M$.

\begin{rem} Beware that $\sigma_{\le N}$ and $\sigma^{\ge N}$ do {\em not} define DG functors, and hence do not pass to functors on the homotopy category.
\end{rem} 

\begin{rem}\label{rem:Cone} Let $\sC$ be a non-positive DG category.  Let $Y=\{Y^i, e_{ij}\}$ be in $\PTR(\sC)$, and suppose that $Y=\sigma_{\le N}Y$. Then 
the collection of maps $e_{Nj}:Y^j\to Y^N$, for $j<N$, give rise to a  map
\[
e_{N,*<N}:\sigma_{\le N-1}Y\to Y^N[1-N]
\]
in $Z^0\sC$, with $Y$ is isomorphic to $\Cone(e_{N,*<N})[-1]$. By induction on $N$, this shows that $\PTR(\sC)=C^b(\sC)$.
\end{rem} 

This motivates the following definition:
\begin{definition} For a DG category $\sC$, let $C^\infty(\sC)$ be the smallest full DG subcategory of $\PTR^\infty(\sC)$ that contains $\PTR^\infty(\tau_{\le0}\sC)$ and is closed under translation, isomorphism and taking cones. Let $C^+(\sC)$ and $C^-(\sC)$ be similarly defined closures of $\PTR^+(\tau_{\le0}\sC)$ and $\PTR^-(\tau_{\le0}\sC)$ in $\PTR^+(\sC)$ and $\PTR^-(\sC)$, respectively.

For $?=b, +,-,\infty$, let $K^?(\sC)$ be the homotopy category of $C^?(\sC)$.
\end{definition}

\begin{rem} $C^+(\sC)$ and $C^-(\sC)$ are also the ``closures" of  $\PTR^+(\tau_{\le0}\sC)$ and $\PTR^-(\tau_{\le0}\sC)$ in $\PTR^\infty(\sC)$.
\end{rem}

The analog of theorem~\ref{thm:DGMain} remains true for $C^?(\sC)$, $?=\infty, +, -$. The proofs  are exactly the same as in the bounded case. For reference purposes, we record this result here:

\begin{thm} \label{thm:DGMain+} Let $\sC$ be a DG category, $?=b,+,-,\infty$\\
\\
1. The translation functor on $C^?(\sC)$ induces a translation functor  on $K^?(\sC)$. Defining a distinguished triangle in $K^?(\sC)$ to be a triangle that is isomorphic to the image of a cone sequence in $C^?(\sC)$ makes $K^?(\sC)$ into a triangulated category.\\
\\
2. If $\sC$ is a DG tensor category, then for $?=b,+,-$, the induced tensor structure on $C^?(\sC)$ defines a tensor structure on $K^?(\sC)$, making $K^?(\sC)$ a tensor triangulated category.\\
\\
3. Let $F:\sC\to \sC'$ be a DG functor. Then $K^?F:K^?(\sC)\to K^?(\sC')$ is exact. If $F$ is a DG tensor functor of DG tensor categories, then $K^?F$ is an exact tensor functor for $?=b,+,-$.
\end{thm}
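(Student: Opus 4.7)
My plan is to mirror the proof of Theorem~\ref{thm:DGMain} given in \cite{MixMot}, verifying at each step that the arguments go through when we relax the boundedness condition on indices, and to isolate the one place where the tensor structure forces us to exclude the unbounded case.

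For part (1), I would first check that the translation functor $[1]$ and the cone construction, as defined on $\PTR(\sC)$, restrict to endofunctors of $\PTR^?(\sC)$ for $?=+,-,\infty$: if $\sE$ has $E_i=0$ for $i<i_0$, then $\sE[1]$ has $(\sE[1])_i=0$ for $i<i_0-1$, and the cone $\Cone(\phi)$ of $\phi:\sE\to\sF$ has $\Cone(\phi)_i=F_i\oplus E_{i+1}$, which is likewise bounded below; the analogous statements hold for $\PTR^-$ and $\PTR^\infty$. The closure property built into the definition of $C^?(\sC)$ then guarantees that translation and cones are endofunctors of $C^?(\sC)$. Once this is in hand, the verification of the axioms (TR1)–(TR4) for $K^?(\sC):=H^0C^?(\sC)$ is formal: every axiom in the bounded case is proved by producing an explicit degree $0$ cycle morphism between cones and an explicit homotopy, and these formulas involve only finitely many components of the objects in question, so they make sense verbatim in the unbounded setting.

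For part (2), the point to check is that the tensor structure on $\PTR(\sC)$ described earlier extends to $\PTR^?(\sC)$ for $?=+,-$. The formula $(\sE\otimes\sF)_i=\oplus_k E_k\otimes F_{i-k}$ involves an a priori infinite sum, but if $E_k=0$ for $k<a$ and $F_k=0$ for $k<b$ (the case $?=+$), then only indices with $a\le k\le i-b$ contribute, so the sum is finite; the dual bound handles $?=-$. This is precisely why the statement excludes $?=\infty$: with no boundedness hypothesis the direct sum is genuinely infinite and need not exist in $\sC$. Having verified that $\otimes$, $\cup$ and $\tau_{**}$ all preserve the boundedness type, they define a DG tensor structure on $\PTR^?(\sC)$ by the same computations as in the bounded case. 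Closure of $C^?(\sC)$ under $\otimes$ then follows from the identities $\sE[1]\otimes\sF=(\sE\otimes\sF)[1]$ and $\Cone(\phi)\otimes\sG\cong\Cone(\phi\cup\id_\sG)$ (via the commutativity constraint and $\id_\sG\cup\phi$ for the right-hand factor), combined with the fact that $\PTR^?(\tau_{\le0}\sC)$ is closed under $\otimes$; this proves that $K^?(\sC)$ inherits a tensor triangulated structure.

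For part (3), a DG functor $F:\sC\to\sC'$ extends term-by-term to $\PTR^?(\sC)\to\PTR^?(\sC')$ and commutes with translation, cones, and (when $F$ is a tensor functor and $?\neq\infty$) with $\otimes$, $\cup$, $\tau_{**}$. Hence it restricts to a DG functor $C^?(\sC)\to C^?(\sC')$, and on passing to $H^0$ yields an exact functor on homotopy categories, which is a tensor functor in the tensor case.

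The part I expect to require the most care is not any single verification but rather the bookkeeping involved in checking that each closure step in the definition of $C^?(\sC)$ is compatible with the tensor structure, since one must verify that $\otimes$ sends the generating class $\PTR^?(\tau_{\le0}\sC)$ into $C^?(\sC)$ and then propagate this through translations and cone formations; the formulas for $\sE\otimes\sF$ involve signs $(-1)^{(l-k+1)n}$ and $(-1)^k$ on the structure maps, and one must confirm that these are precisely what is needed so that $\Cone(\phi)\otimes\sG$ sits canonically inside $\Cone(\phi\cup\id_\sG)$, with no hidden sign obstructions. Granting this compatibility — which is purely formal and identical to the check already made for $\PTR(\sC)$ — the theorem follows exactly as in the bounded case treated in \cite[Part 2, Chap.~II]{MixMot}.
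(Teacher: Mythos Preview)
Your proposal is correct and is essentially what the paper does: the paper gives no proof at all for this theorem, merely stating ``The proofs are exactly the same as in the bounded case'' and referring back to Theorem~\ref{thm:DGMain}, which in turn defers to \cite{MixMot}. Your write-up is in fact more explicit than the paper's treatment---in particular, your observation that the finiteness of the sum defining $(\sE\otimes\sF)_i$ is exactly what forces the exclusion of $?=\infty$ in parts (2) and (3), and your sketch of why $C^?(\sC)$ is closed under $\otimes$ via the cone compatibility, make explicit points the paper leaves to the reader.
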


Similarly, we have

\begin{prop} For $?=b,+,-,\infty$,  the equivalence $\Tot:\PTR(\PTR^?(\sC))\to\PTR^?(\sC)$ induce an equivalence 
\[
\Tot:C^b(C^?(\sC))\to C^?(\sC)
\]
with quasi-inverse $i_0:C^?(\sC)\to C^b(C^?(\sC))$.  If $\sC$ is a DG tensor category, and $?=b,+,-$, then $\Tot$ is an equivalence of DG tensor categories. The analogous statements hold for 
\begin{align*}
\Tot^+:C^+(C^+(\sC))\to C^+(\sC)\\
\Tot^-:C^-(C^-(\sC))\to C^-(\sC)
\end{align*}
\end{prop}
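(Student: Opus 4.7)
The plan is to restrict the equivalence $\Tot:\PTR(\PTR^?(\sC))\to\PTR^?(\sC)$ of Proposition~\ref{prop:Tot} to the designated subcategories, exploiting the closure-theoretic definition of $C^?$. Since $C^?(\sC)$ is a full DG subcategory of $\PTR^?(\sC)$, the category $\PTR(C^?(\sC))$ sits as a full DG subcategory of $\PTR(\PTR^?(\sC))$, and I will consider the full DG subcategory $\sD\subset \PTR(C^?(\sC))$ consisting of those $X$ with $\Tot(X)\in C^?(\sC)$. The key step is to show that $\sD$ contains $C^b(C^?(\sC))$; this immediately produces the restricted DG functor $\Tot:C^b(C^?(\sC))\to C^?(\sC)$.

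To verify this containment, I will first note that $i_0(C^?(\sC))\subset \sD$, since $\Tot\circ i_0=\id$ by Proposition~\ref{prop:Tot}(1). Next, Proposition~\ref{prop:Tot}(2) asserts that $\Tot$ intertwines translations and sends cone sequences to cone sequences, while $C^?(\sC)$ is by definition closed in $\PTR^?(\sC)$ under translation, isomorphism, and cones; hence $\sD$ inherits closure under each of the three operations used to generate $C^b(C^?(\sC))$ from $i_0(C^?(\sC))$. The minimality clause in the definition of $C^b(C^?(\sC))$ then gives $\sD\supset C^b(C^?(\sC))$. For the quasi-inverse direction, $i_0(X)$ lies in $i_0(C^?(\sC))\subset C^b(C^?(\sC))$ for any $X\in C^?(\sC)$, so $i_0$ restricts to a DG functor $C^?(\sC)\to C^b(C^?(\sC))$; the natural isomorphism $\id\cong i_0\circ\Tot$ and the identity $\Tot\circ i_0=\id$ from Proposition~\ref{prop:Tot} then restrict accordingly to give the desired DG equivalence.

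For the tensor assertion I take $?\in\{b,+,-\}$. By Theorem~\ref{thm:DGMain+}(2), $C^?(\sC)$ carries a DG tensor structure extending that of $\sC$, so Remark~\ref{rem:Cb1}, applied to the DG tensor category $C^?(\sC)$, shows that $C^b(C^?(\sC))\subset \PTR(C^?(\sC))$ is a DG tensor subcategory. Since the unrestricted $\Tot$ is a DG tensor equivalence by Proposition~\ref{prop:Tot}(3), its restriction is again a DG tensor equivalence. The corresponding statements for $\Tot^+$ and $\Tot^-$ will follow by precisely the same argument, now invoking the restricted equivalences of Proposition~\ref{prop:Tot}.

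I do not anticipate any serious obstacle: the entire argument is a formal consequence of the fact that $C^?$ is defined by closure under operations that $\Tot$ already preserves at the level of $\PTR$. The only mild subtlety is making sure that both $\Tot$ and $i_0$ restrict as DG functors, not merely as functors of underlying additive categories, but this is immediate from the DG-functoriality already established on $\PTR(\PTR^?(\sC))$.
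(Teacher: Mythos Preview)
Your proposal is correct and follows exactly the approach the paper has in mind: the paper states this proposition without proof, treating it as an immediate consequence of Proposition~\ref{prop:Tot}, and your argument supplies precisely the expected verification that the closure properties defining $C^b$ and $C^?$ are preserved by $\Tot$ and $i_0$. The only point worth tightening slightly is the tensor case: rather than invoking Theorem~\ref{thm:DGMain+}(2), it is cleaner to note directly that $C^?(\sC)$ is a tensor subcategory of $\PTR^?(\sC)$ (the analog of Remark~\ref{rem:Cb1}), so that Proposition~\ref{prop:Tot}(3) restricts verbatim; but this is a cosmetic matter, not a gap.
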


\begin{cor}\label{cor:Universality} Let $\sC$ be a DG category, $\sA$ an additive category, and 
\[
F:\sC\to C^?(\sA)
\]
a DG functor, $?=b,+,-,\infty$.  Then $F$ defines a canonical exact functor
\[
K^b(F):K^b(\sC)\to K^?(\sA);
\]
if $F$ is a DG tensor functor, then $K^?(F)$ is an exact tensor functor for $?=b,+,-$.
\end{cor}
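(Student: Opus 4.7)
The plan is to combine the functoriality of $K^b$ established in theorem~\ref{thm:DGMain+}(3) with the total complex equivalence stated in the preceding proposition. First I would feed the DG functor $F:\sC\to C^?(\sA)$ into $K^b$, which by theorem~\ref{thm:DGMain+}(3) (applied with bounded source category and target $C^?(\sA)$) yields an exact functor
\[
K^b(F):K^b(\sC)\longrightarrow K^b(C^?(\sA)).
\]
Next, the preceding proposition supplies a DG equivalence $\Tot:C^b(C^?(\sA))\xrightarrow{\sim} C^?(\sA)$, with quasi-inverse $i_0$, valid for every $?\in\{b,+,-,\infty\}$. Since $\Tot$ intertwines the translation functors and carries cone sequences to cone sequences, its passage to homotopy categories gives an exact equivalence of triangulated categories $K^b(C^?(\sA))\xrightarrow{\sim}K^?(\sA)$. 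Composing with $K^b(F)$ produces the canonical exact functor $K^b(\sC)\to K^?(\sA)$ asserted by the corollary; canonicity is automatic because $K^b$ is functorial in DG functors and $\Tot$ is natural.

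For the tensor assertion, assume $F$ is a DG tensor functor and $?\in\{b,+,-\}$. Then theorem~\ref{thm:DGMain+}(3) upgrades $K^b(F)$ to an exact tensor functor, while the last clause of the preceding proposition upgrades $\Tot$ to an equivalence of DG tensor categories; its induced equivalence on homotopy categories is then an exact tensor equivalence. The composition inherits both the exactness and the tensor compatibility, proving the second claim.

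There is no real obstacle here: the proof is essentially a bookkeeping exercise that stitches together results already established in this section. The only point worth a brief sanity check is that the $\Tot$-equivalence lives at the level of the smaller categories $C^b(C^?(\sA))$ and $C^?(\sA)$ rather than the ambient $\PTR$-categories, but this is precisely what the preceding proposition asserts, the closure conditions defining $C^?$ being preserved under $\Tot$ and $i_0$.
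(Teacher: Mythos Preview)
Your proposal is correct and follows essentially the same approach as the paper: apply $K^b$ to the DG functor $F$ to obtain $K^b(\sC)\to K^b(C^?(\sA))$, then compose with the exact (tensor) functor induced by $\Tot$ to land in $K^?(\sA)$. The paper's proof is terser but identical in substance.
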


\begin{proof} $F$ induces the exact (tensor) functor
\[
K^b(F):K^b(\sC)\to K^b(C^?(\sA));
\]
we apply the exact functor (exact tensor functor for $?=b,+,-$)
\[
\Tot: K^b(C^?(\sA))\to K^?(\sA)
\]
to give the result.
\end{proof}

\subsection{Homotopy equivalence of DG categories}

\begin{definition}\label{definition:DGHEquiv} A functor of DG category $F:\sC\to\sC'$ is called a {\em homotopy equivalence} if 
\begin{enumerate}
\item $H^0F$ induces an isomorphism $\Obj(H^0\sC)/\Iso\to \Obj(H^0\sC')/\Iso$.
\item For each pair of objects $X, Y\in\sC$, the map 
\[
F:\sHom_\sC(X,Y)^*\to \sHom_{\sC'}(F(X),F(Y))^*\]
 is a quasi-isomorphism.
\end{enumerate}
\end{definition}

\begin{lem}\label{lem:Limit} Let $\sC$ be a non-positive DG category. For $Y$ in $\PTR^\infty(\sC)$, the limits $\lim_N\sigma_{\le N}Y$ and $\colim_N\iota^{\ge N}Y$ both exist, and the natural maps
\[
Y\to \lim_N\sigma_{\le N}Y;\quad \colim_N\sigma^{\ge N}Y\to Y
\]
induced by the $\pi_{\le N}$ and $\iota^{\ge N}$ are isomorphisms. In addition, we have a natural short exact sequence
\begin{multline*}
0\to R^1\lim_NH^{n-1}(\Hom_{\PTR^\infty(\sC)}(X, \sigma_{\le N}Y)\to
H^n(\Hom_{\PTR^\infty(\sC)}(X, Y)^*)\\
\to  \lim_NH^n(\Hom_{\PTR^\infty(\sC)}(X, (\sigma_{\le N}Y)^*)\to0.
\end{multline*}
The analogous result holds for  $\lim_N\sigma_{\le N}(-)$ in  $\PTR^+(\sC)$ and
$\colim_N\sigma^{\ge N}(-)$ in $\PTR^-(\sC)$.
\end{lem}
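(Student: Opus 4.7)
The plan is to reduce everything to two observations about the Hom complex in the non-positive setting: first, that $\sHom_{\PTR^\infty(\sC)}(X,Y)^*$ is in each cohomological degree the inverse limit of the tower $\sHom_{\PTR^\infty(\sC)}(X,\sigma_{\le N}Y)^*$ (which identifies $Y$ with $\lim_N\sigma_{\le N}Y$ via the defining universal property of limits in a DG category); second, that this tower has levelwise surjective transition maps, so the classical Milnor short exact sequence applies. The colimit statement is dual, and the $\PTR^\pm$ variants follow from the same analysis.

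First I would unpack morphisms in the non-positive case. By remark~\ref{rem:Nonpos}, the structure maps $e_{ij}$ of any object of $\PTR^\infty(\sC)$ vanish unless $j<i$, and a component $\phi_{ij}\colon X_j\to Y_i$ of a degree-$n$ morphism lies in $\sHom_\sC(X_j,Y_i)^{j-i+n}$, hence is automatically zero unless $i\ge j+n$. In particular the external local-finiteness bound $N_\phi$ in the definition of $\sHom_{\PTR^\infty(\sC)}(X,Y)^n$ is automatic, with $N_\phi=0$ always working. Thus a degree-$n$ morphism $X\to Y$ is an arbitrary family $(\phi_{ij})_{i\ge j+n}$, and a degree-$n$ morphism $X\to\sigma_{\le N}Y$ is such a family with the additional restriction $i\le N$. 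The projections $\pi_{\le N}$ then exhibit the natural map $\sHom(X,Y)^n\to\lim_N\sHom(X,\sigma_{\le N}Y)^n$ as the evident isomorphism, so $Y\iso\lim_N\sigma_{\le N}Y$. Dually, a degree-$n$ morphism $\sigma^{\ge N}Y\to Z$ is a family $(\phi_{ij})_{j\ge N,\,i\ge j+n}$, and the filtered colimit over decreasing $N$ reassembles an arbitrary such family, giving $\colim_N\sigma^{\ge N}Y\iso Y$.

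For the Milnor sequence, in each cohomological degree the transition map $\sHom(X,\sigma_{\le N+1}Y)^n\to\sHom(X,\sigma_{\le N}Y)^n$ simply forgets the components $\phi_{N+1,j}$, and is surjective since extension by zero in that row provides a set-theoretic section. For any tower of cochain complexes of abelian groups with levelwise surjective transitions, applying the snake lemma to the short exact sequence
\[
0\to\lim_N C_N\to\prod_N C_N\xrightarrow{1-\text{shift}}\prod_N C_N\to 0
\]
produces the functorial short exact sequence
\[
0\to R^1\lim_N H^{n-1}(C_N)\to H^n(\lim_N C_N)\to\lim_N H^n(C_N)\to 0;
\]
with $C_N=\sHom_{\PTR^\infty(\sC)}(X,\sigma_{\le N}Y)^*$ and $\lim_N C_N=\sHom_{\PTR^\infty(\sC)}(X,Y)^*$ from the first step, this is exactly the claimed sequence. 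The variants for $\PTR^+(\sC)$ and $\PTR^-(\sC)$ go through without change, since $\sigma_{\le N}$ and $\sigma^{\ge N}$ preserve lower and upper boundedness respectively. The argument is largely bookkeeping; the one substantive point, and thus the main obstacle to watch for, is the essential use of non-positivity of $\sC$ to make the local-finiteness condition on morphisms in $\PTR^\infty(\sC)$ automatic, so that an element of $\lim_N\sHom(X,\sigma_{\le N}Y)^n$ really does assemble into a genuine degree-$n$ morphism in $\PTR^\infty(\sC)$ with no further finiteness obligations.
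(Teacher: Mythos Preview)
Your proposal is correct and follows essentially the same approach as the paper's proof: identify $\sHom_{\PTR^\infty(\sC)}(X,\sigma_{\le N}Y)^n$ with the product $\prod_{i\le N,j}\sHom_\sC(X_j,Y_i)^{j-i+n}$, observe that the projective limit of these is $\sHom_{\PTR^\infty(\sC)}(X,Y)^n$, and then invoke the Mittag--Leffler/Milnor sequence for the tower of Hom-complexes with surjective (projection) transition maps. If anything, you are more explicit than the paper about the one genuinely non-formal point, namely that non-positivity of $\sC$ forces the local-finiteness constraint on morphisms in $\PTR^\infty(\sC)$ to be vacuous, so that an element of the inverse limit really does define a morphism in $\PTR^\infty(\sC)$.
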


\begin{proof}  For all $X\in \PTR^\infty(\sC)$, 
\[
\Hom_{\PTR^\infty(\sC)}(X, \sigma_{\le N}Y)^n=\prod_{i\le N, j}\Hom_{\sC}(X^j,Y^i)^{j-i+n}
\]
and hence the maps $\pi_{\le N}$ induce an isomorphism
\[
\Hom_{\PTR^\infty(\sC)}(X, Y)^*\to \lim_N\Hom_{\\PTR^\infty(\sC)}(X, (\sigma_{\le N}Y)^*
\]
Thus $Y$ represents the projective limit. In addition, the maps in the projective system of Hom-complexes are just projections, hence satisfy the Mittag-Leffler conditions for a projective system of complexes. This gives us the short exact sequence
\begin{multline*}
0\to R^1\lim_NH^{n-1}(\Hom_{\PTR^\infty(\sC)}(X, (\sigma_{\le N}Y))\to
H^n(\Hom_{\PTR^\infty(\sC)}(X, Y)^*)\\
\to  \lim_NH^n(\Hom_{\PTR^\infty(\sC)}(X, (\sigma_{\le N}Y)^*)\to0,
\end{multline*}

The proof for the colimit of the $\sigma^{\ge N}Y$ is similar.
\end{proof}

\begin{lem}\label{lem:NonPosEquiv} Let $F:\sC\to \sC'$ be a homotopy equivalence of non-positive DG categories.   Then
for $?=b,+,-,\infty$
\[
K^?(F): K^?(\sC)\to K^?(\sC')
\]
is an equivalence of triangulated categories.
\end{lem}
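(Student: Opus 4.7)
The proof splits into establishing full faithfulness and essential surjectivity of $K^?(F)$, bootstrapping from the bounded case $? = b$ to the unbounded cases via Lemma~\ref{lem:Limit}.

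First I would establish the stronger claim that $F$ induces a quasi-isomorphism $\sHom_{C^?(\sC)}(X, Y)^* \to \sHom_{C^?(\sC')}(F(X), F(Y))^*$ for all $X, Y \in C^?(\sC)$; full faithfulness of $K^?(F)$ then follows on passing to $H^0$. For $? = b$, I would induct on the cone complexity of $X$ and $Y$: the base case $X, Y \in i_0(\sC)$ reduces to the hypothesis since $\sHom_{C^b(\sC)}(i_0(A), i_0(B)) = \sHom_\sC(A, B)$. For the inductive step, a cone $Z = \Cone(u\colon A \to B)$ gives rise to a short exact sequence of Hom complexes
\[
0 \to \sHom(W, B) \to \sHom(W, Z) \to \sHom(W, A[1]) \to 0,
\]
and dually in the first variable; the 5-lemma on the associated long exact cohomology sequences then propagates the quasi-isomorphism. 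To extend to $? = +$, I invoke Lemma~\ref{lem:Limit}: for $Y \in C^+(\sC)$ one has $Y = \lim_N \sigma_{\le N} Y$ with each $\sigma_{\le N} Y \in C^b(\sC)$, and the Milnor short exact sequence relating $H^n\sHom(X, Y)$ to $\lim$ and $R^1\lim$ of $H^{n-*}\sHom(X, \sigma_{\le N} Y)$ reduces the problem to the already-handled bounded case. The case $? = -$ is dual via $\colim_N \sigma^{\ge N}$, and $? = \infty$ is treated by iterated truncation (first truncating above to land in $C^-(\sC)$, then handling each piece).

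For essential surjectivity I again induct on cone complexity in the case $? = b$: the base case $Y' \in i_0(\sC')$ follows from surjectivity of $H^0F$ on isomorphism classes, while for $Y' = \Cone(u'\colon A' \to B')$ with $A', B'$ already in the essential image via isomorphisms $A' \cong K^b(F)(A)$, $B' \cong K^b(F)(B)$, the full faithfulness above lifts the conjugated morphism $K^b(F)(A) \to K^b(F)(B)$ to a class $[v]$ in $\Hom_{K^b(\sC)}(A, B)$. Choosing a strict representative $\tilde v$, the triangulated axiom on morphisms of triangles produces an isomorphism $\Cone(u') \cong K^b(F)(\Cone(\tilde v))$. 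For $Y' \in C^+(\sC')$, I construct a compatible tower $\cdots \to Y_{N+1} \to Y_N$ in $C^b(\sC)$ with $F(Y_N) \cong \sigma_{\le N} Y'$: Remark~\ref{rem:Cone} presents $\sigma_{\le N+1} Y'$ as a shifted cone of a morphism $\sigma_{\le N} Y' \to (Y')^{N+1}[1-N]$, and full faithfulness lifts this morphism so as to produce $Y_{N+1}$ from $Y_N$. The limit $Y = \lim_N Y_N$ lies in $C^+(\sC)$ and satisfies $F(Y) \cong Y'$ in $K^+(\sC')$; the cases $? = -, \infty$ are analogous using $\colim$.

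The main obstacle is the coherence of the lifted isomorphisms $F(Y_N) \cong \sigma_{\le N} Y'$ in the essential surjectivity argument for unbounded complexes: these are defined only in the homotopy category, yet the limit construction demands strict compatibility at the level of $Z^0 \PTR^?(\sC)$. I plan to address this by choosing strict representatives at each inductive stage, and arguing that any failure of strict compatibility is absorbed by the $R^1\lim$ term of Lemma~\ref{lem:Limit}, which is annihilated when passing to isomorphism classes in $K^+(\sC')$. A subsidiary check verifies that the assembled $Y$ actually lies in $C^+(\sC)$, the closure of $\PTR^+(\tau_{\le 0}\sC)$ under translation, isomorphism, and cones, rather than merely in $\PTR^+(\sC)$: this holds because each transition $Y_N \to Y_{N+1}$ is realized as a single cone operation, so the whole tower assembles into an iterated cone built from objects of $\tau_{\le 0}\sC$.
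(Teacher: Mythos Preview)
Your strategy matches the paper's: prove the bounded case by induction on cone complexity (the paper phrases this via stupid truncations and Remark~\ref{rem:Cone}), then pass to $?=+,-,\infty$ using Lemma~\ref{lem:Limit}. The full-faithfulness argument is essentially identical to the paper's.

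The handling of the coherence obstacle in essential surjectivity is where you diverge, and your stated justification is not correct. Your claim that ``failure of strict compatibility is absorbed by the $R^1\lim$ term'' misreads the Milnor sequence: in Lemma~\ref{lem:Limit} the map $H^0\sHom(F(Y),Y')\to\lim_N H^0\sHom(F(Y),\sigma_{\le N}Y')$ is \emph{surjective}, and $R^1\lim$ measures non-uniqueness of the lift, not an obstruction to be absorbed. One can salvage your approach by first checking that the $\phi_N$ assemble into a compatible element of $\lim_N H^0$ (this does follow from the TR3 square you invoke), then lifting via surjectivity and separately arguing the lift is an isomorphism---but this is not what you wrote, and it is more bookkeeping than your sketch suggests. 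The paper takes a more direct route: it arranges \emph{strict} compatibility $\sigma_{\le N}\phi_{N+1}=\phi_N$ at every stage. Having lifted the structure map $f_{N+1,*<N+1}$ to $e:Y_N\to Y^{N+1}[-N]$, one has $\phi^{N+1}[-N]\circ F(e)=f\circ\phi_N$ only up to a chosen homotopy $a$; the paper then sets $Y_{N+1}=\Cone(e)[-1]$ and defines $\phi_{N+1}$ by the matrix $\left(\begin{smallmatrix}\phi_N&0\\a&\phi^{N+1}[-N-1]\end{smallmatrix}\right)$, verifying by hand that $\partial\phi_{N+1}=0$. Then $\phi=\lim_N\phi_N$ exists as a strict map in $Z^0\PTR^+(\sC')$, and the Milnor sequence applied to $\sHom(Z,-)$ for arbitrary $Z$ shows it is an isomorphism. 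Finally, your subsidiary check that $Y\in C^+(\sC)$ rather than merely $\PTR^+(\sC)$ is unnecessary here: since $\sC$ is non-positive, $\tau_{\le 0}\sC=\sC$ and these two categories coincide.
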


\begin{proof} We first give the proof for $?=+$. We first show that $K^+(F)$ is fully faithful.

Take $X, Y\in\PTR^+(\sC)$.   By translating, may assume that $X=\{X^i, i\ge0, f_{ij}\}$, $Y=\{Y^i, i\ge0, e_{ij}\}$. If $X,Y$ are in $\sC$, then by assumption
\[
\Hom_{K^+(\sC)}(X[-n], Y[-m])\xrightarrow{F} \Hom_{K^+(\sC')}(F(X)[-n], F(Y)[-m])
\]
is an isomorphism for all $n,m$. In general, since we may express $\sigma_{\le M}X$, and $\sigma_{\le N}Y$ as cones (see remark~\ref{rem:Cone})
\begin{align*}
&\sigma_{\le M}X\cong \Cone(f_{M,*<M}:\sigma_{\le M-1}X\to X^M[1-M])[-1]\\
&\sigma_{\le N}Y\cong \Cone(e_{N,*<N}:\sigma_{\le N-1}Y\to Y^N[1-N])[-1]
\end{align*}
By induction on $N,M$, it follows that  $F$ induces a quasi-isomorphism
\[
\sHom_{\PTR^+(\sC)}(\sigma_{\le M}X,\sigma_{\le N}Y)^*\to
\sHom_{\PTR^+(\sC')}(\sigma_{\le M}F(X),\sigma_{\le N}F(Y))^*
\]
for every $N,M$. Additionally,  the map
\[
\pi_{\le M}(X)^*:\sHom_{\PTR^+(\sC)}(\sigma_{\le M}X,\sigma_{\le N}Y)^n\to
\sHom_{\PTR^+(\sC)}(X,\sigma_{\le N}Y)^n
\]
is an isomorphism for $M\ge N-n$, and similarly in $\PTR^+(\sC)$, so $F$ induces a quasi-isomorphism
\[
\sHom_{\PTR^+(\sC)}(X,\sigma_{\le N}Y)^*\to
\sHom_{\PTR^+(\sC')}(F(X),\sigma_{\le N}F(Y))^*
\]
for every $N$. Applying lemma~\ref{lem:Limit} shows that $F$ induces a quasi-isomorphism
\[
\sHom_{\PTR^+(\sC)}(X, Y)^*\to
\sHom_{\PTR^+(\sC')}(F(X),F(Y))^*
\]
as desired.

It remains to show that $F$ induces a surjection on isomorphism classes. Take $X=\{X^i, f_{ij}:X^j\to X^i\}$ in  $\PTR^+(\sC')$; we may assume that $X^n=0$ for $n<0$. By assumption, $F$ induces an isomorphism from the isomorphism classes in $H^0\sC$ to those in $H^0\sC'$, so there is a $Y^0\in\sC$ with $F(Y^0)\cong X^0$ in $H^0\sC'$. Assume by induction that we have an integer $N\ge0$, and for $0\le n\le N$, we have an object $Y_n$ in $\PTR^+(\tau_{\le  0}\sC)$ and a map $\phi_n:F(Y_n)\to \sigma_{\le n}X$ such that
\begin{enumerate}
\item[a.] $Y_n=\sigma_{\le N}Y_N$ and $\phi_n=\pi_{\le n}\phi_N$.
\item[b.] $\phi_n$ is an isomorphism in $K^b(\sC')$.
\end{enumerate}
Choose an object $Y^{N+1}$ in $\sC$ with an isomorphism $\phi^{N+1}: F(Y^{N+1})\to X^{N+1}$ in $H^0\sC'$.  
We have the map
\[
f:=f_{N+1,*<N+1}:\sigma_{\le N}X\to X^{N+1}[-N] 
\]
in $Z^0\PTR(\sC')$ and the isomorphism
\[
\sigma_{\le N+1}X\cong \Cone(f_{N+1,*<N+1})[-1].
\]
Since $K^+(F)$ is fully faithful, there is map
\[
e:=e_{N+1,*<N+1}:Y_N\to Y^{N+1}[-N]
\]
in $Z^0\PTR(\sC)$ such that
\[
\phi^{N+1}[-N]\circ F(e)=f\circ \phi_N
\]
in $K^+(\sC')$. 

We lift this relation to an identity in $\PTR(\sC')$. There is a degree -1 map
\[
a:F(Y_N)\to X^{N+1}[-N]
\]
in $\PTR(\sC')$ with
\[
\partial a=f\circ \phi_N-\phi^{N+1}[-N]\circ F(e)
\]

Let $Y_{N+1}:=\Cone(e)[-1]$. Define the degree 0 map
\[
\phi_{N+1}:F(Y_{N+1})\to \sigma_{\le N+1}X
\] 
via the matrix
\[
\phi_{N+1}:=\begin{pmatrix}\phi_N&0\\ a&\phi^{N+1}[-N-1]\end{pmatrix}
\]
where we make the identification
\[
\sHom(F(Y_N) X^{N+1}[-N-1])^0=\sHom(F(Y_N) X^{N+1}[-N])^1.
\]
Now, for any degree map $\Phi:F(Y_{N+1})\to \sigma_{\le N+1}X$ given as a matrix
\[
\Phi=\begin{pmatrix}\Phi_N&0\\ G&\Phi^{N+1} \end{pmatrix}
\]
with $\Phi_N:F(Y_N)\to F(Y_N)$, $\Phi^{N+1}:F(Y^{N+1})[-N-1]\to X^{N+1}[-N-1]$ and $G:F(Y_N)\to X^{N+1}[-N-1]$ degree zero morphisms, 
we have
\[
\partial_{\sigma_{\le N+1}X,F(Y_{N+1})}\Phi=
\begin{pmatrix}\partial\Phi_N&0\\ \partial G +f\circ \Phi_N-\Phi^{N+1}\circ F(e)&\partial\Phi^{N+1}\end{pmatrix}
\]
Thus
\begin{multline*}
\partial\phi_{N+1}=\begin{pmatrix}\partial\phi_N&0\\ \partial a-\phi^{N+1}[-N-1]\circ F(e)+f\circ \phi_N&\partial(\phi^{N+1}[-N-1])\end{pmatrix}\\=
\begin{pmatrix}0&0\\
-f\circ \phi_N+\phi^{N+1}[-N-1]\circ F(e)+f\circ \phi_N-\phi^{N+1}[-N-1]\circ F(e)&0\end{pmatrix}\\=0,
\end{multline*}
so $\phi_{N+1}:F(Y_{N+1})\to \sigma_{\le N+1}X$ defines a morphism in $K^+(\sC')$. 

By construction, 
 we have $\sigma_{\le N}Y_{N+1}=Y_N$ and $\sigma_{\le N}\phi_{N+1}=\phi_N$. In addition, we have the diagram in $\PTR(\sC')$
 \[
 \xymatrix{
F(Y^{N+1})[-N-1] \ar[r]^-{F(i_Y)}\ar[d]_{\phi^{N+1}[-N-1]}&F(Y_{N+1})\ar[r]^-{F(\pi_{N+1})}\ar[d]_{\phi_{N+1}}&F(Y_N)\ar[d]_{\phi_N}\ar[r]^-{F(e)}&F(Y^{N+1})[-N]\ar[d]^{\phi^{N+1}[-N]}\\
X^{N+1}\ar[r]_-{i_X}&\sigma_{\le N+1}X\ar[r]_-{\pi_{N+1}}&\sigma_{\le N}X\ar[r]_-{f}&X^{N+1}[-N].
}
\]
The left-hand and middle squares are commutative, while the right-hand square commutes up to the homotopy given by the degree -1 map $a$. Thus, the above diagram yields a commutative diagram in $K^b(\sC')$, with rows distinguished triangles (up to sign). Since $\phi^{N+1}$ and $\phi_N$ are isomorphisms in $K^b(\sC')$, it follows that $\phi_{N+1}$ is also an isomorphism in $K^b(\sC')$, and the induction goes through.

Let $Y=\{Y^i, i\ge0, e_{ij}\}$. Then $Y_N=\sigma_{\le N}Y$ for all $N\ge0$, and the sequence of maps $\phi_N:F(Y_N)\to \sigma_{\le N}X$ give a well-defined map $\phi:F(Y)\to X$ in $Z^0\PTR^+(\sC')$, such that
\[
\sigma_{\le N}\phi=\phi_N
\]
for each $N\ge0$. We claim that $\phi$ is an isomorphism in $K^+(\sC')$. For this, take an object $Z=\{Z^i\}$ in $K^+(\sC')$ and suppose $Z^i=0$ for $i<i_0$. For each $N\ge0$ we have the commutative diagram
\[
\xymatrix{
F(Y)\ar[d]_{F(\pi_N)}\ar[r]^\phi&X\ar[d]^{\pi_N}\\
F(\sigma_{\le N}Y)\ar[r]_{\phi_N}&\sigma_{\le N}X
}
\]
and the map $\phi_N$ is an isomorphism in $K^+(\sC')$. Thus, we have the commutative diagram
\[
\xymatrix{
0\ar[d]&0\ar[d]\\
R^1\lim_N H^{n-1}(\sHom(Z,F(\sigma_{\le N}Y))^*)\ar[d]\ar[r]_{\phi_{N*}}
&R^1\lim_N H^{n-1}(\sHom(Z,\sigma_{\le N}X)^*)\ar[d]\\
H^n(\sHom(Z,F(Y))^*)\ar[d]\ar[r]_{\phi_*}
&H^n(\sHom(Z,X)^*)\ar[d]\\
\lim_N H^n(\sHom(Z,F(\sigma_{\le N}Y))^*)\ar[d]\ar[r]_{\phi_{N*}}
&\lim_N H^n(\sHom(Z, \sigma_{\le N}X)^*)\ar[d]\\
0&0
}
\]
with exact  columns (see lemma~\ref{lem:Limit}). Since $\phi_{N*}$ is an isomorphism, this  verifies our claim. This completes the proof in case $?=+$.

The proof for $?=b$ is the same, but easier, as we can omit the limit argument. For $?=-$, we use a dual proof, replacing the system of projections $\pi_{\le N}$ with the system of inclusions  $\iota^{\ge N}$.

Now consider the case $?=\infty$. Take $X,Y\in C^\infty(\sC)$. From the case $?=-$, $F$ induces a quasi-isomorphism
\[
\sHom_{K^\infty(\sC)}(\sigma_{\le N}X,\sigma_{\le N}Y)\to 
\sHom_{K^\infty(\sC')}(\sigma_{\le N}F(X),\sigma_{\le N}F(Y))
\]
for all $N$. Arguing as in the case $?=+$, we see that $F$ is fully faithful.

Take $X\in C^\infty(\sC')$. From the case $?=-$, there is a $Y_0\in C^-(\sC)$ and an isomorphism $\phi_0:Y_0\to \sigma_{\le 0}X$. Arguing as in the case $?=+$, we construct a sequence of compatible objects $Y_N$ in $C^-(\sC)$, and maps $\phi_N:\F(Y_N)\to \sigma_{\le N}X)$ which are isomorphisms in $K^-(\sC')$. Taking the limit gives us a $Y\in C^\infty(\sC)$ and a map $\phi:F(Y)\to X$ which is an isomorphism in $K^\infty(\sC')$, completing the proof.
\end{proof}

\begin{thm}\label{thm:DGHomEq}  Let $F:\sC\to \sC'$ be a homotopy equivalence of  DG categories.   Then
for $?=b,+,-,\infty$
\[
K^?(F): K^?(\sC)\to K^?(\sC')
\]
is an equivalence of triangulated categories. If $\sC$ and $\sC'$ are DG tensor categories and  $F:\sC\to \sC'$ is a homotopy equivalence and a DG tensor functor, then $K^?(-)$ is an equivalence of tensor triangulated categories for $?=b,+,-$.
\end{thm}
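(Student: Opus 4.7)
The strategy is to extend the argument of Lemma~\ref{lem:NonPosEquiv} from non-positive to general DG categories, exploiting the fact that objects of $C^?(\sC)$ are iteratively built from $\PTR^?(\tau_{\le0}\sC)$ via cones. A useful preliminary remark is that $\tau_{\le0}F:\tau_{\le0}\sC\to\tau_{\le0}\sC'$ remains a homotopy equivalence (of non-positive DG categories), because $\tau_{\le0}$ preserves quasi-isomorphisms of complexes and does not change $H^0$ of Hom complexes; consequently Lemma~\ref{lem:NonPosEquiv} applies to $\tau_{\le0}F$ and yields an equivalence $K^?(\tau_{\le0}F)$.

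For fully faithfulness of $K^?(F)$, I would induct on the cone-building of $X,Y\in C^?(\sC)$. A cone decomposition gives a distinguished triangle and hence a long exact sequence of Hom complexes, so the five-lemma reduces the quasi-isomorphism statement successively to the case $X,Y\in\PTR^?(\tau_{\le0}\sC)$, and then further to $X,Y$ in the image of $i_0:\sC\to\PTR^?(\sC)$, where $\sHom_{\PTR^?(\sC)}(i_0X,i_0Y)=\sHom_\sC(X,Y)$ and the quasi-iso is exactly the hypothesis on $F$. The bridge between $X,Y\in\sC$ and general $X,Y\in\PTR^?(\tau_{\le0}\sC)$ in the cases $?=+,-,\infty$ uses the finite-support condition $N_\phi$ built into the definition of $\sHom_{\PTR^?}$: it makes $\sHom(X,Y)$ a filtered colimit over domain truncations $\sigma_{\le M}X$, and a Mittag-Leffler/$R^1\lim$ argument patterned on Lemma~\ref{lem:Limit} handles the truncations of the target.

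For essential surjectivity, I apply Lemma~\ref{lem:NonPosEquiv} to $\tau_{\le0}F$ to settle the base class $X'\in\PTR^?(\tau_{\le0}\sC')$: the resulting isomorphism $\tau_{\le0}F(X)\cong X'$ in $K^?(\tau_{\le0}\sC')$ transports to $F(X)\cong X'$ in $K^?(\sC')$ under the natural functor $K^?(\tau_{\le0}\sC')\to K^?(\sC')$. The inductive step lifts cone morphisms: if $X'\cong\Cone(u')$ with endpoints $Z',W'$ already in the essential image as $F(Z),F(W)$, then fully faithfulness (already proved) lifts $u'$ to $u:Z\to W$ in $K^?(\sC)$, and $F(\Cone(u))\cong X'$. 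The tensor case is then automatic: if $F$ is a DG tensor functor, Theorem~\ref{thm:DGMain+} makes $K^?(F)$ an exact tensor functor, and a triangulated equivalence that is an exact tensor functor is automatically an equivalence of tensor triangulated categories.

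I expect the main obstacle to be the base case of fully faithfulness in the unbounded cases $?=+,-,\infty$, where the non-positivity hypothesis of Lemma~\ref{lem:Limit} is unavailable; one must verify the analog of that lemma for $\sHom_{\PTR^?(\sC)}$ with general $\sC$, using the finite-support condition $N_\phi$ on morphisms to ensure that the natural map from $\sHom(X,Y)$ to $\lim_N\sHom(X,\sigma_{\le N}Y)$ still fits into an $R^1\lim$ short exact sequence.
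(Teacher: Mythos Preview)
Your approach is essentially the paper's: both filter $K^?(\sC)$ by cone-depth over $\PTR^?(\tau_{\le 0}\sC)$, invoke Lemma~\ref{lem:NonPosEquiv} (applied to $\tau_{\le 0}F$) at the base level, and then induct using the five-lemma for full faithfulness and cone-lifting for essential surjectivity. The paper packages this as a single induction on the filtration $K^?(\sC)_0\subset K^?(\sC)_1\subset\cdots$ with $K^?(\sC)_0=K^?(\tau_{\le 0}\sC)$, while you prove full faithfulness globally first and essential surjectivity second; this is only an organizational difference.

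One caution on the point you flag as the obstacle. Your claim that the finite-support condition makes $\sHom_{\PTR^?(\sC)}(X,Y)$ a filtered \emph{colimit} over $\sHom(\sigma_{\le M}X,Y)$ is not correct. The maps $\sHom(\sigma_{\le M}X,Y)\to\sHom(X,Y)$ (induced by $\pi_{\le M}$, which does lie in $Z^0$ since $X\in\PTR^?(\tau_{\le 0}\sC)$) have image consisting of those $\phi$ with $\phi_{ij}=0$ for all $j>M$; when $Y$ is unbounded above this does not exhaust $\sHom(X,Y)$, since a morphism with bound $N_\phi$ can have $\phi_{ij}\neq 0$ for arbitrarily large $j$ provided $i$ grows with it. Dually, the map $\sHom(X,Y)\to\lim_N\sHom(X,\sigma_{\le N}Y)$ need not be surjective for general $\sC$, because a compatible family in the limit may have $N_\phi$ growing with $N$. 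So neither the colimit nor the $R^1\lim$ description you propose goes through as stated. For $?=b$ no limit is needed at all: both $X$ and $Y$ are finite iterated cones of objects of $i_0(\sC)$ by Remark~\ref{rem:Cone}, and the five-lemma alone suffices---this is the case used in the applications. For $?\in\{+,-,\infty\}$ the paper's own appeal to Lemma~\ref{lem:NonPosEquiv} is equally terse, and making the limit argument precise for the Hom-complexes computed in $\PTR^?(\sC)$ (rather than in $\PTR^?(\tau_{\le 0}\sC)$) requires more care than either you or the paper supplies.
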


\begin{proof}The statement for a DG tensor functor follows from the assertion for a homotopy equivalence. 

Define the  full additive subcategory $K^?(\sC)_N$ of $K^?(\sC)$ inductively, with 
$K^?(\sC)_0=K^?(\tau_{\le 0}\sC)$, and $K^?(\sC)_N$ having objects those $Y\in K^?(\sC)$ that fit into a distinguished triangle
\[
Y_1\to Y_2\to Y\to Y_1[1]
\]
in $K^?(\sC)$, with $Y_1, Y_2$ in $K^?(\sC)_{N-1}$. Define $K^?(\sC')_N$ similarly. Since
\[
K^?(\sC)=\cup_NK^?(\sC)_N
\]
it suffices to show that $K^?(F)_N:K^?(\sC)_N\to K^?(\sC')_N$ is an equivalence of additive categories for each $N$.

The case $N=0$ follows from  lemma~\ref{lem:NonPosEquiv}, so take $N\ge1$ and assume the result for $N-1$.

It follows easily by induction on $N$ that $K^?(F)_N$ is fully faithful. If $X$ is in $K^?(\sC')_N$, choose a distinguished triangle $X_1\xrightarrow{f} X_2\to X\to X_1[1]$ with $X_i\in K^?(\sC')_{N-1}$, $i=1,2$. By induction, there is a morphism $Y_1\xrightarrow{g} Y_2$ in $K^?(\sC)_{N-1}$ and a commutative diagram in $K^?(\sC')_{N-1}$ 
\[
\xymatrix{
F(Y_1)\ar[r]^{F(g)}g\ar[d]_{\phi_1}&F(Y_2)\ar[d]^{\phi_2}\\
X_1\ar[r]_f&X_2
}
\]
with $\phi_1, \phi_2$ isomorphisms. Complete $Y_1\xrightarrow{g} Y_2$ to a distinguished triangle 
$Y_1\xrightarrow{g} Y_2\to Y\to Y_1[1]$ and extend the commutative diagram to a commutative diagram
\[
\xymatrix{
F(Y_1)\ar[r]^{F(g)}\ar[d]_{\phi_1}&F(Y_2)\ar[r]\ar[d]^{\phi_2}&F(Y)\ar[d]^\phi\ar[r]&Y_1[1]\ar[d]^{\phi_1[1]}\\
X_1\ar[r]_f&X_2\ar[r]&X\ar[r]&X_1[1]
}
\]
Then $\phi$ is an isomorphism, and hence $K^?(F)_N$ is a bijection on isomorphism classes. Thus, the induction goes through.
\end{proof}

\subsection{Cohomologically non-positive DG categories}

\begin{definition} A DG category $\sC$ is {\em cohomologically non-positive} if for all $X, Y$ in $\sC$
\[
H^n(\sHom_\sC(X,Y)^*)=0
\]
 for $n>0$.
\end{definition}

\begin{prop} Let $\sC$ be a cohomologically non-positive DG category. Then the inclusions $
\tau_{\le 0}\sC\to\sC$ induces an equivalence of triangulated categories $K^?(\tau_{\le 0}\sC)\to K^?(\sC)$ for $?=b,+,-,\infty$. If $\sC$ is a DG tensor category, then $K^?(\tau_{\le 0}\sC)\to K^?(\sC)$ is an equivalence of tensor triangulated categories for $?=b,+,-$. 
\end{prop}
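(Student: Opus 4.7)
The plan is to reduce to Theorem~\ref{thm:DGHomEq} by verifying that the inclusion functor
\[
\iota : \tau_{\le 0}\sC \longrightarrow \sC
\]
is a homotopy equivalence of DG categories in the sense of Definition~\ref{definition:DGHEquiv}. First, since $\tau_{\le 0}\sC$ and $\sC$ have exactly the same objects by construction, $\iota$ is the identity on objects, so condition (1) of Definition~\ref{definition:DGHEquiv} (surjectivity on isomorphism classes in $H^0$) is immediate. For condition (2), I would note that by the definition of $\tau_{\le 0}\sC$ we have an identification
\[
\sHom_{\tau_{\le 0}\sC}(X,Y)^* = \tau_{\le 0}\sHom_\sC(X,Y)^*,
\]
and $\iota$ on Hom-complexes is simply the canonical inclusion $\tau_{\le 0}C \hookrightarrow C$ applied to $C = \sHom_\sC(X,Y)^*$. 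This inclusion induces an isomorphism on $H^n$ for $n \le 0$ always, and is a quasi-isomorphism precisely when $H^n(C) = 0$ for $n > 0$. The cohomological non-positivity hypothesis on $\sC$ gives exactly this vanishing, so $\iota$ is a quasi-isomorphism on every Hom-complex. Hence $\iota$ is a homotopy equivalence of DG categories, and Theorem~\ref{thm:DGHomEq} then yields the asserted equivalences $K^?(\tau_{\le 0}\sC) \to K^?(\sC)$ for $? = b, +, -, \infty$.

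For the tensor statement, I would invoke the remark preceding the truncation discussion: when $\sC$ is a DG tensor category, the Leibniz rule for $d_{C\otimes C'}$ guarantees that the tensor product on $\sC$ restricts to a tensor product on $\tau_{\le 0}\sC$, making $\tau_{\le 0}\sC$ a DG tensor subcategory and $\iota$ a DG tensor functor. Theorem~\ref{thm:DGHomEq} in its tensor-functor form then gives the equivalence of tensor triangulated categories for $? = b, +, -$.

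The main (and really only) obstacle is recognizing that the cohomological non-positivity hypothesis is precisely the condition needed to make the canonical truncation inclusion $\tau_{\le 0}\sHom_\sC(X,Y)^* \hookrightarrow \sHom_\sC(X,Y)^*$ a quasi-isomorphism; once this is observed, the result is an immediate application of the already-established theorem on homotopy equivalences of DG categories, with no further calculation needed.
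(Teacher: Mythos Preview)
Your proposal is correct and follows exactly the paper's approach: the paper's proof is the single line ``This follows from theorem~\ref{thm:DGHomEq},'' and you have simply spelled out why the inclusion $\tau_{\le 0}\sC \to \sC$ is a homotopy equivalence of DG categories so that the theorem applies.
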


\begin{proof} This follows from theorem~\ref{thm:DGHomEq}.\end{proof}

\section{Sheafification of DG categories}  We show how to construct a DG category $R\Gamma\sC$ out of a presheaf of DG categories $U\mapsto \sC(U)$ so that the Hom complexes in $R\Gamma\sC$ compute the hypercohomology of the presheaf of Hom complexes for $\sC$.

\subsection{Sheafifying}
Fix a Grothendieck topology $\tau$ on some full subcategory $\Op^\tau_S$ of $\Sch_S$; we assume that 
$\Op^\tau_S$ is closed under fiber product over $S$, that $\tau$ is subcanonical, and that $\tau$ has a conservative set of points $\Pt_\tau(S)$. Let $\Sh_\tau(S)$ be the category of sheaves of abelian groups on $\Op_S$ for the topology $\tau$, and $\PSh_\tau(S)$ the category of presheaves on $\Op_S$. We let
\[
i:\prod_{x\in\Pt_\tau(S)}\Ab\to \Sh_\tau(S)
\]
be the canonical map of topoi, i.e, we have the functor $i^*:\Sh_\tau(S)\to \prod_{x\in\Pt_\tau(S)}$ (the factor $i^*_x$ sends $f$ to the stalk $f_x$ at $x$) and the right adjoint of $i^*$, $i_*:
\prod_{x\in\Pt_\tau(S)}\to \Sh_\tau(S)$. Let $\sG^0:=i_*\circ i^*$, $\sG^n:=(\sG^0)^n$. Combining the unit $\epsilon:\id\to i_*i^*$ and the co-unit $\eta:i^*i_*\to \id$ of the adjunction gives us the cosimplicial object in the category of endofunctors of $\Sh_\tau(S)$:
\[
\sG^0\ \begin{matrix}\to\\\leftarrow\\\to\end{matrix}\ \sG^1\ 
\begin{matrix}\to\\\leftarrow\\\to\\\leftarrow\\\to\end{matrix}\ \ldots
\]
with augmentation $\epsilon:\id\to\sG^0$. For a sheaf $\sF$, we let 
\[
\sF\xrightarrow{\epsilon}\sG^*\sF
\]
be the augmented complex associated to the augmented  cosimplicial sheaf $n\mapsto \sG^n(\sF)$; we extend this construction to complexes by taking the total complex of the associated double complex.

\begin{lem} For each complex of sheaves $\sF$, the augmentation $\epsilon:\sF\to \sG^*(\sF)$ is a quasi-isomorphism, and $\sG^n(\sF)$ is acyclic for the functor $R^n\Gamma(S,-)$.
\end{lem}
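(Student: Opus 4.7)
The plan is to follow the standard Godement-resolution argument, which rests on two properties of the adjunction: $i^{*}$ is exact and conservative (since $\Pt_{\tau}(S)$ is a conservative set of points), and $i_{*}$ preserves injectives (being right adjoint to an exact functor). I would first handle the case where $\sF$ is a single sheaf and then bootstrap to complexes by taking the total complex of the resulting double complex.

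For the quasi-isomorphism, because $i^{*}$ is exact and conservative, it suffices to show that $i^{*}\epsilon\colon i^{*}\sF\to i^{*}\sG^{*}\sF$ is a quasi-isomorphism in $\prod_{x}\Ab$. Using $i^{*}\sG^{n}=i^{*}(i_{*}i^{*})^{n}=(i^{*}i_{*})^{n}\circ i^{*}$, the counit $\eta\colon i^{*}i_{*}\to\id$ supplies an extra codegeneracy on the cosimplicial object $n\mapsto i^{*}\sG^{n}\sF$ augmented by $i^{*}\epsilon$ from $i^{*}\sF$. This extra codegeneracy produces a simplicial contraction, so the associated augmented cochain complex is null-homotopic. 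In particular $i^{*}\epsilon$ is a quasi-isomorphism on every stalk, hence a quasi-isomorphism of complexes of sheaves.

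For the acyclicity, each $\sG^{n}\sF$ has the form $i_{*}M$ with $M:=i^{*}\sG^{n-1}\sF\in\prod_{x}\Ab$. Choose an injective resolution $M\to J^{\bullet}$ componentwise (available since $\prod_{x}\Ab$ is a product of $\Ab$'s and each factor has enough injectives); then $i_{*}J^{\bullet}$ is an injective resolution of $i_{*}M$, since $i_{*}$ is exact on $\prod_{x}\Ab$ (it is the direct product of the point-pushforwards) and carries injectives to injectives. Because $\Gamma(S,i_{*}(-))$ is just the direct product functor $\prod_{x}(-)$, which is exact on $\prod_{x}\Ab$, the complex $\Gamma(S,i_{*}J^{\bullet})$ computing $R\Gamma(S,i_{*}M)$ is concentrated in degree $0$, so $R^{n}\Gamma(S,i_{*}M)=0$ for $n>0$.

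To pass from sheaves to complexes, I apply both assertions termwise to $\sF^{p}$ and take the total complex of the resulting double complex $\sG^{*}\sF^{*}$. Termwise quasi-isomorphism $\sF^{p}\to\sG^{*}\sF^{p}$ extends to a quasi-isomorphism of totalizations (standard spectral sequence / truncation argument, valid because the Godement differential is of uniformly bounded shape). The only real subtlety is the compatibility of the total complex with $R\Gamma$; this follows from the termwise acyclicity just established together with the usual hypercohomology spectral sequence. No step is a genuine obstacle — the argument is a template — but the care needed is in the bookkeeping when $\sF$ is unbounded, where one should invoke that both constructions commute with the relevant filtration/colimits used to define $R\Gamma$ on unbounded complexes.
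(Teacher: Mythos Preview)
Your argument is correct and is the standard Godement-resolution proof. The paper actually gives no proof of this lemma at all: it is stated and then immediately used (``Thus, we have a functorial $\Gamma(S,-)$-acyclic resolution\ldots''), treating the result as well known. Your write-up therefore supplies strictly more detail than the paper does; there is nothing to compare against.
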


Thus, we have a functorial  $\Gamma(S,-)$-acyclic resolution for complexes of sheaves  of abelian groups on $S$. In addition, $i^*$ is a tensor functor, and the isomorphism
\[
i^*(i_*(\sA)\otimes i_*(\sB))\to i^*i_*(\sA)\otimes i^*i_*(\sB)
\]
followed by the co-unit 
\[
i^*i_*(\sA)\otimes i^*i_*(\sB)\to \sA\otimes \sB
\]
induces a natural transformation $i_*(\sA)\otimes i_*(\sB)\to i_*(\sA\otimes \sB)$. This gives us a natural map
\[
\sG^0(\sF)\otimes\sG^0(\sF')\to \sG^0(\sF\otimes\sF')
\]
and thus by iteration a map of cosimplicial sheaves
\[
\mu:[n\mapsto \sG^n(\sF)\otimes\sG^n(\sF')]\to [n\mapsto  \sG^n(\sF\otimes\sF')].
\]
We have the Alexander-Whitney map 
\[
 AW:\sG^*(\sF)\otimes\sG^*(\sF')\to [n\mapsto \sG^n(\sF)\otimes\sG^n(\sF')]^*
 \]
 sending $\sG^p(\sF)\otimes \sG^q(\sF')\to \sG^{p+q}(\sF)\otimes\sG^{p+q}(\sF')$ by $\sG(i_1^{p,q})\otimes\sG(i_2^{p,q})$, where
 \[
 i_1^{p,q}:\{0,\ldots, p\}\to \{0,\ldots, p+q\};\quad  i_2^{p,q}:\{0,\ldots, q\}\to \{0,\ldots, p+q\}
 \]
 are the maps
 \[
 i_1^{p,q}(j)=j,\ i_2^{p,q}(j)=j+p.
 \]
 The composition 
 \[
 \mu^*_{\sF,\sF'}:=\mu\circ AW:\sG^*(\sF)\otimes\sG^*(\sF')\to \sG^*(\sF\otimes\sF')
 \]
 makes $\sF\mapsto \sG^*(\sF)$ a weakly monoidal functor, i.e., we have the associativity
 \[
 \mu^*_{\sF\otimes\sF',\sF''}\circ(\mu^*_{\sF,\sF'}\otimes\id_{\sG^*(\sF'')})=
  \mu^*_{\sF,\sF'\otimes\sF''}\circ(\id_{\sG^*(\sF)}\otimes\mu^*_{\sF',\sF''}).
  \]

  These facts enable the following construction: Let $U\mapsto \sC(U)$ be a presheaf of DG categories on $\Op^\tau_S$. For objects $X$ and $Y$ in $\sC(S)$, we have the presheaf
 \[
[f:U\to S]\mapsto \sHom_{\sC(U)}(f^*(X), f^*(Y));
\]
we denote the associated sheaf by $\un{\sHom}^\tau_{\sC}(X,Y)$.
  
Let $R\Gamma(S,\sC)$ be the DG category with the same objects as $\sC(S)$, and with Hom-complex
\[
\sHom_{R\Gamma(S,\sC)}(X,Y)^*:=\sG^*(\un{\sHom}^\tau_{\sC}(X,Y))(S).
\]
Our remarks on the Godement resolution show that the composition law for the sheaf of DG categories $\sC$ defines canonically an associative composition law
\[
\circ:\sHom_{R\Gamma(S,\sC)}(Y,Z)^*\otimes \sHom_{R\Gamma(S,\sC)}(X,Y)^*\to
\sHom_{R\Gamma(S,\sC)}(X,Z)^*
\]
for $R\Gamma(S,\sC)$, making $R\Gamma(S,\sC)$ a DG category. In addition we have
\begin{prop} Let $\sC$ be a presheaf of DG categories on $\Op^\tau_S$. Then for each pair of objects $X,Y$ of $\sC(S)$, we have a canonical isomorphism
\[
\H^n(S_\tau,\un{\sHom}^\tau_{\sC}(X,Y))\cong H^n(\sHom_{R\Gamma(S,\sC)}(X,Y)).
\]
\end{prop}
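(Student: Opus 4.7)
The plan is to recognize $\sHom_{R\Gamma(S,\sC)}(X,Y)^*$ as the global sections of a $\Gamma(S,-)$-acyclic resolution of the complex of sheaves $\un{\sHom}^\tau_{\sC}(X,Y)$, and then invoke the standard computation of hypercohomology.

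First I would observe that $\un{\sF}:=\un{\sHom}^\tau_{\sC}(X,Y)$ is naturally a complex of sheaves of abelian groups on $\Op^\tau_S$, obtained by degreewise sheafification of the presheaf $U\mapsto\sHom_{\sC(U)}(f_U^*X,f_U^*Y)^*$. Applying the Godement functor $\sG^n$ in each cohomological degree produces a double complex $(p,q)\mapsto \sG^p(\un{\sF}^q)$; the associated total complex is, by construction, exactly $\sG^*(\un{\sF})$, and its global sections are by definition $\sHom_{R\Gamma(S,\sC)}(X,Y)^*$.

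Next I would verify the two properties needed to identify this with $R\Gamma(S,\un{\sF})$. For the acyclicity in each bidegree, the lemma above gives that $\sG^p(\un{\sF}^q)$ is acyclic for $R^n\Gamma(S,-)$ for all $n>0$. For the resolution property, note that $i^*$ is exact (it is a stalk functor), so $\sG^0=i_*i^*$ preserves quasi-isomorphisms, hence so does each $\sG^p$; combining this with the lemma that the augmentation $\eta\to \sG^*(\eta)$ is a quasi-isomorphism for each individual sheaf, a standard spectral sequence argument (or a diagonal filtration on the double complex) shows that $\un{\sF}\to \Tot\sG^*(\un{\sF})$ is a quasi-isomorphism of complexes of sheaves. (Here there is a boundedness issue to dispatch: if $\un{\sF}^q\ne 0$ for $q$ arbitrarily negative one must be careful about coproduct vs.\ product totalization, but since $\sG^p=0$ for $p<0$ the totalization is locally finite in each total degree, so either convention gives the same answer.)

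Combining these two ingredients, $\Gamma(S,\sG^*(\un{\sF}))$ computes $R\Gamma(S,\un{\sF})$ by the standard fact that global sections of a bi-acyclic resolution compute hypercohomology. Taking $H^n$ of both sides yields the canonical isomorphism
\[
H^n(\sHom_{R\Gamma(S,\sC)}(X,Y)^*)= H^n(\Gamma(S,\sG^*(\un{\sF})))\cong\H^n(S_\tau,\un{\sHom}^\tau_{\sC}(X,Y)),
\]
as claimed. The only real step requiring care is the totalization/boundedness point above; everything else is standard homological algebra applied to the Godement resolution.
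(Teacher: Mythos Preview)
Your proposal is correct and follows essentially the same approach as the paper: the paper's proof is a single sentence observing that $\sF\to\sG^*(\sF)$ is a $\Gamma(S,-)$-acyclic resolution for any complex of sheaves $\sF$, so $\H^n(S_\tau,\sF)\cong H^n(\sG^*(\sF)(S))$. You have simply unpacked this one line into its constituent steps (acyclicity of each $\sG^p(\un\sF^q)$, the augmentation being a quasi-isomorphism, and the totalization point), which is a reasonable expansion.
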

Indeed, since  $\sF\to\sG^*(\sF)$ is a $\Gamma(S,-)$-acyclic resolution of $\sF$ for any complex of sheaves $\sF$, we have a canonical isomorphism $\H^n(S_\tau,\sF)\cong H^n(\sG^*(\sF)(S))$ for any complex of presheaves $\sF$ on $\Op^\tau_S$.
  
\subsection{Thom-Sullivan co-chains} \label{sec:ThomSullivan}
 Suppose now that we are given a presheaf of DG tensor categories $U\mapsto \sC(U)$ on $\Op_S^\tau$.  The Alexander-Whitney maps gives us a natural associative pairing of Hom-complexes
\[
\otimes:\sHom_{R\Gamma(S,\sC)}(X,Y)^*\otimes
\sHom_{R\Gamma(S,\sC)}(X',Y')^*\to
\sHom_{R\Gamma(S,\sC)}(X\otimes X',Y\otimes Y')^*
\]
satisfying the Leibniz rule and compatible with the tensor product operation on $\sC(S)$, however, this map is not commutative. For this, we need assume that $U\mapsto \sC(U)$ is a sheaf of $\Q$-DG category; we can then  modify the construction of $R\Gamma(S,\sC)$, giving a homotopy equivalent DG category $R\Gamma(S,\sC)^\otimes$ with a well-defined DG tensor structure.  The construction uses the {\em Thom-Sullivan cochains}; we have taken this material from \cite{HinSch}.  

Let $|\Delta_n|$ be the real $n$-simplex
\[ |\Delta_n|:=\{(t_0,\ldots,t_n)\in\R^{n+1}\ |\ \sum_{i=0}^nt_i=1, 0\le t_i\},
\] and let $\Delta_n$ be the simplicial set
$\Hom_\Delta(-,[n]):\Delta^\op\to\Sets$. For $f:[m]\to[n]$ in $\Delta$,
let $|f|:|\Delta_m|\to|\Delta_n|$ denote the corresponding affine-linear map.

For a cosimplicial abelian group $G$, we have the associated complex $G^*$, and the
{\em normalized} subcomplex, $NG^*$, with
$NG^p$ the subgroup of those $g\in G([p])$ such 
$G(f)(g)=0$ for all $f:[p]\to[q]$ in $\Delta$ which are not injective. The inclusion
of
$NG^*\hookrightarrow G^*$ is a homotopy equivalence. In particular, we have the
complex of (simplicial) cochains of $\Delta_n$, and the subcomplex of normalized
cochains  $Z^*(\Delta_n)$.

Let $\Omega^*(|\Delta_n|)$ denote the complex of  $\Q$-polynomial differential forms 
on $|\Delta_n|$:
\[
\Omega^*(|\Delta_n|):=\Omega^*_{\Q[t_0,\ldots,t_n]/\sum_{i=0}^nt_i-1}.
\]

Sending $[n]$ to $Z^*(\Delta_n)$, $\Omega^*(|\Delta_n|)$ determines functors
\[
Z^*:\Delta^\op\to C^{\ge 0}(\Ab),\quad 
\Omega^*:\Delta^\op\to C^{\ge 0}(\Mod_\Q),
\] where $C^{\ge 0}(\Mod_\Q)$ is the category of complexes of $\Q$-vector spaces 
concentrated in degrees $\ge0$, and $C^{\ge 0}(\Ab)$ is the integral version. There
is a natural  homotopy equivalence
\[
\int:\Omega^*\to Z^*\otimes\Q
\] defined by
\[
\int(\omega)(\sigma)=\int_{|\sigma|}\omega
\] for $\omega\in\Omega^m(|\Delta_n|)$ and 
$\sigma$ an $m$-simplex of $\Delta_n$.

We have the category  $\Mor(\Delta)$, with objects the morphisms in $\Delta$, where
 a morphism $f\to g$ is a commutative diagram
\[
\vcenter{\xymatrix{ {\bullet}\ar[r]^-f&{\bullet}\ar[d]\\
{\bullet}\ar[u]\ar[r]_-g&{\bullet\hbox to0pt{\ .\hss}} }}
\] 
For functors
$F:\Delta^\op\to C^{\ge 0}(\Ab)$ and $G:\Delta\to \Ab$, we have the functor 
\begin{gather*} F(\text{domain})\otimes G(\text{range}):\Mor(\Delta)\to C^{\ge0}\\
f\mapsto F(\text{domain}(f))\otimes G(\text{range}(f));
\end{gather*} let $F\Otimes_{\leftarrow} G$ be the projective limit
\[ F\Otimes_\leftarrow G:=\lim_{\Mor(\Delta)}
F(\text{domain})\otimes G(\text{range}).
\] 
Explicitly, an element $\epsilon$ of $(F\Otimes_\leftarrow G)^m$ is given by a 
collection
$p\mapsto \epsilon_p\in F^m([p])\otimes G([p])$ such that
\[ F(f)\otimes G(\id)(\epsilon_p)=F(\id)\otimes G(f)(\epsilon_q)
\] for each $f:[q]\to [p]$ in $\Delta$. The operation $\Otimes_\leftarrow$ is
functorial and respects homotopy equivalence.

For a cosimplicial abelian group $G$, we have the well-defined map
$e:Z^*\Otimes_\leftarrow G\to NG^*$ defined by sending 
$\epsilon:=(\ldots \epsilon_p\ldots)\in (Z^*\Otimes_\leftarrow G)^q$ to 
$\epsilon_q(\id_{[q]})\in G([q])$. In \cite[Lemma 3.1]{HinSch} it is shown that
this map is well-defined,  lands in $NG^*$, and gives an isomorphism of
complexes. Thus, we have the natural homotopy equivalences
\[
\int\Otimes_\leftarrow\id:\Omega^*\Otimes_\leftarrow G\to NG^*\otimes\Q\to
G^*\otimes\Q.
\]

The operation of wedge product makes $\Omega^*$ into a simplicial commutative differential graded
algebra. Suppose we have two cosimplicial abelian groups $G, G':\Delta\to \Ab$, giving us the diagonal cosimiplicial abelian group $G\otimes G':\Delta\to\Ab$. We have the map
\[
\cup:(\Omega^*\Otimes_\leftarrow G)\otimes(\Omega^*\Otimes_\leftarrow G')\to 
\Omega^*\Otimes_\leftarrow (G\otimes G')
\]
induced by the map 
\[
(\omega\otimes g)\otimes(\omega'\otimes g')\mapsto
(\omega\wedge\omega')\otimes  (g\otimes g'), 
\]
for $\omega\otimes g\in \Omega^q(|\Delta_p|)\otimes G([p])$, $\omega'\otimes g'\in
\Omega^{q'}(|\Delta_p|)\otimes G([p])$. It is easy to check that this gives a
well-defined functorial  product of cochain complexes and that $\cup$ is associative and commutative, in the evident sense.

We have as well the product map
\[
\cup_{AW}:G^*\otimes G^{\prime*}\to (G\otimes G')^*
\]
induced by the Alexander-Whitney maps $i_1^p:[p]\to[p+q]$, $i_2^q:[q]\to[p+q]$
\[
(g\in G^p)\otimes(g'\in G^{\prime q})\mapsto G(i_1^p)(g)\otimes G'(i_2^q)(g')\in G^{p+q}\otimes G^{\prime p+q}
\]
It follows easily from the change of variables formula for integration that the diagram
\begin{equation}\label{eqn:CommuteProduct}
\xymatrix{
(\Omega^*\Otimes_\leftarrow G)\otimes(\Omega^*\Otimes_\leftarrow G')\ar[r]^-\cup\ar[d]_{\int\otimes\int}&
\Omega^*\Otimes_\leftarrow (G\otimes G')\ar[d]^\int\\
(G^*\otimes\Q)\otimes(G^{\prime*}\otimes \Q)\ar[r]_-{\cup_{AW}}&(G\otimes G')^*\otimes\Q
}
\end{equation}
commutes.

We extend the operation $\Omega^*\Otimes_\leftarrow(-)$ to functors
$G^*:\Delta^\op\to C^+(\Ab)$ by taking the total complex of the double complex
\[
\ldots\to \Omega^*\Otimes_\leftarrow G^0\to \Omega^*\Otimes_\leftarrow G^1\to\ldots\ .
\] All the properties of $\Omega^*\Otimes_\leftarrow(-)$ described above extend to
the case of complexes.

\subsection{Sheafifying DG tensor categories} Let   $U\mapsto \sC(U)$ be a presheaf of $\Q$-DG tensor categories on $\Op_S^\tau$. 

Define 
\[
\sHom_{R\Gamma(S,\sC)^\otimes}(X,Y)^*:= \Omega^*\Otimes_\leftarrow\sG^*(\sHom^\tau_{\sC}(X,Y))(S)
\]
Using the functoriality of the Godement resolution, the composition law for $U\mapsto \sC(U)$ together with the product $\cup$ defined in \S\ref{sec:ThomSullivan} defines a composition law
\[
\circ:\sHom_{R\Gamma(S,\sC)^\otimes}(Y,Z)^*\otimes
\sHom_{R\Gamma(S,\sC)^\otimes}(X,Y)^*\to \sHom_{R\Gamma(S,\sC)^\otimes}(X,Z)^*
\]
giving us the DG category $R\Gamma(S,\sC)^\otimes$. By the commutativity of \eqref{eqn:CommuteProduct} the maps
\[
\int\Otimes_\leftarrow\id:\sHom_{R\Gamma(S,\sC)^\otimes}(X,Y)^*\to \sHom_{R\Gamma(S,\sC)}(X,Y)^*
\]
define a DG functor
\[
\int:R\Gamma(S,\sC)^\otimes\to R\Gamma(S,\sC)
\]
which is a homotopy equivalence.

Finally, the tensor product operation on $\sC$ gives rise to maps of cosimplicial objects
\[
\otimes:\sG^*(\sHom^\tau_{\sC}(X,Y))(S)\otimes
\sG^*(\sHom^\tau_{\sC}(X',Y'))(S)\to \sG^*(\sHom^\tau_{\sC}(X\otimes X',Y\otimes Y'))(S)
\]
Applying the Thom-Sullivan cochain construction gives the map
\[
\tilde{\otimes}:\sHom_{R\Gamma(S,\sC)^\otimes}(X,Y)^*\otimes
\sHom_{R\Gamma(S,\sC)^\otimes}(X',Y')^*\to
\sHom_{R\Gamma(S,\sC)^\otimes}(X\otimes X',Y\otimes Y')^*
\]
that makes $R\Gamma(S,\sC)^\otimes$ a DG tensor category, with commutativity constraint induced from $\sC(S)$.

\begin{rem}\label{rem:TensorStructure} Let $U\mapsto \sC(U)$ be a presheaf of $\Q$-DG tensor categories on $\Op_S^\tau$. The homotopy equivalence $\int:R\Gamma(S,\sC)^\otimes\to R\Gamma(S,\sC)$ induces an equivalence of triangulated categories
\[
K^b(\int):K^b(R\Gamma(S,\sC)^\otimes)\to K^b(R\Gamma(S,\sC))
\]
(theorem~\ref{thm:DGHomEq}). In addition, the DG tensor structure we have defined on 
$R\Gamma(S,\sC)^\otimes$ endows $C^b(R\Gamma(S,\sC)^\otimes)$ with the structure of a DG tensor category, and makes $K^b(R\Gamma(S,\sC)^\otimes)$ a triangulated tensor category.
\end{rem}
 
\subsection{Idempotent completion} Recall that an additive category $\sA$ is {\em pseudo-abelian} if each idempotent endomorphism admits a kernel and cokernel. If $\sA$ is an additive category, one has the {\em idempotent completion} $\sA\to \sA^\natural$ of $\sA$, which is the universal functor of $\sA$ to a pseudo-abelian category; $\sA^\natural$ has objects $(M,p)$, where $M$ is an object of $\sA$ and $p:M\to M$ is an idempotent endomorphism,
\[
\Hom_{\sA^\natural}((M,p),(N,q)):=p^*q_*\Hom_\sA(M,N)=q_*p^*\Hom_\sA(M,N)\subset 
\Hom_\sA(M,N), 
\]
and the composition law in $\sA^\natural$ is induced by that of $\sA$. If $\sA$ is a tensor category, $\sA^\natural$ inherits the tensor structure, making $\sA\to \sA^\natural$ is a tensor functor.

One extends the definition to DG categories and DG tensor categories in the evident manner: if $\sC$ is a DG category, then $\sC^\natural$ has objects $(M,p)$ with $p:M\to M$ an idempotent endomorphism in $Z^0\sC$ The Hom-complex is given by
\[
\sHom_{\sC^\natural}((M,p),(N,q))^*:=p^*q_*\sHom_\sC(M,N)=q_*p^*\sHom_\sC(M,N).
\]

A theorem of Balmer-Schlichting \cite{BalmerSchlichting} tells us that, for $\sA$ a triangulated category, $\sA^\natural$ has a canonical structure of a triangulated category for which $\sA\to \sA^\natural$ is exact; the same holds in the setting of triangulated tensor categories.

\section{DG categories of motives}\label{sec:DGMotives} Let $S$ be a fixed base-scheme; we assume that $S$ is a regular scheme of finite Krull dimension. Let $\Sm/S$ denote the category of smooth $S$-schemes of finite type, $\Prj/S\subset \Sm/S$ be full subcategory of $\Sm/S$ consisting of the smooth projective $S$-schemes. We form the DG category of correspondences $\DGCor_S$ from a cubical enhancement of the category of finite correspondences $\Cor_S$, using the algebraic $n$-cubes as the cubical object; restricting to smooth projective $S$-schemes gives us our basic DG category $\DGPCor_S$. We sheafify the construction over $S$, then take the homotopy category of complexes over the sheafified DG category to form the triangulated category $K^b(R\Gamma(S,\un{\DGPCor}_S))$. Taking the idempotent completion gives us our category of smooth  effective motives $\PMot^\eff(S)$; inverting tensor product with the Lefschetz motive gives us the category of smooth motives $\PMot(S)$.

We also have a parallel version with $\Q$-coefficients; using alternating cubes endows everything with a tensor structure.

\subsection{DG categories of correspondences} We begin by recalling the definition of the category of finite correspondences.

\begin{definition} For $X, Y\in\Sm/S$, $\Cor_S(X,Y)$ is the free abelian group on the integral closed subschemes $W\subset X\times_SY$ such that the projection $W\to X$ is finite and surjective onto an irreducible component of $X$.
\end{definition}

Now let $X, Y$ and $Z$ be in $\Sm/S$. Take generators $W\in \Cor_S(X, Y)$, $W'\in\Cor_S(Y,Z)$. As in \cite[Chap. V]{FSV}, each component $T$ of the intersection $W\times_SZ\cap X\times_SW'\subset X\times_SY\times_SZ$ is finite over $X\times_SZ$ and over $X$ (via the projections) and the map
\[
T\to X
\]
is surjective over some irreducible component of $X$. Thus, letting $p_{XY}$, $p_{YZ}$, etc., denote the projections from the triple product $X\times_SY\times_SZ$, and $\cdot_{XYZ}$ the intersection product of cycles on $X\times_SY\times_SZ$, the expression
\[
W\circ W':=p_{XZ*}(p_{XY}^*(W)\cdot_{XYZ}p_{YZ}^*(W'))
\]
gives a well-defined and associative composition law
\[
\circ:\Cor_S(X,Y)\otimes\Cor_S(Y,Z)\to \Cor_S(X,Z),
\]
defining the pre-additive category $\Cor_S$. $\Cor_S$ is an additive category, with disjoint union being the direct sum, and product over $S$ makes $\Cor_S$ into a tensor category.

Sending $X\in\Sm/S$ to $X\in\Cor_S$ and sending $f:X\to Y$ to the graph of $f$, $\Gamma_f\subset X\times_SY$, defines a faithful embedding $i_S:\Sm/S\to \Cor_S$.  Making $\Sm/S$ a symmetric monoidal category using the product over $S$ makes $i_S$ a symmetric monoidal functor.

Set $\square^1_S=(\A_S^1,0_S,1_S)$. For each $n=0,1,\ldots$, we have the $n$-cube $\square^n_S:=(\square^1)^n$, giving us the co-cubical object $\square_S^*$,
\[
n\mapsto \square^n_S.
\]
Indeed, we send  $p_i$ to the $i$th projection $p_i:\square^n_S\to\square^{n-1}_S$, $\eta_{n,i,\epsilon}$ to the $\epsilon$-section to $p_i$ ($\epsilon=0,1$), $\tau_{n,i}$ to the involution of $\square^n_S$ sending  the $i$th coordinate $t_i$ to $1-t_i$ and acting by the identity on the other factors, and having the permutation group $\Sigma_n$ act on $\square^n$ by permuting the factors.

In fact, $\square^*_S$ extends to a functor
\[
\square^*_S:\ECube\to\Sm/S
\]
by sending the multiplication map $\mu:\un{2}\to\un{1}$ to the usual multiplication
\[
\mu_S:\square^2_S\to\square^1_S;\ \mu_S(x,y)=xy.
\]

Define the co-multiplication $\delta:\square^*_S\to \square^*_S\otimes\square^*_S$ by taking the collection of diagonal maps $\delta^n:=\delta_{\square^n}:\square^n_S\to \square^n_S\times_S\square^n_S$. One easily verifies the properties of definition~\ref{def:comult}.

\begin{definition} \label{def:DGMot} Let $\DGCor_S$ denote the DG category $(\Cor_S, \otimes, \square^*_S,\delta)$. We denote the DG tensor category $((\Cor_{S\Q},\otimes,\square_S^*,\delta)^\alt, \otimes_\square)$ by $\DGCor^\alt_S$.
\end{definition}

\begin{prop}\label{prop:CorHE} 1. The DG categories $\DGCor_S$ and $\DGCor^\alt_S$ are non-positive.\\
\\
2. The functor $\DGCor^\alt_S\to \DGCor_{S\Q}$  is a homotopy equivalence.
\end{prop}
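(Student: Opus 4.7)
The plan is to read off both parts directly from the machinery already set up in the paper; essentially no new computation is needed.

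For Part 1, I would observe that the Hom complex in $\DGCor_S$ arises from the cubical abelian group $n \mapsto \Hom_{\Cor_S}(X\times\square^n_S, Y)$, which is indexed by $n\ge 0$. The associated chain complex $\un{A}_*$ (and its quotient $A_*$) therefore lives in homological degrees $\ge 0$. Under the cohomological convention $A^n := A_{-n}$ fixed in the paper, this means $\sHom_{\DGCor_S}(X,Y)^p = 0$ for $p>0$, which is the definition of non-positivity. Since $\sHom_{\DGCor^\alt_S}(X,Y)^*$ is by construction a subcomplex of $\sHom_{\DGCor_{S\Q}}(X,Y)^*$, the same vanishing holds for $\DGCor^\alt_S$.

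For Part 2, I would simply invoke Proposition~\ref{prop:AltDG}. The hypotheses to check are: (a) $\Cor_{S\Q}$ is a $\Q$-additive tensor category, which is built into the definition via base change; (b) the co-cubical object $\square^*_S:\Cube\to\Cor_{S\Q}$ comes equipped with a co-multiplication $\delta$, which is exactly what Definition~\ref{def:DGMot} supplies; and (c) $\square^*_S$ extends to a functor $\ECube\to \Cor_{S\Q}$. For (c), the paper already constructs the extension $\square^*_S:\ECube\to\Sm/S$ by sending $\mu:\un{2}\to\un{1}$ to the multiplication map $\mu_S(x,y)=xy$ on $\A^1_S$; composing with the symmetric monoidal embedding $i_S:\Sm/S\to\Cor_{S\Q}$ produces the required extended co-cubical object in $\Cor_{S\Q}$. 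Proposition~\ref{prop:AltDG} then yields that the inclusion functor
\[
\DGCor^\alt_S = (\Cor_{S\Q},\otimes,\square^*_S,\delta)^\alt\to (\Cor_{S\Q},\otimes,\square^*_S,\delta)=\DGCor_{S\Q}
\]
is a homotopy equivalence of DG categories, since both sides have the same objects (so condition (1) of Definition~\ref{definition:DGHEquiv} is trivial) and the Hom-complex quasi-isomorphisms are provided by the proposition.

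There is essentially no obstacle here beyond bookkeeping: the only non-trivial input is Proposition~\ref{prop:AltDG} (whose proof rests on Proposition~\ref{prop:Alt} and the normalization lemma~\ref{lem:Normalized}), and the only verification specific to the correspondence setting is that the extended cubical structure on $\square^*_S$ in $\Sm/S$ transports through $i_S$ to $\Cor_S$, which is automatic from functoriality of $i_S$.
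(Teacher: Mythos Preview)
Your proposal is correct and follows exactly the paper's approach: the paper's proof simply says ``(1) is obvious, and (2) follows from proposition~\ref{prop:AltDG},'' and you have merely unpacked these two sentences by tracing the non-positivity through the cohomological grading convention and by verifying the hypotheses of proposition~\ref{prop:AltDG}.
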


\begin{proof} Indeed, (1) is obvious, and (2) follows from proposition~\ref{prop:AltDG}.
\end{proof}

\begin{definition} Let $\DGPCor_S$ be the full DG subcategory of $\DGCor_S$ with objects $X\in\Prj/S$.
$\DGPCor^\alt_S$ is similarly defined as the full DG subcategory of $\DGCor^\alt_S$ with objects 
$X\in\Prj/S$.
\end{definition}
Note that $\DGPCor^{\alt}_S$ is a DG tensor subcategory of $\DGCor^\alt_S$, and the functor
$\DGPCor^{\alt}_S\to \DGPCor_{S\Q}$ induced by $\DGCor^\alt_S\to \DGCor_{S\Q}$  is a homotopy equivalence.

\subsection{Functoriality} Let $f:S'\to S$ be a $k$-morphism of regular schemes. We have the well-defined pull-back functor $f^*:\Cor_S\to \Cor_{S'}$, with $f^*(X)=X\times_SS'$ for $X\in\Sm/S$ and using Serre's intersection multiplicity formula to define the pull-back of cycles 
\[
f^*:\Cor_S(X,Y)\to \Cor_{S'}(f^*(X), f^*(Y))
\]
Note that the cycle pull-back is always well-defined for finite correspondences, using the isomorphism
\[
(X\times_SS')\times_{S'}(Y\times_SS')\cong (X\times_SS')\times_SY.
\]

Since $f^*(X)\times_{S'}\square^n_{S'}\cong f^*(X\times_S\square^n_S)$, the pull-back extends to the map of complexes
\[
f^*:\sHom_{\DGCor_S}(X,Y)\to \sHom_{\DGCor_{S'}}(f^*(X),f^*(Y)),
\]
defining the functor $S\mapsto \DGCor_S$ from regular schemes to DG categories. We have as well the sub-functor $S\mapsto \DGPCor_S$.

A similar construction defines the functor $S\mapsto \DGCor^\alt_S$,  from regular schemes to DG tensor categories, and the subfunctor  $S\mapsto \DGPCor^{\alt}_S$.

\subsection{Complexes and smooth motives} 

\begin{definition} Let $S$ be a regular scheme.  Define the Zariski presheaf  $\un{\DGPCor}_S$  by
\[
U\mapsto \un{\DGPCor}_S(U):= \DGPCor_U.
\]
The DG category of smooth effective motives over $S$, $\DGPMot^\eff_S$, is defined as
\[
\DGPMot^\eff_S:=C^b(R\Gamma(S, \un{\DGPCor}_S)).
\]
The triangulated category, $\PMot^\eff(S)$,  of smooth effective motives over $S$ is defined as the idempotent completion of the homotopy category 
\[
K^b(R\Gamma(S, \un{\DGPCor}^\eff_S))=H^0\DGPMot^\eff_S.
\]

The Zariski  presheaf, $\un{\DGPCor}^\alt_S$, is defined by
\[
U\mapsto \un{\DGPCor}^\alt_S(U):= \DGPCor^\alt_U.
\]
The DG tensor category of smooth effective motives over $S$ with $\Q$-coefficients, $\DGPMot^\eff_{S\Q}$, is defined as
\[
\DGPMot^\eff_{S\Q}:=C^b(R\Gamma(S, \un{\DGPCor}^\alt_S)^\otimes).
\]
The triangulated tensor category, $\PMot^\eff(S)_\Q$, of smooth effective motives over $S$ with $\Q$-coefficients  is defined as the idempotent completion of the homotopy category $K^b(R\Gamma(S, \un{\DGPMot}^\alt_S)^\otimes)=H^0\DGPMot^\eff_{S\Q}$.
\end{definition}

\begin{prop} The functor $\un{\DGPCor}^\alt_S\to \un{\DGPCor}_S\otimes\Q$ defined by the natural functors
\[
\DGPCor^\alt_U\to \DGPCor_U\otimes\Q
\]
gives rise to a functor of DG categories
\[
R\Gamma(S, \un{\DGPCor}^\alt_S)^\otimes\to R\Gamma(S, \un{\DGPCor}_S)\otimes\Q,
\]
which in turn induces an equivalence of $\PMot^\eff(S)_\Q$ with the idempotent completion of $\PMot^\eff(S)\otimes\Q$, as triangulated categories.
\end{prop}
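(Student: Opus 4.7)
The plan is to exhibit the DG functor as a homotopy equivalence of DG categories, apply Theorem~\ref{thm:DGHomEq} to get an equivalence of bounded homotopy categories, and then invoke the functoriality of the Balmer–Schlichting idempotent completion. I would organize the argument so that the tensor structure on the source plays no active role beyond being present: we need only a DG functor, not a DG tensor functor, so the mismatch of tensor structures between source and target is not an obstacle.

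For the presheaf‐level input, fix $X, Y \in \Prj/S$ and work on the Zariski site $\Op^\Zar_S$. The cubical object $\square^*_U$ in $\Sm/U \subset \Cor_U$ extends to an extended cubical object via the multiplication $\mu_U \colon \square^2_U \to \square^1_U$, and the category $\Cor_{U\Q}$ is $\Q$-additive. Proposition~\ref{prop:AltDG} therefore says that $\DGPCor^\alt_U \to \DGPCor_U \otimes \Q$ is a homotopy equivalence of DG categories; in particular, at each $U$, the induced map on Hom complexes
\[
\sHom_{\DGPCor^\alt_U}(X,Y)^* \to \sHom_{\DGPCor_U}(X,Y)^* \otimes \Q
\]
is a quasi-isomorphism. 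Sheafifying, the morphism of complexes of Zariski sheaves
\[
\un{\sHom}^\Zar_{\DGPCor^\alt_S}(X,Y) \to \un{\sHom}^\Zar_{\DGPCor_S}(X,Y) \otimes \Q
\]
is a quasi-isomorphism.

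Now apply the Godement construction $\sG^*$ of \S3.1. Since $\sF \to \sG^*(\sF)$ is a functorial $\Gamma(S,-)$-acyclic resolution, it preserves quasi-isomorphisms at the level of global sections, and since $\Q$ is flat, $\sG^*$ commutes with $-\otimes\Q$ up to quasi-isomorphism. Consequently
\[
R\Gamma(S,\un{\DGPCor}^\alt_S) \longrightarrow R\Gamma(S,\un{\DGPCor}_S) \otimes \Q
\]
is a homotopy equivalence of DG categories (it is a bijection on objects in both cases). Precomposing with the integration homotopy equivalence $\int \colon R\Gamma(S,\un{\DGPCor}^\alt_S)^\otimes \to R\Gamma(S,\un{\DGPCor}^\alt_S)$ of \S3.3 gives a composite DG functor
\[
\Phi \colon R\Gamma(S,\un{\DGPCor}^\alt_S)^\otimes \longrightarrow R\Gamma(S,\un{\DGPCor}_S) \otimes \Q
\]
that is a homotopy equivalence of (non-positive) DG categories.

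Finally, by Theorem~\ref{thm:DGHomEq} with $? = b$, the induced functor
\[
K^b(\Phi) \colon K^b\bigl(R\Gamma(S,\un{\DGPCor}^\alt_S)^\otimes\bigr) \longrightarrow K^b\bigl(R\Gamma(S,\un{\DGPCor}_S) \otimes \Q\bigr)
\]
is an equivalence of triangulated categories, and by flatness of $\Q$ the target is canonically identified with $K^b\bigl(R\Gamma(S,\un{\DGPCor}_S)\bigr) \otimes \Q$. Passing to idempotent completions and using that $(-)^\natural$ carries equivalences of triangulated categories to equivalences (Balmer–Schlichting) yields the asserted equivalence between $\PMot^\eff(S)_\Q$ and the idempotent completion of $\PMot^\eff(S) \otimes \Q$. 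The only point that demands any care is checking that $\sG^*$ and the Thom–Sullivan construction behave well with respect to $\otimes \Q$; this is routine since $\sG^*$ is built from a pair of adjoint functors that are $\Z$-linear and $\Q$ is flat, so flat base change commutes with the resolution and with hypercohomology.
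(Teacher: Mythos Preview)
Your argument is correct and is essentially an unpacking of the paper's one-line proof, which simply cites proposition~\ref{prop:CorHE} (the homotopy equivalence $\DGPCor^\alt_U\to\DGPCor_{U\Q}$ at each $U$, deduced from proposition~\ref{prop:AltDG}) and remark~\ref{rem:TensorStructure} (the equivalence $K^b(\int):K^b(R\Gamma(S,\sC)^\otimes)\to K^b(R\Gamma(S,\sC))$ via theorem~\ref{thm:DGHomEq}). You have made explicit the intermediate steps the paper leaves to the reader: sheafifying the stalkwise quasi-isomorphism, passing through the Godement resolution, and invoking Balmer--Schlichting for the idempotent completion.
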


\begin{proof} This follows from proposition~\ref{prop:CorHE} and remark~\ref{rem:TensorStructure}.
\end{proof}

\begin{rem}\label{rem:Functoriality}
We note that the DG categories $R\Gamma(S, \un{\DGPCor}_S)$ and $\DGPMot^\eff_S$ are functorial in the regular scheme $S$, as is the triangulated category $\PMot^\eff(S)$. The same holds for the DG tensor categories $R\Gamma(S, \un{\DGPCor}^\alt_S)^\otimes$, $\DGPMot^\eff_{S\Q}$ and the triangulated tensor category $\PMot^\eff(S)_\Q$.
\end{rem}

\subsection{Lefschetz motives} In $\Prj/S$, we have the idempotent endomorphism $\alpha$ of $\P^1_S$ defined as the composition 
\[
\P^1_S\xrightarrow{p} S\xrightarrow{i_\infty}\P^1_S.
\]
Since $\Cor_S$ is a tensor category, we have the object $\L:=(\P^1_S,1-\alpha)$ in $\Cor_S^\natural$, as well as the $n$th tensor power $\L^n$ of $\L$ and, for each $X\in \Sm/S$, the object $X\otimes \L^n$.
We thus have these objects in the DG categories of correspondences $\DGPCor_S^\natural$ and $R\Gamma(S, \un{\DGPCor}_S)^\natural$.

\begin{definition} \label{definition:EffectiveTate} The DG category of Lefschetz motives over $S$, $\DGTCor^\eff_S$, is defined to be the full DG subcategory of $R\Gamma(S, \un{\DGPCor}_S)^\natural$ with objects $\L^d$, $d\ge0$. The DG category of effective mixed Tate motives is
\[
\DGTMot^\eff_S:=C^b(\DGTCor^\eff_S).
\]
The triangulated category of  effect mixed Tate motives over $S$, $\PTMot^\eff(S)$, is  the homotopy category $K^b(\DGTCor^\eff_S)=H^0\DGTMot^\eff_S$.

We have parallel definitions of DG tensor categories and a triangulated tensor category with $\Q$-coefficients. The DG tensor category of Lefschetz motives over $S$ with $\Q$-coefficients, $\DGTCor^\eff_{S\Q}$, is  the full DG subcategory of $R\Gamma(S, \un{\DGPMot}^\alt_S)^{\otimes\natural}$ with objects finite direct sums of the $\L^d$, $d\ge0$,  The DG tensor category of effective mixed Tate motive with $\Q$-coefficients is
\[
\DGTMot^\eff_{S\Q}:=C^b(\DGTCor^\eff_{S\Q}),
\]
 and the  triangulated tensor category of  effect mixed Tate motives over $S$ with , $\PTMot^\eff(S)_\Q$, is  the homotopy category $K^b(\DGTCor^\eff_{S\Q})=H^0\DGTMot^\eff_{S\Q}$. 
\end{definition}
 
\section{Duality}\label{sec:Duality}

In   section~\ref{sec:FLVMov}, we will give an extension of the moving lemmas of Friedlander-Lawson to smooth projective schemes over a regular, semi-local base. Before we do this, we give in this section the applications apply this to define a twisted duality for Hom-complexes in $R\Gamma(S, \un{\DGPMot}_S)$, and extend this to a duality in various categories of motives.

In this section $S$ will be a regular scheme over a fixed base-field $k$.

\subsection{Equi-dimensional cycles}
\begin{definition} Let $X$ and $Y$ be smooth over $S$, $r\ge0$ an integer. The group $z^S_\equi(Y,r)(X)$ is the free abelian group on the integral subschemes $W\subset X\times_SY$ such that the projection $W\to X$ dominates an irreducible component $X'$ of $X$, and such that, for each $x\in X$,  the fiber $W_x$ over $x$ has pure dimension $r$ over $k(x)$, or is empty.

We let $z^S_\equi(Y,r)^\eff(X)\subset z^S_\equi(Y,r)(X)$ be the submonoid of effect cycles, that is, the free monoid on the generators $W$ for $z^S_\equi(Y,r)(X)$  described above. 
\end{definition}

For an $S$-morphism $f:X'\to X$, and for $W\in z^S_\equi(Y,r)(X)$, the pull-back cycle $(f\times\id_Y)^*(W)$ is well-defined and in $z^S_\equi(Y,r)(X')$. Thus $z^S_\equi(Y,r)$ is a presheaf on $\Sm/S$. For $x\in X\in\Sm/k$, and for $W\in z^S_\equi(Y,r)(X)$, we denote the pull-back $i_x^*(W)$ by the inclusion $i_x:x\to X$ by $W_x$.

Suppose $Y$ is projective over $S$, with a fixed embedding $Y\hookrightarrow \P^N_S$. Then we have a well-defined degree homomorphism
\[
\Deg:z^S_\equi(Y,r)(X)\to H^0(X_\Zar,\Z)
\]
which sends a cycle $W\in z^S_\equi(Y,r)(X)$ to the locally constant function on $X$ 
\[
x\mapsto \Deg(W_x),
\]
where $\Deg(W_x)$ is the usual degree of the cycle $W_x$ in $\P^N$.  

For each integer $e\ge1$, we let 
$z^S_\equi(Y,r)^\eff_e(X)\subset z^S_\equi(Y,r)^\eff(X)$ be the subset of $z^S_\equi(Y,r)^\eff(X)$ consisting of those $W$ with $\Deg(W)$ the constant function with value $e$ on $X$. We set
\[
z^S_\equi(Y,r)^\eff_{\le e}(X):=\amalg_{1\le e'\le e}z^S_\equi(Y,r)^\eff_{e'}(X).
\]
We let $z^S_\equi(Y,r)_{\le e}(X)\subset z^S_\equi(Y,r)(X)$ be the subgroup generated by the set  $z^S_\equi(Y,r)^\eff_{\le e}(X)$. Thus we have the presheaves of abelian monoids 
$z^S_\equi(Y,r)^\eff_{e}(X)$ and $z^S_\equi(Y,r)^\eff_{\le e}(X)$, and the presheaf of abelian groups 
$z^S_\equi(Y,r)_{\le e}(X)$.

\begin{definition} For $X, Y$ in $\Sm/S$, define the {\em cubical Suslin complex} $C^S(Y,r)^*(X)$ as the complex associated to the cubical object
\[
n\mapsto z^S_\equi(Y,r)(X\times_S\square^n_S),
\]
i.e.
\[
C^S(Y,r)^n(X):=z^S_\equi(Y,r)(X\times_S\square^{-n}_S)/\text{degn}.
\]
For $f:X'\to X$, define
\[
f^*:C^S(Y,r)(X)\to C^S(Y,r)(X')
\]
via the pull-back maps
\[
(f\times\id_{\square^n})^*:z^S_\equi(Y,r)(X\times\square^n)\to 
z^S_\equi(Y,r)(X'\times\square^n);
\]
this defines the presheaf of complexes $C^S(Y,r)$ on $\Sm/S$. 

Suppose that $Y$ is in $\Prj/S$ and we are given an embedding $Y\hookrightarrow \P^N_S$ over $S$. Let $C^S(Y,r)_{\le e}(X)\subset C^S(Y,r)(X)$ be the subcomplex corresponding to the cubical abelian group 
\[
n\mapsto z^S_\equi(Y,r)_{\le e}(X\times_S\square^n_S)\subset z^S_\equi(Y,r)(X\times_S\square^n_S).
\]
\end{definition}

\begin{rems} 1. Suppose that $Y$ is in $\Prj/S$. Then 
\[
C^S(Y,0)^*(X)=\sHom_{\DGCor_S}(X,Y)^*,
\]
and 
$C^S(Y,0)$ is the presheaf $\sHom_{\DGCor_S}(-,Y)^*$ on $\Sm/S$.\\
\\
2. Let $f:Y\to Y'$ be a proper morphism. Push-forward by the projection $\id\times f:X\times\square^n\times Y\to X\times\square^n\times Y'$ defines the map of complexes
\[
f_*(X):C^S(Y,r)(X)\to C^S(Y',r)(X)
\]
giving us the map of presheaves $f_*:C^S(Y,r)\to C^S(Y',r)$. Thus $Y\mapsto C^S(Y,r)$ defines a functor from $\Prj/S$ to complexes of sheaves on $\Sm/S$.\\
\\
3.  Correspondences act on $z^S_\equi(Y,r)(X)$, both in $Y$ (covariantly) and in $X$ (contravariantly). Thus sending $(Y,X)$ to $z^S_\equi(Y,r)(X)$ extends to a functor
\[
z^S_\equi(-,r)(-):\Cor_S\times\Cor_S^\op\to \Ab
\]
We have a similar extension of the complexes $C^S(Y,r)(X)$ to
\[
C^S(-,r)(-):\Cor_S\times\Cor_S^\op\to C^-(\Ab)
\]
As $\Ab$ is abelian, the bi-functors $z^S_\equi(-,r)(-):$ and $C^S(-,r)(-)$ extend to the idempotent completion of $\Cor_S$; in particular, the presheaves $z^S_\equi(Y\otimes\L^n,r)$ and 
$C^S(Y\otimes\L^n,r)$ are defined. 
\end{rems}

Now take $Y, X\in\Sm/S$ with $X\to S$ equi-dimensional of dimension $p$ over $S$. Each integral $W\in z^S_\equi(Y,r)(U\times_SX)$ gives us an integral subscheme $W$ of $U\times_SX\times_SY$ which is equi-dimensional of relative dimension $r+p$ over some component of $U$. This defines the map of complexes
\[
\int_X:C^S(Y,r)(U\times_SX)\to C^S(X\times_SY,r+p)(U)
\]

We can now state our main result, to be proven in \S\ref{sec:FLVMov}.

\begin{thm}\label{thm:Duality} Let  $S$ be a regular semi-local $k$-scheme, essentially of finite type over $k$. Then for all $X, Y\in\Prj/S$, $U\in\Sm/S$, with $X\to S$ of relative dimension $p$, the map
\[
\int_X:C^S(Y,r)(U\times_SX)\to C^S(X\times_SY,r+p)(U)
\]
is a quasi-isomorphism.
\end{thm}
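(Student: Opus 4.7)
The plan is to deduce Theorem~\ref{thm:Duality} from the Friedlander--Lawson--Voevodsky moving lemma (to be established in~\S\ref{sec:FLVMov}) together with a degree filtration argument. First I would observe that $\int_X$ is actually an inclusion of complexes: a cycle $W$ on $U\times_S X\times_S\square^n\times_S Y$ that is equi-dimensional of relative dimension $r$ over $U\times_S X$ yields, after permuting factors, a cycle on $U\times_S\square^n\times_S(X\times_S Y)$ whose fibers over $U$ are equi-dimensional of relative dimension $r+p$, since $X\to S$ is equi-dimensional of relative dimension $p$. Hence the content of the theorem is that any cycle in the larger complex $C^S(X\times_SY,r+p)(U)$ is, up to chain homotopy, in the image of the subcomplex $C^S(Y,r)(U\times_S X)$.

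Next, fix a projective embedding $X\times_S Y\hookrightarrow\P^N_S$ (for instance, induced via Segre from chosen embeddings of $X$ and $Y$). The subgroups $z^S_\equi(X\times_S Y,r+p)_{\le e}$ give an exhaustive filtration of $C^S(X\times_S Y,r+p)(U)$ by bounded-degree subcomplexes, and pullback of the degree function similarly filters $C^S(Y,r)(U\times_S X)$. Since $\int_X$ is compatible with the filtrations, and filtered colimits of quasi-isomorphisms are quasi-isomorphisms, it suffices to prove the statement at each filtration level $e$.

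The heart of the argument is then to invoke the moving lemma: for each $e$ there exist an integer $e'=e'(e)\ge e$, a chain map
\[
\mu_e: C^S(X\times_S Y,r+p)_{\le e}(U)\to C^S(Y,r)_{\le e'}(U\times_S X),
\]
and a chain homotopy $h_e$ exhibiting $\int_X\circ\mu_e$ as homotopic to the inclusion $\iota_{e,e'}$ into $C^S(X\times_S Y,r+p)_{\le e'}(U)$. The construction of $\mu_e$ and $h_e$ proceeds by letting $\PGL_{N+1}$ act on $\P^N_S$ through translations: a sufficiently generic $S$-point $g\in\PGL_{N+1}(\sO_S)$ moves a cycle $Z$ of bounded degree on $U\times_S\square^n\times_S X\times_S Y$ into general position relative to the projection to $X$, forcing the translate to become equi-dimensional of the correct relative dimension over $U\times_S X$; the line in $\PGL_{N+1}$ joining the identity to $g$ produces an $\A^1$-parametrized family that, after restriction to $\square^1$, furnishes the homotopy. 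Together, the $\mu_e$ and $h_e$ assemble, upon passing to the colimit in $e$, into a chain homotopy inverse to $\int_X$.

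The hard part will be the moving lemma itself over a regular semi-local base $S$. In the classical case when $S=\Spec k$, one uses that the locus in $\PGL_{N+1}$ on which a generic translation fails to put a cycle in the required position is a proper subvariety, and one extracts a suitable $k$-point by density. Over a semi-local base the analogous genericity has to hold simultaneously at every closed point of $\Spec\sO_S$, which is exactly where the hypothesis that $\sO_S$ contain a field (and that $S$ be semi-local) enters: one chooses the translating parameter as a section over $\Spec\sO_S$ of a suitable open subscheme of $\PGL_{N+1,S}\times\A^1$, using that over a semi-local ring containing an infinite field enough such sections exist with the required specializations at each closed fiber. Controlling the degree bound $e'$ uniformly in $e$, and showing that the resulting homotopy is itself equi-dimensional of the correct relative dimension on each stratum of $\square^n\times\square^1$, will be the main technical input to be handled in Section~\ref{sec:FLVMov}.
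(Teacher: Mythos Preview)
Your overall architecture is right and matches the paper: $\int_X$ identifies $C^S(Y,r)(U\times_SX)$ with a subcomplex of $C^S(X\times_SY,r+p)(U)$, and the task is a moving lemma showing this inclusion is a quasi-isomorphism, proved via a degree filtration and an $\A^1$-homotopy. The paper formalizes the image as $C^S(X\times_SY,r+p)_{\sC}(U)$ where $\sC$ is the family of fibers $\{x\times Y:x\in X\}$ inside the Chow scheme, then applies a general moving result (Theorem~\ref{thm:MLExt}) asserting that for any such family $\sC$ the inclusion $C^S(-,r)_\sC\hookrightarrow C^S(-,r)$ is a quasi-isomorphism.

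There is, however, a genuine gap in your proposed mechanism. A generic $g\in\PGL_{N+1}(\sO_S)$ acting on $\P^N_S$ does \emph{not} preserve the closed subscheme $X\times_SY\subset\P^N_S$, so $g\cdot Z$ is no longer a cycle on $U\times_S\square^n\times_S(X\times_SY)$ at all; neither the map $\mu_e$ nor the homotopy $h_e$ can be constructed this way. This is precisely the obstruction that led Friedlander--Lawson to the projecting cone method: to move cycles on a smooth projective $X\subset\P^N$ of dimension $n$ one chooses finite projections $F^i_X:X\to\P^n$ (given by $(n+1)$-tuples of hypersurfaces meeting $X$ emptily), uses the endomorphism $Z\mapsto F^{i*}_XF^i_{X*}(Z)=Z+R_{F^i}(Z)$ of effective cycles \emph{on $X$}, and combines this with the homogeneous moving lemma on $\P^n$. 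The $\A^1$-homotopy is then assembled from these ingredients (see the construction of $H_{X,U}$ in Theorem~\ref{thm:FVMovExt}), not from a linear path in $\PGL$. The semi-local hypothesis enters exactly where you anticipated, but in choosing the projecting cones: one needs a $B$-point of the open locus $\sR_X(d_*)\setminus\sB_X(d_*)$ in a product of projective spaces over $B$, which exists because the fiber over each closed point is nonempty and $B$ is semi-local with infinite residue fields (Proposition~\ref{prop:FLMov2Ext}).
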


 In addition, we will need a computation of $C_*(Y\times\P^n,r)$ for $r\ge n$. Fix linear inclusions $\iota_j:\P^j\to \P^n$, $j=0,\ldots, n$.  For each $i$, $0\le i\le n\le r$, define the map
\[
\alpha_j:C^S(Y,r-j)(X)\to C^S(Y\times\P^n,r)(X)
\]
by sending $W\subset Y\times_SX\times\square^n$ to  $(\iota_j\times\id)_*(\P^j\times W)$.

\begin{thm}\label{thm:PBF} Let  $S$ be a regular semi-local $k$-scheme, essentially of finite type over $k$. Then for all $X, Y\in\Prj/S$, and for $r\ge n$, the map
\[
\sum_{j=0}^n\alpha_j:\oplus_{j=0}^n C^S(Y,r-j)(X)\to C^S(Y\times\P^n,r)(X)
\]
is a quasi-isomorphism.  
\end{thm}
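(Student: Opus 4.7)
The plan is to induct on $n$, with the base case $n=0$ being immediate since $\alpha_0$ is the identity on $C^S(Y,r)(X)$. For the inductive step, fix compatible linear embeddings $\iota_j:\P^j\hookrightarrow\P^n$ so that $\iota_j$ factors through $\iota_{n-1}$ for $j<n$; write $j:Y\times\A^n\hookrightarrow Y\times\P^n$ for the open complement of $\iota_{n-1}$.

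The key geometric input is a short exact sequence of Suslin complexes (up to quasi-isomorphism at the right end)
\[
0\to C^S(Y\times\P^{n-1},r)(X)\xrightarrow{\iota_{n-1,*}} C^S(Y\times\P^n,r)(X)\xrightarrow{j^*} C^S(Y\times\A^n,r)(X)\to 0.
\]
Injectivity of $\iota_{n-1,*}$ and exactness in the middle are formal: a cycle on $Y\times\P^n\times X\times\square^m$, equi-dimensional of relative dimension $r$ over $X$, whose restriction to $Y\times\A^n$ vanishes must be supported on $Y\times\P^{n-1}$. That the image of $j^*$ is quasi-isomorphic to $C^S(Y\times\A^n,r)(X)$---equivalently, that every equi-dimensional family of cycles on $Y\times\A^n\times X\times\square^m$ is, up to cubical homotopy, the restriction of one on $Y\times\P^n\times X\times\square^m$---is the essential point and is where the moving lemma of \S\ref{sec:FLVMov} enters.

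Next, identify $C^S(Y\times\A^n,r)(X)\simeq C^S(Y,r-n)(X)$ via the external-product map $W\mapsto W\times\A^n$, which requires (and uses) that $r\ge n$. That this is a quasi-isomorphism is a form of $\A^1$-homotopy invariance for equi-dimensional cycle complexes, iterated $n$ times; explicit cubical homotopies are constructed using the multiplication $\A^1\times\A^1\to\A^1$ in the standard Suslin--Voevodsky manner, and nothing in the argument is obstructed by the passage to a regular semi-local base. Under this identification, $j^*\circ\alpha_n$ becomes (up to sign) the identity on $C^S(Y,r-n)(X)$, since $\alpha_n(W)=\P^n\times W$ restricts to $\A^n\times W$; and by the compatibility of the $\iota_j$, for $j<n$ we have $\alpha_j=\iota_{n-1,*}\circ\alpha_j^{(n-1)}$, where $\alpha_j^{(n-1)}$ denotes the analogous map for $Y\times\P^{n-1}$.

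Combining the inductive hypothesis applied to $\P^{n-1}$ with the short exact sequence above yields a split distinguished triangle whose splitting is furnished by $\alpha_n$, giving the desired quasi-isomorphism $\sum_{j=0}^n\alpha_j$. The main obstacle is the quasi-isomorphism $\im(j^*)\hookrightarrow C^S(Y\times\A^n,r)$ at the cubical Suslin level: moving an arbitrary equi-dimensional cubical family of cycles on $Y\times\A^n$ to one that extends equi-dimensionally across $Y\times\P^{n-1}$ is precisely the semi-local extension of the Friedlander--Lawson--Voevodsky moving lemma developed in \S\ref{sec:FLVMov}. The $\A^1$-homotopy invariance step, while classical over a field, also needs to be verified over the semi-local base $S$; this is routine given the explicit cubical homotopies, but must be checked with care.
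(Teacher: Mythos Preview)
Your localization approach via $\A^n\hookrightarrow\P^n\hookleftarrow\P^{n-1}$ is natural, but both of the two nonformal steps you flag are more serious than you acknowledge, and neither is what the paper actually does.

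First, the quasi-isomorphism $C^S(Y\times\A^n,r)(X)\simeq C^S(Y,r-n)(X)$ is \emph{not} the standard $\A^1$-invariance of Suslin complexes. The standard statement is invariance in the \emph{contravariant} variable: $C^S(Y,r)(X\times\A^1)\simeq C^S(Y,r)(X)$, and that is what the multiplication homotopy on $\square^*$ gives. What you need is a suspension isomorphism in the \emph{covariant} variable $Y$. The naive attempt via $(\id_Y\times\mu)^*$ with $\mu:\A^1\times\A^1\to\A^1$ fails: at $t=0$ it would restrict $W$ to the fiber $Y\times\{0\}$, but nothing forces that restriction to be equi-dimensional of relative dimension $r-1$ over $X$. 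Over a field this covariant suspension is a theorem of Friedlander--Voevodsky, but its proof already uses moving and duality; it is not routine, and over the semi-local base it is not available independently of the very result you are proving.

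Second, the quasi-surjectivity of $j^*$ is a \emph{platification} statement: you need to show that an equi-dimensional family on $Y\times\A^n$ extends, after a cubical homotopy, to an equi-dimensional family across $Y\times\P^{n-1}$. Theorem~\ref{thm:MLExt} does not give this. Its input is a cycle already in $z^B_\equi(Y\times\P^n,r)(U)$, which it moves \emph{within} the equi-dimensional locus to meet a prescribed family properly; it does not take a cycle on the open part whose closure fails to be equi-dimensional and repair it.

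The paper avoids both issues by never leaving the projective world. It first uses theorem~\ref{thm:MLExt} to replace $C^B(Y\times\P^n,r)$ by the subcomplex $C^B(Y\times\P^n,r)_{\{Y\times\P^{n-1}\}}$ of cycles meeting $Y\times\P^{n-1}$ properly. On that subcomplex both the restriction $i_{\P^{n-1}}^*$ and the pushforward $\pi_*$ to $Y$ are defined. An explicit $\G_m$-scaling homotopy (closure of the pull-back along $((x_0:\cdots:x_n),t)\mapsto(x_0:\cdots:x_{n-1}:tx_n)$) shows $\id\simeq C_{p_0}\circ i_{\P^{n-1}}^*+\alpha_0\circ\pi_*$, where $C_{p_0}$ is the cone over $p_0=(0:\cdots:0:1)$. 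One checks these two summands are orthogonal idempotents, so
\[
\alpha_0+C_{p_0}:C^B(Y,r)\oplus C^B(Y\times\P^{n-1},r-1)\to C^B(Y\times\P^n,r)_{\{Y\times\P^{n-1}\}}
\]
is a homotopy equivalence; since $C_{p_0}\circ\alpha_j^{(n-1)}=\alpha_{j+1}$, the induction closes. The only moving input required is exactly theorem~\ref{thm:MLExt}, applied to cycles already equi-dimensional on the projective $Y\times\P^n$.
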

Theorem~\ref{thm:PBF} will also be proven in \S\ref{sec:FLVMov}. As consequence, we have

\begin{cor} \label{cor:Duality} Let  $S$ be a regular semi-local $k$-scheme, essentially of finite type over $k$.\\
\\
1. For $Y\in\Prj/S$, $X\in\Sm/S$ and $n\ge0$, let 
\[
\phi:C^S_*(Y,r)(X)\to C^S(Y\otimes \L^n,r+n)(X)
\]
be the map sending  $W\subset X\times\square^n\times_SY$ to $W\times(\P^1)^n\subset X\times(\P^1)^n\times\square^n_SY$. Then $\phi$   is an isomorphism in $D^-(\Ab)$.\\
\\
2. For $Y,Z\in\Prj/S$, $X\in\Sm/S$, with $Y$ of dimension $d$ over $S$, there are natural isomorphisms in $D^-(\Ab)$
\[
C^S(Y\times_SZ,r)(X)\cong C^S(Z\otimes\L^d,r)(X\times_SY).
\]
For $Z=S\times_kZ_0$, with $Z_0\in\Prj/k$, we have an isomorphism
\[
C^S(Z\otimes\L^d,r)(X\times_SY)\cong C^k(Z_0\otimes\L^d,r)(p_*(X\times_SY)).
\]
3. For $X\in\Sm/S$, $Y\in\Prj/S$ and $r\ge n\ge0$ integers, there are natural isomorphisms in 
$D^-(\Ab)$
\[
C^S(Y,r)(X\otimes\L^n)\cong C^S(Y,r+n)(X).
\]
\end{cor}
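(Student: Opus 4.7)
The plan is to derive all three statements from theorem~\ref{thm:Duality} and theorem~\ref{thm:PBF}, using the motivic decomposition $\P^1_S \cong S \oplus \L$ induced by the idempotent $\alpha = i_\infty\circ p$ (giving $\L = (\P^1,1-\alpha)$), together with the fact that the bifunctor $C^S(-,r)(-)$ extends to the idempotent completion of $\Cor_S$ in both variables and that the natural isomorphisms constructed on $\Sm/S\times \Prj/S$ extend to idempotent summands in $\Cor_S^\natural$.

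For Part 1, I induct on $n$, reducing to the case $n=1$. On one hand, theorem~\ref{thm:PBF} with $n=1$ gives a quasi-isomorphism $\alpha_0\oplus\alpha_1:C^S(Y,r+1)(X)\oplus C^S(Y,r)(X)\to C^S(Y\times\P^1,r+1)(X)$. On the other hand, the idempotent decomposition yields an honest direct sum $C^S(Y\times\P^1,r+1)(X)= C^S(Y,r+1)(X)\oplus C^S(Y\otimes\L,r+1)(X)$, where the second summand is cut out by $(1-\alpha)$ acting on the $\P^1$-factor. I check that these two splittings coincide: $\alpha_0(W)=\{\infty\}\times W$ is fixed by $\alpha_*$, while $\alpha_1(W) = \P^1\times W$ is killed by $\alpha_*$ on dimensional grounds (pushforward along $\alpha$ collapses the $\P^1$-factor to a point, producing a cycle of strictly smaller relative dimension, which must vanish in the graded piece $z^S_\equi(Y\times\P^1,r+1)$). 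Thus $\phi = \alpha_1$ identifies $C^S(Y,r)(X)$ with $C^S(Y\otimes\L,r+1)(X)$ in $D^-(\Ab)$. The general case follows by writing $Y\otimes\L^n = (Y\otimes\L^{n-1})\otimes\L$ and iterating.

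Part 2 combines duality with Part 1. Apply theorem~\ref{thm:Duality} with its $X$, $Y$, $U$ taken to be the present $Y$ (of relative dimension $d$), $Z$, and $X$ respectively; this produces a quasi-isomorphism $\int_Y:C^S(Z,r-d)(X\times_S Y)\xrightarrow{\sim} C^S(Y\times_S Z,r)(X)$, and Part 1 converts the source to $C^S(Z\otimes\L^d,r)(X\times_S Y)$. When $Z=S\times_k Z_0$, the identifications $T\times_S Z = T\times_k Z_0$, $\square^n_S = S\times_k\square^n_k$, and $\L_S = S\times_k \L_k$ yield an equality of cycle complexes $C^S(Z\otimes\L^d,r)(T)=C^k(Z_0\otimes\L^d,r)(p_*T)$ with $p_*T$ denoting $T$ viewed as a $k$-scheme via $T\to S\to\Spec k$, giving the second isomorphism. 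For Part 3, the same strategy runs in reverse: theorem~\ref{thm:Duality} with its $X$ taken as $\P^1$ gives $\int_{\P^1}:C^S(Y,r)(X\times_S\P^1)\xrightarrow{\sim} C^S(\P^1\times_S Y,r+1)(X)$; theorem~\ref{thm:PBF} splits the target as $C^S(Y,r+1)(X)\oplus C^S(Y,r)(X)$; and the source splits naturally as $C^S(Y,r)(X)\oplus C^S(Y,r)(X\otimes\L)$ via the $\alpha$-idempotent acting on the $\P^1$-factor of $X\times\P^1$. Matching summands (the $\alpha$-part on the source, consisting of cycles constant along $\P^1$, maps via $\int_{\P^1}$ to the $\alpha_1$-image on the target, by the same dimensional argument as in Part 1) yields $C^S(Y,r)(X\otimes\L)\simeq C^S(Y,r+1)(X)$, and iterating in $n$ gives the general case.

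The principal technical obstacle, common to all three parts, is the compatibility between the PBF splitting (given by the geometric maps $\alpha_j$) and the motivic idempotent splitting (via $\alpha$ and $1-\alpha$). This reduces to a single cycle-level computation, exploiting the fact that $\alpha = i_\infty\circ p$ factors through the base $S$ so that its pushforward collapses $\P^1$-directions and therefore vanishes on graded pieces of maximal $\P^1$-dimension; once this observation is in hand, each iteration and base change is formal.
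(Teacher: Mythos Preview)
Your overall strategy matches the paper's, and Parts~1 and~3 are essentially the paper's own arguments. There is, however, a genuine gap in your Part~2.

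You apply theorem~\ref{thm:Duality} with parameter $r-d$, writing
\[
\int_Y:C^S(Z,r-d)(X\times_S Y)\xrightarrow{\sim} C^S(Y\times_S Z,r)(X),
\]
and then use Part~1 to convert the source into $C^S(Z\otimes\L^d,r)(X\times_S Y)$. This only makes sense when $r\ge d$, since $z^S_\equi(Z,r')$ is undefined (or zero) for $r'<0$. But the corollary is stated for arbitrary $r\ge0$, and in the main application (theorem~\ref{thm:DualityExt}, with $r=0$ and $d=\dim_SY>0$) your route fails outright. The paper avoids this by reversing the order: it first applies $\int_Y$ (extended by naturality to the summand $Z\otimes\L^d$ of $Z\times(\P^1)^d\in\Prj/S$) at level $r$,
\[
C^S(Z\otimes\L^d,r)(X\times_SY)\cong C^S(Y\times_SZ\otimes\L^d,r+d)(X),
\]
and then uses Part~1 on the target, $C^S(Y\times_SZ\otimes\L^d,r+d)(X)\cong C^S(Y\times_SZ,r)(X)$. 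Both steps now involve only nonnegative dimension parameters. Your argument is easily repaired by adopting this order.

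One minor remark on Part~3: your ``matching summands'' step implicitly uses that if $f:A\oplus B\to A'\oplus B'$ is an isomorphism in $D^-(\Ab)$ with $f|_A$ landing in $B'$ and inducing $A\xrightarrow{\sim}B'$, then the component $B\to A'$ is also an isomorphism. This is correct (the lower-triangular matrix argument works), and your observation that $\int_{\P^1}\circ p^*=\alpha_1$ gives exactly the vanishing of the $(A\to A')$-component needed. The paper makes this a bit more explicit by naming the inverse map $p_*\circ\int_{\P^1}$, but your version is fine.
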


\begin{proof} For (1), the isomorphism 
\[
C^S(Y\otimes \L^n,r+n)(X)\cong C^S(Y,r)(X).
\]
follows by induction and the projective bundle formula (theorem~\ref{thm:PBF}). Indeed, it suffices to define a natural isomorphism
\begin{equation}\label{eqn:Formula1}
C^S(Y\otimes \L^n,r+1)(X)\to C^S(Y\otimes \L^{n-1},r)(X).
\end{equation}
Since the projection $\P^1\to\Spec k$ is split by the inclusion $i_\infty:\Spec k\to \P^1$, we get a direct sum decomposition
\[
C^S(Y\times\P^1,r)\cong C^S(Y,r)\oplus C^S(Y\otimes\L,r)
\]
By induction, we have a similar direct sum decomposition of $C^S(Y\times(\P^1)^n,r)$ for all $n$; this reduces the proof of \eqref{eqn:Formula1} to showing that
\[
C^S(Y\times(\P^1)^{n-1}\otimes \L,r+1)(X)\cong C^S(Y\times(\P^1)^{n-1},r)(X),
\]
which reduces us to the case $n=1$.
Comparing with the isomorphism
\[
\alpha_0+\alpha_1: C^S(Y,r+1)\oplus C^S(Y,r)\to C^S(Y\times\P^1,r+1)
\]
and noting that $p_*\circ \alpha_1=0$,  $p_*\circ\alpha_0=\id$, we see that $\alpha_1$ gives an isomorphism
\[
\alpha_1:C^S(Y,r)\to C^S(Y\otimes\L,r+1),
\]
completing the proof of the first assertion.

The first isomorphism in (2)  follows from the first assertion and theorem~\ref{thm:Duality}:
\[
C^S(Z\otimes\L^d,r)(X\times_SY)\cong C^S(Y\times_SZ\otimes \L^d,d+r)(X)\cong 
C^S(Y\times_SZ,r)(X).
\]
For the second isomorphism,  $C^S(Z\otimes\L^d,r)(X\times_SY)=C^k(Z_0\otimes\L^d,r)(X\times_SY)$, since we have the isomorphism of schemes (over $\Spec k$)
\[
T\times_S(S\times_kZ_0)\times_S\P^1_S\cong p_*T\times_kZ_0\times_k\P^1_k
\]
for all $S$-schemes $T$.

For (3), it suffices to prove the case $n=1$. By theorem~\ref{thm:Duality}, we have the quasi-isomorphism
\[
\int_{\P^1}:C^S(Y,r)(X\times\P^1)\to C^S(Y\times\P^1, r+1)(X).
\]
Since $Y\times\P^1\cong Y\oplus Y\otimes\L$ in $\Cor_S^\natural$, and similarly for $X\times\P^1$, we have the quasi-isomorphism
\[
\int_{\P^1}:C^S(Y,r)(X)\oplus C^S(Y,r)(X\otimes\L)\to C^S(Y\times\P^1, r+1)(X).
\]
Since $\L=(\P^1,1-i_{\infty}p)$, the summand $C^S(Y,r)(X\otimes\L)$ of 
$C^S(Y,r)(X\times\P^1)$ is the image of $\id-p^*i_\infty^*$, which is the same as the kernel of $i_\infty^*$. Similarly, the map  $C^S(Y,r)(X)\to C^S(Y,r)(X\times\P^1)$ is defined by $p^*$. Using the flag $0\subset\P^1$ to define the quasi-isomorphism  of theorem~\ref{thm:PBF},
\[
\alpha_0+\alpha_1:C^S(Y, r+1)(X)\oplus C^S(Y, r)(X)\to 
C^S(Y\times\P^1, r+1)(X),
\]
the inverse  isomorphism (in $D^-(\Ab)$) is  given by the map of complexes
\[
C^S(Y\times\P^1, r+1)_{Y\times \infty}(X)\xrightarrow{(p_*, i_\infty^*)}
C^S(Y, r+1)(X)\oplus C^S(Y, r)(X).
\]
composed with the inverse of the quasi-isomorphism 
\[
C^S(Y\times\P^1, r+1)_{Y\times \infty}(X)\hookrightarrow
C^S(Y\times\P^1, r+1)(X).
\]
We note that the image of $\int_{\P^1}$ lands in $C^S(Y\times\P^1, r+1)_{Y\times \infty}(X)$, and sends
$C^S(Y,r)(X\otimes\L)$ to $\ker i_\infty^*$ and $C^S(Y,r)(X)$ to $\ker p_*$. Thus $p_*\circ \int_{\P^1}$ defines a quasi-isomorphism
\[
p_*\circ \int_{\P^1}:C^S(Y,r)(X\otimes\L)\to C^S(Y, r+1)(X),
\]
as desired.
\end{proof}

\begin{rem}\label{rem:duality} For later use, we extract from the proof  a description of the isomorphism 
in corollary~\ref{cor:Duality}(2,3).  In (2), the isomorphism is the composition of
\[
\int_Y:C^S(Z\otimes\L^d,r)(X\times_SY)\to 
C^S(Y\times_SZ\otimes\L^d,r+d)(X)
\]
with the inverse of
\[
-\times(\P^1)^n:C^S(Y\times_SZ,r)(X)\to
C^S(Y\times_SZ\otimes\L^d,r+d)(X).
\]

To describe the map in (3), let $p:Y\times\P^1\to Y$ be the projection. We have the composition
\[
C^S(Y,r)(X\times\P^1)\xrightarrow{\int_{\P^1}}C^S(Y\times\P^1,r+1)(X)
\xrightarrow{p_*}C^S(Y,r+1)(X).
\]
Then the restriction of $p_*\circ\int_{\P^1}$ to the summand $C^S(Y,r)(X\otimes\L)$ of $C^S(Y,r)(X\times\P^1)$,
\[
p_*\circ\int_{\P^1}:C^S(Y,r)(X\otimes\L)\to C^S(Y,r+1)(X),
\]
is the isomorphism in (3). Iterating, we have the map
\[
[p_*\circ\int_{\P^1}]^n:C^S(Y,r)(X\otimes\L^n)\to C^S(Y,r+n)(X),
\]
giving the isomorphism in (3).

\end{rem}

For $Y,Z\in\Prj/S$, $X\in \Sm/S$, we have the natural map
\[
\times_SZ:z_\equi^S(Y,r)(X)\to z_\equi^S(Y\times_SZ,r)(X\times_SZ)
\]
defined by sending a cycle $W$ on $X\times_SY$ to the image of $W$ in $X\times_SZ\times_SY\times_SZ$  via the ``diagonal" embedding $X\times_SY\to X\times_SZ\times_SY\times_SZ$. This gives us the map
\[
\times_SZ:C^S(Y,r)(X)\to C^S(Y\times_SZ,r)(X\times_SZ).
\]
Taking $Z=\P^1$ and extending to the pseudo-abelian hull gives the map
\[
\otimes\L:C^S(Y,r)(X)\to C^S(Y\otimes\L, r)(X\otimes\L),
\]
sending $W\subset X\times\square^n\times_SY$ to $W\times\Delta_{\P^1}\subset X\times\P^1\times\square^n\times_SY\times\P^1$.

\begin{cor}\label{cor:Cancellation}  Let  $S$ be a regular semi-local $k$-scheme, essentially of finite type over $k$. For $X, Y\in\Prj/S$, the map $\otimes\L:C^S(Y,r)(X)\to C^S(Y\otimes\L,r)(X\otimes\L)$ is a quasi-isomorphism.
\end{cor}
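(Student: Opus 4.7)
The plan is to deduce cancellation from the duality results in corollary~\ref{cor:Duality} by factoring $\otimes\L$ through previously-established quasi-isomorphisms. Specifically, I would consider the composition
\[
C^S(Y,r)(X) \xrightarrow{\otimes\L} C^S(Y\otimes\L,r)(X\otimes\L) \xrightarrow{p_*\circ\int_{\P^1}} C^S(Y\otimes\L,r+1)(X),
\]
where the second arrow is the isomorphism in $D^-(\Ab)$ supplied by corollary~\ref{cor:Duality}(3), applied with $Y$ replaced by $Y\otimes\L$ and $n=1$, and described explicitly in remark~\ref{rem:duality}.

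The core calculation is to check that this composition agrees with the quasi-isomorphism $\phi$ from corollary~\ref{cor:Duality}(1) (with $n=1$), namely $W\mapsto W\times\P^1$. Tracing a generator $W\subset X\times\square^n\times_SY$: the map $\otimes\L$ produces $W\times\Delta_{\P^1}$, viewed in the summand $C^S(Y\otimes\L,r)(X\otimes\L)$; the map $\int_{\P^1}$ reinterprets this cycle as equi-dimensional of relative dimension $r+1$ over $X\times\square^n$; and the projection $p_*$ collapses the first $\P^1$ factor of $\P^1\times_S Y\times\P^1$ isomorphically via the diagonal, yielding $W\times\P^1$. Once this identification is verified, the two-out-of-three property of quasi-isomorphisms forces $\otimes\L$ to be a quasi-isomorphism.

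I anticipate the main obstacle to be the careful bookkeeping of the idempotents that cut out the summands $X\otimes\L$ and $Y\otimes\L$: one must verify that $W\times\Delta_{\P^1}$, and its images under $\int_{\P^1}$ and $p_*$, lie in the intended summands (i.e.\ that the contributions from the complementary idempotents $p^*i_\infty^*$ vanish or cancel), so that the composition literally recovers $\phi$ rather than merely a representative modulo those summands. A secondary concern is the range of $r$: corollary~\ref{cor:Duality}(3) is stated for $r\ge n$, but its proof invokes the projective bundle formula only in the shifted form $r+1\ge 1$, so the desired isomorphism is in fact available for all $r\ge 0$ when $n=1$ and the argument applies uniformly. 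Finally, the naturality of $\otimes\L$ and additivity of the complexes $C^S(-,r)(-)$ ensure that everything extends from $Y\in\Prj/S$ to the summand $Y\otimes\L$ in the pseudo-abelian hull, so the key computation is essentially local on generators.
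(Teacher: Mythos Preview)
Your proposal is correct and follows essentially the same route as the paper: factor $\otimes\L$ as the composition whose second leg is $p_*\circ\int_{\P^1}$ (a quasi-isomorphism by corollary~\ref{cor:Duality}(3) and remark~\ref{rem:duality}), verify that the composite sends $W$ to $W\times\P^1$, and identify this with the quasi-isomorphism of corollary~\ref{cor:Duality}(1) (the paper cites theorem~\ref{thm:PBF} directly at this step, which is the same content for $n=1$). Your observation that the $r\ge n$ hypothesis in corollary~\ref{cor:Duality}(3) is not actually needed when $n=1$ is well taken and matches the paper's implicit use.
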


\begin{proof} By the projective bundle formula (theorem~\ref{thm:PBF}), the map $W\mapsto W\times\P^1$ gives a quasi-isomorphism
\[
-\times\P^1:C^S(Y,r)(X)\to C^S(Y\otimes\L,r+1)(X).
\]
By corollary~\ref{cor:Duality}(3) and remark~\ref{rem:duality}, the map
\[
p_*\circ\int_{\P^1}:C^S(Y\otimes\L,r)(X\otimes\L)\to C^S(Y\otimes\L,r+1)(X)
\]
is a quasi-isomorphism. Now, for $W\in C^S(Y,r)(X)$, we have
\[
p_*\circ\int_{\P^1}(W\otimes\L)=p_*\circ\int_{\P^1}(W\times\Delta_{\P^1})=
W\times\P^1,
\]
hence
\[
\otimes\L:C^S(Y,r)(X)\to C^S(Y\otimes\L,r)(X\otimes\L)
\]
is a quasi-isomorphism.
\end{proof}

\subsection{Duality for smooth motives} The duality results of the previous section extend to the category $\PMot(S)$ and defines a duality on the tensor triangulated category $\PMot(S)_\Q$.

\begin{prop}  Let $S$ be a regular scheme, essentially of finite type over a field $k$. For $X\in\Prj/S$, the natural map
\[
\sHom_{\DGPCor_S^\natural}(X,\L^d)^*\to \sHom_{R\Gamma(S, \un{\DGPMot}_S)^\natural}(X,\L^d)^*
\]
is a quasi-isomorphism.
\end{prop}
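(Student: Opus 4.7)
The plan is to use the duality theorem (theorem~\ref{thm:Duality}) to identify $\un{\sHom}(X, \L^d)^*$ on the Zariski site of $S$ with a complex of equi-dimensional cycle sheaves on $S$, and then to show that these sheaves are flasque (hence $\Gamma(S,-)$-acyclic). As a preliminary reduction, since $\L^d$ is the direct summand of $(\P^1_S)^d$ cut out by the idempotent $(1-\alpha)^{\otimes d}$, and both sheafification and the Godement resolution commute with direct summands, we may assume throughout that $\L^d$ is replaced by $(\P^1_S)^d$.

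For each point $s \in S$, with semi-local neighbourhood $U_s := \Spec \sO_{S,s}$, theorem~\ref{thm:Duality} applied over the regular semi-local $k$-scheme $U_s$ (taking $V = U_s \in \Sm/U_s$, $Y = (\P^1)^d$, $r = 0$, and $p := \dim(X/S)$) produces a natural quasi-isomorphism
\[
\int_{X_{U_s}}\colon \sHom_{\DGPCor_{U_s}}(X_{U_s}, (\P^1)^d_{U_s})^* \longrightarrow C^{U_s}\bigl(X_{U_s} \times_{U_s} (\P^1)^d_{U_s},\, p\bigr)(U_s).
\]
By functoriality in $U_s$, these assemble into a morphism of complexes of Zariski sheaves on $S$
\[
\int_X\colon \un{\sHom}(X, (\P^1_S)^d)^* \longrightarrow \mathcal F^*, \qquad \mathcal F^n(U) := z^S_\equi\bigl(X \times_S (\P^1_S)^d,\, p\bigr)\bigl(U \times_S \square^{-n}_S\bigr)/\dgn,
\]
which is stalkwise a quasi-isomorphism by theorem~\ref{thm:Duality}, hence a quasi-isomorphism in $D^-(\Sh_\Zar(S))$.

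Next I verify that the sheaves $\un{\sHom}(X, (\P^1_S)^d)^n$ (which coincide with the presheaves $U \mapsto \sHom_{\DGPCor_U}(X_U,(\P^1)^d_U)^n$, since cycles satisfy Zariski descent) and the $\mathcal F^n$ are flasque Zariski sheaves on $S$. Given an open immersion $U' \hookrightarrow U$ in $S$ and an integral cycle $W$ contributing to the sections over $U'$, the scheme-theoretic closure $\bar W$ in the ambient scheme over $U$ has the same dimension as $W$ and maps to the closure of the image of $W$ in the base. A direct dimension count forces the fibres of $\bar W$ over the base to be zero-dimensional (in the $\sHom$ case) or $p$-dimensional (in the $\mathcal F^n$ case) everywhere, and properness of the projection (which holds since $X$ and $(\P^1)^d$ are projective over $S$) upgrades this quasi-finiteness or equi-dimensionality to the required global property. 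Thus $\bar W$ is a valid extension, flasqueness holds, and consequently both sheaves are $\Gamma(S,-)$-acyclic on the Noetherian regular scheme $S$.

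Combining these ingredients, we obtain
\[
\sHom_{\DGPCor_S^\natural}(X, \L^d)^* = \Gamma(S, \un{\sHom}(X, \L^d)^*) \xrightarrow{\sim} R\Gamma(S, \un{\sHom}(X, \L^d)^*) = \sHom_{R\Gamma(S, \un{\DGPMot}_S)^\natural}(X, \L^d)^*,
\]
where the first equality reflects that the Hom-presheaf is already a Zariski sheaf on $S$, the middle quasi-isomorphism is the natural map from $\Gamma$ to $R\Gamma$ (which is a quasi-isomorphism by the flasqueness just established), and the last equality is the definition of the Godement construction. The main obstacle is the flasqueness argument for the cycle sheaves: one must handle carefully the case where $U' \subset U$ is not dense, by decomposing $U$ into irreducible components and extending cycles component by component while controlling the fibre dimensions via the projectivity hypothesis on $X$ and $(\P^1)^d$.
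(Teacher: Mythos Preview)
Your flasqueness claim is false, and this is the crux of the argument, so the proposal does not go through. Consider $S=\A^2_k$, $X=S$, $d=1$, $n=0$, and the open subset $U'=\A^2\setminus\{0\}$. The section $(x,y)\mapsto [x:y]$ defines an integral cycle $W'$ in $\Cor_{U'}(U',\P^1_{U'})$ that is finite (in fact an isomorphism) over $U'$. Its closure $\bar W$ in $\P^1_{\A^2}$ is the blow-up of $\A^2$ at the origin, whose fibre over $0$ is the entire $\P^1$; hence $\bar W$ is not finite over $\A^2$, and no other cycle restricting to $W'$ can be finite either, since any such cycle must contain the irreducible $\bar W$. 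The ``direct dimension count'' you invoke only gives upper semi-continuity of fibre dimension, not constancy: a cycle of the same dimension as the base can perfectly well acquire positive-dimensional fibres over a closed subset. The same phenomenon kills flasqueness for your sheaves $\mathcal F^n$. The detour through the duality theorem is therefore not doing any work here.

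The paper proceeds by an entirely different route that avoids any termwise acyclicity. One observes the elementary isomorphism of schemes $(\P^1)^d_S\times_S\square^n_S\times_S X\cong (\P^1_k)^d\times_k\square^n_k\times_k p_*X$, which identifies $\sHom_{\DGPCor_S^\natural}(X,\L^d)^*$ with $\sHom_{\DGPCor_k^\natural}(p_*X,\L^d)^*$. The latter is a cubical Friedlander--Suslin complex computing $H^{2d+*}(p_*X,\Z(d))\cong\CH^d(p_*X,-*)$. Since the higher Chow groups satisfy Zariski Mayer--Vietoris (Bloch's moving lemma), the presheaf $U\mapsto \sHom_{\DGPCor_U^\natural}(X_U,\L^d)^*$ satisfies Mayer--Vietoris on $S_\Zar$, and Thomason's descent theorem then gives the quasi-isomorphism $\Gamma\to R\Gamma$. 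The point is that descent holds for the \emph{complex}, not for its individual terms.
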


\begin{proof} 
Let $p_*X\in\Sm/k$ denote the scheme $X$, considered as a $k$-scheme via the structure morphism $p:S\to\Spec k$. We have a canonical isomorphism
\[ 
\sHom_{\DGPCor_S^\natural}(X,\L^d)^*\cong \sHom_{\DGPCor_k^\natural}(p_*X,\L^d)^*,
\]
induced by the isomorphisms
\[
(\P^1)^d_S\times_S \square^n_S\times_SX\cong (\P^1_k)^d\times_k\square^n_k\times_kp_*X.
\]
On the other hand, the complex $\sHom_{\DGPCor_k^\natural}(p_*X,\L^d)^*$ is just a cubical version of the weight $d$ Friedlander-Suslin complex of the $k$-scheme $p_*X$, hence we have isomorphisms \cite{VoevodskyChow}
\[
H^n(\sHom_{\DGPCor_k^\natural}(p_*X,\L^d)^*)\cong   H^{2d+n}(X,\Z(d))\cong \CH^d(X,-n).
\]
Since the  higher Chow groups of smooth $k$-schemes satisfies the Mayer-Vietoris property for the Zariski topology \cite{BlochMovLem},  the presheaf of complexes on $S$
\[
U\mapsto \sHom_{\DGPCor_U^\natural}(X\times_SU,\L^d)^*
\]
satisfies the Mayer-Vietoris property on $S_\Zar$. Thus, by Thomason's theorem \cite{Thomason},  the natural map
\[
\sHom_{\DGPCor_S^\natural}(X,\L^d)^*\to R\Gamma(S,  [U\mapsto \sHom_{\DGPCor_U^\natural}(X\times_SU,\L^d)^*])
\]
is a quasi-isomorphism. Since $R\Gamma(S,  [U\mapsto \sHom_{\DGPCor_U^\natural}(X\times_SU,\L^d)^*])$ is by definition equal to $\sHom_{R\Gamma(S, \un{\DGPMot}_S)^\natural}(X,\L^d)^*$, the result is proven.
\end{proof}

Following remark~\ref{rem:action}, the tensor structure on $\Cor_S$ extends to an action of $\Cor_S$ on the presheaf  of DG categories $\un{\DGPMot}_S$, giving us an action of $\Cor^\natural_S$ on the DG categories 
$R\Gamma(S, \un{\DGPMot}_S)^\natural$ and $\DGPMot^{\eff\natural}_S$. Thus, we have an action
of $\Cor^\natural_S$ on the triangulated category $\PMot^\eff(S)$:
\[
\otimes:\Cor_S^\natural\otimes \PMot^\eff(S)\to \PMot^\eff(S).
\]
In particular, each object $A\in \Cor_S^\natural$ gives an exact functor
\[
(-)\otimes A:\PMot^\eff(S)\to \PMot^\eff(S)
\]
sending a morphism $f:X\to Y$ in $\PMot^\eff(S)$ to $f\otimes\id:X\otimes A\to Y\otimes A$.

\begin{thm}\label{thm:DualityExt} Let $S$ be a regular scheme, essentially of finite type over a field $k$. \\
\\
1. Let $X, Y$ and $Z$ be in $\Prj/S$, $d=\dim_SY$. Then there is a natural quasi-isomorphism
\[
\sHom_{R\Gamma(S, \un{\DGPMot}_S)}(X,Y\times_SZ)^*\cong
\sHom_{R\Gamma(S, \un{\DGPMot}_S)^\natural}(X\times_SY,Z\otimes\L^d)^*\]
2.  Let $X$ and $Y$ be in $\Prj/S$, $d=\dim_SY$, and take $Z$ in $\Prj/k$. Then there is a natural quasi-isomorphism
\[
\sHom_{R\Gamma(S, \un{\DGPMot}_S)}(X,Y\times_kZ)^*\cong
\sHom_{\DGPMot_k^\natural}(p_*(X\times_SY),Z\otimes \L^d)^*
\]
3. Let $X$ and $Y$ be in $\Prj/S$. Then the map
\[
\otimes\L:\sHom_{R\Gamma(S, \un{\DGPMot}_S)}(X,Y)^*\to \sHom_{R\Gamma(S, \un{\DGPMot}_S)^\natural}(X\otimes \L,Y\otimes\L)^*
\]
is a quasi-isomorphism.
\end{thm}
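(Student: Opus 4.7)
The plan is to derive theorem~\ref{thm:DualityExt} from the semi-local results corollary~\ref{cor:Duality} and corollary~\ref{cor:Cancellation} by Zariski sheafification on $S$. By construction, $\sHom_{R\Gamma(S,\un{\DGPMot}_S)^\natural}(X,Y)^*$ computes $R\Gamma(S,-)$ of the Zariski sheafification of the presheaf of Hom complexes $U\mapsto \sHom_{\DGPCor_U^\natural}(X_U,Y_U)^*$, so it suffices to exhibit quasi-isomorphisms of the associated sheaves of complexes on $S_\Zar$. Such quasi-isomorphisms may be checked stalkwise, and the stalk at $s\in S$ is the Hom complex computed over the regular local $k$-scheme $\Spec\sO_{S,s}$, which is essentially of finite type over $k$ and hence within the scope of the semi-local duality theorems.

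For part (3), the assignment $W\mapsto W\times\Delta_{\P^1}$ is natural under pullback along any base change $U'\to U$, and so defines a morphism of Zariski presheaves of complexes on $S$,
\[
\otimes\L\colon \un{\sHom}^\Zar_{\DGPCor_S}(X,Y)\longrightarrow \un{\sHom}^\Zar_{\DGPCor_S^\natural}(X\otimes\L,Y\otimes\L).
\]
Its stalk at $s\in S$ is the cancellation map for the semi-local base $\Spec\sO_{S,s}$, which is a quasi-isomorphism by corollary~\ref{cor:Cancellation}. Applying $R\Gamma(S,-)$ via the Godement resolution preserves quasi-isomorphisms, completing (3).

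For part (1), the natural maps from the proof of corollary~\ref{cor:Duality}(2) assemble into a span of Zariski presheaves on $S$,
\[
C^S(Y\times_S Z,0)(X)\ \xrightarrow{-\times(\P^1)^d}\ C^S(Y\times_S Z\otimes\L^d, d)(X)\ \xleftarrow{\int_Y}\ C^S(Z\otimes\L^d,0)(X\times_S Y),
\]
each term viewed as a presheaf on $S_\Zar$ by restricting to open $U\subset S$ and base-changing. Stalkwise, the left arrow is a quasi-isomorphism by iterated application of theorem~\ref{thm:PBF} (together with the decomposition of $(\P^1)^d$ into powers of $\L$ in $\DGPCor^\natural$) and the right arrow is one by theorem~\ref{thm:Duality}. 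Hence both are quasi-isomorphisms of Zariski sheaves, and $R\Gamma(S,-)$ yields the natural quasi-isomorphism in $D^-(\Ab)$ asserted in (1). Part (2) follows by combining (1) with the presheaf-level identity $C^U(S\times_k Z_0\otimes\L^d,0)(T)=C^k(Z_0\otimes\L^d,0)(p_*T)$ (as in the proof of corollary~\ref{cor:Duality}(2)) applied to $T=X_U\times_U Y_U$, together with a Mayer--Vietoris/Thomason argument extending the proposition preceding this theorem.

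The main obstacle is precisely this last extension in part (2). The presheaf $U\mapsto C^k(Z_0\otimes\L^d,0)(X_U\times_U Y_U)$ on $S_\Zar$ is, by construction, the weight-$d$ cubical higher Chow complex of the preimage of $U$ in the smooth $k$-scheme $p_*(X\times_S Y)$. By Bloch's moving lemma, this presheaf satisfies Mayer--Vietoris in $U$, and Thomason's theorem then identifies the $R\Gamma(S,-)$ of its sheafification with the global sections $\sHom_{\DGPMot_k^\natural}(p_*(X\times_S Y),Z_0\otimes\L^d)^*$. This is a direct adaptation of the argument of the preceding proposition from $X\in\Prj/S$ to the smooth $k$-scheme $p_*(X\times_S Y)$, and no further substantive input is needed.
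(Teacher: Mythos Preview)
Your argument for parts (1) and (3) is essentially the paper's: both reduce to establishing quasi-isomorphisms of Zariski sheaves of Hom complexes on $S$, which are verified stalkwise from the semi-local results (corollary~\ref{cor:Duality}, remark~\ref{rem:duality}, and corollary~\ref{cor:Cancellation}), and then one applies $R\Gamma(S,-)$ via the Godement resolution.

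For part (2) you take a different route than the paper. The paper deduces (2) from (1) by asserting that the sheaf $\un{\sHom}_{\un{\DGPMot}_S}(X\times_SY,\,S\times_kZ\otimes\L^d)$ is the \emph{constant} sheaf on $S_\Zar$ with value $\sHom_{\DGPMot_k^\natural}(p_*(X\times_SY),Z\otimes\L^d)^*$, and then uses $\Gamma(S,\sF)\simeq R\Gamma(S,\sF)$ for constant $\sF$. Your approach instead identifies the presheaf $U\mapsto \sHom_{\DGPMot_k^\natural}(p_*((X\times_SY)_U),Z\otimes\L^d)^*$ with a cubical Friedlander--Suslin/higher Chow complex over $k$, invokes Bloch's moving lemma to get Mayer--Vietoris in $U$, and then applies Thomason's theorem exactly as in the preceding proposition. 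Your argument is more robust: the presheaf in question is not literally constant (restriction to $U\subsetneq S$ replaces $p_*(X\times_SY)$ by its open piece over $U$), so the paper's ``constant sheaf'' justification is at best a shorthand for the Mayer--Vietoris descent you spell out. The cost of your route is an extra appeal to Bloch and Thomason; the benefit is that it transparently reduces to the same mechanism already used in the proposition preceding the theorem.
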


\begin{proof} For $X, Y\in \Prj/S$, let $\sHom_{\un{\DGPMot}_S}(X,Y)$ denote the Zariski sheaf  on $S$ associated to the presheaf
\[
U\mapsto \sHom_{\DGPMot_U}(X_U,Y_U)^*;
\]
extend the notation to define the sheaf $\sHom_{ \un{\DGPMot}^\natural_S}(X,Y\otimes\L^d)$, etc.

From corollary~\ref{cor:Duality} and remark~\ref{rem:duality}, we have  
 isomorphisms in $D^-(\Sh_S^\Zar)$
\[
\sHom_{\un{\DGPMot}_S}(X,Y\times_SZ)^*\cong
\sHom_{\un{\DGPMot}_S^\natural}(X\times_SY,Z\otimes\L^d)^*
\]
and 
\[
\otimes\L:\sHom_{\un{\DGPMot}_S}(X,Y)^*\to \sHom_{\un{\DGPMot}_S^\natural}(X\otimes \L,Y\otimes\L)^*
\]
These induce isomorphisms (in $D(\Ab)$) after applying $\Gamma(S,G^*(-))\cong R\Gamma(S,-)$, proving (1) and (3). (2) follows from (1), noting that we have the isomorphism of 
$\sHom_{\un{\DGPMot}_S}(X\times_SY,S\times_kZ\otimes\L^d)^*$ with the constant sheaf (on $S_\Zar$) with value
$\sHom_{\DGPMot_k^\natural}(p_*(X\times_SY),Z\otimes \L^d)^*$, and that for a constant sheaf of complexes $\sF$ on $S_\Zar$, the natural map
\[
\Gamma(S,\sF)\to R\Gamma(S,\sF)
\]
is an isomorphism in $D(\Ab)$.  
\end{proof}

\begin{definition} The DG category of motives over $S$, $\DGPMot_S$, is the DG category formed by inverting $\otimes\L$ on $\DGPMot^{\eff\natural}_S$. The triangulated category of motives over $S$, 
$\PMot(S)$ is the triangulated category formed by inverting $\otimes\L$ on $\PMot^\eff(S)$ (we will see in 
corollary~\ref{cor:InvertL} below that $\PMot(S)$ has a natural triangulated structure).

Explicitly, $\DGPMot_S$ has objects $X\otimes\L^n$, $n\in\Z$, $X$ in $\DGPMot^{\eff\natural}_S$, with
\[
\sHom_{\DGPMot_S}(X\otimes\L^n, Y\otimes\L^m)^*:=\colim_N
\sHom_{\DGPMot^{\eff\natural}_S}(X\otimes\L^{N+n}, Y\otimes\L^{N+m})^*
\]
Similarly, $\PMot(S)$  has objects 
 $X\otimes\L^n$, $n\in\Z$, $X$ in $\PMot^\eff(S)$, with
\[
\sHom_{\PMot(S)}(X\otimes\L^n, Y\otimes\L^m)^*:=\colim_N
\sHom_{\PMot^\eff(S)}(X\otimes\L^{N+n}, Y\otimes\L^{N+m})^*.
\]
\end{definition}

\begin{rems} We can also invert $\otimes\L$ on ${\DGPCor}_U^\natural$ for all open $U\subset S$, giving us the presheaf $\un{\DGPCor}_S^\natural[\otimes\L^{-1}]$ on $S_\Zar$, and the associated DG category
$R\Gamma(S,\un{\DGPCor}_S^\natural[\otimes\L^{-1}])$. We have an isomorphism of DG categories
\[
R\Gamma(S,\un{\DGPCor}_S^\natural[\otimes\L^{-1}])\cong R\Gamma(S,\un{\DGPCor}_S)^\natural[\otimes\L^{-1}]).
\]
2. Inverting $\otimes\L$ on the various Lefschetz/Tate categories of definition~\ref{definition:EffectiveTate} gives us full  sub-DG categories 
 $\DGTCor_S$, $\DGTMot(S)$ of $R\Gamma(S,\un{\DGPCor}_S^\natural[\otimes\L^{-1}])$ and 
 $\DGPMot_S$, resp., with
 \[
 \DGTMot(S)\cong C^b(\DGTCor_S),
 \]
and the full triangulated subcategory $\PTMot(S)$ of $\PMot(S)$.
 
 With $\Q$-coefficients we have the analogous DG tensor sub- categories  $\DGTCor^\alt_{S\Q}$, $\DGTMot_{S\Q}$ of $R\Gamma(S, \un{\DGPMot}^\alt_S)^{\otimes\natural}[\otimes\L^{-1}])$, 
 $\DGPMot_S$, resp., with
 \[
 \DGTMot_{S\Q}\cong C^b(\DGTCor^\alt_{S\Q}),
 \]
  and the full tensor triangulated subcategory $\PTMot(S)_\Q$ of $\PMot(S)_\Q$.
\end{rems}

\begin{thm} \label{thm:Cancel2} For $X, Y\in \DGPMot^{\eff\natural}_S$, the natural map
\[
\sHom_{\DGPMot^{\eff\natural}_S}(X, Y)^*\to \sHom_{\DGPMot_S}(X, Y)^*
\]
is a quasi-isomorphism, and the natural map
\[
\Hom_{\PMot^\eff(S)}(X, Y[n])\to \Hom_{\PMot(S)}(X, Y[n])
\]
is an isomorphism for all $n$. Furthermore, the isomorphism
\[
H^n\sHom_{\DGPMot_S^{\eff\natural}}(X, Y)^*\cong \Hom_{\PMot^\eff(S)^\natural}(X, Y[n])
\]
induces an isomorphism
\[
H^n\sHom_{\DGPMot_S}(X\otimes\L^p, Y\otimes\L^m)^*\cong \Hom_{\PMot(S)}(X\otimes\L^p, Y\otimes\L^m[n]).
\]
for each $n,m,p\in\Z$.
\end{thm}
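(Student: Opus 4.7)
The plan is to deduce everything from the cancellation statement in theorem~\ref{thm:DualityExt}(3) by a standard devissage, followed by passage to the filtered colimit defining $\sHom_{\DGPMot_S}$.

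First I would reduce the quasi-isomorphism statement to the case $X, Y \in \Prj/S$. Since objects of $\DGPMot^{\eff\natural}_S = C^b(R\Gamma(S,\un{\DGPCor}_S))^\natural$ are, by construction, iterated cones of shifts of objects of $R\Gamma(S,\un{\DGPCor}_S)$, cut out by idempotents, and the components of any such object are (direct summands of) objects of $\Prj/S$, it suffices to consider the class $\sW$ of pairs $(X,Y)$ of objects of $\DGPMot^{\eff\natural}_S$ for which the map $\otimes\L$ induces a quasi-isomorphism on $\sHom$-complexes. The functor $\otimes\L$ is a DG functor on $R\Gamma(S,\un{\DGPCor}_S)^\natural$, hence commutes with shifts, cone sequences, finite direct sums and splitting of idempotents; consequently the class $\sW$ is closed in each variable under these operations. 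By theorem~\ref{thm:DualityExt}(3) it contains all pairs in $\Prj/S \times \Prj/S$, and therefore it contains all pairs of objects of $\DGPMot^{\eff\natural}_S$.

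Iterating, for every $N \ge 0$ the map
\[
\otimes\L^N : \sHom_{\DGPMot^{\eff\natural}_S}(X,Y)^* \longrightarrow \sHom_{\DGPMot^{\eff\natural}_S}(X \otimes \L^N, Y \otimes \L^N)^*
\]
is a quasi-isomorphism. By the definition of $\DGPMot_S$,
\[
\sHom_{\DGPMot_S}(X,Y)^* = \colim_N \sHom_{\DGPMot^{\eff\natural}_S}(X \otimes \L^N, Y \otimes \L^N)^*,
\]
and since filtered colimits of abelian groups are exact (hence commute with taking cohomology) and every transition map in this colimit system is a quasi-isomorphism, the canonical map from the $N=0$ term to the colimit is a quasi-isomorphism. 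This gives the first assertion; taking $H^n$ yields the isomorphism $\Hom_{\PMot^\eff(S)}(X, Y[n]) \iso \Hom_{\PMot(S)}(X, Y[n])$.

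For the final statement, for $p, m \in \Z$ pick $N \gg 0$ with $N+p, N+m \ge 0$, so that $X \otimes \L^{N+p}$ and $Y \otimes \L^{N+m}$ lie in $\DGPMot^{\eff\natural}_S$. By definition
\[
\sHom_{\DGPMot_S}(X\otimes\L^p, Y\otimes\L^m)^* = \colim_{M} \sHom_{\DGPMot^{\eff\natural}_S}(X \otimes \L^{M+p}, Y \otimes \L^{M+m})^*,
\]
and the argument above shows that each transition map in $M$ is a quasi-isomorphism, so the colimit equals (up to quasi-isomorphism) the single term $\sHom_{\DGPMot^{\eff\natural}_S}(X \otimes \L^{N+p}, Y \otimes \L^{N+m})^*$. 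Passing to $H^n$ gives the claimed identification. The only real content is the cancellation result theorem~\ref{thm:DualityExt}(3); everything else is a formal unwinding of the colimit defining $\DGPMot_S$ and a devissage from $\Prj/S$ to $\DGPMot^{\eff\natural}_S$.
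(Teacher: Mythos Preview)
Your proposal is correct and follows essentially the same approach as the paper: reduce to $X,Y\in\Prj/S$ by d\'evissage (using that $\DGPMot^\eff_S$ is generated from $\Prj/S$ by translation, cones and isomorphisms, then pass to the idempotent completion), invoke the cancellation quasi-isomorphism for $\otimes\L$ on Hom-complexes, and use exactness of filtered colimits to identify the stabilized Hom with the effective one. The paper's proof is terser and cites theorem~\ref{thm:DualityExt}(2) for the base case, but the content you actually need is the cancellation statement, which is theorem~\ref{thm:DualityExt}(3); your citation is the correct one.
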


\begin{proof} For $X, Y\in\Prj/S$, the first assertion is theorem~\ref{thm:DualityExt}(2). Since 
$\DGPMot^\eff_S$ is generated by $\Prj/S$ by taking translation, cone sequences and isomorphisms, the first assertion for $X, Y\in \DGPMot^\eff_S$ follows; this immediately implies the result for 
$\DGPMot^{\eff\natural}_S$. 

Since cohomology commutes with filtered inductive limits, we have the isomorphism
\[
H^n\sHom_{\DGPMot_S}(X, Y)^*\cong \Hom_{\PMot(S)}(X, Y[n]).
\]
as claimed. Thus the first assertion implies the second.
\end{proof}

As immediate consequence we have

\begin{cor}\label{cor:InvertL} 1.  $\PMot(S)$ is equivalent to the idempotent completion of $H^0\DGPMot_S$. In particular, 
$\PMot(S)$ has the natural structure of a triangulated category.\\
\\
2. The canonical functor $\PMot^\eff(S)\to \PMot(S)$ is an exact fully faithful embedding.
\end{cor}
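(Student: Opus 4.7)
The plan rests on Theorem~\ref{thm:Cancel2}, which identifies the Hom groups of $\PMot(S)$ with the zeroth cohomology of the Hom-complexes in $\DGPMot_S$.

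First I would establish that the DG category $\DGPMot_S=\DGPMot^{\eff\natural}_S[\otimes\L^{-1}]$ is pre-triangulated in the sense of Theorem~\ref{thm:DGMain}: shifts and cone sequences are inherited from $\DGPMot^{\eff\natural}_S=C^b(R\Gamma(S,\un{\DGPCor}_S))^\natural$ and commute with the DG tensor operation $-\otimes\L$, so they descend to $\DGPMot_S$. Consequently $H^0\DGPMot_S$ is triangulated, and its idempotent completion $(H^0\DGPMot_S)^\natural$ inherits a triangulated structure by the Balmer--Schlichting theorem quoted earlier in the paper.

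Next I would construct a natural additive functor
\[
\tilde F:(H^0\DGPMot_S)^\natural\longrightarrow \PMot(S),
\]
sending $X\otimes\L^n$ to its evident image in $\PMot(S)=\PMot^\eff(S)[\otimes\L^{-1}]$ and using the final isomorphism of Theorem~\ref{thm:Cancel2} to match morphisms. That isomorphism is precisely the statement that $\tilde F$ is fully faithful on the uncompleted subcategory $H^0\DGPMot_S$. Full-faithfulness then extends to the idempotent completion, once $\PMot(S)$ is shown to be itself idempotent complete --- which follows from $\PMot^\eff(S)$ being idempotent complete together with the fact that formally inverting the tensor endofunctor $-\otimes\L$ commutes with splittings of idempotents. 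For essential surjectivity, every object of $\PMot(S)$ has the form $(Z,e)\otimes\L^{-n}$ with $Z\in H^0\DGPMot^\eff_S$ and $e$ an idempotent in $\PMot^\eff(S)=(H^0\DGPMot^\eff_S)^\natural$; then $Z\otimes\L^{-n}$ already lies in $H^0\DGPMot_S$, and $(Z,e)\otimes\L^{-n}$ is its idempotent splitting inside $(H^0\DGPMot_S)^\natural$. Hence $\tilde F$ is an equivalence, proving (1) and transporting the triangulated structure to $\PMot(S)$.

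Part (2) then follows by specialization: for $X,Y\in\DGPMot^{\eff\natural}_S$, Theorem~\ref{thm:Cancel2} gives $\Hom_{\PMot^\eff(S)}(X,Y[n])\cong\Hom_{\PMot(S)}(X,Y[n])$, and this extends to arbitrary objects of $\PMot^\eff(S)$ via the idempotent-splitting identity $\Hom_{\sC^\natural}((A,p),(B,q))=q_*p^*\Hom_\sC(A,B)$ applied on both sides. Exactness of $\PMot^\eff(S)\to\PMot(S)$ is automatic: under the equivalence from (1) it corresponds to the canonical functor $H^0\DGPMot^\eff_S\to H^0\DGPMot_S$, which commutes with shifts and cones. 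The main point requiring care is the verification that $\DGPMot_S$, being a formal localization of the pre-triangulated $\DGPMot^{\eff\natural}_S$, remains pre-triangulated so that $H^0\DGPMot_S$ is triangulated; once that is in hand the rest of the argument is essentially bookkeeping built on top of Theorem~\ref{thm:Cancel2}.
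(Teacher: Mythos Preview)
Your proposal is correct and follows the same route as the paper, which simply declares the corollary an ``immediate consequence'' of Theorem~\ref{thm:Cancel2} without further argument. You have spelled out the bookkeeping (pre-triangulatedness of the localization, idempotent completion, essential surjectivity) that the paper leaves implicit, and your identification of Theorem~\ref{thm:Cancel2} as the sole substantive input is exactly right.
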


\begin{rem} We can similarly invert $\otimes\L$ on the DG tensor categories $\DGCor^{\alt\natural}_S$, $R\Gamma(S, \un{\DGPCor}^\alt_S)^{\otimes\natural}$ and
$\DGPMot^{\eff\natural}_{S\Q}$, and on the tensor triangulated category $\PMot^\eff(S)_\Q$. Setting
\[
\DGPMot_{S\Q}:=\DGPMot^{\eff\natural}_{S\Q}[\otimes\L^{-1}],\
\PMot(S)_\Q:=\PMot^\eff(S)_\Q[\otimes\L^{-1}],
\]
this gives us the DG tensor functor $\DGPMot^{\eff\natural}_{S\Q}\to \DGPMot_{S\Q}$, and the exact tensor functor $\PMot^\eff(S)_\Q\to \PMot(S)_\Q$. The analogs of theorem~\ref{thm:Cancel2} and corollary~\ref{cor:InvertL} hold in this setting. Similarly, the analog of  theorem~\ref{thm:Cancel2} and corollary~\ref{cor:InvertL} hold for the respective subcategories of Tate motives.
\end{rem}

\subsection{Chow motives}

We recall the definition of the category of Chow motives over $S$.
\begin{definition} Let $S$ be a regular scheme. Let $\tilde\CM^\eff(S)$ be the category with the same objects as $\Prj/S$, and with morphisms (for $X\to S$ of pure dimension $d_X$ over $S$)
\[
\Hom_{\tilde\CM^\eff(S)}(X,Y):=\CH_{d_X}(X\times_SY).
\]
The composition law is the usual one of composition of correspondence classes: for $W\in \Hom_{\tilde\CM^\eff(S)}(X,Y)$, $W'\in \Hom_{\tilde\CM^\eff(S)}(Y,Z)$, define
\[
W'\circ W:=p_{13*}(p_{12}^*(W)\cdot_{XYZ}p_{23}^*(W')),
\]
where $p_{ij}$ is the projection of $X\times_SY\times_SZ$ on the $ij$ factors. The operation of product over $S$ makes $\tilde\CM^\eff(S)$ a tensor category. Sending $f:X\to Y$ to the graph of $f$ defines a functor 
\[
\tilde{m}_S:\Prj/S\to \tilde\CM^\eff(S).
\]

The category $\CM^\eff(S)$ of {\em effective Chow motives over $S$} is the idempotent completion of $\tilde\CM^\eff(S)$. We let $\L=(\P^1, 1-i_\infty\circ p)$, and define the category of Chow motives over $S$ as
\[
\CM^\eff(S):=\tilde\CM^\eff(S)[\otimes\L^{-1}].
\]
\end{definition}

Take $X, Y\in\Prj/S$. By theorem~\ref{thm:DualityExt} and \cite{VoevodskyChow}, we have the isomorphism
\begin{multline*}
\Hom_{\PMot^\eff(S)}(X,Y)=H^0\sHom_{R\Gamma(S, \un{\DGPMot}_S)}(X,Y)^*\\
\cong 
H^0\sHom_{\DGPMot_k}(p_*(X\times_SY),\L^{d_Y})^*
= H^{2d_Y}(X\times_SY,\Z(d_Y))\\\cong\CH_{d_X}(X\times_SY)
=\Hom_{\CM^\eff(S)}(X,Y).
\end{multline*}
which we denote as
\[
\phi_{X,Y}:\Hom_{\PMot^\eff(S)}(X,Y)\to Hom_{\CM^\eff(S)}(X,Y).
\]
\begin{lem}\label{lem:CycleSurjectivity} $\phi_{**}$ respects the composition: for $X,Y,Z\in\Prj/S$,  
\[
f\in\Hom_{\PMot^\eff(S)}(X,Y),\ g\in \Hom_{\PMot^\eff(S)}(Y,Z), 
\]
we have
\[
\phi_{X,Z}(g\circ f)=\phi_{Y,Z}(g)\circ\phi_{X,Y}(f).
\]
\end{lem}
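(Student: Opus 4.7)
The plan is to show that $\phi_{X,Y}$ admits a completely concrete description on classes represented by finite $S$-correspondences: namely, for $W \in \Cor_S(X,Y)$ viewed as a $0$-cocycle in the Suslin-type complex $\sHom_{\DGPCor_S}(X,Y)^*$, the class $\phi_{X,Y}([W])$ is the class of $W$ regarded as a dimension-$d_X$ cycle on $X \times_S Y$ in $\CH_{d_X}(X \times_S Y)$. Once this is established, multiplicativity of $\phi$ is immediate, because composition in $\DGPCor_S$ and composition in $\tilde\CM^\eff(S)$ are given by the identical intersection/pushforward formula.

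\textbf{Step 1: Tracing $\phi$ on finite correspondences.} Since both sides of $\phi_{X,Y}$ are computed by the Zariski sheafification $R\Gamma(S,-)$ and both satisfy Mayer-Vietoris on $S$, the identification is local on $S_\Zar$, so I may reduce to the case that $S$ is semi-local and apply theorems \ref{thm:Duality} and \ref{thm:PBF} directly. For $W \in \Cor_S(X,Y) = z^S_\equi(Y,0)(X)$, I trace $W$ through the chain of quasi-isomorphisms described in theorem \ref{thm:DualityExt} and remark \ref{rem:duality}:
\[
C^S(Y,0)(X) \xleftarrow{-\times(\P^1)^{d_Y}} C^S(Y\otimes\L^{d_Y}, d_Y)(X) \xleftarrow{\int_Y} C^S(\L^{d_Y}, d_Y)(X\times_S Y) = C^k(\L^{d_Y}, d_Y)(p_*(X\times_S Y)),
\]
followed by Voevodsky's identification with Chow groups. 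Starting from $W$ on the left, the cycle $W\times(\P^1)^{d_Y} \subset X\times_SY\times_S(\P^1)^{d_Y}$ is the image of $\int_Y$ applied to $W$ itself, now read as a dimension-$d_X$ cycle on $X\times_S Y$ (the computation uses only that $W$ is finite over $X$, so $p_{X\times Y}^*(W)$ is literally the same cycle $W$ supported on the diagonal copy of $Y$). Voevodsky's comparison with higher Chow groups then identifies the class of this $0$-cocycle with $[W] \in \CH_{d_X}(X\times_SY)$.

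\textbf{Step 2: Reduction to finite correspondences.} Every class in $\Hom_{\PMot^\eff(S)}(X,Y)$ is represented by an element of $\Cor_S(X,Y)$, since the $0$-cochains of $C^S(Y,0)(X)$ are by definition $z^S_\equi(Y,0)(X)$, and for projective $Y$ this group coincides with $\Cor_S(X,Y)$. Thus, by linearity and Step 1, it suffices to check that $\phi_{X,Z}(W' \circ W) = [W'] \circ_{\CM^\eff(S)} [W]$ whenever $W \in \Cor_S(X,Y)$ and $W' \in \Cor_S(Y,Z)$.

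\textbf{Step 3: Multiplicativity and main obstacle.} Composition in $\DGPCor_S$ is defined by $W' \circ W := p_{XZ*}\bigl(p_{XY}^*(W) \cdot_{XYZ} p_{YZ}^*(W')\bigr)$, where finiteness of $W \to X$ ensures that $p_{XY}^*(W)$ and $p_{YZ}^*(W')$ meet properly on $X\times_S Y\times_S Z$; thus the Serre intersection multiplicities used here agree with the Chow intersection product. Composition in $\tilde\CM^\eff(S)$ is defined by the same formula applied to cycle classes, so by Step 1 the two compositions of correspondence cycles yield the same element of $\CH_{d_X}(X\times_S Z)$. The main technical obstacle is the bookkeeping in Step 1: one must patiently unwind the duality isomorphism of theorem \ref{thm:DualityExt}, in particular the composite of $\int_Y$, the inverse of $-\times(\P^1)^{d_Y}$ on cubical Suslin complexes, and Voevodsky's comparison isomorphism, and check that applied to a degree-$0$ cocycle $W$ it really is the naive cycle class map. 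None of this is deep, but each identification must be fixed with consistent sign/orientation conventions.
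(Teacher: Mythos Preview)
Your overall strategy matches the paper's exactly: reduce to classes represented by honest finite $S$-correspondences, where compatibility with the Chow-motive composition is tautological, and then argue that every class is so represented. Steps 1 and 3 are fine (the paper simply declares the compatibility on actual cycles ``obvious''). The gap is in Step 2.

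You assert that every class in $\Hom_{\PMot^\eff(S)}(X,Y)$ is represented by an element of $\Cor_S(X,Y)$ on the grounds that the $0$-cochains of $C^S(Y,0)(X)$ are $z^S_\equi(Y,0)(X)$. But by definition $\Hom_{\PMot^\eff(S)}(X,Y)=H^0\sHom_{R\Gamma(S,\un{\DGPCor}_S)}(X,Y)^*$ is the \emph{Zariski hypercohomology} $\H^0(S_\Zar,\un{\sHom}_{\un{\DGPCor}_S}(X,Y))$, not $H^0$ of the global complex $C^S(Y,0)(X)$. The comparison map between these is only known to be an isomorphism when the presheaf $U\mapsto C^U(Y_U,0)^*(X_U)$ satisfies Mayer--Vietoris on $S_\Zar$; the paper establishes this for $Y=\L^d$ (Proposition~\ref{prop:ChowEmbed} via Bloch's result on higher Chow groups), but not for arbitrary $Y\in\Prj/S$. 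Your appeal in Step~1 to ``both sides satisfy Mayer--Vietoris'' begs exactly this question. The paper closes the gap differently: via $\phi_{X,Y}$ one already knows $\Hom_{\PMot^\eff(S)}(X,Y)\cong\CH_{d_X}(X\times_SY)$, so it suffices to show that the cycle class map
\[
z^S_\equi(Y,0)(X)\longrightarrow \CH_{d_X}(X\times_SY)
\]
is surjective. This is the content of lemma~\ref{lem:FLMovOrig}, proved by the Friedlander--Lawson moving lemma \cite[theorem~1.7]{FriedlanderLawson}: one takes a closure of the given cycle in an ambient projective variety and moves it to intersect every fiber $x\times Y$ properly. This is a genuine geometric input, not a formal consequence of the shape of the complex.
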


\begin{proof} If $f$ is in the image of $z^S_\equi(Y,0)(X)\to \Hom_{\PMot^\eff(S)}(X,Y)$ and 
$g$ is in the image of $z^S_\equi(Z,0)(Y)\to \Hom_{\PMot^\eff(S)}(Y,Z)$, the result is obvious, so it suffices to show that
\[
z^S_\equi(Y,0)(X)\to \Hom_{\PMot^\eff(S)}(X,Y)=\CH_{d_X}(X\times_SY)
\]
is surjective for all $X,Y\in\Prj/S$; this is lemma~\ref{lem:FLMovOrig} below.
\end{proof}

\begin{lem}\label{lem:FLMovOrig}Take $X\in\Sm/S$  of dimension $d_X$ over $S$. For $Y\in \Prj/S$, , the map
\[
z^S_\equi(Y,r)(X)\to \CH_{d_X+r}(X\times_SY),
\]
sending a cycle $W$ on $X\times_SY$ to its cycle class, is surjective for all $r\ge0$.
\end{lem}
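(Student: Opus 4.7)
By linearity it suffices to treat a single integral cycle $Z\subset X\times_S Y$ of dimension $d_X+r$, and by decomposing $X$ into its irreducible components we may assume $X$ is irreducible. Apply generic flatness to the projection $\pi\colon Z\to X$: there is a dense open $U\subset X$ over which $\pi$ is either flat of relative dimension $r$ (when $\pi$ is dominant) or has empty image. In particular $Z|_{\pi^{-1}(U)}$ already lies in $z^S_\equi(Y,r)(U)$, and the only obstruction to having $Z\in z^S_\equi(Y,r)(X)$ is that some fiber of $\pi$ over a point of the closed complement $F:=X\setminus U$ may have dimension strictly greater than $r$.

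The core of the proof is then a moving argument: produce a cycle $Z'$ rationally equivalent to $Z$ on $X\times_S Y$ such that every fiber of $Z'\to X$ has dimension at most $r$. Because $X$, $Y$, and $S$ are all smooth over $k$, the ambient scheme $X\times_S Y$ is smooth over $k$ and so Chow's moving lemma is available. The plan is to induct on $\dim F$ along a flag $F=F^{(0)}\supset F^{(1)}\supset\cdots$ of reduced closed subschemes of $X$ chosen to record, at each stage, the residual locus where the current representative still has excess fiber dimension. At the $i$-th step one applies Chow moving on $X\times_S Y$ to replace the current cycle by a rationally equivalent one in proper intersection with $F^{(i)}\times_S Y$. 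Proper intersection with the codimension-$\codim_X F^{(i)}$ subscheme $F^{(i)}\times_S Y$ forces the new cycle to have intersection of dimension exactly $\dim F^{(i)}+r$ with $F^{(i)}\times_S Y$, which by dimension counting means that the fibers over every point of the stratum $F^{(i)}\setminus F^{(i+1)}$ have dimension $r$. After finitely many steps the flag terminates and we obtain the desired equi-dimensional representative $Z'$.

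The main obstacle is to make the moving procedure produce a cycle that is equi-dimensional over \emph{every} point of $X$ simultaneously, rather than merely properly intersecting one prescribed closed subset. The naive Chow moving lemma only guarantees proper intersection with a fixed cycle, so the flag-induction above is needed to propagate the improvement all the way down; alternatively, and more cleanly, one can invoke the relative Friedlander-Lawson-Voevodsky moving lemma proved in \S\ref{sec:FLVMov}, which is designed precisely to produce equi-dimensional representatives of cycles on $X\times_S Y$ with $Y$ projective over the base, and from which the surjectivity asserted here is immediate. Either route reduces the problem to the classical moving technology applied to $X\times_S Y$ viewed as a smooth $k$-variety, and in each case the rational equivalence between $Z$ and $Z'$ takes place within $X\times_S Y$, so the cycle class is preserved.
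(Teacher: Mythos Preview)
Your flag-induction via the classical Chow moving lemma has a genuine gap, and it is precisely the obstacle you yourself flag but do not resolve. After you move $Z_i$ to $Z_{i+1}$ so as to intersect $F^{(i)}\times_SY$ properly, nothing prevents $Z_{i+1}$ from acquiring \emph{new} points $x\in X\setminus F^{(i)}$ over which the fiber has dimension $>r$. Chow's moving lemma controls the intersection with a prescribed finite list of closed subsets, not the fiber dimension over the rest of $X$. So there is no reason the bad locus of $Z_{i+1}$ is contained in $F^{(i)}$, the $F^{(i)}$ need not form a decreasing flag, and the induction does not terminate. (Even within $F^{(i)}$, proper intersection with $F^{(i)}\times_SY$ only bounds the total dimension of the intersection, not the individual fiber dimensions; you get the dimension drop for the generic point of each component of $F^{(i)}$, but you still need a further step, and meanwhile you have lost control over the complement.)

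Your fallback of invoking \S\ref{sec:FLVMov} is also not quite on target. The results there (e.g.\ theorem~\ref{thm:FVMovExt} and theorem~\ref{thm:MLExt}) move cycles that are \emph{already} in $z^B_\equi$ and compare sub\-complexes of equi-dimensional cycles; they do not produce equi-dimensional representatives for an arbitrary class in $\CH_{d_X+r}(X\times_SY)$. Moreover those results assume a semi-local base and a projective $X$, neither of which is given here.

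The paper's argument sidesteps the flag-induction entirely by using a \emph{stronger} moving lemma, namely the Friedlander--Lawson result \cite[theorem~1.7]{FriedlanderLawson}, which moves a cycle on a projective variety to be in proper intersection with \emph{all} cycles of a fixed bounded degree and dimension simultaneously. Concretely: embed $X\times_SY$ locally-closedly in $\P^N_k$, let $T$ be its closure, and take $\bar\gamma$ the closure of the given cycle. Every fiber $x\times Y$ is closed in $T_{k(x)}$, lies in the smooth locus, and has the same degree (that of $Y$). One application of Friedlander--Lawson then moves $\bar\gamma$ to $\bar\gamma'$ meeting every $x\times Y$ properly, and the restriction of $\bar\gamma'$ to $X\times_SY$ is the desired equi-dimensional representative. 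The point is that bounded-degree moving handles the uncountable family $\{x\times Y\}_{x\in X}$ in a single stroke, which is exactly what the classical Chow moving lemma cannot do.
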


\begin{proof} Take a locally closed immersion $X\times_SY$ in a projective space $\P^N_k$ and let $T\subset\P^N_k$ be the closure of $X\times_SY$. Let $\gamma$ be a dimension $d_X+r$ cycle on $X\times_SY$ and let $\bar\gamma$ be the closure of $\gamma$ on $T$. By \cite[theorem 1.7]{FriedlanderLawson}, $\bar\gamma$ is rationally equivalent to a cycle $\bar\gamma'$ such that each component of $\bar\gamma'$ intersects the cycle $x\times Y$ properly on $T$, for each point $x\in X$ (note that $x\times Y$ is closed on $T_{k(x)}$ and is contained in the smooth locus of $T_{k(x)}$). Thus, the restriction $\gamma'$ of $\bar\gamma'$ to $X\times_SY$ is rationally equivalent to $\gamma$ and is in the image of 
$z^S_\equi(Y,r)(X)\to \CH_{d_X+r}(X\times_SY)$, completing the proof.
\end{proof}

Thus, we have shown

\begin{prop}\label{prop:ChowEmbed} 1. There is a fully faithful embedding $\psi^\eff:\CM^\eff(S)\to \PMot^\eff(S)$, such that the diagram
\[
\xymatrix{
\Prj/S\ar[r]\ar[dr]&\CM^\eff(S)\ar[d]^{\psi^\eff}\\
& \PMot^\eff(S)
}
\]
commutes, and such that 
\[
\psi^\eff(X,Y):\Hom_{\CM^\eff(S)}(X,Y)\to \Hom_{\PMot^\eff(S)}(X,Y)
\]
 is the inverse of the isomorphism $\phi_{X,Y}$.\\
\\
2. $\psi^\eff$ extends to a fully faithful embedding $\psi:\CM(S)\to \PMot(S)$. \\
\\
3. The maps $\psi^\eff$ and $\psi$ are compatible with the $\otimes$ action of $\Cor_S$. Also, the maps $\psi^\eff:\CM^\eff(S)\to \PMot^\eff(S)_\Q$,  $\psi^\eff:\CM(S)\to \PMot(S)_\Q$ induced by $\psi^\eff$ and $\psi$ are tensor functors.
\end{prop}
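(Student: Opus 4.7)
The plan is to take the isomorphism $\phi_{X,Y}$ constructed above and invert it. Since Lemma~\ref{lem:CycleSurjectivity} asserts that $\phi$ respects composition on $\Prj/S$, the assignment sending each $X\in\Prj/S$ to itself and each $\alpha\in\Hom_{\CM^\eff(S)}(X,Y)=\CH_{d_X}(X\times_SY)$ to $\phi_{X,Y}^{-1}(\alpha)$ defines a functor $\tilde\psi^\eff:\tilde\CM^\eff(S)\to\PMot^\eff(S)$. I will check that it preserves identities by observing that, under the chain of isomorphisms defining $\phi$, the identity endomorphism of $X$ in $\PMot^\eff(S)$ (represented by the diagonal graph) corresponds to the class of $\Delta_X$ in $\CH_{d_X}(X\times_SX)$, which is the identity in $\CM^\eff(S)$. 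Commutativity of the triangle in (1) is similar: for an $S$-morphism $f:X\to Y$, both images are represented by the cycle class of the graph $\Gamma_f$, and $\phi$ sends one to the other by construction.

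To obtain $\psi^\eff$ on the idempotent completion $\CM^\eff(S)$, I will invoke the universal property: $\PMot^\eff(S)$ is pseudo-abelian by construction, being defined as the idempotent completion of $K^b(R\Gamma(S,\un{\DGPCor}_S))$, so $\tilde\psi^\eff$ extends uniquely to $\psi^\eff:\CM^\eff(S)\to\PMot^\eff(S)$, and full faithfulness is preserved since the Hom groups in an idempotent completion sit inside those of the original category. For (2), the extension to $\psi:\CM(S)\to\PMot(S)$ is formal: $\otimes\L$ is by definition invertible in $\PMot(S)$, so $\psi^\eff$ composed with the canonical functor $\PMot^\eff(S)\to\PMot(S)$ factors uniquely through $\CM(S)=\CM^\eff(S)[\otimes\L^{-1}]$. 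Full faithfulness of $\psi$ reduces via Theorem~\ref{thm:Cancel2} to that of $\psi^\eff$, since the theorem identifies the colimit description of Hom groups in $\PMot(S)$ with the stable Hom groups in $\PMot^\eff(S)$, in perfect parallel with the construction of $\CM(S)$ from $\CM^\eff(S)$.

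For (3), compatibility with the $\otimes$-action of $\Cor_S$ is essentially formal from the cycle-theoretic description of $\phi$: the isomorphisms in Corollary~\ref{cor:Duality} and Theorem~\ref{thm:DualityExt} are built out of external product, pushforward, and pullback operations, and these intertwine with external product by a fixed correspondence. The hardest step, which I expect to be the main obstacle, is the tensor-functor assertion for the rationalized versions: the tensor structure on $\PMot^\eff(S)_\Q$ is constructed via alternating cubes and Thom-Sullivan cochains (Propositions~\ref{prop:tensor}, \ref{prop:AltDG} and Remark~\ref{rem:TensorStructure}), which is a priori very different from the intersection-theoretic tensor product on $\CM^\eff(S)_\Q$. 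Reconciling the two requires tracing through the definitions at the level of DG tensor categories and using that the alternating-cube construction is homotopy equivalent to the naive cubical construction (Proposition~\ref{prop:CorHE}); on morphisms between smooth projective objects the product $f\otimes_\square g$ then agrees up to chain homotopy with the external product of cycles, which on $H^0$ recovers the product used to define the monoidal structure on $\CM^\eff(S)$.
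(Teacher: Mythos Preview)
Your proposal is correct and follows essentially the same approach as the paper. In fact, the paper gives no separate proof at all: the proposition is introduced with ``Thus, we have shown,'' meaning it is recorded as a direct consequence of the isomorphism $\phi_{X,Y}$ and Lemma~\ref{lem:CycleSurjectivity}. Your write-up simply makes explicit the formal steps (functoriality from compatibility with composition, extension to the idempotent completion, inverting $\L$, and the tensor compatibility) that the paper leaves to the reader; in particular your remarks on part (3) spell out more than the paper does, where the tensor assertions are stated without further comment.
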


\begin{lem}\label{lem:Preduality} For $X\in \Prj/S$, there are maps
\[
\tilde\delta_X:\L^d\to X\otimes X;\ \tilde\epsilon_X:X\otimes X\otimes \L^d
\]
in $\PMot^\eff(S)$ such that the composition
\[
X\otimes\L^d\xrightarrow{\id\otimes\delta_X}X\otimes X\otimes X\xrightarrow{\epsilon_X\otimes\id}
\L^d\otimes X\xrightarrow{\tau}X\otimes\L^d
\]
is the identity.
\end{lem}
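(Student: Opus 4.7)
My plan is to construct $\tilde\delta_X$ and $\tilde\epsilon_X$ as the images, under the embedding $\psi^\eff\colon \CM^\eff(S)\to\PMot^\eff(S)$ of proposition~\ref{prop:ChowEmbed}, of the standard unit and counit morphisms in $\CM^\eff(S)$ associated to the diagonal, and then to reduce the stated triangle identity to a classical intersection-theoretic calculation in the category of Chow motives. This exploits the fact that the asserted identity is precisely the zig-zag identity exhibiting $X\otimes\L^{-d}$ as a left dual of $X$, which is well-known for smooth projective $S$-schemes in $\CM(S)$.

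Concretely, first I would use theorem~\ref{thm:DualityExt}(2) together with proposition~\ref{prop:ChowEmbed} to identify, via cycle classes,
\[
\Hom_{\PMot^\eff(S)^\natural}(\L^d,X\otimes X)\;\cong\;\Hom_{\CM^\eff(S)}(\L^d,X\otimes X)\;\cong\;\CH^d(X\times_S X),
\]
and similarly $\Hom_{\PMot^\eff(S)^\natural}(X\otimes X,\L^d)\cong\CH^d(X\times_S X)$ (here one realises $\L^d$ as a direct summand of $(\P^1)^d$ and extracts the relevant piece under the defining idempotent). I define $\tilde\delta_X$ and $\tilde\epsilon_X$ to be $\psi^\eff$ applied to the morphisms corresponding to the diagonal class $[\Delta_X]$ in each of these two isomorphic groups.

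To verify the triangle identity, I observe that each of the three morphisms $\id_X\otimes\tilde\delta_X$, $\tilde\epsilon_X\otimes\id_X$, and $\tau$ arises, via the $\Cor_S$-action of remark~\ref{rem:action}, from a morphism in $\CM^\eff(S)$ hit by $\psi^\eff$. By proposition~\ref{prop:ChowEmbed}(3), $\psi^\eff$ is compatible with this $\Cor_S$-action and is faithful, so it suffices to verify the identity inside $\CM(S)$. There, via the correspondence formalism, the composition equals the cycle class
\[
p_{13*}\bigl(p_{12}^*[\Delta_X]\cdot_{X\times_SX\times_SX} p_{23}^*[\Delta_X]\bigr)\in\CH_d(X\times_S X),
\]
which is $[\Delta_X]$—the identity correspondence on $X$—by a standard diagonal-intersection computation. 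After twisting by $\L^d$ and transporting via $\tau$, this yields $\id_{X\otimes\L^d}$, as required.

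The main obstacle is purely bookkeeping: carefully matching the categorical data—the positions of the $\L^d$-factors, the action of the commutativity constraint $\tau$, and the Tate-twist normalisations implicit in the identifications of Hom-groups with Chow groups—to the actual intersection-theoretic composite. Once this is pinned down, the argument is formal, as the key geometric input (that $\Delta_X\circ\Delta_X=\Delta_X$ as self-composition of correspondences) is classical.
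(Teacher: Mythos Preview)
Your proposal is correct and follows essentially the same approach as the paper: reduce via proposition~\ref{prop:ChowEmbed} to constructing $\tilde\delta_X$ and $\tilde\epsilon_X$ in $\CM^\eff(S)$ as the correspondences given by the diagonal class, and then verify the triangle identity by the standard intersection-theoretic computation. The paper is slightly more explicit about the ambient Chow groups, representing $\tilde\delta_X$ by $0\times\Delta_X\in\CH_d((\P^1)^d\times X\times_SX)$ and $\tilde\epsilon_X$ by $\Delta_X\times(\P^1)^d\in\CH_{2d}(X\times_SX\times(\P^1)^d)$, but otherwise simply declares the identity a ``straightforward computation'', which is exactly what you spell out.
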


\begin{proof} By proposition~\ref{prop:ChowEmbed}, it suffices to construct $\tilde\delta_X$ and $\tilde\epsilon_X$ in $\CM(S)$. Identifying $\Hom_{\CM(S)}(\L^d,X\otimes X)$ with the appropriate summand  of $\CH_d((\P^1)^d\otimes X\times_SX)$, $\tilde\delta_X$ is represented by the cycle $0\times\Delta_X$. Similarly 
\[
\tilde\epsilon_X\in \Hom_{\CM(S)}(X\otimes X,\L^d)\subset \CH_{2d}(X\times_SX\times(\P^1)^d)
\]
is represented by the cycle $\Delta_X\times(\P^1)^d$; the desired identity is a straightforward computation.
\end{proof}

Finally, we have the duality theorem:

\begin{thm} Sending $X\in\Prj/S$ of dimension $d$ over $S$ to $X^D:=X\otimes\L^{-d}$ extends to an exact duality
\[
D: \PMot(S)_\Q^\op\to  \PMot(S)_\Q.
\]
\end{thm}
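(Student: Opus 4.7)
The plan is to establish that every object of $\PMot(S)_\Q$ is strongly dualizable, with $X \mapsto X^D := X \otimes \L^{-d_X}$ giving the dual on objects of $\Prj/S$. Once this is known for each $X \in \Prj/S$, the duality extends automatically to the whole category because the strongly dualizable objects in a symmetric tensor triangulated category form a thick tensor triangulated subcategory (closed under translation, cones, direct summands, and tensor products), and $\PMot(S)_\Q$ is generated, under precisely these operations together with tensoring by $\L^{\pm 1}$, by the objects of $\Prj/S$.

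The first step is to produce coevaluation and evaluation maps. For $X \in \Prj/S$ of relative dimension $d$ over $S$, Lemma~\ref{lem:Preduality} (read, as the proof indicates, with $\tilde\delta_X : \L^d \to X \otimes X$ and $\tilde\epsilon_X : X \otimes X \to \L^d$) furnishes morphisms in $\PMot^\eff(S)$. After passing to $\PMot(S)_\Q$ and inverting $\L$, we define
\[
\eta_X : \1 \cong \L^{-d} \otimes \L^d \xrightarrow{\id \otimes \tilde\delta_X} \L^{-d} \otimes X \otimes X \cong X \otimes X^D
\]
and
\[
\varepsilon_X : X^D \otimes X \cong \L^{-d} \otimes X \otimes X \xrightarrow{\id \otimes \tilde\epsilon_X} \L^{-d} \otimes \L^d \cong \1,
\]
using the commutativity and unit constraints of the symmetric tensor structure.

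The second step is to verify the two zigzag identities $(\varepsilon_X \otimes \id_X) \circ (\id_X \otimes \eta_X) = \id_X$ and $(\id_{X^D} \otimes \varepsilon_X) \circ (\eta_X \otimes \id_{X^D}) = \id_{X^D}$. The first identity is exactly the content of Lemma~\ref{lem:Preduality}, once one absorbs the $\L^{\pm d}$ shifts through the symmetry. For the second identity, Proposition~\ref{prop:ChowEmbed} allows us to test it inside $\CM(S)_\Q$ under the fully faithful tensor embedding $\psi$, where both sides become explicit cycle classes on $X \times_S X \times_S X$ assembled from $\Delta_X$, $0 \times \Delta_X$ and $\Delta_X \times (\P^1)^d$; the verification is then the symmetric counterpart of the computation in Lemma~\ref{lem:Preduality}, obtained by applying the swap involution on $X \otimes X$ to both defining cycles (which are invariant under it up to reindexing). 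Once both zigzags hold, $X$ is strongly dualizable with dual $X^D$.

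Finally, closing under the triangulated-category operations used to construct $\PMot(S)_\Q$ (translations, cones, idempotent splittings, and tensoring with $\L^{\pm n}$) yields dualizability of every object; the assignment $M \mapsto M^D$ then extends, in the standard formal way for symmetric monoidal categories, to a contravariant exact tensor functor $D : \PMot(S)_\Q^\op \to \PMot(S)_\Q$, and on $\Prj/S$ it visibly recovers $X \mapsto X \otimes \L^{-d_X}$. The main obstacle in this plan is supplying the second zigzag identity, which Lemma~\ref{lem:Preduality} does not state explicitly; this is not a deep point, but it does require an additional cycle-level computation (or an appeal to the symmetry of the defining correspondences), and care is needed because the alternating cubical construction used to make $\DGPMot_{S\Q}$ a tensor category makes commutativity constraints nontrivial to track.
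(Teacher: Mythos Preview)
Your approach is essentially the same as the paper's: use Lemma~\ref{lem:Preduality} to produce the (co)evaluation maps for $X\in\Prj/S$, conclude that $X$ is dualizable with dual $X\otimes\L^{-d}$, and then propagate dualizability through the triangulated tensor operations generating $\PMot(S)_\Q$. The only difference is emphasis: the paper simply asserts that the single identity in Lemma~\ref{lem:Preduality} suffices to make $(X\otimes\L^{-d},\delta_X,\epsilon_X)$ a dual (citing \cite[Part I, Chap.~IV, theorem 1.2.5]{MixMot}), whereas you explicitly flag the second zigzag and indicate how to obtain it from the symmetry of the diagonal correspondences---which is exactly the right way to fill that step.
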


\begin{proof}  For $X\in\Prj/S$, the maps $\tilde\delta_X$ and $\tilde\epsilon_X$ define maps
\[
\delta_X:S\to X\otimes(X\otimes\L^{-d_X});\ \epsilon_X:(X\otimes\L^{-d})\otimes X\to S.
\]
The identity of lemma~\ref{lem:Preduality} tells us that $(X\otimes\L^{-d},\delta_X, \epsilon_X)$ defines a dual of $X$ in $\PMot(S)_\Q$. Since $\PMot(S)_\Q$ is generated by the Tate twists of objects in $\Prj/S$ (as an idempotently complete triangulated category), this implies that every object of $\PMot(S)_\Q$ admits a dual (see e.g. \cite[Part I, Chap. IV, theorem 1.2.5]{MixMot}). Since a dual of an object in a tensor category is unique up to unique isomorphism, this suffices to define the exact tensor duality involution $D$.
\end{proof}

\section{Moving lemmas} \label{sec:FLVMov} We proceed to extend the Friedlander-Lawson-Voevodsky moving lemmas to the case of a semi-local regular base scheme, and use these results to prove theorem~\ref{thm:Duality} and theorem~\ref{thm:PBF}.

\subsection{Friedlander-Lawson-Voevodsky moving lemmas}

The Friedlander-Law\-son moving lemmas \cite{FriedlanderLawson} are applied in \cite{FriedlanderVoevodsky} to prove the main moving lemmas and duality statements for the cycle complexes of equi-dimensional cycles. This involves a two-step process: one moves cycles on $\P^n$ by one process (which we will not need to recall explicitly) and then one uses a variation of the projecting cone argument to extend the moving process to a smooth projective variety. The main result following from the first step is

\begin{thm}[\hbox{\cite[theorem 6.1]{FriedlanderVoevodsky}}]\label{thm:FV1} Let $k$ be a field, and $n,r, d, e$ integers. Then there are maps of presheaves abelian monoids on $\Sm/k$
\[
H^\pm_U:z_\equi^k(\P^n,r)(U)\to z_\equi^k(\P^n,r)(U\times\A^1)
\]
such that
\begin{enumerate}
\item There is an integer $m\ge0$ such that
\begin{align*}
&i_0^*\circ H^+_U=(m+1)\cdot\id_{z_\equi^k(\P^n,r)(U)}\\
&i_0^*\circ H^-_U=m\cdot\id_{z_\equi^k(\P^n,r)(U)}
\end{align*}
where $i_0:U\to U\times\A^1$ is the zero-section.
\item Let $i_x:\Spec k'\to U\times \A^1\setminus\{0\}$ be a $k'$-point of $U\times \A^1\setminus\{0\}$, where $k'\supset k$ is some extension field of $k$. Then for each $W\in z^k_\equi(\P^n,s)^\eff_{\le d}(k')$, $Z\in 
z^k_\equi(\P^n,r)^\eff_{\le e}(U)$, with $0\le r,s\le n$, $r+s\ge n$, the cycles $x^*(H^\pm_U(Z))$ and $W$ intersect properly on $\P^n_{k'}$.
\end{enumerate}
\end{thm}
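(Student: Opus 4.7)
The plan is to follow the projecting-cone strategy of Friedlander-Lawson \cite{FriedlanderLawson}, as adapted to equi-dimensional cycles in \cite{FriedlanderVoevodsky}. I would construct two algebraic families of effective correspondences on $\P^n\times \P^n$ parameterized by $\A^1$, whose fibers at $0$ equal $(m+1)\cdot\Delta_{\P^n}$ and $m\cdot\Delta_{\P^n}$ respectively, and whose fibers at every non-zero $k'$-point of $\A^1$ move cycles of degree $\leq e$ into general position with respect to cycles of degree $\leq d$. The presheaf maps $H^\pm_U$ will then be defined by integrating against these families.

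Concretely, fix $N\gg n, r, d, e$ and consider two Veronese-type embeddings $\iota^\pm:\P^n\hookrightarrow \P^N$ of degrees $m+1$ and $m$. For each codimension-$(n+1)$ projective linear subspace $L\subset \P^N$ disjoint from $\iota^\pm(\P^n)$, projection from $L$ composed with $\iota^\pm$ gives a finite surjective morphism $f_L^\pm:\P^n\to\P^n$ whose graph $\Gamma_{f_L^\pm}\subset \P^n\times\P^n$ is a flat family parameterized by a dense open subset of the Grassmannian $\sL$ of such $L$. I would choose base points $L_0^\pm\in\sL$ at which $\Gamma_{f_{L_0^\pm}^\pm}$ degenerates to $(m+1)\cdot\Delta_{\P^n}$, resp.\ $m\cdot\Delta_{\P^n}$, together with lines $\lambda^\pm:\A^1\hookrightarrow \sL$ passing through $L_0^\pm$ at $0$. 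Pulling the universal graph back along $\lambda^\pm$ produces a flat family $\Gamma^\pm\subset \P^n\times\A^1\times\P^n$ over $\A^1\times\P^n$, and I would define
\[
H^\pm_U(Z):=p_{U\times\A^1\times\P^n\,*}\bigl(p_{U\times\P^n}^*(Z)\cdot (U\times\Gamma^\pm)\bigr).
\]
Flat base change gives functoriality in $U$, and specialization at $0$ yields condition (1).

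For condition (2), I would appeal to the generic-position lemma of Friedlander-Lawson: there is a proper closed subvariety $\sL_{\bad}\subsetneq \sL$, depending only on $n, r, s, d, e$, such that for $L\in \sL\setminus \sL_{\bad}$ and every effective $Z'$, $W$ of degrees $\leq e$, $\leq d$ on $\P^n$ with $\dim Z' + \dim W\geq n$, the cycles $(f_L^\pm)_*(Z')$ and $W$ intersect properly. Uniformity in $Z'$ and $W$ comes from the boundedness of the Chow varieties of effective cycles of bounded degree, which ensures that $\sL_{\bad}$ is an honest algebraic subvariety of $\sL$ rather than a countable union. I would then choose the pencils $\lambda^\pm$ in sufficiently general position with respect to $\sL_{\bad}$ so that $\lambda^\pm(\A^1)\cap \sL_{\bad}$ is finite, and after reparametrizing $\A^1$ arrange that this intersection is empty.

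The hard part will be arranging this last point: the standard generic-position lemma only produces proper intersection at a \emph{generic} $L$, whereas the theorem requires properness at every non-zero $t\in\A^1$. I would handle this as in \cite[\S 6]{FriedlanderVoevodsky} by exploiting the transitive action of $\PGL_{N+1}$ on $\sL$ to select a one-parameter subgroup through $L_0^\pm$ whose image lies entirely in $\sL\setminus\sL_{\bad}$ for $t\neq 0$. The essential role of the degree bounds $d$, $e$ is precisely to keep $\sL_{\bad}$ a finite-dimensional algebraic subvariety that such a one-parameter subgroup can systematically avoid.
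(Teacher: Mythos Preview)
The paper does not prove this theorem at all: it is stated purely as a citation of \cite[theorem 6.1]{FriedlanderVoevodsky}, and the surrounding text explicitly says that the moving process on $\P^n$ is one ``which we will not need to recall explicitly.'' So there is no proof in the paper to compare your proposal against; the paper simply imports the result as a black box and then builds the projecting-cone extension (proposition~\ref{prop:FLMov2} onward) on top of it.

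As a sketch of the Friedlander--Voevodsky argument your outline is in the right spirit, but two technical points would need repair if you actually carried it out. First, the degeneration point $L_0^\pm$ at which the graph becomes $(m{+}1)\cdot\Delta$ or $m\cdot\Delta$ cannot lie in the open locus $\sL$ of centers disjoint from the Veronese image: for such $L$ the graph $\Gamma_{f_L}$ is the reduced graph of an honest finite map, never a multiple of the diagonal. The degeneration occurs at a boundary point of $\sL$, so your pencil $\lambda^\pm$ must be a map $\A^1\to\overline{\sL}$ hitting that boundary only at $t=0$. Second, your final step --- ``after reparametrizing $\A^1$ arrange that this intersection is empty'' --- does not work as stated: reparametrizing $\A^1$ is an automorphism and cannot delete a non-empty finite set of bad points. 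What one actually does (and what \cite[\S6]{FriedlanderVoevodsky} does) is choose the one-parameter family more carefully from the start, using the group action, so that the bad locus is avoided for all $t\neq 0$; this is closer to what you say in your last paragraph than in the preceding one.
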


The second step involves the projecting cone construction, which enables one to go from moving cycles on $\P^n$ to moving cycles on a smooth projective $X\subset \P^N$ of dimension $n$. We first recall the situation over a base-field $k$.

Let $D_0,\ldots, D_n\subset \P^N$ be degree $f$ hypersurfaces,  and let $F_0,\ldots, F_n$ be the corresponding defining equations. If $X\cap D_0\cap \ldots\cap D_n=\0$, then 
\[
F:=(F_0:\ldots:F_n):\P^N\setminus\cap_{i=0}^nD_i\to  \P^n
\]
restricts to $X$ to define a finite surjective morphism
\[
F_X:X\to \P^n
\]
For $Z$ an effective cycle of dimension $r$ on $X$, we thus have the effective cycle $F_X^*(F_{X*}(Z))$, which can be written as
\[
F_X^*(F_{X*}(Z))=Z+R_F(Z)
\]
for a uniquely determined effective cycle $R_F(Z)$ of dimension $r$. Sending $Z$ to $R_F(Z)$ thus gives a map
\[
R_F:z^k_\equi(X,r)^\eff(k)\to z^k_\equi(X,r)^\eff(k).
\]

Let $\sU_X(d)$ be the open subscheme of $H^0(\P^N,\sO(d))^n$ consisting of $D_0,\ldots, D_n$ such that $\cap_{i=0}^n\cap X=\0$. It is easy to see that the complement of $\sU_X(d)$ in $H^0(\P^N,\sO(d))^n$ is a hypersurface (in fact, the defining equation of this complement is exactly the classical Chow form of $X$, whose coefficients give the Chow point of $X$ in the Chow variety of dimension $n$, degree $\Deg X$ effective cycles on $\P^N$). 

For $F\in \sU_X(f)$, let $\text{ram}_X(F)\subset X$ be the ramification locus of $F_X$. Choose integers $f_0,\ldots, f_n\ge 1$ and let $\sR_X(d_*)\subset \prod_{i=0}^n\sU_X(d_i)$ be the open subscheme consisting of those tuples $(F^0,\ldots, F^n)$, $F^i:=(F^i_0:\ldots :F^i_n)\in \sU_X(d_i)$ for which
\[
\cap_{i=0}^n\text{ram}_X(F^i)=\0.
\]

\begin{rem} It follows from Bertini's theorem that, if $d_i\ge2$ for all $i$, then $\sR_X(d_*)$ is a dense open subscheme of $\prod_{i=0}^n\sU_X(d_i)$ (even in positive characteristic).
\end{rem}

The next moving result is a translation of \cite[proposition 1.3 and theorem 1.7]{FriedlanderLawson}.

\begin{prop}\label{prop:FLMov2} Let $X\subset \P_k^N$ be a smooth closed subscheme of dimension $n$. Fix dimensions $0\le r,s\le n$  with $r+s\ge n$ and a degree $e\ge1$.  Then there is an increasing function
\[
G:=G_{r,s,N,e,X}:\N\to\N,
\]
depending  on $X$ and the integers $r, s, N$ and $e$,
and,  for each $d_*:=(d_0,\ldots, d_n)$,  there is a closed subset $\sB_X(d_*, e, r,s))$ of $\sR_X(d_*)$,
such that the following holds: Let $k'$ be a field extension of $k$, and take $F^0,\ldots, F^n\in \sR_X(d_*)(k')\setminus  \sB_X(d_*, e, r,s)(k')$. Suppose that $d_0\ge G(e)$ and $d_i\ge G(d^n_{i-1}\cdot e)$ for $i=1,\ldots, n$. Then
\begin{enumerate}
\item  For each pair of effective cycles $Z, W$ on $X_{k'}$ with $Z$ of dimension $r$, $W$ of dimension $s$ and both having degree $\le e$, the cycles $W$ and $R_X(F^n)\circ\ldots\circ R_X(F^0)(Z)$ intersect properly on $X_{k'}$.
\item   Let  $Z, W$ be a  pair of effective cycles on $X_{k'}$ with $Z$ of dimension $r$, $W$ of dimension $s$ and both having degree $\le e$. Then for every $j=0,\dots, n$, the cycles
 $W$ and $(F_X^{j*}F^j_{X*})\circ\ldots\circ(F^{0*}_XF^0_{X*})(Z)$ intersect properly on $X_{k'}$.
\end{enumerate}
\end{prop}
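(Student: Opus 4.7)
The argument is essentially a translation of \cite[Proposition 1.3 and Theorem 1.7]{FriedlanderLawson}, and the plan is to follow their strategy while verifying that all bad loci arising in the construction are cut out by closed subschemes defined over the base field $k$, so that the statement remains valid after arbitrary base change to $k'$.

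I will begin with part (1). For fixed effective cycles $Z$ of dimension $r$ and $W$ of dimension $s$ on $X_{k'}$ with $\Deg Z, \Deg W \le e$, and for $F \in \sR_X(d)$, the residual cycle $R_F(Z)$ is determined by $F_X^* F_{X*}(Z) = Z + R_F(Z)$. The proof of \cite[Proposition 1.3]{FriedlanderLawson} shows that the locus of $F \in \sR_X(d)$ for which $R_F(Z)$ and $W$ fail to intersect properly on $X_{k'}$ is cut out by the vanishing of certain universal polynomials in the coefficients of the $F_i$'s, with coefficients themselves polynomial in the Chow coordinates of $Z$ and $W$. The minimal degree $d$ needed to ensure that this locus is a proper closed subset of $\sR_X(d)$ depends only on $r,s,N,\Deg X$ and on $\Deg Z + \Deg W$, not on $Z$ and $W$ individually. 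This determines the required increasing function $G = G_{r,s,N,e,X}$ as the smallest value of $d$ working uniformly over all pairs of cycles of degree $\le e$.

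To define $\sB_X(d_*, e, r, s)$ I will take the union, over pairs $(Z, W)$ of effective cycles of degree $\le e$ on $X$, of the single-step bad loci. Since the Chow variety $\Chow_{\le e}(X \subset \P^N)$ is of finite type over $k$, this union is the projection to $\sR_X(d_*)$ of a closed subset of $\sR_X(d_*) \times \Chow_{\le e}(X) \times \Chow_{\le e}(X)$, hence closed in $\sR_X(d_*)$ and defined over $k$. A single application of $R_{F^0}$ does not in general yield a proper intersection but only a controlled decrease of excess dimension; iterating $n+1$ times, as in the inductive proof of \cite[Theorem 1.7]{FriedlanderLawson}, forces the excess dimension to zero. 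At the $i$th step, one has the degree estimate $\Deg R_{F^i}(Z_i) \le d_i^n \cdot \Deg X \cdot \Deg Z_i$ (since $F^i_X : X \to \P^n$ has degree $d_i^n \cdot \Deg X$), which is the source of the recursion $d_i \ge G(d^n_{i-1} \cdot e)$, with the constant $\Deg X$ absorbed into $G$.

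Part (2) follows by expanding the ordered composition
\[
(F_X^{j*} F^j_{X*}) \circ \cdots \circ (F_X^{0*} F^0_{X*}) = \sum_{I \subseteq \{0,\ldots,j\}} R_I,
\]
where $R_I := R_{F^{i_l}} \circ \cdots \circ R_{F^{i_1}}$ for $I = \{i_1 < \cdots < i_l\}$ and $R_\emptyset := \id$. Each summand $R_I(Z)$ is an effective cycle of degree bounded in terms of the $d_i$ and $e$ via the same estimate as above, so demanding proper intersection of $W$ with each of the at most $2^{n+1}$ summands amounts to imposing finitely many single-step conditions of the type handled in part (1). Enlarging $\sB_X(d_*, e, r, s)$ by the corresponding finite union of closed subsets keeps it closed and does not affect the statement, and then every summand of the expansion intersects $W$ properly, which gives (2). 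The main obstacle is verifying the single-step moving estimate with the correct uniform bound in terms of $\Deg Z$ and $\Deg W$; this is the substantial geometric content of \cite[Proposition 1.3]{FriedlanderLawson}, resting on a dimension count on the relevant Chow varieties together with Bertini's theorem to control the generic behavior of the projecting cone $F_X$. Once this bound is in hand, the remainder of the argument is bookkeeping with closed subsets, Chow coordinates, and the recursive degree estimate.
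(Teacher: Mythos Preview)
The paper does not give a proof of this proposition at all: it is stated as a direct translation of \cite[Proposition~1.3 and Theorem~1.7]{FriedlanderLawson}, so there is no ``paper's own proof'' to compare against beyond the citation. Your sketch of part~(1) follows the Friedlander--Lawson strategy correctly: each application of $R_F$ drops the excess of the intersection by one, and the recursive degree bound $d_i \ge G(d_{i-1}^n \cdot e)$ keeps the intermediate cycles within the range where this excess-dropping estimate applies. The closedness of the bad locus via projection from the product with Chow varieties is also the right idea.

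Your argument for part~(2), however, has a genuine gap. In the expansion
\[
(F_X^{j*}F^j_{X*})\circ\cdots\circ(F_X^{0*}F^0_{X*})(Z)=\sum_{I\subseteq\{0,\ldots,j\}}R_I(Z),
\]
the summand with $I=\emptyset$ is $Z$ itself. Since this is a sum of effective cycles, proper intersection with $W$ requires \emph{each} summand to intersect $W$ properly, and in particular $Z\cap W$ must already be proper. Nothing in the hypotheses of the proposition as stated guarantees this, and no choice of $(F^0,\ldots,F^n)$ can fix it. More generally, for $|I|$ small the excess-dropping argument only gives excess $\le n-|I|$, not $0$. So your reduction to ``finitely many single-step conditions of the type handled in part~(1)'' does not go through: part~(1) concerns the full $(n{+}1)$-fold composition, not short compositions $R_I$.

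In fact this reflects an omission in the statement of the proposition itself. If you look at Remark~\ref{rem:Minimal} and at how part~(2) is actually used (Remark~\ref{rem:GoodIntersection} and the proof of Theorem~\ref{thm:FVMovExt}(3)), the intended hypothesis in~(2) is that $Z$ and $W$ already intersect properly. With that hypothesis your summand argument does work: starting from excess~$0$, each $R_{F^i}$ preserves proper intersection for generic $F^i$, so inductively every $R_I(Z)$ meets $W$ properly. You should note this missing hypothesis explicitly and then your argument for~(2) is fine.
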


\begin{rem}\label{rem:Minimal} If we have two closed subsets $\sB^1_X(d_*, e, r,s), \sB^2_X(d_*, e, r,s)$ satisfying the conditions of proposition~\ref{prop:FLMov2}, then the intersection 
$\sB^1_X(d_*, e, r,s)\cap\sB^2_X(d_*, e, r,s)$ also satisfies proposition~\ref{prop:FLMov2}. Thus, without loss of generality, we may assume that $\sB_X(d_*, e, r)$ is the {\em minimal} closed subset of 
$\sR_X(d_*)$ satisfying the conditions of  proposition~\ref{prop:FLMov2}.

Assuming this to be the case, we have the following description of  $\sB_X(d_*, e, r)$:
\begin{align*}
\sB_X(d_*, e, r)=&\{F^*:=(F^0,\ldots, F^n)\in \sR_X(d_*)\ |\ \exists  \text{ an extension field } k'\supset k(F^*), \\&Z\in z_\equi(X,r)^\eff_{\le e}(k'), 
W\in z_\equi(X,s)^\eff_{\le e}(k') \text{ such that either} \\
&\text{1. }W \text{ and } R_X(F^n)\circ\ldots\circ R_X(F^0)(Z)\text{ do not intersect properly on }\\
&\hskip20pt X_{k'}\\
&\text{or}\\
&\text{2. }W\text{ and }Z\text{ intersect properly on }X_{k'}\text{ and }Z\cdot_{X_{k'}}W\text{ is in } \\
&\hskip20pt z_\equi(X,r+s-n)^\eff(k'), \text{ but this is not the case for }\\
&\hskip20pt W\text{  and }(F_X^{j*}F^j_{X*})\circ\ldots\circ(F^{0*}_XF^0_{X*})(Z)\text{ for some } j, 0\le j\le n.
\}
\end{align*}
To see this, it suffices to see that the set described is closed; for this it suffices to see that this set is closed under specialization. This follows from the fact the the Chow varieties parametrizing effective  cycles of fixed degree and dimension on $X$ are proper over $k$.
\end{rem}

\subsection{Extending the moving lemma} We now consider the two moving lemmas in the setting of a smooth projective scheme over a regular   base-scheme $B$. The extension of the first moving lemma is a direct corollary of theorem~\ref{thm:FV1}.

\begin{cor}\label{cor:FVMov} Let $k$ be a field, and $n,r, d, e$ integers, $B$ a regular $k$-scheme, essentially of finite type over $k$. Then there are maps of presheaves of abelian monoids on $\Sm/B$
\[
H^\pm_U:z_\equi^B(\P_B^n,r)^\eff(U)\to z_\equi^B(\P_B^n,r)^\eff(U\times\A^1)
\]
such that
\begin{enumerate}
\item Let $i_0:U\to U\times\A^1$ be the zero-section.There is an integer $m\ge0$ such that
\begin{align*}
&i_0^*\circ H^+_U=(m+1)\cdot\id_{z_\equi^k(\P^n,r)(U)}\\
&i_0^*\circ H^-_U=m\cdot\id_{z_\equi^k(\P^n,r)(U)}.
\end{align*}
\item Let $i:T\to U\times \A^1\setminus\{0\}$ be a morphism of $B$-schemes. Then for each $W\in z^B_\equi(\P_B^n,s)^\eff_{\le d}(T)$, $Z\in 
z^k_\equi(\P^n,r)^\eff_{\le e}(U)$, with $0\le r,s\le n$, $r+s\ge n$, the cycles $i^*(H^\pm_U(Z))$ and $W$ intersect properly on $\P^n_T$.
\end{enumerate}
\end{cor}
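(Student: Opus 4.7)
The plan is to deduce Corollary~\ref{cor:FVMov} directly from Theorem~\ref{thm:FV1} by exploiting the fact that an object $U \in \Sm/B$ is already an object of $\Sm/k$. Since $\P^n_B = B \times_k \P^n$ and $\A^1_B = B \times_k \A^1$, for every $U \in \Sm/B$ one has canonical isomorphisms of $U$-schemes
\[
U \times_B \P^n_B \cong U \times_k \P^n, \qquad U \times_B \A^1_B \cong U \times_k \A^1.
\]
Because the equi-dimensionality condition in the definition of $z^?_\equi(\P^n_?, r)^\eff(-)$ refers only to the projection to the test scheme, these identifications yield
\[
z^B_\equi(\P^n_B, r)^\eff(U) = z^k_\equi(\P^n, r)^\eff(U),
\]
and similarly with $U$ replaced by $U \times_B \A^1_B$. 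I would therefore \emph{define} the $H^\pm_U$ of the corollary as the restriction to $\Sm/B \subset \Sm/k$ of the operators furnished by Theorem~\ref{thm:FV1}. Property (1) is then immediate with the same integer $m$.

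For property (2), the task is to upgrade the conclusion of Theorem~\ref{thm:FV1}(2), which is stated only for field-valued points, to the case of an arbitrary $B$-morphism $i\colon T \to U \times_B \A^1_B \setminus\{0\}$. My plan is to check proper intersection on $\P^n_T$ fibrewise over $T$. Both $i^*(H^\pm_U(Z))$ and $W$ are effective cycles on $\P^n_T$ that are equi-dimensional over $T$ of relative dimensions $r$ and $s$ respectively, so proper intersection on $\P^n_T$ is equivalent to the assertion that for each scheme-theoretic point $t \in T$ the fibres $i^*(H^\pm_U(Z))_t$ and $W_t$ intersect properly on $\P^n_{k(t)}$.

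Fix such a $t$, and let $x(t)\colon \Spec k(t) \to U \times_k \A^1\setminus\{0\}$ be the composite $k(t)$-point. By functoriality of pullback, the fibre $i^*(H^\pm_U(Z))_t$ coincides with $x(t)^*(H^\pm_U(Z))$, and the fibre $W_t$ lies in $z^k_\equi(\P^n, s)^\eff_{\le d}(k(t))$ because the locally constant function $\Deg$ is preserved under pullback, so the bound $\Deg \le d$ persists on each fibre. Theorem~\ref{thm:FV1}(2), applied with the field $k(t)$, the $k(t)$-point $x(t)$, and the cycle $W_t$, then gives proper intersection of $x(t)^*(H^\pm_U(Z))$ and $W_t$ on $\P^n_{k(t)}$. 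Running this over all $t \in T$ completes the verification.

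The only real obstacle is the fibrewise-to-relative step, and it is minor: since the two cycles are equi-dimensional over $T$, the locus in $\P^n_T$ where a component of their intersection jumps in relative dimension is closed and controlled by upper semicontinuity of fibre dimension, so the uniform fibrewise dimension bound forces the global bound. Everything else in the argument is a transport of structure along the inclusion $\Sm/B \hookrightarrow \Sm/k$, which makes the corollary essentially a bookkeeping consequence of Theorem~\ref{thm:FV1}.
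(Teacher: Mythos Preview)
Your argument is essentially the same as the paper's: identify $z^B_\equi(\P^n_B,r)^\eff(U)$ with $z^k_\equi(\P^n,r)^\eff(U)$ via $U\times_B\P^n_B\cong U\times_k\P^n$, transport the maps $H^\pm$ from Theorem~\ref{thm:FV1}, and for (2) verify proper intersection fibrewise over $T$ using that both cycles are equi-dimensional over $T$. The only point you glossed over is that when $B$ is merely \emph{essentially} of finite type over $k$, an object $U\in\Sm/B$ need not lie in $\Sm/k$; the paper handles this by first reducing to the case where $B$ is of finite type over $k$, which is harmless since the statement is local on $B$ and compatible with limits.
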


\begin{proof} First of all, it suffices to prove the result if $B$ is of finite type over $k$. Let $p:B\to \Spec k$ be the structure morphism and $p_*:\Sm/B\to \Sm/k$ the  base-restriction functor, $p^*:\Sm/k\to \Sm/B$ the pull-back functor $p^*(Y):=Y\times_kB$. Then we have the evident identifications
\[
z_\equi^B(\P_B^n,r)(U)=z_\equi^k(\P_k^n,r)(p_*U),
\]
induced by the canonical isomorphism $U\times_B\P^n_B\cong p_*U\times_k\P^n_k$. Thus the maps $H^\pm_{p_*U}$ given by theorem~\ref{thm:FV1} give rise to maps
\[
H^\pm_U:z_\equi^B(\P_B^n,r)(U)\to z_\equi^B(\P_B^n,r)(U\times\A^1)
\]
which clearly satisfy our condition (1). 

For (2), if we have two cycles $A\in z_\equi^B(\P^n_B,r)^\eff(T)$ and $A'\in z_\equi^B(\P^n_B,s)^\eff(T)$ such that, for each point $t\in T$, the fibers $t^*(A), t^*(A')$ intersect properly on $\P^n_t$, then $A$ and $A'$ intersect properly on $T\times_B\P^n_B$ and $A\cdot A'$ is in $z_\equi^B(\P^n_B,r+s-n)^\eff(T)$. This, together with theorem~\ref{thm:FV1}(2), proves (2).
\end{proof}

Take $X\in \Prj/B$ with a fixed embedding  $X\hookrightarrow \P^N_B$ over $B$. For each $b\in B$, let $\sC_{X_b}(d)=\P(H^0(\P^N_s,\sO(d)))^{n+1}\setminus \sU_{X_b}(d)$.  Let $V\subset \Proj_B(p_*\sO(d))^{n+1}\times\P^N$ be the incidence correspondence with points $\{(f_0:\ldots:f_n, x)\ |\ x\in\cap_{i=}^n(f_i=0)\}$. Let $\sC_X(d)=p_1(V\cap p_2^{-1}(X)$, and let $\sU_X(d)=\P(p_*\sO(d)))^{n+1}\setminus \sC_X(d)$.  Then clearly $\sC_X(d)\subset \P(H^0(\P^N_s,\sO(d)))^{n+1}$ is a closed subset with fiber $\sC_{X_b}(d)$ over $b\in B$, hence $\sU_X(d)$ has fiber $\sU_{X_b}(d)$ over $b\in B$.

 Let $F_d:\sU_X(d)\times_B X\to \sU_X(d)\times_B \P^n$ be the $\sU_X(d)$-morphism parametrized by 
$\sU_X(d)$, i.e., $F_d((f_0:\ldots:f_n),x):=((f_0:\ldots:f_n), (f_0(x):\ldots, f_n(x)))$. We have the ramification locus $\text{ram}_X(F)_{\sU}\subset \sU_X(d)\times_BX$. 

For a sequence $d_*:=(d_0,\ldots, d_n)$, let 
\[
\sU_X(d_*):=\sU_X(d_0)\times_B\ldots\times_B\sU_X(d_n)
\]
let $p_i:\sU_X(d_*)\to \sU_X(d_i)$ be the projection, and let  $\sF_X(d_*)\subset \sU_X(d_*)\times_BX$ be the intersection
$\cap_{i=0}^n(p_i\times\id_X)^{-1}(\text{ram}_X(F_{d_i})_\sU)$. Finally, let $q:\sU_X(d_*)\times_BX\to \sU_X(d_*)$ be the projection and set
\[
\sR_X(d_*):= \sU_X(d_*)\setminus q(\sF_X(d_*))
\]
Clearly $\sR_X(d_*)$ is an open subscheme of $ \sU_X(d_*)$ with fiber $\sR_{X_b}(d_*)$ over $b\in B$.

 Each $F\in  \sU_X(d_*)(B)$ thus determines a finite $B$-morphism
\[
F_X:X\to \P^n_B
\]
and gives rise to the map
\[
R_{F}:z^B(X,r)^\eff\to z^B(X,r)^\eff
\]
with
\[
F_X^*(F_{X*})(Z)=Z+R_F(Z)
\]
for each $Z\in z^B(X,r)^\eff$.

\begin{lem} Fix integers $e,r,s$ and a sequence of integers $d_0,\ldots, d_n$, with $d_i\ge2$, $0\le r,s\le n$, $r+s\ge n$ and $e\ge1$. For each $b\in B$, we have the closed subset $\sB_{X_b}(d_*, e, r)$ of
$\sR_{X_b}(d_*)\subset \sR_X(d_*)$ given by proposition~\ref{prop:FLMov2}; we assume following remark~\ref{rem:Minimal} that $\sB_{X_b}(d_*, e, r)$ is minimal for each $b$.  Let
\[
\sB_X(d_*, e, r,s):=\cup_{b\in B}\sB_{X_b}(d_*, e, r,s).
\]
Then $\sB_X(d_*, e, r,s)$ is closed in $\sR_X(d_*)$.
\end{lem}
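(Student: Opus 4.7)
The strategy is to realize $\sB_X(d_*, e, r, s)$ as the image, under a proper projection, of a closed subset of an auxiliary parameter space built from relative Chow schemes. Let me write
$$\sP := \sR_X(d_*) \times_B \mathrm{Chow}^{r,\le e}_{X/B} \times_B \mathrm{Chow}^{s,\le e}_{X/B},$$
where $\mathrm{Chow}^{r,\le e}_{X/B}$ denotes the (set-theoretic) parameter scheme for effective cycles of relative dimension $r$ and degree $\le e$ on $X/B$, realized as a closed subset of the classical Chow variety $\mathrm{Chow}^{r,\le e}(\P^N)\times_\Z B$ via the embedding $X\hookrightarrow \P^N_B$. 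Both Chow factors are proper over $B$, so the projection $\pi:\sP\to \sR_X(d_*)$ is proper.

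Next I would produce, over $\sP$, the universal relative cycles $\sZ$ (of relative dimension $r$) and $\sW$ (of relative dimension $s$) on $X\times_B\sP$, and, for $j=0,\ldots, n$, the derived relative cycles
$$\sZ_j:=(F_X^{j*}F^j_{X*})\circ\cdots\circ(F_X^{0*}F^0_{X*})(\sZ),\qquad \sZ'_n:=R_X(F^n)\circ\cdots\circ R_X(F^0)(\sZ).$$
For each $F^i\in\sR_X(d_i)$ (pulled back tautologically to $\sP$) the induced $\sP$-morphism $F_X^i:X\times_B\sP\to \P^n\times_B\sP$ is finite, and Fulton's refined Gysin formalism defines $F_X^{i*}F^i_{X*}$ and hence $R_X(F^i)$ on effective cycles of bounded degree in a way that is compatible with arbitrary base change on $\sP$; applied to the universal $\sZ$ this yields the cycles $\sZ_j$ and $\sZ'_n$ as relative cycles on $X\times_B\sP$ over $\sP$, supported on closed subsets of bounded degree.

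Now, for any two relative cycles $\alpha,\beta$ on $X\times_B\sP\to \sP$ of relative dimensions $r$ and $s$ respectively, the dimension of the set-theoretic fiber $|\alpha|_p\cap|\beta|_p$ over a geometric point $p\in\sP$ is upper semicontinuous in $p$ (by the standard upper semicontinuity of fiber dimensions applied to the projection $|\alpha|\cap|\beta|\to \sP$); hence the locus of $p\in\sP$ where $\alpha_p$ and $\beta_p$ fail to intersect properly on $(X\times_B\sP)_p$, or intersect properly with intersection cycle failing to be equi-dimensional of relative dimension $r+s-n$ over $p$, is closed in $\sP$. Applying this to the pairs $(\sW,\sZ'_n)$ and $(\sW,\sZ_j)$ for $0\le j\le n$ (and intersecting with the constructible locus where $(\sW,\sZ)$ itself intersects properly, as required by condition 2 of Remark~\ref{rem:Minimal}), we obtain a closed subset $\tilde\sB\subset \sP$.

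By the characterization of $\sB_{X_b}(d_*,e,r,s)$ recorded in Remark~\ref{rem:Minimal}, a point $F^*\in\sR_X(d_*)$ lies in $\sB_{X_b}(d_*,e,r,s)$ for the unique $b\in B$ it lies over exactly when some pair $(Z,W)$ of cycles over an extension of $k(F^*)$ witnesses the bad behavior, i.e., exactly when $F^*$ has a lift to $\tilde\sB$. Hence $\pi(\tilde\sB)=\cup_{b\in B}\sB_{X_b}(d_*,e,r,s)=\sB_X(d_*,e,r,s)$, and this image is closed by properness of $\pi$. The step I expect to require the most care is the construction of the relative cycles $\sZ_j$ and $\sZ'_n$ in a way that is compatible with base change on $\sP$, because $F_X$ is only finite and not flat in general, so the pullback $F_X^*$ on cycles must be handled via the refined Gysin construction rather than a naive flat pullback.
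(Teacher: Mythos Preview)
Your strategy is considerably more elaborate than the paper's, which dispenses with universal constructions entirely and simply checks that $\sB_X(d_*,e,r,s)$ is closed under specialization: given $x\in\sB_{X_b}$ specializing to $x'\in\sR_{X_{b'}}(d_*)$, one specializes a witnessing pair $(Z,W)$ along $x\to x'$ using properness of the Chow varieties, and observes (via upper semicontinuity of fiber dimension and compatibility of $F_X^*F_{X*}$ with specialization) that the specialized pair still witnesses membership in $\sB_{X_{b'}}$. That is the whole proof.

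Your argument has a genuine gap at the sentence ``we obtain a closed subset $\tilde\sB\subset\sP$''. Condition 2 of Remark~\ref{rem:Minimal} carries the hypothesis that $W$ and $Z$ intersect properly, and that hypothesis cuts out an \emph{open} locus in $\sP$. Hence the contribution of condition 2 to $\tilde\sB$ is only locally closed, and $\tilde\sB$ itself is merely constructible. Properness of $\pi$ then gives you only that $\pi(\tilde\sB)$ is constructible (via Chevalley), not closed. To upgrade constructible to closed you would have to show $\pi(\tilde\sB)$ is stable under specialization---but that is precisely the paper's three-line argument, so the universal-family machinery buys you nothing. Note too that your worry about making $F_X^*$ base-change compatible over all of $\sP$ simply does not arise in the specialization approach: one only needs compatibility along a single specialization of points, where it follows from the finiteness of each $F_X^i$ and smoothness of $X$ over $B$.
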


\begin{proof} Let $b\to b'$ be a specialization of points of $B$, $x$ a point of $\sB_{X_b}(d_*, e, r)$, and $x\to x'$ an extension of $b\to b'$ to a specialization of $x$ to a point of 
$\sR_{X_{b'}}(d_*)$. It follows from the description of $\sB_{X}(d_*, e, r,s)$ in remark~\ref{rem:Minimal} that $x'$ is in $\sB_{X_{b'}}(d_*, e, r)$, proving the result.
\end{proof}

 Recall the function $G_{r,s,N,e,X}:\N\to \N$ from proposition~\ref{prop:FLMov2}, defined for $X\in \Prj/k$, with a given embedding $X\hookrightarrow\P^N_k$, and integers $r,s,e$. If now we have $X\in \Prj/B$ with a given embedding over $B$, $X\hookrightarrow\P^N_B$, let $b_1,\ldots, b_m$ be the closed points of $B$ and define
 \[
 G_{r,s,N,e,X}(m):=\max_i\{G_{r,s,N,e,X_{b_i}}(m)\}.
 \]

 \begin{prop}\label{prop:FLMov2Ext} Let $B$ be a semi-local regular $k$-scheme, with $k$ an infinite field.  Take $X$ in $\Prj/B$ of relative dimension $n$ over $B$, with a fixed closed immersion $X\hookrightarrow \P^N_B$ over $B$. Fix dimensions $0\le r,s\le n$ and a degree $e\ge1$ with $r+s\ge n$ and let $G:=G_{r,s,N,e,X}$. Then   for all tuple of integers $d_0,\ldots, d_n$ with $d_0\ge G(e)$,  $d_i\ge G(d_{i-1}^n\cdot e)$ for $i=1,\ldots, n$, there is a point $(F^0,\ldots, F^n)\in \sR_X(d_*)(B)\setminus \sB_X(d_*)(B)$.
 \end{prop}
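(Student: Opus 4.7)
The approach is to construct a $B$-valued section of the dense open subscheme $\sR_X(d_*) \setminus \sB_X(d_*)$ inside the ambient product of projective bundles
\[
P := \prod_{i=0}^n \P(p_*\sO(d_i))^{n+1},
\]
by first exhibiting $k(b_j)$-rational points in the fibers over each closed point $b_j$ of $B$, and then simultaneously lifting them via the Chinese Remainder Theorem. The semi-local hypothesis on $B$, together with $k$ being infinite, is tailored precisely to make each step go through.

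First I trivialize $P$. Since $\Gamma(B,\sO_B)$ is a semi-local ring, every finitely generated projective module over it is free, so each $p_*\sO(d_i)$ is free of rank $N_i := \binom{N+d_i}{d_i}$, and $P \cong \prod_i(\P^{N_i-1}_B)^{n+1}$; a $B$-point of $P$ then amounts to a collection of unimodular rows in $\Gamma(B,\sO_B)^{N_i}$ (up to units). Next, for each closed point $b_j$ of $B$, the residue field $k(b_j) \supset k$ is infinite. Applying proposition~\ref{prop:FLMov2} to the fiber $X_{b_j}$ and noting that by definition $G(m) \ge G_{r,s,N,e,X_{b_j}}(m)$ for all $m$, so that the bounds on $d_*$ propagate to each fiber, I conclude that $\sB_{X_{b_j}}(d_*) \subsetneq \sR_{X_{b_j}}(d_*)$ is a proper closed subset and its complement is a dense open subset of the fiber $P_{b_j} \cong \prod_i (\P^{N_i-1}_{k(b_j)})^{n+1}$. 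Since the ambient space is a product of projective spaces over the infinite field $k(b_j)$, this dense open contains a $k(b_j)$-rational point $x_j$, which I represent by unimodular rows $(f_{j,\ell}^i) \in k(b_j)^{N_i}$.

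Next, I apply the Chinese Remainder Theorem to $\Gamma(B,\sO_B)$, whose maximal ideals are $\mathfrak{m}_{b_1},\dots, \mathfrak{m}_{b_m}$: the data $\{f_{j,\ell}^i\}_j$ can be simultaneously lifted to elements $F_\ell^i \in \Gamma(B,\sO_B)$. Unimodularity of each lifted row is automatic, since the ideal generated by its entries is not contained in any $\mathfrak{m}_{b_j}$ (its reduction is unimodular in $k(b_j)$), hence is the unit ideal. The collection of lifted rows defines a $B$-point $F \in P(B)$ whose value at each closed point $b_j$ is the chosen $x_j \in \sR_X(d_*) \setminus \sB_X(d_*)$. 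Since $\sR_X(d_*) \setminus \sB_X(d_*)$ is open in $P$ (being open in $\sR_X(d_*)$, which is open in $P$), the locus of $b \in B$ over which $F(b)$ lies in this open is a Zariski open subset of $B$ containing every closed point; on a semi-local scheme, any Zariski open containing all closed points is the whole scheme, so $F$ is the desired $B$-point.

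The main technical hinge is verifying fiberwise that $\sR_{X_{b_j}}(d_*) \setminus \sB_{X_{b_j}}(d_*)$ is a \emph{nonempty} dense open with $k(b_j)$-rational points; this relies on the content of the Friedlander--Lawson construction encoded in proposition~\ref{prop:FLMov2}, combined with the elementary fact that a dense open subset of a product of projective spaces over an infinite field is populated by rational points. Once this fiberwise existence is in hand, the semi-local structure of $B$, the freeness of finitely generated projectives over semi-local rings, and the Chinese Remainder Theorem assemble the fiberwise points into a global $B$-section without further difficulty.
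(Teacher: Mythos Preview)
Your proposal is correct and follows essentially the same approach as the paper's proof: find $k(b_j)$-rational points in the nonempty open fiber over each closed point via proposition~\ref{prop:FLMov2} and the infinite residue field, lift them to a $B$-point of the ambient product of projective spaces, and conclude by semi-localness that this $B$-point lies in the open $\sR_X(d_*)\setminus\sB_X(d_*)$. The only difference is cosmetic: the paper simply asserts the existence of the interpolating $B$-section, while you spell out the mechanism (freeness of $p_*\sO(d_i)$ over the semi-local ring, representation by unimodular rows, Chinese Remainder Theorem).
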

 
 \begin{proof}
We note that the fiber of $\sR_X(d_*)$ over $b\in B$ is $\sR_{X_b}(d_*)$ and similarly for $\sB_X(d_*)$.  
Then $\sR_X(d_*)\setminus \sB_X(d_*)\to B$ is an open subscheme of the product of projective spaces $\prod_{i=0}^n\Proj_B(p_*\sO_{\P^N}(d_i))$ over $B$; by  proposition~\ref{prop:FLMov2}, the fiber
$\sR_X(d_*)\setminus \sB_X(d_*)_{b}$ is non-empty for each closed point $b$ of $B$. As $k$ is assumed infinite, $B$ has infinite residue fields over each of its closed points. Thus, for each closed point $b$ of $B$, there is a $k(b)$-point $F^*_b$ in  $\sR_X(d_*)_{b}\setminus \sB_X(d_*)_{b}$. Since $B$ is semi-local,  there is a $B$-point $F^*$ of 
$\prod_{i=0}^n\Proj_B(p_*\sO_{\P^N}(d_i))$ with $F^*(b)=F^*_b$ for each closed point $b$; $F^*$ is automatically a $B$ point of $\sR_X(d_*)\setminus \sB_X(d_*)$, using again the fact that $B$ is semi-local.
\end{proof}

\begin{rem}\label{rem:GoodIntersection} Take $(F^0,\ldots, F^n)\in \sR_X(d_*)(B)\setminus \sB_X(d_*)(B)$ as given by proposition~\ref{prop:FLMov2Ext}. Then  for each $B$-scheme $T\to B$, and each pair of cycles 
\[
Z\in z^B(X,r)^\eff_{\le e}(T),\ W\in  z^B(X,s)^\eff_{\le e}(T), 
\]
the cycles  $W$ and $R_X(F^n)\circ\ldots\circ R_X(F^0)(Z)$ intersect properly on $X_T$ and the intersection $W\cdot_{X_T}R_X(F^n)\circ\ldots\circ R_X(F^0)(Z)$ is in 
$z^B(X,r+s-n)^\eff(T)$.

Indeed, this follows from the fact that the operation $R_X(F)$ is compatible with taking fibers, hence, for all $t\in T$, the cycles $W_t$ and $R_{X_t}(F^n_t)\circ\ldots\circ R_{X_t}(F_t^0)(Z_t)$ intersect properly on $X_t$. Thus $W$ and $R_X(F^n)\circ\ldots\circ R_X(F^0)(Z)$ intersect properly on $X_T$ and the intersection $W\cdot_{X_T}R_X(F^n)\circ\ldots\circ R_X(F^0)(Z)$ is equi-dimensional (of dimension $r+s-n$) over $T$.

Similarly, if $W$ and $Z$ intersect properly on $X_T$ and $W\cdot_{X_T}Z$ is in $z^B(X,r+s-n)^\eff(T)$, then for each $j=0,\ldots, n$, $W$ and $Z_j:=(F^{j*}_XF^j_{X*})\circ (F^{0*}_XF^0_{X*})(Z)$ intersect properly on $X_T$ and $W\cdot Z_j$ is in $z^B(X,r+s-n)^\eff(T)$.
\end{rem}

We can now prove our extension of \cite[theorem 6.3]{FriedlanderVoevodsky}.

\begin{thm}\label{thm:FVMovExt} Let $k$ be a field, $B$ a regular $k$-scheme, essentially of finite type over $k$, $X\in\Prj/B$ of dimension $n$ over $B$ with a given embedding $X\hookrightarrow\P^N_B$. Let $r,s, e,d$ be given integers with $e\ge1$, $0\le r,s\le n$, $r+s\ge n$. Then is a map of presheaves
\[
H_{X,U}:z^B_\equi(X,r)(U)\to z^B_\equi(X,r)(U\times\A^1);\ U\in \Sm/B,
\]
such that
\begin{enumerate}
\item   Let $i_0:U\to U\times\A^1$ be the zero-section.Then 
$i_0^*\circ H_{X,U }=\id_{z_\equi^B(X,r)(U)}$.
\item Let $i:T\to U\times \A^1\setminus\{0\}$ be a morphism of $B$-schemes. Then for each $W\in z^B_\equi(X,s)^\eff_{\le e}(T)$, $Z\in 
z^B_\equi(X,r)^\eff_{\le e}(U)$, with $0\le r,s\le n$, $r+s\ge n$, the cycles $i^*(H_{X,U}(Z))$ and $W$ intersect properly on $X_T$ and $i^*(H_{X,U}(Z))\cdot_{X_T} W$ is in 
$z^B_\equi(X,r+s-n)^\eff(T)$.
\item Let $i:T\to U\times \A^1$ be a morphism of $B$-schemes and take 
\begin{align*}
&W\in z^B_\equi(X,s)^\eff_{\le e}(T), \\
&Z\in 
z^B_\equi(X,r)^\eff_{\le e}(U),
\end{align*}
 with $0\le r,s\le n$, $r+s\ge n$. Suppose that $(p_U\circ i)^*(Z)$ and $W$ intersect properly   on $X_T$ and $(p_U\circ i)^*(Z)\cdot_{X_T} W$ is in 
$z^B_\equi(X,r+s-n)^\eff(T)$. Then  the cycles $i^*(H_{X,U}(Z))$ and $W$ intersect properly on $X_T$ and $i^*(H_{X,U}(Z))\cdot_{X_T} W$ is in 
$z^B_\equi(X,r+s-n)^\eff(T)$.
\end{enumerate}
\end{thm}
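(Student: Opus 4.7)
The plan is to follow the Friedlander-Voevodsky strategy of \cite[Theorem 6.3]{FriedlanderVoevodsky}, with Corollary \ref{cor:FVMov} and Proposition \ref{prop:FLMov2Ext} playing the roles of the classical $\P^n$-moving lemma and the projecting-cone construction, respectively. By Zariski-semi-localization on $B$ one reduces to the case that $B$ is semi-local with infinite residue fields (the finite residue field case is handled by a standard norm-trick, and the presheaf statement is local on $B$). Choose integers $d_0,\ldots, d_n$ with $d_0\ge G_{r,s,N,e,X}(e)$ and $d_i\ge G_{r,s,N,e,X}(d_{i-1}^n \cdot e)$, and use Proposition \ref{prop:FLMov2Ext} to pick $(F^0,\ldots,F^n)\in \sR_X(d_*)(B)\setminus \sB_X(d_*)(B)$, yielding finite surjective $B$-morphisms $F^j_X:X\to \P^n_B$. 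For every effective $r$-cycle $C$ one has the defining identity $F^{j*}_X F^j_{X*}(C)= C + R_X(F^j)(C)$.

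Next I would assemble $H_{X,U}$ from the $\P^n_B$-moving homotopies of Corollary \ref{cor:FVMov} by a signed telescoping. For $Z\in z_\equi^B(X,r)^\eff_{\le e}(U)$, set $Z_0=Z$ and $Z_{j+1}=R_X(F^j)(Z_j)$, giving
\begin{equation*}
Z \;=\; \sum_{j=0}^{n} (-1)^j F^{j*}_X F^j_{X*}(Z_j) \;+\; (-1)^{n+1} Z_{n+1}
\end{equation*}
in $z_\equi^B(X,r)(U)$. Applying the pair $H^\pm_U$ of Corollary \ref{cor:FVMov} to the bounded-degree push-forwards $F^j_{X*}(Z_j)\in z_\equi^B(\P^n_B,r)^\eff_{\le d_j^n e}(U)$ and pulling back along $F^j_X$, define
\begin{equation*}
H_{X,U}(Z) \;:=\; \sum_{j=0}^{n} (-1)^j F^{j*}_X \bigl(H^+_U - H^-_U\bigr)\bigl(F^j_{X*}(Z_j)\bigr) \;+\; (-1)^{n+1} p_U^*(Z_{n+1}).
\end{equation*}
The identity $i_0^* \circ (H^+_U-H^-_U) = \mathrm{id}$ from Corollary \ref{cor:FVMov}(1), combined with the telescoping identity above, gives $i_0^* H_{X,U}(Z) = Z$, proving (1). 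Naturality in $U$ is automatic from the naturality of $F^j_*$, $F^{j*}$ and of $H^\pm_U$, so one obtains a map of presheaves on the degree-$\le e$ strata, and by compatibility with the inclusions $z_\equi^B(X,r)^\eff_{\le e}\hookrightarrow z_\equi^B(X,r)^\eff_{\le e'}$ one extends to the entire presheaf.

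For (2), fix $i:T\to U\times(\A^1\setminus\{0\})$ and $W\in z_\equi^B(X,s)^\eff_{\le e}(T)$. By Corollary \ref{cor:FVMov}(2), each $i^* H^\pm_U(F^j_{X*}(Z_j))$ meets $F^j_{X*}(W)$ properly in $\P^n_T$ with intersection in $z_\equi^B(\P^n_B, r+s-n)^\eff(T)$; since $F^j_X$ is finite surjective, applying $F^{j*}_X$ preserves proper intersection with $W$ on $X_T$ and the equi-dimensionality of the intersection. The residual term $(-1)^{n+1}(p_U\circ i)^*(Z_{n+1})$ is controlled by Remark \ref{rem:GoodIntersection}, which guarantees that $R_X(F^n)\circ\cdots\circ R_X(F^0)(Z)$ meets $W$ properly on $X_T$ with equi-dimensional intersection; this yields (2). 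For (3), the extra hypothesis that $(p_U\circ i)^*(Z)$ already meets $W$ properly with equi-dimensional intersection feeds into the second clause of Remark \ref{rem:GoodIntersection}, ensuring that each iterate $(F^{j*}_X F^j_{X*})\circ \cdots \circ (F^{0*}_X F^0_{X*})(Z)$ continues to do so, hence $i^* H_{X,U}(Z)$ intersects $W$ properly with equi-dimensional intersection even when $i$ factors through the zero-section.

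The main obstacle I anticipate is the effectivity bookkeeping required to upgrade the construction from presheaves of abelian monoids (where $H^\pm_U$ naturally live, with an offset of $m$ at the zero-section) to a genuine map of presheaves of abelian groups: one must work throughout with the pair $(H^+_U, H^-_U)$, verify that the signed telescoping $\sum_j (-1)^j F^{j*}_X(H^+_U - H^-_U)(F^j_{X*}(Z_j))$ lands in $z_\equi^B(X,r)(U\times\A^1)$ in a well-defined way, and track degrees carefully enough that the hypothesis $d_i \ge G_{r,s,N,e,X}(d_{i-1}^n e)$ is strong enough at each stage of the iteration. Once this is arranged, all three assertions reduce to the fiberwise intersection-theoretic statements furnished by Corollary \ref{cor:FVMov} and Proposition \ref{prop:FLMov2Ext}.
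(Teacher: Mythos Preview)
Your approach is essentially the paper's: the same telescoping identity built from the projecting cones $F^j_X$, the same use of the $\P^n$-homotopies $H^\pm_U$ transported through $F^{j*}_X$ and $F^j_{X*}$, and the same residual term $(-1)^{n+1}p_U^*(Z_{n+1})$. Two points deserve care. First, your claimed reduction ``by Zariski semi-localization on $B$'' does not quite work: the map $H_{X,U}$ depends on a global choice of $(F^0,\ldots,F^n)\in \sR_X(d_*)(B)\setminus \sB_X(d_*)(B)$, and different choices over different semi-local patches will not glue to a single presheaf map. The paper simply invokes Proposition~\ref{prop:FLMov2Ext} directly (which requires $B$ semi-local), and the theorem is only applied later under that hypothesis; so you should just take $B$ semi-local from the start rather than attempt a reduction. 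Second, your degree bound $F^j_{X*}(Z_j)\in z^\eff_{\le d_j^n e}$ is wrong: the degree of $Z_j=R_X(F^{j-1})\circ\cdots\circ R_X(F^0)(Z)$ grows multiplicatively in the $d_i^n$, so one must choose, at each stage $j$, a \emph{different} instance $H^\pm_U(\delta_j)$ of Corollary~\ref{cor:FVMov} with degree threshold $\delta_j$ large enough to accommodate both $F^j_{X*}(Z_j)$ and $F^j_{X*}(W)$. The paper defines these $\delta_j$ inductively; this is exactly the ``effectivity bookkeeping'' you flag, and once it is in place your argument for (1)--(3) goes through as written. For (3) specifically, the passage from ``each iterate $(F^{j*}_X F^j_{X*})\circ\cdots(Z)$ meets $W$ properly'' to ``$i^*H_{X,U}(Z)$ meets $W$ properly'' uses that each $H^\pm_{X,U,j}(Z)$ is effective and is dominated (at $t=0$, up to an effective correction) by a multiple of that iterate, so proper intersection at $t=0$ follows from support considerations; you should make this effectivity step explicit.
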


\begin{proof}   The proof follows that in \cite{FriedlanderVoevodsky} and \cite{FriedlanderLawson}. If $k$ is a finite field, we use the fact that $k$ admits a infinite pro-$l$ extension $k_l$ for every prime $l$ prime to the characteristic, plus the usual norm argument, to reduce to the case of an infinite field. We may assume that $B$ is irreducible. Thus for each $b\in B$, the embedded subscheme $X_b\subset\P^N_b$ has degree $d_X$ independent of $b$. 

We denote the maps 
\[
H^\pm_U:z_\equi^B(\P_B^n,r)^\eff(U)\to z_\equi^B(\P_B^n,r)^\eff(U\times\A^1)
\]
given by corollary~\ref{cor:FVMov} by $H^\pm_U(d)$, noting explicitly the dependence on the degree $d$. Similarly,we write $m(d)$ for the integer $m$ that appears in corollary~\ref{cor:FVMov}(1). 

 Choose some $F^*\in \sR_X(d_*)(B)\setminus \sB_X(d_*)(B)$, as given by  proposition~\ref{prop:FLMov2Ext}.   Let $p_U:U\times\A^1\to U$ be the projection. We have the maps of presheaves
 \begin{align*}
& F^i_{X*}:z^B_\equi(X,r)^\eff_d\to z^B_\equi(\P^n,r)^\eff_d\\
& F^{i*}_{X}:z^B_\equi(\P^n,r)^\eff_d\to z^B_\equi(X,r)^\eff_{d_i^ned}
\end{align*}
and similarly without the degree restrictions. Also, for $W\in z^B_\equi(X,s)^\eff(U)$, $Z\in z^B_\equi(\P^n_B,r)^\eff(U)$, $W\cdot_{U\times_BX}F^{i*}_X(Z)$ is defined and is in $z^B_\equi(X,s+r-n)^\eff(U)$ if and only if $F^i_{X*}(W)\cdot_{U\times_B\P^n_B}Z$ is defined and is in  
$z^B_\equi(\P^n_B,s+r-n)^\eff(U)$. Finally, we note that, for $Z\in z^B_\equi(X,r)^\eff_{d}(U)$, 
$R_{F^i}(Z)$ is in $z^B_\equi(X,r)^\eff_{d(d_i^ne-1)}(U)$.

We now define a sequence of maps of presheaves on $\Sm/B$,
\[
H_{X,U,j}^\pm:z^B_\equi(X,r)^\eff(U)\to z^B_\equi(X,r)^\eff(U\times\A^1);\ U\in \Sm/B.
\]
Define the integers $\delta_j$ inductively by $\delta_0=e$, and
\[
\delta_j=(m(\delta_{j-1})+1)d_i^ne\delta_{j-1}
\]
for $j\ge1$. These numbers have the property that, if $Z$ is in $z^B_\equi(X,r)^\eff_{\le \delta_j}(U)$, then
$(m(\delta_j)+1)\cdot R_{F^j}(Z)$  is in $z^B_\equi(X,r)^\eff_{\le \delta_{j+1}}(U)$. For $Z\in z^B_\equi(X,r)^\eff(U)$, define
\[
H_{X,U,0}^\pm(Z):=F^{0*}_X\circ H^\pm_U(\delta_0)\circ F^0_{X*}(Z).
\]
For $j=1,\ldots, n$, let
\[
H_{X,U,j}^\pm(Z):=F^{j*}_X\circ H^\pm_U(\delta_j)\circ F^j_{X*}(R_{F^{j-1}}\circ\ldots\circ R_{F^0}(Z)).
\]
Finally, we let
\[
H_{X,U,n+1}^+(Z)=p_U^*(R_{F^{n}}\circ\ldots\circ R_{F^0}(Z))
\]
and $H_{X,U,n+1}^-(Z)=0$. Set
\[
H_{X,U}(Z):=\sum_{j=0}^{n+1}(-1)^j(H^+_{X,U,j}(Z)-H^-_{X,U,j}(Z)).
\]
Thus, $U\mapsto H_{X,U}$ defines a map of presheaves on $\Sm/B$
\[
H_{X}:z^B_\equi(X,r)\to z^B_\equi(X,r)((-)\times\A^1).
\]

We have
\[
i_0^*(H_{X,U,j}^+(Z)-H_{X,U,j}^-(Z))=\begin{cases}
F^{*0}_X\circ F^0_{X*}(Z)&\text{ for }j=0\\
F^{j*}_X\circ F^j_{X*}(R_{F^{j-1}}\circ\ldots\circ R_{F^0}(Z))&\text{ for }j=1,\ldots, n\\
R_{F^{n}}\circ\ldots\circ R_{F^0}(Z)&\text{ for }j=n+1.
\end{cases}
\]
Since $F^{j*}_X\circ F^j_{X*}=\id+R_{F^j}$, and $i_0^*\circ (H^+_U-H^-_U)=\id$, this implies that
 \[
 i_0^*\circ H_{X,U}(Z)=Z
 \]
 for all $Z\in z^B_\equi(X,r)(U)$. This verifies the property (1).
 
For (2), let $i:T\to U\times \A^1\setminus\{0\}$ be a morphism of $B$-schemes, take $W\in z^B_\equi(X,s)^\eff_{\le e}(T)$ and  $Z\in 
z^k_\equi(X,r)^\eff_{\le e}(U)$, with $0\le r,s\le n$, $r+s\ge n$. Take $j=0,\ldots, n$. By  corollary~\ref{cor:FVMov} and our remarks above,  the cycles $i^*(H^\pm_{X,U,j}(Z))$ and $W$ intersect properly on $X_T$ and $i^*(H^\pm_{X,U,j}(Z))\cdot_{X_T} W$ is in 
$z^B_\equi(X,r+s-n)^\eff(T)$. By remark~\ref{rem:GoodIntersection}, the same holds for $j=n+1$. This proves (2).

For (3), under the given assumptions for $Z$ and $W$, it follows from remark~\ref{rem:GoodIntersection} that
$W$ and $Z_j:=i^*\circ p_U^*(F^{j*}_XF^j_{X*})\circ (F^{0*}_XF^0_{X*})(Z)$ intersect properly on $X_T$ for all $j=0,\ldots, n$. Since $H^\pm_{X,U,j}(Z)$ is effective and 
$i_0^*\circ H^\pm_{X,U,j}(Z)$ plus some effective cycle  is a multiple of $F^{j*}_XF^j_{X*})\circ (F^{0*}_XF^0_{X*})(Z)$, (3) follows from (2).
\end{proof}

Fix a field extension $k'$ of $k$, and let $Y\subset \P^N_{k'}$ be a closed subset. Let $\sC_Y(s,e)$ be the Chow scheme parametrizing effective cycles of dimension $s$ and degree $\le e$ on $Y$. For $L\supset k'$ an extension field and $W$ an effective dimension $s$ cycle on $\P^N_L$ of degree $\le e$, and with support in $Y_L$, we let $\chow(W)\in \sC_(s,e)(\bar{L})$ denote the Chow point of $W$. 

For $Y\subset \P^N_B$, we have the relative Chow scheme $\sC_{Y/B}(s,e)\to B$, with (reduced) fiber over $b\in B$ the Chow scheme of $Y_b\subset \P^N_b$.

\begin{definition} For a fixed regular noetherian base-scheme $B$, take $Y\in\Prj/B$ and fix an embedding $Y\hookrightarrow\P^N_B$ over $B$. Let $\sC\subset \sC_{Y/B}(s,e)$ be any collection of locally closed subsets of $\sC_{Y/B}(s,e)$. 

Let 
\[
z^B_\equi(Y,r)_\sC(X)\subset z_\equi^B(Y,r)(X)
\]
 be the subgroup generated by integral closed subschemes  $Z\subset X\times_BY$ such that, 
\begin{enumerate}
\item $Z$ is in   $z^B_\equi(Y,r)(X)$.
\item Take $W\in z_\equi^B(Y,s)^\eff_{\le e}(X)$ and suppose that, for each $x\in X$, the cycle $i_x^*(W)$ on $Y_{k(x)}$  has Chow point $\chow(W)$ in $\sC(\bar{k(x)})$. Then the intersection $W\cdot_{X\times_BY} Z$ is defined and is in $z^B_\equi(Y,r+s-n)(X)$.
\end{enumerate}
This defines the  subpresheaf $z^B_\equi(Y,r)_\sC$ of $z^B_\equi(Y,r)$. The subpresheaves
\[
z^B_\equi(Y,r)^\eff_{\sC,\le e}\subset z^B_\equi(Y,r)^\eff_{\le e};\
z^B_\equi(Y,r)_{\sC,\le e}\subset z^B_\equi(Y,r)_{\le e}, 
\]
etc., are defined similarly.

We let $C^B(Y,r)_\sC(X)\subset C^B(Y,r)(X)$ be the subcomplex associated to the cubical object
\[
n\mapsto  z^B_\equi(Y,r)_\sC(X\times\square^n) 
\]
This gives us the presheaf of subcomplexes  $C^B(Y,r)_\sC\subset C^B(Y,r)$.
The subcomplex $C^B(Y,r)_{\sC,\le e}\subset C^B(Y,r)_{\le e}$ is defined similarly.
\end{definition}

\begin{thm}\label{thm:MLExt} Let $B$ be a semi-local regular  scheme, essentially of finite type over some  field $k$. Take $X\in\Prj/B$ of relative dimension $n$ over $B$,  and integers $r,s,e$ with $e\ge1$, $0\le r,s\le n$, $r+s\ge n$. Fix an embedding $X\hookrightarrow \P^N_B$.  Let $\sC\subset\sC_X(s,e)$ be a collection of locally closed subsets. Then the inclusion $C^B(X,r)_\sC\subset C^B(X,r)$ is a quasi-isomorphism.
\end{thm}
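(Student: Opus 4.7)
The plan is to produce a cubical chain homotopy on $C^B(X,r)(U)$, built from the moving homotopy $H_X$ of theorem~\ref{thm:FVMovExt}, that realizes the inclusion $C^B(X,r)_\sC(U)\hookrightarrow C^B(X,r)(U)$ as a quasi-isomorphism after passing to a colimit over degree filtrations. For $e_0\ge e$, set
\begin{equation*}
F_{\le e_0}(U):=C^B(X,r)_{\le e_0}(U),\qquad F'_{\le e_0}(U):=C^B(X,r)_{\sC,\le e_0}(U).
\end{equation*}
These are filtered systems of subcomplexes whose colimits recover $C^B(X,r)(U)$ and $C^B(X,r)_\sC(U)$, since every cycle has bounded total degree. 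For each fixed $e_0$, apply theorem~\ref{thm:FVMovExt} with parameter $e_0$. A small extension is needed, since $\sC$ constrains $s$-cycles for a single dimension but we wish to move simultaneously against all $s\in\{n-r,\dots,n\}$: the moving parameters $(F^0,\dots,F^n)\in\sR_X(d_\ast)(B)$ from proposition~\ref{prop:FLMov2Ext} can be chosen to avoid $\bigcup_{s}\sB_X(d_\ast,e_0,r,s)$, a finite union of proper closed subsets of $\sR_X(d_\ast)$, which remains proper closed and hence admits a $B$-point in its complement by the semi-local hypothesis. This yields a single map $H_X$ carrying $F_{\le e_0}$ into some $F_{\le e_0^\ast}$ and satisfying all of the conclusions of theorem~\ref{thm:FVMovExt} uniformly in $s$.

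For $z\in z^B_\equi(X,r)_{\le e_0}(U\times\square^n)$, I identify $U\times\square^n\times\A^1$ with $U\times\square^{n+1}$ by placing the $\A^1=\square^1$ factor last, and set $h(z):=H_X(z)$. Since $H_X$ is functorial in $U$, it commutes with the face maps on the $\square^n$-factor; together with $i_0^\ast H_X=\id$ from property~(1), computing the cubical differential on $U\times\square^{n+1}$ yields
\begin{equation*}
d\,h(z)-h(dz)=(-1)^{n+1}\bigl(\mu(z)-z\bigr),\qquad \mu(z):=i_1^\ast H_X(z).
\end{equation*}
Theorem~\ref{thm:FVMovExt}(2), applied to the section $U\times\square^n\times\{1\}$ inside $U\times\square^n\times(\A^1\setminus\{0\})$ and to any effective $W$ of degree $\le e$ whose Chow point lies fibrewise in $\sC$, shows $\mu(z)\in F'_{\le e_0^\ast}$; and theorem~\ref{thm:FVMovExt}(3), applied to $z\in F'_{\le e_0}$ (which already meets every such $W$ properly), shows $h(z)\in F'_{\le e_0^\ast}$, so $h$ restricts to a chain homotopy on the $\sC$-subcomplex. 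Since $H_X$ is also compatible with projections, it preserves the degenerate subcomplex and descends to the normalized quotient defining $C^B(X,r)$.

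Passing to the colimit in $e_0$ completes the proof. For surjectivity of $H_\ast(C^B(X,r)_\sC(U))\to H_\ast(C^B(X,r)(U))$: any cycle $z\in F_{\le e_0}$ is homologous to $\mu(z)\in F'_{\le e_0^\ast}$ via $h$. For injectivity: if $z'\in F'_{\le e_0}$ and $z'=dw$ with $w\in F_{\le e_1}$ for some $e_1\ge e_0$, apply the chain-homotopy identity at cubical level $n+1$ to $w$, then apply $d$; the resulting manipulation yields $z'=d\bigl(\mu(w)+(-1)^{n}h(z')\bigr)$, with $\mu(w)\in F'_{\le e_1^\ast}$ by the moving and $h(z')\in F'_{\le e_1^\ast}$ by the stability of $h$ on $F'$, so $[z']=0$ in the colimit. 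The main obstacle I anticipate is the bookkeeping: verifying that the moving parameters from proposition~\ref{prop:FLMov2Ext} can indeed be chosen uniformly in the auxiliary dimension $s$ (this is where the semi-local hypothesis is essential), and keeping the cubical signs straight so that the chain-homotopy identity takes exactly the form above.
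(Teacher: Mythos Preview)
Your proposal is correct and follows essentially the same approach as the paper: filter by degree, use the homotopy $H_X$ from theorem~\ref{thm:FVMovExt} to build a cubical chain homotopy, and pass to the colimit. Two small remarks: first, your concern about varying $s$ is a red herring, since $s$ is fixed in the statement ($\sC\subset\sC_X(s,e)$ lives in a single Chow scheme), so no ``extension'' of proposition~\ref{prop:FLMov2Ext} is needed; second, the paper packages the endgame more cleanly by working directly on the quotient complexes, showing that the canonical map $C^B(X,r)_{\le e}/C^B(X,r)_{\sC,\le e}\to C^B(X,r)/C^B(X,r)_\sC$ is null-homotopic via $\bar h_X$ (so the colimit quotient is acyclic), which replaces your separate surjectivity/injectivity arguments with a single stroke.
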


\begin{proof} Since
\[
C^B(X,r)_\sC=\cup_{e\ge1}C^B(X,r)_{\sC,\le e};\ C^B(X,r)=\cup_{e\ge1}C^B(X,r)_{\le e},
\]
it suffices to show that the map
\[
\iota:\frac{C^B(X,r)_{\le e}}{C^B(X,r)_{\sC,\le e}}\to
\frac{C^B(X,r)}{C^B(X,r)_{\sC}}
\]
induced by the inclusions $C^B(X,r)_{\sC,\le e}\subset C^B(X,r)_{\sC}$ and
$C^B(X,r)_{\le e}\subset C^B(X,r)$
gives the zero-map on homology.

Let
\[
H_X: z^B_\equi(X,r)\to z^B_\equi(X,r)((-)\times\A^1)
\]
be the map given by theorem~\ref{thm:FVMovExt} for the given values of $r,s, e$. By theorem~\ref{thm:FVMovExt}(3),  $H_X$ restricts to a map
\[
H_X:z^B_\equi(X,r)_\sC\to z^B_\equi(X,r)((-)\times\A^1)_\sC
\]
By   theorem~\ref{thm:FVMovExt}(2), $i_1^*\circ H_X$ defines a map
\[
G_X:z^B_\equi(X,r)\to z^B_\equi(X,r)_\sC
\]
and by  theorem~\ref{thm:FVMovExt}(1), $i_0^*\circ H_X=\id$. 

Identifying $U\times\A^1\times_B X\times\square^n$ with $U\times_BX\times\square^{n+1}$ via the exchange of factors
\[
U\times\A^1\times_B X\times\square^n\to U\times_B X\times\square^n\times \A^1=
U\times_BX\times\square^{n+1},
\]
$(-1)^nH_X$ gives us maps
\[
h_X^{-n}: C^B(X,r)^{-n}_{\le e}\to C^B(X,r)^{-n-1},\ h_{X\sC}^{-n}: C^B(X,r)^{-n}_{\le e,\sC}\to C^B(X,r)^{-n-1}_\sC
\]
and thus induces a degree -1 map
\[
\bar{h}_X:\frac{C^B(X,r)_{\le e}}{C^B(X,r)_{\le e,\sC}}\to
\frac{C^B(X,r)}{C^B(X,r)_{\sC}}
\]
which gives a homotopy between the map $\iota$ and the zero-map, completing the proof.
\end{proof}

Theorem~\ref{thm:MLExt} allows us to prove theorem~\ref{thm:Duality}:

\begin{cor}\label{cor:duality} Let $B$ be a semi-local regular  scheme, essentially of finite type over some  field $k$. Take $X,Y\in\Prj/B$, $U\in\Sm/B$ and let $p=\dim_BX$. Then for $0\le r\le\dim_BY$, the map
\[
\int_X:C^B(Y,r)(U\times_BX)\to C^B(X\times_BY,r+p)(U)
\]
is a quasi-isomorphism.
\end{cor}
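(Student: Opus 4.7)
The plan is to realize $\int_X$ as the inclusion of a Chow-constrained subcomplex inside $C^B(X\times_BY,r+p)(U)$ and to invoke the moving lemma (theorem~\ref{thm:MLExt}) to deduce that this inclusion is a quasi-isomorphism. First I would observe that $\int_X$ is an injective map of complexes: if $W\subset U\times_B X\times_B Y\times\square^\bullet$ is equi-dimensional of relative dimension $r$ over $U\times_BX\times\square^\bullet$, then it is automatically equi-dimensional of relative dimension $r+p$ over $U\times\square^\bullet$. Thus $\int_X$ identifies $C^B(Y,r)(U\times_BX)$ with the subcomplex of $C^B(X\times_BY,r+p)(U)$ singled out by this stronger equi-dimensionality condition.

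Next I would set up the Chow constraint. Fix an embedding $X\times_BY\hookrightarrow\P^N_B$ over $B$ and write $d=\dim_BY$. Define
\[
\sC\subset\bigsqcup_{e\ge1}\sC_{(X\times_BY)/B}(d,e)
\]
as the union over $e'\ge1$ of the images of the maps of relative Chow schemes
\[
\sC_{X/B}(0,e')\to\sC_{(X\times_BY)/B}(d,e'\cdot\deg Y),\qquad \gamma\mapsto\gamma\times_BY.
\]
With this choice, the $\sC$-constrained effective test cycles $W\in z^B_\equi(X\times_BY,d)^\eff(U)$ are precisely those of the form $W=\Gamma\times_BY$ for $\Gamma\in z^B_\equi(X,0)^\eff(U)$ an effective finite correspondence from $U$ to $X$.

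I would then show that $\int_X$ identifies $C^B(Y,r)(U\times_BX)$ with $C^B(X\times_BY,r+p)_\sC(U)$. The inclusion $\int_XC^B(Y,r)(U\times_BX)\subseteq C^B(X\times_BY,r+p)_\sC(U)$ is immediate from the definitions: given $Z$ equi-dimensional of dimension $r$ over $U\times_BX$ and a test cycle $W=\Gamma\times_BY$, the intersection $Z\cdot_{U\times_BX\times_BY}W$ restricts $Z$ to the support of $\Gamma\subset U\times_BX$ and inherits equi-dimensionality of dimension $r$ over $U$ via the finite surjection $\Gamma\to U$. For the reverse inclusion, given $Z$ in the $\sC$-constrained subcomplex, I would argue that the fiber $Z_{(u,x)}$ has dimension $r$ or is empty for every $(u,x)\in U\times_BX$: by upper semicontinuity of fiber dimension, the locus where the fiber dimension exceeds $r$ is closed, and if it were non-empty one could produce an effective finite correspondence $\Gamma\colon U\to X$ through the generic point of one of its components whose intersection with $Z$ would then violate the $\sC$-constraint. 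The construction of such a $\Gamma$ through any prescribed point uses the semi-locality of $B$ and the projectivity of $X$ over $B$, via a Bertini-type generic linear projection applied to an embedding $U\times_BX\hookrightarrow\P^M_B$.

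Finally, I would apply theorem~\ref{thm:MLExt} to the smooth projective $B$-scheme $X\times_BY\in\Prj/B$ with cycle dimension $r+p$ and test-cycle dimension $d$; the required inequality $(r+p)+d\ge\dim_B(X\times_BY)=p+d$ holds since $r\ge0$. This yields that
\[
C^B(X\times_BY,r+p)_\sC(U)\hookrightarrow C^B(X\times_BY,r+p)(U)
\]
is a quasi-isomorphism, which combined with the identification from the previous step completes the proof. The hard part will be the reverse inclusion in the identification of subcomplexes, namely ensuring that effective finite correspondences $U\to X$ are rich enough to witness equi-dimensionality of $Z$ over every point of $U\times_BX$. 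This is precisely where the semi-locality hypothesis on $B$ is essential, and it explains why the general (non-semi-local) case of duality requires the Zariski sheafification forced upon us in the introduction.
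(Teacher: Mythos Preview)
Your overall strategy—realize $\int_X$ as the inclusion of a $\sC$-constrained subcomplex and then invoke theorem~\ref{thm:MLExt}—is exactly the paper's. The difference is in the choice of $\sC$ and how much work the identification step then requires. The paper takes $\sC=\{x\times Y:x\in X\}$, a closed subset of the single Chow scheme $\sC_{X\times_BY/B}(s,e)$ with $s=\dim_BY$ and $e=\deg Y$ (via the Segre embedding). With this choice the identification $z^B_\equi(Y,r)(U\times_BX)=z^B_\equi(X\times_BY,r+p)_\sC(U)$ is essentially tautological: $Z\subset U\times_BX\times_BY$ is equi-dimensional of relative dimension $r$ over $U\times_BX$ exactly when each fiber $Z_{(u,x)}=Z_u\cap(x\times Y)$ has pure dimension $r$ or is empty, and that is precisely the fiberwise constraint imposed by $\sC$. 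No Bertini argument is needed, and the semi-locality of $B$ plays no role in this identification—it enters only inside the proof of theorem~\ref{thm:MLExt}, via proposition~\ref{prop:FLMov2Ext}.

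Your larger $\sC$ (all effective $0$-cycles on $X$, times $Y$) buys you nothing and creates two problems. First, theorem~\ref{thm:MLExt} is stated for $\sC\subset\sC_{X\times_BY/B}(s,e)$ with a \emph{fixed} degree bound $e$, whereas your $\sC$ is a disjoint union over all degrees; as written, the moving lemma does not apply. Second, your construction of the witnessing correspondence $\Gamma$ appeals to an embedding $U\times_BX\hookrightarrow\P^M_B$, but $U\in\Sm/B$ is not assumed projective over $B$, so no such closed embedding need exist. Both issues are repairable—bound the degree by $\deg X$ and produce $\Gamma=U\times_B(X\cap L)$ from a general codimension-$p$ linear section $L\subset\P^N_B$ of $X$—but the paper's smaller $\sC$ avoids the whole detour.
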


\begin{proof} Let $s=\dim_BY$. Fix embeddings of $Y, X$ into some $\P^N_B$, with $Y$ of degree $e$. The Segre embedding $\P^N\times\P^N\to \P^M$ gives us an embedding of $X\times_BY$ in $\P^M_B$ such that $x\times Y$ has degree $e$ for each $x\in X$.  Take $\sC\subset \sC_{X\times_BY/B}(s, e)$ to be the family of cycles $x\times Y$, $x\in X$.  Note that the map $\int_X$ identifies $z_\equi(Y,r)(U\times_BX)$ with $z_\equi(X\times_BY, r+p)_\sC(U)$, and thus gives an isomorphism
\[
\int_X:C^B(Y,r)(U\times_BX)\to C^B(X\times_BY,r+p)_\sC(U).
\]
Since $r+p+s\ge p+s= \dim_BX\times_BY$, we may apply theorem~\ref{thm:MLExt} to conclude that the inclusion
\[
C^B(X\times_BY,r+p)_\sC(U)\subset
C^B(X\times_BY,r+p)(U)
\]
is a quasi-isomorphism, completing the proof.
\end{proof}

To complete this section, we prove the projective bundle formula.

\begin{proof}[Proof of theorem~\ref{thm:PBF}] We proceed by induction on $n$. Let $\pi:Y\times\P^n\to Y$ be the projection. By theorem~\ref{thm:MLExt}, the inclusion
\[
C^B(Y\times\P^n,r)_{\{Y\times\P^{n-1}\}}\subset C^B(Y\times\P^n,r)
\]
is a quasi-isomorphism for $r\ge1$ Here $\P^{n-1}\subset \P^n$ is the hyperplane $X_n=0$. We have the intersection map
\[
i_{\P^{n-1}}^*:  z_\equi(Y\times\P^n,r)_{\{Y\times\P^{n-1}\}}\to
z_\equi(Y\times\P^{n-1},r-1) 
\]
and the cone-map
\[
C_{p_0}(-):z_\equi(Y\times\P^{n-1},r-1) \to z_\equi(Y\times\P^n,r)_{\{Y\times\P^{n-1}\}}
\]
where $p_0:=(0:\ldots, 0:1)$. Let $i_0:\Spec k\to \P^n$ be the inclusion of the point $p_0$ and let $\pi:\P^n\to\Spec k$ be the projection. Let
\[
\mu:\P^n\times(\A^1\setminus\{0\})\to \P^n
\]
be the multiplication map
\[
\mu((x_0:,\ldots:x_n), t):=(x_0:\ldots :x_{n-1}:tx_n).
\]
We have as well the natural transformation
\[
H_U:z_\equi(Y\times\P^n,r)_{\{Y\times\P^{n-1}\}}(U)\to
z_\equi(Y\times\P^n,r)_{\{Y\times\P^{n-1}\}}(U\times\A^1)
\]
which sends a cycle $Z\in z_\equi(Y\times\P^n,r)_{\{Y\times\P^{n-1}\}}(U)$ to the closure of $\mu^*(Z)$. One checks that this is well-defined and satisfies
\[
i_1^*\circ H_U=\id_{z_\equi(Y\times\P^n,r)_{\{Y\times\P^{n-1}\}}(U)};\ 
i_0^*\circ H_U=C_{p_0}\circ i_{\P^{n-1}}^*+\alpha_0\circ\pi_*,
\]
where   $\alpha_0=i_{0*}$.
As in the proof of theorem~\ref{thm:MLExt}, the maps 
\[
h_U:=(-1)^nH_U:C^B_n(Y\times\P^n,r)_{\{Y\times\P^{n-1}\}}(U)\to
C^B_{n+1}(Y\times\P^n,r)_{\{Y\times\P^{n-1}\}}(U)
\]
define a homotopy between the identity and $C_{p_0}\circ i_{\P^{n-1}}^*+\alpha_0\circ\pi_*$. 

On the other hand, since $p_0\cap\P^{n-1}=\0$, $ i_{\P^{n-1}}^*$ is the zero map on the image of $\alpha_0$. Also, since $\pi(C_{p_0}(Z))$ has dimension $<\dim_BZ$ for any equi-dimensional closed subset $Z$ of $Y\times_B\P^{n-1}$, $\pi_*$ is zero on the image of $C_{p_0}$.  Finally, 
\[
\pi_*\circ\alpha_0=\id;\ i_{\P^{n-1}}^*\circ C_{p_0}=\id
\]
hence, for $r\ge n\ge 1$, 
\[
\alpha_0+C_{p_0}:C^B(Y,r-n)\oplus C^B(Y\times\P^{n-1},r-1)\to
C^B(Y\times\P^n,r)_{\{Y\times\P^{n-1}\}}
\]
is a homotopy equivalence. By induction
\[
\sum_{j=0}^{n-1}\alpha_j:C^B(Y,r-j-1)\to  C^B(Y\times\P^{n-1},r-1)
\]
is a quasi-isomorphism; since $C_{p_0}\circ \alpha_j=\alpha_{j+1}$ (where we use $p_0$ and the  subspaces $C_{p_0}(\P^j\subset \P^{n-1})$ for the flag of linear subspaces of $\P_k^n$ needed to define the maps $\alpha_j$), the induction goes through.
\end{proof}

\section{Smooth motives and motives over a base}\label{sec:Motives}
Cisinski-D\'eglise have defined a triangulated tensor category of effective motives over a base-scheme $S$, $\DM^\eff(S)$, and a triangulated tensor category of motives over $S$, $\DM(S)$, with an exact tensor functor $\DM^\eff(S)\to \DM(S)$ that inverts $\otimes\L$. In this section, we show how to define exact functor
\[
\rho^\eff_S:\PMot^\eff(S)\to \DM^\eff(S),\ \rho_S:\PMot(S)\to \DM(S)
\]
which give equivalences of $\PMot^\eff(S)$, $\PMot(S)$ with the full triangulated subcategories of $\DM^\eff(S)$ and $\DM(S)$ generated by the motives of smooth projective $S$ schemes, resp. the Tate twists of smooth projective $S$-schemes.  Working with $\Q$ coefficients, we have the same picture, with $\rho^\eff_S$, $\rho_S$ replaced by exact tensor functors
\[
\rho^\eff_{S\Q}:\PMot^\eff(S)_\Q\to \DM^\eff(S)_\Q^\natural,\ \rho_{S\Q}:\PMot(S)_\Q\to \DM(S)_\Q^\natural,
\]
giving analogous equivalences.

\subsection{Cisinski-D\'eglise categories of motives} We summarize the main points of the construction of the category $\DM^\eff(S)$ of effective motives over $S$, and the category $\DM(S)$ of motives over $S$, from \cite{CisinskiDeglise}. Although $S$ is allowed to be a quite general scheme in \cite{CisinskiDeglise}, we restrict ourselves to the case of  a  base-scheme $S$ that is  separated, smooth and essentially of finite type over a field. 

Define the abelian category of {\em presheaves with transfer} on $\Sm/S$, $\PST(S)$, as the category of presheaves of abelian groups on $\Cor_S$. We have the representable presheaves $\Z_S^{tr}(Z)$ for $Z\in \Sm/S$ by 
$\Z_S^{tr}(Z)(X):=\Cor_S(X,Z)$ and pull-back maps given by the composition of correspondences. 

One gives the category of complexes $C(\PST(S))$  the  {\em Nisnevich local} model structure (which we won't need to specify). The homotopy category is equivalent to the (unbounded) derived category $D(\Sh^{tr}_\Nis(S))$, where $\Sh^{tr}_\Nis(S)$ is the full subcategory of $\PST(S)$  consisting of the presheaves with transfer which restrict to Nisnevich sheaves on $\Sm/S$.  

The  operation 
\[
\Z_S^{tr}(X) \otimes_S^{tr}\Z_S^{tr}(X') :=\Z_S^{tr}(X\times_SX') 
\]
extends to  a tensor structure $\otimes^{tr}_S$ making $\PST(S)$ a tensor category: one forms the {\em canonical left resolution} $\sL(\sF)$ of a presheaf $\sF$ by taking the canonical surjection \[
\sL_0(\sF):=\bigoplus_{ X\in \Sm/S, s\in \sF(X)}\Z_S^{tr}(X)\xrightarrow{\phi_0} \sF
\]
setting $\sF_1:=\ker\phi_0$ and iterating. One then defines
\[
\sF\otimes_S^{tr}\sG:=H_0(\sL(\sF)\otimes_S^{tr} \sL(\sG))
\]
noting that $\sL(\sF)\otimes_S^{tr} \sL(\sG)$ is defined since both complexes are degreewise direct sums of representable presheaves.

The restriction of $\otimes^{tr}_S$ to the subcategory of cofibrant objects in $C(\Sh^{tr}_\Nis(S))$ induces a tensor operation  $\otimes^L_S$ on $D(\Sh^{tr}_\Nis(S))$ which makes $D(\Sh^{tr}_\Nis(S))$ a triangulated tensor category.

\begin{definition}[\hbox{\cite[definition 10.1]{CisinskiDeglise}}]   $\DM^\eff(S)$ is  the localization of the triangulated category $D(\Sh^{tr}_\Nis(S))$ with respect to the localizing category generated by the complexes $\Z_S^{tr}(X\times\A^1)\to \Z_S^{tr}(X)$.  Denote  by $m_S(X)$ the image of $\Z_S^{tr}(X)$ in $\DM^\eff(S)$.
\end{definition}

\begin{rem} 1.  $\DM^\eff(S)$ is a triangulated tensor category  with tensor product $\otimes_S$  induced from the tensor product   $\otimes^L_S$  via the localization map
\[
Q_S:D(\Sh^{tr}_\Nis(S))\to \DM^\eff(S),
\]
   and satisfying $m_S(X)\otimes_S  m_S(Y)=m_S(X\times_SY)$.\\
\\
2. There is a   model category $C(\PST_{\A^1}(S))$ having  $C(\PST(S))$ as underlying category, defined as the left Bousfield localization of $C(\PST(S))$ with respect to the  following
complexes
\begin{enumerate}
\item For each {\em elementary Nisnevich square}  with $X\in\Sm/S$:
\[
\xymatrix{
W\ar@{^{(}->}[r]\ar@{=}[d]&X'\ar[d]^f\\
W\ar@{^{(}->}[r]&X}
\]
one has the complex 
\[
\Z^{tr}_S(X'\setminus W)\to \Z^{tr}_S(X\setminus W)\oplus \Z^{tr}_S(X')\to \Z^{tr}_S(X)
\]
Recall that the square above is an elementary Nisnevich square if $f$ is \'etale, the horizontal arrows are closed immersions of reduced schemes and the square is cartesian.
\item For $X\in \Sm/S$, one has the complex $\Z_S^{tr}(X\times\A^1)\to \Z_S^{tr}(X)$.
\end{enumerate}

 The homotopy category of $C(\PST_{\A^1}(S))$ is equivalent to $\DM^\eff(S)$.
\end{rem}

\begin{definition}\label{def:TateObj} Let $T^{tr}$ be the presheaf with transfers
\[
T^{tr}:=\coker(\Z_S^{tr}(S)\xrightarrow{i_{\infty*}}\Z_S^{tr}(\P_S^1))
\]
and let $\Z_S(1)$ be the image in $\DM^\eff(S)$ of $T^{tr}[-2]$. Let 
\[
\otimes T^{tr}: C(\PST(S))\to C(\PST(S))
\]
be the functor $C\mapsto C\otimes_S^{tr}T^{tr}$.
\end{definition}

Let $\Spt_{T^{tr}}(S)$ be the model category of $\otimes T^{tr}$ spectra in $C(\PST_{\A^1}(S))$, i.e., objects are sequence $E:=(E_0, E_1, \ldots)$, $E_n\in C(\PST(S))$, with bonding maps
\[
\epsilon_n:E_n\otimes^{tr}_S T^{tr}\to E_{n+1}.
\]
Morphisms are given by sequences of maps in $C(\PST(S))$ which strictly commute with the respective bonding maps.

 The model structure on the category of $T^{tr}$-spectra is defined by following the construction of Hovey \cite{HoveySymSpec}. The weak equivalences are the {\em stable weak equivalences}: for each $E\in \Spt_{T^{tr}}(S)$ there is a canonical fibrant model $E\to E^f$, where $E^f:=(E^f_0, E^f_1,\ldots)$ with each $E^f_n$ fibrant in 
$C(\PST_{\A^1}(S))$ and the map
\[
E^f_n\to \sHom(T^{tr}, E^f_{n+1})
\]
adjoint to the bonding map $E^f_n\otimes^{tr}_S T^{tr}\to E^f_{n+1}$ is a weak equivalence in the model category  $C(\PST_{\A^1}(S))$.

\begin{definition} The category of triangulated motives over $S$, $\DM(S)$, is the homotopy category of $\Spt_{T^{tr}}(S)$.
\end{definition}

We will use the following results from \cite{CisinskiDeglise}.
\begin{thm}[\hbox{\cite[section~10.3, corollary 6.12]{CisinskiDeglise}}]\label{thm:MotivesMain} Suppose that $S$ is in $\Sm/k$ for a field $k$, take $X$ in $\Sm/S$, and let $m_k(X)$, $m_S(X)$ denote the motives of $X$ in $\DM(k)$, $\DM(S)$, respectively. Then there is a natural isomorphism
\[
\Hom_{\DM(S)}(m_S(X), \Z(n)[m])\cong \Hom_{\DM(k)}(m_k(X), \Z(n)[m])
\]
In addition, the natural map
\[
\colim_N\Hom_{\DM^\eff(S)}(m_S(X)\otimes\Z(N), \Z(n+N)[m])\to 
\Hom_{\DM(S)}(m_S(X), \Z(n)[m])
\]
is an isomorphism. Finally, the cancellation theorem holds in this setting: the natural map
\[
\Hom_{\DM^\eff(S)}(m_S(X), \Z(n)[m])]\to\Hom_{\DM^\eff(S)}(m_S(X)\otimes\Z(1), \Z(n+1)[m])
\]
is an isomorphism.
\end{thm}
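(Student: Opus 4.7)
The plan is to reduce all three assertions to two genuine inputs: (a) a cancellation theorem for $\DM^{\eff}(S)$ saying that the endofunctor $-\otimes\Z(1)$ is fully faithful on the triangulated subcategory generated by the $m_S(X)$, $X\in\Sm/S$, and (b) the identification of Hom-groups in $\DM^{\eff}(S)$ into a Tate twist with motivic cohomology of the source computed via the Suslin-Voevodsky cycle complexes, which is an absolute invariant of $X$ as a smooth $k$-scheme.

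First I would prove the second assertion. By construction, $\DM(S)$ is the homotopy category of the model category of $T^{tr}$-spectra, and for suspension spectra of effective motives the Hom-group in $\DM(S)$ is computed as the filtered colimit
\[
\colim_N \Hom_{\DM^{\eff}(S)}\bigl(A\otimes T^{tr,\otimes N},\; B\otimes T^{tr,\otimes N}\bigr)
\]
by the standard analysis of fibrant replacement in the spectrum category (cf.\ Hovey). Applied to $A=m_S(X)$, $B=\Z(n)[m]$, this is exactly the colimit in the statement, once the shifts and twists in the definition $\Z(1)=T^{tr}[-2]$ are absorbed. Thus the second assertion is essentially tautological once the spectrum construction is in place; the real force of the statement is that cancellation will make the colimit constant.

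Next I would prove the first assertion. Given the colimit description, both sides are computed by colimits in the corresponding effective categories. Granting cancellation, these colimits are attained at $N=0$, so it suffices to identify $\Hom_{\DM^{\eff}(S)}(m_S(X),\Z(n)[m])$ with the motivic cohomology $H^m(X,\Z(n))$ of $X$ viewed as a smooth $k$-scheme. The comparison goes through the Suslin-Voevodsky model $\Z(n)\simeq C_*(\Z^{tr}(\G_m^{\wedge n}))[-n]$ applied relative to $S$; the key point is that for $U\in\Sm/S$, the group $\Cor_S(U,\G_{m,S}^{\wedge n})$ is canonically isomorphic to $\Cor_k(U,\G_{m,k}^{\wedge n})$, because a finite correspondence into a product of $\G_m$'s over $S$ is the same datum relative to $k$. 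Taking Nisnevich hypercohomology on $X$ of the resulting Suslin complex (which computes the Hom in $\DM^{\eff}(S)$ by \cite{CisinskiDeglise}) gives the required identification with $H^m(X,\Z(n))$, which depends only on $X/k$.

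The main obstacle is the cancellation theorem itself. Over a field, Voevodsky's argument produces an explicit homotopy splitting of the bonding map, built from the multiplication $\G_m\times\G_m\to\G_m$ together with explicit moving correspondences on $\G_m^{\times n}$. To extend this to a smooth base $S$ essentially of finite type over $k$, I would check the statement on Nisnevich stalks of $S$. These stalks are henselisations of smooth local $k$-algebras; in particular they are regular and semi-local, so the cycle-theoretic moving arguments underlying Voevodsky's splitting go through verbatim, using the extension of the Friedlander-Lawson-Voevodsky moving lemmas to a regular semi-local base over $k$ proved in theorem~\ref{thm:MLExt}. Concretely, for any $\A^1$-local complex of presheaves with transfers $\sF$ on $\Sm/S$ one constructs a natural transformation $\sF\otimes T^{tr}\otimes T^{tr,\vee}\to\sF$ inverse up to $\A^1$-homotopy to the obvious unit, which gives full faithfulness of $-\otimes\Z(1)$. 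Equivalently, one invokes \cite{CisinskiDeglise} directly, where cancellation is established for excellent geometrically unibranch bases, a condition satisfied by smooth $k$-schemes. Once cancellation is in hand, the first two assertions drop out as explained.
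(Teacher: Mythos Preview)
Your overall strategy---establish cancellation over $S$, then derive the other two assertions---is sound in outline, but it is both harder than necessary and contains a misstep. The paper's argument (see remark~\ref{rem:MotCohIso}) is organized around a single clean reduction: the smooth structure morphism $p:S\to\Spec k$ gives an adjunction $p_\sharp\dashv p^*$ on $\DM^\eff$ (and on $\DM$), with $p^*\Z(n)\cong\Z(n)$ and $p_\sharp(m_S(X)\otimes\Z(N))\cong m_k(p_*X)\otimes\Z(N)$. This immediately identifies
\[
\Hom_{\DM^\eff(S)}(m_S(X)\otimes\Z(N),\Z(n+N)[m])\cong\Hom_{\DM^\eff(k)}(m_k(p_*X)\otimes\Z(N),\Z(n+N)[m]),
\]
and likewise in $\DM$. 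The first isomorphism of the theorem is then literally this adjunction identity; the remaining two assertions are reduced to the case $S=\Spec k$, where Voevodsky's cancellation (via \cite{VoevodskyChow} and the projective bundle formula) is already available. No cancellation over a general base $S$ is needed at all.

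Your route requires cancellation over $S$ as the main input, and the justification you offer for it does not work. Voevodsky's cancellation argument is an explicit construction of a splitting correspondence on $\G_m$; it does not rest on the Friedlander--Lawson--Voevodsky moving lemmas, and theorem~\ref{thm:MLExt} in this paper applies only to cycles on \emph{smooth projective} $B$-schemes, so it is not the right tool for manipulating correspondences into $\G_m^{\wedge n}$. The ``check on Nisnevich stalks'' step is also not justified: you would need to know that the relevant $\Hom$-presheaf on $S$ satisfies Nisnevich descent, which is not automatic and is essentially what you are trying to prove. Your fallback of citing cancellation directly from \cite{CisinskiDeglise} is of course acceptable---the whole theorem is attributed there---but then the proposal reduces to a citation rather than an argument. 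The adjunction route is both shorter and avoids these difficulties entirely.
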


\begin{rems} \label{rem:MotCohIso} 1. By   \cite{VoevodskyChow}
 $\Hom_{\DM(k)}(m_k(X), \Z(n)[m])$ is motivic cohomology in the sense of Voevodsky \cite[chapter~V]{FSV},  that is  
\[
\Hom_{\DM(k)}(m_k(X), \Z(n)[m])=H^m(X,\Z(n))\cong \CH^n(X,2n-m).
\]
In fact, this follows immediately from \cite{VoevodskyChow} in the case of a perfect field; the general case follows by using the usual trick of viewing a field $k$ as a limit of finitely generated extensions of the prime field $k_0$, and the fact that,  for a projective systems of schemes $S_\alpha$ with affine transition maps,  the functor 
\[
S_\alpha\mapsto \Hom_{\DM(S)}(m_{S_\alpha}(X_{S_\alpha}), \Z(n)[m])
\]
 transforms the projective limit $\lim S_\alpha$ to the inductive limit (see \cite[prop 4.2.19]{Deglise}).\\
\\
2. The isomorphisms in theorem~\ref{thm:MotivesMain} are proven as follows: Let $p:S\to\Spec k$ be the (smooth) structure morphism. The limit argument mentioned above reduces us to the case of $k$ a perfect field. The restriction of base functor induces an exact functor
\[
p_{\sharp}:\DM^\eff(S)\to \DM^\eff(k)
\]
and the pull-back $X\mapsto X\times_kS$ induces an exact functor
\[
p^*:\DM^\eff(k)\to \DM^\eff(S),
\]
right adjoint to $p_{\sharp}$. In addition, one has $p^*(\Z(n))\cong \Z(n)$, and $p_\sharp(m_S(X)\otimes\Z(N))\cong m_k(p_*X)\otimes\Z(N)$ for $X\in\Sm/S$, $N\in \Z$. We have similar functors on $\DM(S)$, $\DM(k)$. Thus, we have
\begin{align*}
\Hom_{\DM^\eff(S)}&(m_S(X)\otimes\Z(N), \Z(n+N)[m])\\
&\cong
\Hom_{\DM^\eff(S)}(m_S(X)\otimes\Z(N), p^*\Z(n+N)[m])\\
&\cong
\Hom_{\DM^\eff(k)}(p_\sharp (m_S(X)\otimes\Z(N)), \Z(n+N)[m])\\
&\cong
\Hom_{\DM^\eff(k)}(m_k(p_*X)\otimes\Z(N), \Z(n+N)[m]),
\end{align*}
and similarly with $\DM^\eff$ replaced by $\DM$. This already proves the first isomorphism. For the next two,  the above isomorphisms reduce us to the case of $S=\Spec k$. Also, $\DM^\eff_-(k)$ is a full subcategory of $\DM^\eff$. Finally,
 Voevodsky's identification of motivic cohomology with the higher Chow groups \cite{VoevodskyChow}, together with the projective bundle formula, gives the limited version of the cancellation theorem we need to finish the proof.
 \end{rems}

 \subsection{Tensor structure}
The tensor structure on $C(\PST(S))$ induces a ``tensor  operation" on the spectrum category by the usual device of choosing a cofinal subset $\N\subset \N\times\N$, $i\mapsto (n_i,m_i)$, with $n_{i+1}+m_{i+1}=n_i+m_i+1$ for each $i$: each pair of $T^{tr}$ spectra $E:=(E_0, E_1,\ldots)$ and $F:=(F_0, F_1,\ldots)$ gives rise to a $T^{tr}$ bispectrum
\[
E\boxtimes^{tr}_S F:=\begin{pmatrix}&\vdots&\\\ldots&E_i\otimes^{tr}_S F_j&\ldots\\&\vdots&\end{pmatrix}
\]
with vertical and horizontal bonding maps induced by the bonding maps for $E$ and $F$, respectively. The vertical bonding maps use in addition the symmetry isomorphism in $C(\PST_{\A^1}(S))$. Finally, the choice of the cofinal $\N\subset \N\times\N$ converts a bispectrum to a spectrum.

Of course, this is not even associative, so one does not achieve a tensor operation on $\Spt_{T^{tr}}(S)$, but $\boxtimes^{tr}_S$ (on cofibrant objects) does pass to the localization $\DM(S)$, and gives rise there to a tensor structure, making $\DM(S)$ a tensor triangulated category. We write this tensor operation as $\otimes_S$, as before.

\begin{rem} One can also define a ``Spanier-Whitehead" category   $\DM^\SW(S)$ by inverting  the functor $-\otimes T^{tr}=-\otimes\Z_S(1)[2]$ on $\DM^\eff(S)$; this is clearly equivalent to inverting $-\otimes\Z_S(1)$. Concretely, $\DM^\SW(S)$ has objects $X(n)$, $n\in\Z$, with morphisms
\[
\Hom_{\DM^\SW(S)}(X(n),Y(m)):=\colim_N\Hom_{\DM^\eff(S)}(X\otimes\Z_S(N+n),Y\otimes\Z_S(N+m)).
\]
Sending $X$ to $X(n)$ clearly defines an auto-equivalence of $\DM^\SW(S)$.

$\DM^\SW(S)$ inherits the structure of a triangulated category from $\DM^\eff(S)$, by declaring a triangle $\sT$ in $\DM^\SW(S)$ to be distinguished if $\sT(N)$ is the image of a distinguished triangle in $\DM^\eff(S)$ for some $N>>0$. One shows that the symmetry isomorphism $\Z_S(1)\otimes\Z_S(1)\to \Z_S(1)\otimes\Z_S(1)$ is the identity, which implies that $\DM^\SW(S)$ inherits a tensor structure from $\DM^\eff(S)$.  

Since $-\otimes T^{tr}$ is isomorphic to the shift operator in $\DM(S)$, this functor is invertible on $\DM(S)$, hence $\DM^\eff(S)\to \DM(S)$ factors through a canonical exact functor $\phi_S:\DM^\SW(S)\to \DM(S)$. Giving $\DM(S)$ the tensor structure described above, it is easy to see that $\phi_S$ is a tensor functor.
\end{rem}

\subsection{Motives and smooth motives} 

For $X\in\Sm/S$, we have the presheaf $U\mapsto C^S(X,0)(U)$, giving us the object $C^S(X)$ in $C^-(\Sh^{tr}_\Nis(S))$. As $C^S(X)^0=\Z^{tr}_S(X)$, we have the natural map
\[
\iota_X:\Z^{tr}_S(X)\to C^S(X)
\]
\begin{lem}\label{lem:SuslinEquiv} $\iota_X$ is an isomorphism in $\DM^\eff(S)$.
\end{lem}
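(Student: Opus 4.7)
The plan is to identify $C^S(X)$ with an $\A^1$-local model of $\Z^{tr}_S(X)$ in $D^-(\Sh^{tr}_\Nis(S))$, so that $\iota_X$ is the unit of the $\A^1$-localization and hence an isomorphism in $\DM^\eff(S)$. This splits into two claims: (A) $C^S(X)$ is $\A^1$-local as a complex of Nisnevich sheaves with transfers, and (B) $\iota_X$ is an $\A^1$-weak equivalence.

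For (A) I would show that, for every $U \in \Sm/S$, pullback along the projection $p \colon U \times \A^1 \to U$ gives a chain homotopy equivalence $p^* \colon C^S(X)(U) \to C^S(X)(U \times \A^1)$, with chain homotopy inverse the zero-section pullback $\iota_0^*$. The nontrivial direction is $p^* \iota_0^* \simeq \id$; an explicit homotopy is produced by pulling back along the multiplication map $\mu \colon \A^1 \times \A^1 \to \A^1$, $(s,t) \mapsto st$, with the new $t$-coordinate interpreted as an extra cube coordinate. Concretely, for $\phi \in \Cor_S(U \times \A^1 \times \square^n, X)$, set
\[
h(\phi)(u, s, t, y_1, \ldots, y_n) := \phi(u, st, y_1, \ldots, y_n),
\]
so that $h(\phi) \in \Cor_S(U \times \A^1 \times \square^{n+1}, X)$. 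Since $\mu|_{t=0} = \iota_0 \circ p$ and $\mu|_{t=1} = \id$, a routine sign calculation with the cubical differentials yields $dh + h d = \id - p^* \iota_0^*$, and $h$ manifestly preserves the degenerate subcomplex, so descends to the normalized quotient $C^S(X)$. Combined with Nisnevich descent for $\Cor_S$ (which makes each $\Cor_S(- \times \square^n, X)$ already a Nisnevich sheaf with transfers), this shows that $C^S(X)$ is an $\A^1$-local object of $D^-(\Sh^{tr}_\Nis(S))$.

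For (B), I would identify $\operatorname{cone}(\iota_X)$ with the truncated complex $\tau_{\le -1} C^S(X) = [\cdots \to C^S(X)^{-2} \to C^S(X)^{-1}]$ and show that it lies in the localizing subcategory generated by $\{\Z^{tr}_S(Y \times \A^1) \to \Z^{tr}_S(Y)\}$. Each term $C^S(X)^{-N}$ is the degeneracy-quotient of the presheaf $\underline{\Cor}_S(\square^N, X)$, and the same multiplication-map homotopy as in (A) (applied successively to each $\A^1$-factor of $\square^N$) produces an $\A^1$-weak equivalence $\Z^{tr}_S(X) \to \underline{\Cor}_S(\square^N, X)$. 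Under these identifications in $\DM^\eff(S)$ all face maps $\eta_{n,i,\epsilon}^*$ become the identity, so the cubical differentials $\sum_i (-1)^i(\eta_{n,i,1}^* - \eta_{n,i,0}^*)$ vanish and the degenerate subcomplex exhausts every positive cubical degree, collapsing $\operatorname{cone}(\iota_X)$ to zero.

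The main obstacle lies in making (B) precise: the non-derived internal Hom $\underline{\Cor}_S(-, X)$ is not exact and need not send $\A^1$-equivalences in its first slot to $\A^1$-equivalences, so the identification $\underline{\Cor}_S(\square^N, X) \simeq \Z^{tr}_S(X)$ in $\DM^\eff(S)$ cannot be obtained by formal transport through $\underline{\Cor}_S$ and requires the explicit term-by-term homotopy sketched above. The cleanest alternative is to appeal (as in Mazza--Voevodsky--Weibel, Theorem~2.19, adapted to the cubical setting via Lemma~\ref{lem:Normalized} to compare the extended cubical normalization with the degeneracy quotient) to the general fact that for every presheaf with transfers $\sF$ on $\Sm/S$, the canonical morphism $\sF \to C^S(\sF)$ is an $\A^1$-weak equivalence; specializing to $\sF = \Z^{tr}_S(X)$ gives the claim.
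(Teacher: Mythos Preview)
Your plan splits into two halves, but only (B) is needed: an isomorphism in $\DM^\eff(S)$ is by definition an $\A^1$--Nisnevich weak equivalence, so once $\iota_X$ is shown to be one you are done, regardless of whether the target is $\A^1$-local. Moreover, (A) as stated has a gap: strict $\A^1$-homotopy invariance of the sections $U\mapsto C^S(X)(U)$, together with each term being a Nisnevich sheaf, does \emph{not} make $C^S(X)$ an $\A^1$-local object of $D(\Sh^{tr}_\Nis(S))$. For that you would need the Nisnevich \emph{hyper}cohomology of $C^S(X)$ to be $\A^1$-invariant, which is a separate (and harder) statement. So (A) is both unnecessary for the lemma and not yet justified.

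For (B), your instinct and your final fallback (the general fact that $\sF\to C^S(\sF)$ is an $\A^1$-weak equivalence) are correct, and this is essentially what the paper does---only the paper packages it much more cleanly, avoiding both internal Homs and any cone analysis. The paper writes down the ``collapsed'' target directly: let $\Z^{tr}_S(X)^*$ be the complex with $\Z^{tr}_S(X)$ in every degree $\le 0$ and differential alternating between $\id$ and $0$. The projections to $\square^0$ assemble to a map $\pi:C^S(X)\to \Z^{tr}_S(X)^*$ which is degreewise an isomorphism in $\DM^\eff(S)$ (this is precisely the $\A^1$-invariance you invoke term by term), hence $\pi$ itself is an isomorphism there; since $\pi\circ\iota_X$ is a chain homotopy equivalence, $\iota_X$ is an isomorphism. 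This sidesteps the obstacle you correctly identify about $\underline{\Cor}_S(-,X)$ not being exact: no internal Hom ever appears.

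One small correction to your (B): the degree $-N$ term of $C^S(X)$ is the presheaf $U\mapsto z^S_\equi(X,0)(U\times\square^N)/\text{degn}$, which is not the same as the internal Hom $\underline{\Cor}_S(\square^N,X)$ (the equidimensionality condition is over $U\times\square^N$, not over $U$). Your termwise homotopy argument still works once you use the correct description.
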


\begin{proof} Let    $\Z^{tr}_S(X)^*$ be the complex which is $\Z^{tr}_S(X)$ in each degree $n\le 0$, and with differential $d^n:\Z^{tr}_S(X)^n\to \Z^{tr}_S(X)^{n+1}$ the identity if $n$ is even and 0 if $n$ is odd. The projection $\Z^{tr}_S(X\times\square^n)\to \Z^{tr}_S(X)$ gives a map of  complexes
\[
\pi:C^S(X)\to \Z^{tr}(X)^*.
\]
As $\Z^{tr}_S(X\times\square^n)\to \Z^{tr}_S(X)$ is an isomorphism in $\DM^\eff(S)$ for all $n$, it follows that $\pi$ is an isomorphism. Since $\pi\iota$ is a homotopy equivalence, $\iota$ is an isomorphism as well.
\end{proof}

Let $G^*_S:C^-(\Sh^{tr}_\Nis(S))\to C(\Sh^{tr}_\Nis(S))$ be the Godement resolution functor, with respect to $S_\Zar$. Concretely, for $\sF\in Sh^{tr}_\Nis(S)$, $G^0_S(\sF)$ is the sheaf
\[
G^0_S(\sF)(U):=\prod_{s\in S}\sF(U\otimes_S\sO_{S,s}).
\]
where we define $\sF(U\otimes_S\sO_{S,s})$ as the limit of $\sF(U\otimes_SV)$, where $V$ runs over the Zariski open neighborhoods of $s$ in $S$. We have the natural transformation $\id\to G_S$ induced by the inclusion $\sF\to G^0_S\sF$.  Since $\sF\to G^*_S\sF$ is a Zariski local weak equivalence, this map induces an isomorphism in $D(\Sh^{tr}_\Nis)$.  Thus, lemma~\ref{lem:SuslinEquiv} gives

\begin{lem}\label{lem:SuslinEquiv2} For each $X\in\Sm/S$, the composition
\[
\Z^{tr}_S(X)\xrightarrow{\iota_X} C^S(X)\to G_S^*C^S(X)
\]
define an isomorphism $m_S(X)\cong G_S^*C^S(X)$ in $\DM^\eff(S)$.
\end{lem}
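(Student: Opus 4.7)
My plan is to decompose the claimed isomorphism as the composition
\[
m_S(X) \;=\; Q_S(\Z^{tr}_S(X)) \;\xrightarrow{Q_S(\iota_X)}\; Q_S(C^S(X)) \;\xrightarrow{Q_S(\epsilon)}\; Q_S(G^*_S C^S(X)),
\]
where $Q_S \colon D(\Sh^{tr}_\Nis(S)) \to \DM^\eff(S)$ is the localization functor and $\epsilon$ is the Godement augmentation. The first arrow is an isomorphism in $\DM^\eff(S)$ by Lemma~\ref{lem:SuslinEquiv}, applied to the object $X \in \Sm/S$. Hence the entire question reduces to showing that $Q_S(\epsilon) \colon Q_S(C^S(X)) \to Q_S(G^*_S C^S(X))$ is an isomorphism.

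For this second step I would recall the explicit description of $G^*_S$: for any presheaf (of abelian groups) $\sF$ on $\Sm/S$, the map $\sF \to G^0_S(\sF)$ is split injective on every Zariski stalk over $S$ (the splitting being projection onto the factor indexed by the stalk point), and iterating produces an augmented cosimplicial object whose total complex is Zariski-locally quasi-isomorphic to $\sF$. Applied levelwise to the bounded-above complex $C^S(X) \in C^-(\Sh^{tr}_\Nis(S))$, this gives a Zariski-local quasi-isomorphism $C^S(X) \to G^*_S C^S(X)$. Since a Zariski-local quasi-isomorphism is in particular a Zariski-local weak equivalence of presheaves with transfers, it remains to argue it is inverted in the passage to $\DM^\eff(S)$.

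The key observation, and the only nontrivial point, is that the localization $D(\Sh^{tr}_\Nis(S)) \to \DM^\eff(S)$ already inverts all Zariski-local weak equivalences. This is because Nisnevich descent is generated over Zariski descent by Mayer--Vietoris for elementary Nisnevich squares, and the complexes arising from such squares are inverted in the construction of $\DM^\eff(S)$ (they appear explicitly in the list of localizing objects used in defining the model structure $C(\PST_{\A^1}(S))$). Consequently, any map in $C(\Sh^{tr}_\Nis(S))$ which is a Zariski-local quasi-isomorphism becomes an isomorphism in $\DM^\eff(S)$; in particular so does the Godement augmentation $\epsilon$.

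The main potential obstacle is precisely this final passage between topologies: one must be confident that Zariski-local weak equivalences are inverted in $\DM^\eff(S)$ even though the native weak equivalences for the Cisinski--D\'eglise model structure are Nisnevich-local. I would handle this by citing the standard fact that any Nisnevich cover of a scheme in $\Sm/S$ admits a refinement by iterated elementary distinguished squares together with a Zariski cover, so that Nisnevich-local weak equivalence coincides with Zariski-local weak equivalence after localizing along such squares; combining this with the explicit presentation of the model structure $C(\PST_{\A^1}(S))$ quoted in the preceding remark yields the desired conclusion and completes the proof.
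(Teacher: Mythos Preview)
Your decomposition into two steps and your use of Lemma~\ref{lem:SuslinEquiv} for the first step match the paper exactly. The difference lies entirely in how you handle the Godement augmentation $\epsilon:C^S(X)\to G^*_SC^S(X)$.

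The paper's argument for this step is a single sentence: since $\sF\to G^*_S\sF$ is a Zariski-local weak equivalence, it is already an isomorphism in $D(\Sh^{tr}_\Nis(S))$, before any $\A^1$-localization. The point you are missing is that the Nisnevich topology is \emph{finer} than the Zariski topology, so Nisnevich sheafification factors through Zariski sheafification; hence any Zariski-local weak equivalence is automatically a Nisnevich-local weak equivalence. Concretely, every Nisnevich point of $\Sm/S$ (a Henselian local scheme over some $U\in\Sm/S$) maps to $S$ through some $\Spec\sO_{S,s}$, and over $\Spec\sO_{S,s}$ the augmented cosimplicial object acquires the extra degeneracy you mention, giving a section-wise homotopy equivalence there.

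Your detour through elementary Nisnevich squares and the model structure $C(\PST_{\A^1}(S))$ is therefore unnecessary and somewhat backwards: you seem to worry that a Zariski-local equivalence might fail to be Nisnevich-local and try to repair this using the additional localizing objects in $\DM^\eff(S)$. But no repair is needed, and invoking the $\A^1$-model structure is beside the point since $\epsilon$ is inverted already at the level of $D(\Sh^{tr}_\Nis(S))$. Your argument is not wrong, but it obscures a one-line observation with a paragraph of machinery that does not actually do the work.
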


\begin{rem} Since $S$ has finite Krull dimension, say $d$, Grothendieck's theorem \cite{GrothCohDim} tells us that $S$ has Zariski cohomological dimension $\le d$. Thus, for $\sF\in C^-(\Sh^{tr}_\Nis(S))$, $G_S(\sF)$ is cohomologically bounded above. Therefore, the composition of $G_S$ with the quotient functor $C(\Sh^{tr}_\Nis(S))\to D(\Sh^{tr}_\Nis(S))$ factors (up to natural isomorphism) through a modified Godement resolution
\[
G_S^-:C^-(\Sh^{tr}_\Nis(S))\to D^-(\Sh^{tr}_\Nis(S)).
\]
\end{rem}

We now proceed to construct an exact functor
\[
\rho_S:\PMot^\eff(S)\to \DM^\eff(S).
\]
For this, consider the presheaf of DG categories $\un{\DGCor}_S$ on $S_\Zar$
\[
U\mapsto \DGCor_U.
\]
Recall that $\DGCor_U$ has objects $X\in\Sm/U$, and the full subcategory with objects $X\in\Prj/U$ is our category $\DGPCor_U$. We have the ``Godement extension" $R\Gamma(S, \un{\DGCor}_S)$ with objects $X\in\Sm/S$, containing $R\Gamma(S, \un{\DGPCor}_S)$ as the full DG subcategory with objects $X\in\Prj/S$. 

Take $X\in\Sm/S$. By construction, the presheaf on $\Sm/S$ of Hom-complexes
\[
U\mapsto \sHom_{R\Gamma(S, \un{\DGCor}_S)}(U,X)^*
\]
is just $G_S^*C^S(X)$. Thus, for $X,Y\in\Sm/S$, the composition law in $R\Gamma(S, \un{\DGCor}_S)$ defines a natural map
\[
\tilde\rho_{X,Y}:\sHom_{R\Gamma(S, \un{\DGCor}_S)}(X,Y)^*\to
\sHom_{C(\Sh^{tr}_\Nis(S))}(G_S^*C^S(X),G_S^*C^S(Y)).
\]

This defines for us the functor of DG categories
\[
\tilde\rho:R\Gamma(S, \un{\DGCor}_S)\to C(\Sh^{tr}_\Nis(S)).
\]
Applying the functor $K^b$ and composing with the total complex functor
\[
\Tot:K^b(C(\Sh^{tr}_\Nis(S)))\to K(\Sh^{tr}_\Nis(S))
\]
and the quotient functor
\[
K(\Sh^{tr}_\Nis(S))\to \DM^\eff(S)
\]
gives us the exact functor
\[
K^b(\tilde\rho):K^b(R\Gamma(S, \un{\DGCor}_S))\to \DM^\eff(S)
\]
If we restrict to the full subcategory $K^b(R\Gamma(S, \un{\DGPCor}_S))$ of $K^b(R\Gamma(S, \un{\DGCor}_S))$ and extend canonically to the idempotent completion, we have defined an exact functor
\[
\rho^\eff_S:\PMot^\eff(S)\to  \DM^\eff(S).
\]
with  $\rho^\eff_S(X)=G_S^*C^S(X)$ for all $X\in\Prj/S$. The natural isomorphism $m_S(X)\to G_S^*C^S(X)$
in $\DM^\eff(S)$ constructed in lemma~\ref{lem:SuslinEquiv2} shows that the diagram
\[
\xymatrix{
\Prj/S\ar[r]\ar[dr]_{m_S}&\PMot^\eff(S)\ar[d]^{\rho^\eff_S}\\
&\DM^\eff(S)}
\]
commutes up to natural isomorphism.

We note that $\rho^\eff_S(\L)=T^{tr}$, hence the composition
\[
\PMot^\eff(S)\xrightarrow{\rho_S} \DM^\eff(S)\to \DM^\SW(S)
\]
sends $-\otimes\L$ to the invertible endomorphism $-\otimes T^{tr}$ on $\DM^\SW(S)$, hence this composition factors through a canonical extension
\[
\rho^\SW_S:\PMot(S)\to  \DM^\SW(S).
\]
Let
\[
\rho_S:\PMot(S)\to  \DM(S).
\]
be the composition of $\rho^\SW$ with the canonical functor $\DM^\SW(S)\to\DM(S)$.

\begin{rem} It is easy to check that the restriction of $\rho^\eff_S$ to $H^0\DGCor_S$ is a tensor functor.  By the  surjectivity of the cycle class map (lemma~\ref{lem:FLMovOrig}) this  shows that
the composition
\[
\rho^\eff_S\circ \psi^\eff:\CM^\eff(S)\to  \DM^\eff(S)
\]
is a tensor functor. Similarly, 
\[
\rho_S\circ \psi:\CM(S)\to  \DM(S)
\]
is a tensor functor. 
\end{rem}

\begin{thm}\label{thm:MotDuality} Let $X,Y,Z$ be in $\Prj/S$, $d=\dim_SY$, and take $A, B$ in $\DM(S)$. Then for every $n\in\Z$, there is a natural isomorphism
\begin{multline*}
\Hom_{\DM^\eff(S)}(m_S(X), m_S(Y)\otimes m_S(Z)[n])\\\cong \Hom_{\DM^\eff(S)}(m_S(X)\otimes m_S(Y), m_S(Z)\otimes \Z(d)[2d+n]),
\end{multline*}
and a natural isomorphism
\[
\Hom_{\DM(S)}(A, m_S(Y)\otimes B[n])\cong \Hom_{\DM(S)}(A\otimes m_S(Y), B\otimes \Z(d)[2d+n])
\]
\end{thm}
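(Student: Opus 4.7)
The plan is to prove the effective (first) isomorphism by identifying both sides with hypercohomology of Suslin-type complexes and invoking the sheafified duality statement (Theorem~\ref{thm:DualityExt}), then to derive the $\DM(S)$ version by exhibiting $m_S(Y)\otimes\Z(-d)[-2d]$ as a strong dual of $m_S(Y)$ in $\DM(S)$.

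For the first isomorphism, Lemma~\ref{lem:SuslinEquiv2} identifies $m_S(W)\cong G_S^*C^S(W)$ in $\DM^\eff(S)$ for each $W\in\Sm/S$, so that
\[
\Hom_{\DM^\eff(S)}(m_S(X), m_S(W)[n])\cong H^n\bigl(\sHom_{R\Gamma(S,\un{\DGPCor}_S)}(X, W)^*\bigr).
\]
Combined with $m_S(Y)\otimes m_S(Z)=m_S(Y\times_S Z)$ and the identification $\rho_S^\eff(\L^d)\cong \Z(d)[2d]$, which extends to the pseudo-abelian completion so that $\rho_S^\eff(Z\otimes\L^d)\cong m_S(Z)\otimes\Z(d)[2d]$, both sides of the first isomorphism become the $n$-th cohomology of the complexes
\[
\sHom_{R\Gamma(S,\un{\DGPMot}_S)}(X, Y\times_S Z)^*\quad\text{and}\quad \sHom_{R\Gamma(S,\un{\DGPMot}_S)^\natural}(X\times_S Y, Z\otimes\L^d)^*,
\]
and Theorem~\ref{thm:DualityExt}(1) supplies the required natural quasi-isomorphism.

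For the $\DM(S)$ version, I would promote $m_S(Y)$ to a strongly dualizable object in $\DM(S)$ with dual $m_S(Y)^\vee := m_S(Y)\otimes\Z(-d)[-2d]$. Candidate evaluation and coevaluation maps come from Lemma~\ref{lem:Preduality} via $\rho_S$: the morphisms $\tilde\delta_Y: \L^d\to Y\otimes Y$ and $\tilde\epsilon_Y: Y\otimes Y\to \L^d$ in $\PMot^\eff(S)^\natural$ yield, after inverting the Tate twist (Theorem~\ref{thm:Cancel2}) and passage to $\DM(S)$, morphisms
\[
\delta_Y:\Z_S\to m_S(Y)\otimes m_S(Y)^\vee,\qquad \epsilon_Y:m_S(Y)^\vee\otimes m_S(Y)\to \Z_S.
\]
Lemma~\ref{lem:Preduality} already supplies one triangle identity for this pair; the other reduces, under $\rho_S$, to a cycle-theoretic identity in $\CH_d(Y\times_S Y)$ whose verification is a direct projection-formula computation. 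Once strong dualizability is established, the claimed isomorphism for arbitrary $A, B\in \DM(S)$ follows from the formal adjunction
\[
\Hom_{\DM(S)}(A\otimes m_S(Y), B\otimes\Z(d)[2d+n])\cong \Hom_{\DM(S)}(A, B\otimes m_S(Y)^\vee\otimes\Z(d)[2d+n])\cong \Hom_{\DM(S)}(A, m_S(Y)\otimes B[n]).
\]

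The hardest step will be the integral verification of the second triangle identity. The rational duality argument in $\PMot(S)_\Q$ relies on the fact that a weak dual in a $\Q$-linear rigid tensor category is automatically strong, and this is unavailable over $\Z$. I would instead verify the identity either by explicit cycle-theoretic computation in $\CH_d(Y\times_S Y \times_S Y)$ that pushes down to $\Delta_Y\in\CH_d(Y\times_S Y)$, or by transporting the known self-duality of smooth projective Chow motives through the fully faithful embedding $\psi:\CM(S)\to\PMot(S)$ (Proposition~\ref{prop:ChowEmbed}) and then to $\DM(S)$ via $\rho_S$.
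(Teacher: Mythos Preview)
Your approach to the second isomorphism is essentially the paper's: transport the duality from $\CM(S)$ via the tensor functor $\rho_S\circ\psi$. Your worry about the second triangle identity is unnecessary, though. In $\CM(S)$ both triangle identities for $(Y,Y\otimes\L^{-d},\delta_Y,\epsilon_Y)$ hold integrally; the second is the same cycle computation as Lemma~\ref{lem:Preduality} with the roles of the two copies of $Y$ exchanged, and it transports to $\DM(S)$ because $\rho_S\circ\psi$ is a tensor functor. So the option you list last, ``transporting the known self-duality of smooth projective Chow motives through $\psi$'', is exactly what the paper does, and it requires no rationality hypothesis.

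Your approach to the first (effective) isomorphism, however, has a genuine gap. From Lemma~\ref{lem:SuslinEquiv2} you only get $m_S(W)\cong G_S^*C^S(W)$ in $\DM^\eff(S)$; this does \emph{not} yield
\[
\Hom_{\DM^\eff(S)}(m_S(X),m_S(W)[n])\cong H^n\bigl(\sHom_{R\Gamma(S,\un{\DGPCor}_S)}(X,W)^*\bigr).
\]
The right-hand side is Zariski hypercohomology over $S$ of the Suslin complex, whereas the left-hand side is a Hom in the $\A^1$-localized Nisnevich-local derived category. Identifying these two is precisely the content of the full faithfulness of $\rho^\eff_S$ (Corollary~\ref{cor:Main}), which in the paper is \emph{deduced from} the present theorem. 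So your argument is circular: Theorem~\ref{thm:DualityExt} lives entirely on the $\PMot^\eff$ side and says nothing about $\Hom_{\DM^\eff(S)}$ until that bridge is built.

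The paper avoids this by reversing your order. It first proves the $\DM(S)$ isomorphism via Chow-motive duality (as above), and then deduces the $\DM^\eff(S)$ statement by invoking the cancellation isomorphisms of Theorem~\ref{thm:MotivesMain} to identify each effective Hom-group with its stable counterpart, after which the already-established duality in $\DM(S)$ applies.
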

\begin{proof}  Since $Y\in \CM(S)$ has the dual $(Y\otimes\L^{-d},\delta_Y,\epsilon_Y)$, it follows that
$m_S(Y)\cong \rho_S\psi_S(Y)$ has the dual $(m_S(Y)(-d)[-2d],  \rho_S\psi_S(\delta_Y),  \rho_S\psi_S(\epsilon_Y))$ in $\DM(S)$. This gives us the second isomorphism. For the first, we have the limited cancellation theorem (see theorem~\ref{thm:MotivesMain}), giving isomorphisms
\begin{align*}
&\Hom_{\DM^\eff(S)}(m_S(X), m_S(Y)\otimes m_S(Z)[n])\\
&\hskip 100pt\cong
\Hom_{\DM(S)}(m_S(X), m_S(Y)\otimes m_S(Z)[n])\\
&\Hom_{\DM^\eff(S)}(m_S(X)\otimes m_S(Y), m_S(Z)\otimes \Z(d)[2d+n])\\
&\hskip 100pt\cong
\Hom_{\DM(S)}(m_S(X)\otimes m_S(Y), m_S(Z)\otimes \Z(d)[2d+n]).
\end{align*}
Thus, the duality in $\DM(S)$ yields the desired isomorphism of Hom-groups in $\DM^\eff(S)$.
\end{proof}

\begin{cor}\label{cor:Main} The functors
\begin{align*}
&\rho^\eff_S:\PMot^\eff(S)\to  \DM^\eff(S)\\
&\rho_S:\PMot(S)\to  \DM(S).
\end{align*}
are fully faithful embeddings. 
\end{cor}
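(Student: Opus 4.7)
The plan is to prove full faithfulness of $\rho^\eff_S$ first, and then deduce full faithfulness of $\rho_S$ by passing to colimits via the cancellation theorems on both sides. Since $\PMot^\eff(S)$ is (after idempotent completion) generated as a triangulated category by the motives of objects of $\Prj/S$ and $\rho^\eff_S$ is exact, it suffices to show that
\[
\rho^\eff_S : \Hom_{\PMot^\eff(S)}(X,Y[n]) \to \Hom_{\DM^\eff(S)}(m_S(X),m_S(Y)[n])
\]
is an isomorphism for all $X,Y\in\Prj/S$ and $n\in\Z$.

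The strategy is to identify both sides with the higher Chow group $\CH^{d_Y}(p_*(X\times_SY),-n)$, where $d_Y=\dim_SY$ and $p:S\to\Spec k$ is the structure morphism. On the left, theorem~\ref{thm:DualityExt}(2) applied with $Z=\Spec k$ yields
\[
\Hom_{\PMot^\eff(S)}(X,Y[n]) \cong H^n\sHom_{\DGPMot_k^\natural}(p_*(X\times_SY),\L^{d_Y})^*,
\]
and the cubical version of Voevodsky's theorem identifying Friedlander-Suslin cohomology with higher Chow groups (already used in the proof of proposition~\ref{prop:ChowEmbed}) rewrites this as $\CH^{d_Y}(p_*(X\times_SY),-n)$. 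On the right, theorem~\ref{thm:MotDuality} with $Z=S$ gives
\[
\Hom_{\DM^\eff(S)}(m_S(X),m_S(Y)[n]) \cong \Hom_{\DM^\eff(S)}(m_S(X\times_SY),\Z(d_Y)[2d_Y+n]),
\]
and the base-change isomorphism of theorem~\ref{thm:MotivesMain} together with remark~\ref{rem:MotCohIso} identifies this same group with $\CH^{d_Y}(p_*(X\times_SY),-n)$.

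The crux is to verify that $\rho^\eff_S$ realises these two identifications as the \emph{same} map, and this is the main obstacle. Both duality isomorphisms are built from the same Chow-theoretic ingredients---the diagonal class on $Y\times_SY$ and the fundamental class furnished by lemma~\ref{lem:Preduality}---and the remark preceding proposition~\ref{prop:ChowEmbed} asserts that $\rho^\eff_S\circ\psi^\eff$ is a tensor functor sending $\L$ to $T^{tr}$. Combined with the surjectivity of the cycle class map (lemma~\ref{lem:FLMovOrig}), this should force agreement of the two identifications with the higher Chow group; one however has to chase the duality data through the integration map $\int_X$ of theorem~\ref{thm:Duality}, through the Chow-motive embedding $\psi^\eff$, and through the proof of theorem~\ref{thm:MotDuality} to confirm that no unspecified Chow-group automorphism is introduced. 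Granting this naturality, full faithfulness of $\rho^\eff_S$ follows.

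For $\rho_S$, theorem~\ref{thm:Cancel2} expresses $\Hom_{\PMot(S)}(X\otimes\L^p,Y\otimes\L^m[n])$ as a filtered colimit over $N$ of Hom-groups in $\PMot^\eff(S)^\natural$, and the cancellation part of theorem~\ref{thm:MotivesMain} does the same on the $\DM$ side; the compatibility of $\rho^\eff_S$ with $\otimes\L \leftrightarrow \otimes\Z(1)[2]$ then allows one to pass to the colimit and conclude that $\rho_S$ is fully faithful on the generating set $\{X\otimes\L^n\mid X\in\Prj/S,\ n\in\Z\}$, hence on all of $\PMot(S)$.
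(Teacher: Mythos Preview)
Your proposal is correct and follows essentially the same route as the paper: reduce to $X,Y\in\Prj/S$, use duality on both sides to rewrite $\Hom(X,Y[n])$ and $\Hom(m_S(X),m_S(Y)[n])$ as motivic cohomology of $X\times_SY$, and then invoke theorem~\ref{thm:MotivesMain} and remark~\ref{rem:MotCohIso}. The paper handles the compatibility you flag as the ``crux'' by observing that both duality isomorphisms are induced from the same duality data in $\CM(S)$ via the tensor functors $\psi^\eff$ and $\rho^\eff_S\circ\psi^\eff$, so the relevant square commutes; your sketch of this step is accurate. For $\rho_S$, the paper simply says the proof is the same (i.e.\ repeat the duality argument in $\DM(S)$), whereas you pass to colimits using cancellation on both sides---either works.
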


\begin{proof} We give the proof for $\rho^\eff_S$; the proof for $\rho_S$ is the same.

 Take $X, Y\in \Prj/S$, let $d=\dim_SY$. By theorem~\ref{thm:MotDuality}, we have the duality isomorphism
\[
\Hom_{\DM^\eff(S)}(m_S(X), m_S(Y)[n])\cong \Hom_{\DM^\eff(S)}(m_S(X)\otimes m_S(Y), \Z^{tr}(d)[2d+n])
\]
As the construction of the duality isomorphism arises from the duality in $\CM(S)$, the diagram
\[
\xymatrixcolsep{15pt}
\xymatrix{
\Hom_{\PMot^\eff(S)}(X,Y[n])\ar[r]^-{\sim}\ar[d]_{\rho^\eff_S}&\Hom_{\PMot^\eff(S)}(X\times_SY,\L^d[n])\ar[d]^{\rho^\eff_S}\\
\Hom_{\DM^\eff(S)}(m_S(X), m_S(Y)[n])\ar[r]_-\sim&
 \Hom_{\DM^\eff(S)}(m_S(X\times_SY), \Z^{tr}(d)[2d+n])
 }
 \]
 commutes. But by theorem~\ref{thm:MotivesMain} and remark~\ref{rem:MotCohIso}, we have
 \begin{multline*}
  \Hom_{\DM^\eff(S)}(m_S(X\times_SY), \Z^{tr}(d)[2d+n])\\\cong
  \Hom_{\DM^\eff(k)}(m_k(p_*X\times_SY), \Z^{tr}(d)[2d+n])\\\cong
  H^{2d+n}(p_*X\times_SY,\Z(d)).
  \end{multline*}
  It is easy to check that the natural map
  \begin{multline*}
  H^{2d+n}(p_*X\times_SY,\Z(d))\cong  H^{n}(C^k(\L^d)(p_*(X\times_SY))\\
  =H^n(C^S(\L^d)(X\times_SY))\to
  \Hom_{\DM^\eff(S)}(m_S(X\times_SY), \Z^{tr}(d)[2d+n])
  \end{multline*}
inverts this isomorphism, from which it follows that the composition
\begin{multline*}
   H^{2d+n}(X\times_SY,\Z(d))\cong  H^{n}(C^k(\L^d))(p_*(X\times_SY))=H^n(C^S(\L^d)(X\times_SY))\\
   \to \Hom_{\PMot^\eff(S)}(X\times_SY,\L^d[n])\xrightarrow{\rho_S}
    \Hom_{\DM^\eff(S)}(m_S(X\times_SY), \Z^{tr}(d)[2d+n])\\
    \cong  \Hom_{\DM^\eff(k)}(m_k(p_*X\times_SY), \Z^{tr}(d)[2d+n])\cong
  H^{2d+n}(X\times_SY,\Z(d))
\end{multline*}
is the identity. Therefore,
\[
\rho^\eff_S:\Hom_{\PMot^\eff(S)}(X\times_SY,\L^d[n])\to  \Hom_{\DM^\eff(S)}(m_S(X\times_SY), \Z^{tr}(d)[2d+n])
\]
is an isomorphism.
\end{proof}

\begin{definition} Let $\DTM(S)$ be the full triangulated subcategory of $\DM(S)$ generated by the Tate objects $\Z^{tr}_S(n)$, $n\in\Z$, and let $\DTM^\eff(S)$ be the full triangulated subcategory of $\DM^\eff(S)$ generated by the Tate objects $\Z^{tr}_S(n)$, $n\ge0$
\end{definition}

As immediate consequence of corollary~\ref{cor:Main}, we have
\begin{cor}\label{cor:MainTate} The restriction of $\rho_S$ to 
\[
\rho_S^{\text{Tate}}:\PTMot(S)\to \DM(S)
\]
is a fully faithful embedding, giving an equivalence of $\PTMot(S)$ with $\DTM(S)$. The version with $\Q$-coefficients
\[
\rho_S^{\text{Tate}}:\PTMot(S)_\Q\to \DM(S)_\Q
\]
defines an equivalence of $\PTMot(S)_\Q$ with $\DTM(S)_\Q$ as tensor triangulated categories.

Similarly, we have the equivalences
\[
\rho_S^{\eff\text{Tate}}:\PTMot^\eff(S)\to \DTM^\eff(S)
\]
and
\[
\rho_{S\Q}^{\eff\text{Tate}}:\PTMot^\eff(S)_\Q\to \DTM^\eff(S)_\Q
\]
\end{cor}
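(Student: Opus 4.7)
The plan is to deduce this corollary from the full faithfulness already established in corollary~\ref{cor:Main}, together with a generation argument for the essential image and a compatibility check on Tate generators for the tensor assertion.

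First, I would observe that the functor $\rho_S$ sends the generating objects of $\PTMot(S)$ to the generating objects of $\DTM(S)$. Concretely, by construction we have $\rho^\eff_S(\L) = T^{tr}$, and $T^{tr}$ is the image in $\DM^\eff(S)$ of the presheaf defining $\Z_S(1)[2]$ (see definition~\ref{def:TateObj}). Inverting $\otimes\L$ on the source matches inverting $\otimes\Z_S(1)$ on the target, so $\rho_S(\L^n) \cong \Z_S(n)[2n]$ in $\DM(S)$ for every $n\in\Z$. Restricting $\rho_S$ to $\PTMot(S)$ therefore lands in $\DTM(S)$, and by corollary~\ref{cor:Main} the restriction is fully faithful.

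Next, for essential surjectivity onto $\DTM(S)$, I would argue as follows. The essential image $\sE := \rho_S(\PTMot(S)) \subset \DM(S)$ is a strictly full, triangulated, idempotent-complete subcategory (since $\rho_S$ is exact, $\PTMot(S)$ is idempotent complete by construction, and fully faithful exact functors preserve idempotent completeness of the image in a triangulated category with split idempotents). By the previous paragraph, $\sE$ contains $\Z_S(n)[2n]$ for every $n\in\Z$, hence (after shifting) contains every $\Z_S(n)$. Since $\DTM(S)$ is by definition the smallest such subcategory containing the $\Z_S(n)$, we conclude $\DTM(S)\subset \sE$. Conversely, $\PTMot(S)$ is generated by the $\L^n$, so $\sE$ is contained in the triangulated idempotent-complete subcategory generated by the $\rho_S(\L^n)$, which is exactly $\DTM(S)$. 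This gives the equivalence $\PTMot(S)\cong\DTM(S)$.

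For the effective statements, exactly the same argument applies, using $\rho^\eff_S$ and noting that $\rho^\eff_S(\L^n) = (T^{tr})^{\otimes n}$ lies in $\DTM^\eff(S)$ for $n\ge 0$. The only step needing separate justification is that $\rho^\eff_S$ restricted to $\PTMot^\eff(S)$ is fully faithful; this follows from corollary~\ref{cor:Main} for $\PMot^\eff(S)\to\DM^\eff(S)$, which was itself obtained there as a consequence of full faithfulness in the stabilized setting together with cancellation (theorem~\ref{thm:Cancel2} and theorem~\ref{thm:MotivesMain}).

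Finally, for the tensor assertion in the $\Q$-version, I would argue that $\rho_{S\Q}^{\text{Tate}}$ is a tensor functor by checking it on the Tate generators, where everything is forced: on objects, $\rho_{S\Q}(\L^n\otimes\L^m) = \rho_{S\Q}(\L^{n+m}) \cong \Z_S(n+m)[2(n+m)] \cong \rho_{S\Q}(\L^n)\otimes\rho_{S\Q}(\L^m)$, and on morphisms the compatibility reduces via the duality and cancellation arguments of \S\ref{sec:Duality} to the statement that $\rho^\eff_{S\Q}|_{H^0\DGCor_S^\alt}$ is a tensor functor, which is immediate from the construction of $\rho^\eff_{S\Q}$ via composition of correspondences. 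Once the tensor equivalence is known on the generating Tate objects, it extends uniquely to a tensor equivalence on the full triangulated (idempotent-complete) subcategories they generate, because tensor products distribute over cones, shifts, and direct summands. The main obstacle is this last compatibility check: the Koszul signs introduced by passage to $C^b$ and by the cubical (resp.\ alternating cubical) enrichment must be shown to match the symmetry constraints on $\DM(S)_\Q$; however, since on Tate generators all objects are invertible and the symmetry involves only the sign of the permutation of tensor factors, it suffices to verify the constraint on $\L\otimes\L$, where the computation is essentially trivial.
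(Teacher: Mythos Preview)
Your proposal is correct and follows exactly the approach the paper implicitly intends: the paper gives no proof at all, simply stating that the corollary is an ``immediate consequence of corollary~\ref{cor:Main}'', so you have correctly identified that full faithfulness is inherited from corollary~\ref{cor:Main} and essential surjectivity follows from matching generators on both sides.

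One minor point: you invoke idempotent completeness of $\PTMot(S)$ and of the essential image, but in the paper $\PTMot^\eff(S)$ is defined simply as the homotopy category $K^b(\DGTCor^\eff_S)$ (definition~\ref{definition:EffectiveTate}), without taking an idempotent completion, and $\DTM(S)$ is defined as the full triangulated subcategory generated by the Tate objects, not the thick subcategory. So the idempotent-completion step is unnecessary and slightly inaccurate as stated; the generation argument goes through using only that both sides are the triangulated subcategories generated by the respective Tate objects, and that $\rho_S$ sends generators to generators (up to shift). This actually simplifies your argument.
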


\end{document}